\newcommand{\ga}{\gamma}
\newcommand{\e}{\varepsilon}
\newcommand{\la}{\lambda}
\newcommand{\sig}{\sigma}
\newtheorem{theorem}{Theorem}[section]
\newtheorem{lemma}[theorem]{Lemma}
\newtheorem{remark}[theorem]{Remark}
\newtheorem{proposition}[theorem]{Proposition}
\newtheorem{corollary}[theorem]{Corollary}
\numberwithin{equation}{section}
\begin{document}
\title{
Concentration and oscillation analysis of positive solutions to semilinear elliptic equations with exponential growth in a disc
}
\date{}
\author{Daisuke Naimen\thanks{Muroran Institute of Technology, 27-1, Mizumoto-cho, Muroran-shi, Hokkaido, 0508585, Japan, naimen@muroran-it.ac.jp%, 
%Key words: elliptic equation, exponential nonlinearity,  supercritical, blow-up,  concentration, oscillation,  bifurcation,  singular solution, 
% MSC(2020) 
%35A21, 
%35B05, 
%%35B09, 
%35B32, 
%%35B33, 
%%35B38, 
%35B40, 
%35B44, 
%%35J08, 
%%35J15, 
%%35J25, 
%35J61%, 
%%35J91
}}
\maketitle
\begin{abstract}
We establish a series of concentration and oscillation estimates for elliptic equations with exponential nonlinearity $e^{u^p}$ in a disc. Especially, we show various new results on the supercritical case $p>2$ which are left open in the previous works. We begin with the concentration analysis of blow-up solutions by extending the scaling and pointwise techniques developed in the previous studies. A striking result is that we detect an infinite sequence of bubbles in the supercritical case $p>2$. The precise characterization of the limit profile, energy, and location of each bubble is given. Moreover, we arrive at a natural interpretation, the infinite sequence of bubbles causes the infinite oscillation of the solutions. Based on this idea and our concentration estimates,  we next carry out the oscillation analysis. The results allow us to estimate  intersection points and numbers between blow-up solutions and singular functions. Applying this, we finally demonstrate the infinite oscillations of the bifurcation diagrams of supercritical equations. In addition, we also discuss what happens on the sequences of bubbles in the limit cases $p\to 2^+$ and $p\to \infty$ respectively. As above, the present work discovers a direct path connecting the concentration and oscillation analyses. It leads to a consistent and straightforward understanding of concentration, oscillation, and bifurcation phenomena on blow-up solutions of supercritical problems.  
\end{abstract}
\newpage
\tableofcontents
\newpage
\section{Introduction}\label{sec:intr}
We study the following elliptic problem with exponential nonlinearity,
\begin{equation}\label{p0}
\begin{cases}
-\Delta u=\la h(u)e^{u^p},\ \  u>0\ \text{ in }\Omega,\\
u=0\ \ \ \ \ \ \ \ \ \ \ \ \ \ \ \ \ \ \ \ \ \ \ \ \  \ \text{ on }\partial \Omega,
\end{cases}
\end{equation}
where $\Omega\subset \mathbb{R}^2$ is a smooth bounded domain, $p>0$  a given constant, $\la>0$  a parameter, and $h:[0,\infty)\to \mathbb{R}$ a nonnegative smooth function with lower growth.  In our main argument, we assume $\Omega$ is a unit disc and investigate the concentration and oscillation  behavior of blow-solutions of \eqref{p0}.

For many years, \eqref{p0} focuses various attentions. In the case $p=1$ and $h=1$, it is known as the Gelfand problem \cite{G}. An interesting discussion appears in the study of  the bifurcation diagram of solutions $(\la,u)$ of \eqref{p0}. Assuming $\Omega$ is a ball in $\mathbb{R}^n$ with  $n\ge1$, one sees that the shape  of the  diagram drastically changes depending on $n$. This allows several uniqueness, multiplicity, and nonexistence results on \eqref{p0}. In fact, if $1\le n\le2$, the diagram, which emanates from $(0,0)$, tuns only once and goes to the point at infinity $(0,\infty)$. This implies the existence of two solutions for all small $\la$ and no solutions for all large $\la$.  If $3\le n\le 9$, one finds that  it has infinitely many turning points  which permit  the existence of infinitely many solutions $u$ for some value $\la=\la^*>0$ and many solutions near $\la=\la^*$. Moreover, if $n\ge 10$, the diagram has no turning point and yields the existence of a unique solution for all small $\la$ and no solution for all large $\la$. For more precise and complete statements, see \cite{JL}.  We also note that certain extensions to more general nonlinearities are carrying out  for  $n\ge3$ more recently. We refer the readers to  \cite{Mi2},  \cite{KW},  \cite{MN} and references therein. 
 
On the other hand, if  $p=2$,  the nonlinearity has the term $e^{u^2}$. This is the critical growth of the Sobolev embedding in dimension two  as is known by the Trudinger-Moser inequality   \cite{T} \cite{M}. Hence, we may regard \eqref{p0} as the critical problem in dimension two. In other words, it gives the two-dimensional counter part of the Brezis-Nirenberg problem in higher dimension \cite{BN}. Of particular interest is that \eqref{p0} has noncomact properties which  can be observed as the concentration of solutions or Palais-Smale sequences.  In this point of view, various interesting and challenging problems arise in the study of the existence and asymptotic behavior of solutions. We refer the readers to  the previous works \cite{A}, \cite{AD}, \cite{AS}, \cite{D}, \cite{DMR}, \cite{DMMT}, \cite{DT}, \cite{MM1}, and references therein. Lastly, we remark that there are several papers which  cover also the case $p\not=1,2$.  See,  for example, \cite{AtPe}, \cite{FMR}, \cite{DM}, and \cite{OS}.

In any case of $p>0$, in order to understand the structure of solutions of \eqref{p0}, it is important to investigate the asymptotic behavior of blow-up solutions. See the blow-up analyses in \cite{BM} and \cite{NS} for $p=1$, \cite{AD}, \cite{D}, \cite{DT} and references therein for $p=2$, and \cite{OS} for $1< p\le2$. In these works, one of the key ideas is to utilize the scaling property of \eqref{p0}. Indeed, after the suitable scaling around the maximum point, one sees that the blow-up solutions converge to a solution $z$ of the Liouville equation, 
\begin{equation}\label{limp}
-\Delta z=e^z\ \text{in }\mathbb{R}^2,\ \ \ \int_{\mathbb{R}^2}e^zdx<\infty.
\end{equation}
Moreover, one also finds that the limit value of  the ``energy" is characterized by $\int_{\mathbb{R}^2}e^zdx$.  Then, noting the classification result by \cite{CL}, one may expect that the asymptotic behavior of blow-up solutions can be precisely described by the explicit information of \eqref{limp}.  Actually, based on this idea, in \cite{D} and \cite{DT} for $p=2$, the authors prove that any energy-bounded sequence of blow-up solutions of \eqref{p0} (with some conditions on $h$) is decomposed by a finite number of ``bubbles" which are characterized by the classical solution of \eqref{limp}. This leads us to observe the ``energy quantization" of solutions. This quantification result plays an important role  in the variational analysis of the Trudinger-Moser energy functional. See the recent development in \cite{DMMT} along this direction. 

As noted above, we observe various interesting discussions on the case $p\le2$. On the other hand, there seems to have been few works for the supercritical case $p>2$ after the basic  existence result and asymptotic formulas were obtained by Atkinson-Peletier \cite{AtPe} in a disc. (One finds  a suggestive work \cite{McMc}  by McLeod-McLeod which will be noted later.)  Since the scaling structure of the equation does not change even for $p>2$, it is natural to ask what happens on supercritical blow-up solutions in view of  the scaling analysis. Starting from this question, our main aim in this paper is to attack the blow-up analysis of the supercritical problems of \eqref{p} and  discover and clarify any supercritical phenomena through the scaling approach. 

To accomplish our aim, we begin with applying the pointwise technique and radial analysis in \cite{D} and \cite{MM1}. In particular, we first detect the standard bubble around the maximum point. Then we next search the outside of the first concentration region for the next bubble to appear for general $p>0$. As a result, we figure out the striking difference between the cases $p\le2$ and $p>2$. A fundamental observation is that, in the subcritical case, no additional bubble appears while in the supercritical case $p>2$, an infinite sequence of bubbles does appear. Then, following this fact, in the supercritical case, we establish  the precise characterization of the limit profile, energy, and location of each bubble via the limit equation \eqref{limp}. We emphasize that they are completely described with the two sequences $(a_k)$ and $(\delta_k)$ of numbers  defined by a system of recurrence formulas \eqref{eq:del1} and \eqref{eq:del2} below.  These results will be summarized in Theorems \ref{thm1}, \ref{thm30}, and \ref{thm3}.   A symbolic consequence of our concentration estimates is that  infinitely many bubbles break the uniform boundedness of the ``energy" of solutions which is usually assumed or proved in the study of the critical case. See \eqref{sup2}. Moreover, we arrive at a key observation which connects two important supercritical phenomena on \eqref{p0}. 

That is, we observe that the infinite sequence of bubbles causes  the infinite oscillation of  solutions.   See  Theorem \ref{thm:osc} which is a direct consequence of our concentration estimates. Furthermore,  our oscillation estimates naturally lead us to study the points and numbers of intersections between blow-up solutions and singular functions. See the corollaries of the theorem. In particular, this enables us to estimate the intersection numbers between regular  and singular solutions of \eqref{p0}. Then, with the idea in \cite{Mi},  it allows us to study the oscillation of the bifurcation diagram of \eqref{p0}. Based on this idea, we will finally  demonstrate that for some nonlinearities, the intersection numbers between regular and singular solutions diverge to infinity and as a consequence, the corresponding bifurcation diagrams oscillate infinitely many times.  See Theorem \ref{thm:app2} and Remark \ref{rmk:B}.

As mentioned above, we successfully establish the precise estimates for the concentration, oscillation, and intersection phenomena, and at last, prove the infinite oscillations of the bifurcation diagrams of  \eqref{p0}. One of the remarkable novelties here is that we detect the infinite sequence of bubbles with complete characterization. In the previous studies for $p\le2$ referred above, one finds or constructs concentrating solutions with at most finitely many bubbles. Hence what we show here is nothing but a typical  supercritical concentration phenomenon.

 Another novelty is that we clarify  the  direct connection between bubbling and oscillation behaviors of blow-up solutions with precise quantification. It leads to our consistent  and straightforward analysis of concentration, oscillation, and  bifurcation phenomena on  blow-up solutions in dimension two.  This observation will deserve consideration also in the study of related problems.

The other novelty we remark here is that we provide a useful approach to prove the infinite oscillations of the bifurcation diagrams and further, we actually succeed in showing it for some supercritical equations. Here, we point out that the  study of supercritical problems with exponential nonlinearity $e^{u^p}$ or more general one is recently developed in, for instance, \cite{KW} and \cite{MN}. They assume that the dimension of the domain is greater than or equal to three and  leave the two-dimensional case open. As noted above, our approach is based on the direct analysis of the infinite sequence of bubbles which is very different from those in the papers. Thanks to this, we can accomplish  the precise and extensive analysis, including the proof of the infinite oscillations of the bifurcation diagrams, in dimension two.

 We here refer to an earlier work \cite{McMc} by McLeod-McLeod (which the author knew after the present work was completed). An oscillation property of solutions for the supercritical case is indicated as the ``bouncing process" in Sections 1 and 2 there. Actually, their observation coincides with what we prove in the present work. Their discussion is very interesting since they find out such a oscillation phenomenon and some effective formulas from a heuristic and qualitative argument.  On the other hand, we do it through a direct analysis of our concentration estimates which is completely different way. Moreover, our approach enables us to achieve a rigorous justification and precise quantification. Thanks to this, we can proceed to further analysis of intersection and bifurcation problems. We also note that it is interesting to carefully compare what we prove with what they observe. Interestingly, we encounter some correspondences  although the approaches or interpretations are different from each other. A surprising fact is that formulas which are  equivalent to \eqref{eq:del1} and \eqref{eq:del2}  have already been observed there. Their derivation is still based on a heuristic observation while we deduce them via a precise and different way  as noted above.  We will give some more discussions about the relation between our present work and \cite{McMc} in  Remark \ref{rmk:mm} below.

Finally, we remark on two resent works \cite{FIRT} and \cite{Ku} on \eqref{p0} with general nonlinearity which are carried out  independently of us. In \cite{FIRT}, Fujisjima-Ioku-Ruf-Terraneo construct singular solutions with precise asymptotic formulas. Their result is fruitful also for our analysis of bifurcation diagrams. See Remarks \ref{rmk:B} and \ref{rmk:D} below. On the other hand, more recently, Kumagai \cite{Ku} establishes suitable nonexistence results and also the uniform  boundedness of finite Morse index solutions. Consequently, he proves, with a different approach,  that the bifurcation diagrams admit infinitely many turning points and more.  For the details, check \cite{Ku}.  As above, we recently observe various new developments in the study of \eqref{p0} in dimension two. The present work establishes one of the leading results.

Now let us give our main results on concentration phenomena. 
\subsection{Concentration estimates}\label{subsec:ce}
We set $D=\{x\in \mathbb{R}^2\ |\ |x|<1\}$. Throughout this paper, we always assume the next condition without further comments in the results below. 
\begin{enumerate}
\item[(H0)]$h:[0,\infty)\to \mathbb{R}$ is nonnegative $C^1$ function and  there exists a value $t_0>0$ such that $h(t)>0$ for all $t\ge t_0$. 
\end{enumerate} 
First note that  every solution $u$ of \eqref{p0} with $\Omega=D$ is radially symmetric and strictly decreasing for each radial direction by \cite{GNN}. In particular, $u(0)=\max_{x\in D}u(x)$. Then, regarding $u=u(|x|)$ for all $x\in D$, \eqref{p0} turns into a one dimensional equation for $u=u(r)$ $(r\in[0,1])$, 
\begin{equation}\label{p}
\begin{cases}
-u''-\frac1r u''=\la f(u),\ \  u>0\ \text{ in }(0,1),\\
u'(0)=0=u(1),
\end{cases}
\end{equation}
where we put $f(u)=h(u)e^{u^p}$. Now, we introduce our basic condition for the concentration analysis which is a straight adaptation of (H4) in \cite{D} to our situation $p>0$.  
 \begin{enumerate}
\item[(H1)]  It holds that 
\[\displaystyle \lim_{t\to \infty}\frac{h'(t)}{t^{p-1}h(t)}=0.\]
\end{enumerate}
This implies that $\lim_{t\to \infty}(t^{-p}\log{h(t)})=0$. See \eqref{ha} in Lemma \ref{lem:h} below. In particular,  $h(t)$ has a lower growth than $e^{t^p}$ at infinity and
 $f(t)\to \infty$ as $t\to \infty$.  Moreover, it is  equivalent to  
\[
\displaystyle \lim_{t\to \infty}\frac{f'(t)}{pt^{p-1}f(t)}=1.
\]
(H1) admits the standard example $h$ which satisfies  $h(t)=t^me^{\alpha t^{q}}$ for all large $t>0$ where $m,\alpha \in \mathbb{R}$ and $0<q<p$.  Some more properties derived from (H1) are summarized in Section \ref{sec:pre}. 

From now on, we consider any sequence of solutions of \eqref{p}. That is, we study the behavior of any sequence  $\{(\la_n,\mu_n,u_n)\}$ satisfying 
\begin{equation}\label{q}
\begin{cases}
-u_n''-\frac1r u_n'=\la_n f(u_n),\ \  u_n>0\ \text{ in }(0,1),\\
u_n(0)=\mu_n,\ u_n'(0)=0=u_n(1),
\end{cases}
\end{equation}
for all $n\in \mathbb{N}$. We are interested in the case $\mu_n\to \infty$ as $n\to \infty$. The existence of such a sequence is ensured, for example,  by Theorem 3 in \cite{AtPe} with a few more conditions on $h$ which still admits the standard example noted above, or Lemma 2.1 in \cite{AKG}.

 We finally  note that our concentration analysis is based on the study of the following quantity,
\[
p\mu_n^{p-1}\int_0^r\la_n f(u_n)rdr
\]
which can be connected  to the  integral value $\int_0^\infty e^zrdr$ via the scaling argument where $z$ is some solution of the  limit equation \eqref{limp}.   Moreover,  we also refer to two variants of the previous quantity, 
\[
p\mu_n^{p-2}\int_0^r\la_n u_nf(u_n)rdr\text{ \ \ and \ \ } p\int_0^r\la_n u_n^{p-1}f(u_n)rdr.
\]
One may be interested in  the former  one in view of the variational analysis in the case $0< p \le2$. The latter one, which seems to be less meaningful if $p\not=2$, will be a symbolic quantity in this paper for the case $p>2$ which purely quantify the appearance of bubbles. Regarding those quantities as variants or extensions of the usual variational energy $\int_0^1 \la u_nf(u_n)rdr $, we often call them ``energy" for convenience in this paper.  

Now, we first give our basic result on the first concentration around the maximum point for any $p>0$.  
\begin{theorem}\label{thm1} Assume $p>0$, (H1), and $\{(\la_n,\mu_n,u_n)\}$ is a sequence of solutions of \eqref{q} such that $\mu_n\to \infty$ as $n\to \infty$. Let $(\ga_{0,n})$ be a sequence of positive numbers such that 
\[
 p\la_n\mu_n^{p-1}f(\mu_n)\ga_{0,n}^2=1
\] 
and put 
\[
z_{0,n}(r)=p\mu_n^{p-1}(u_n(\ga_{0,n} r)-\mu_n)
\]
for all $r\in[0,1/\ga_{0,n}]$ and $n\in\mathbb{N}$. Then, up  to a subsequence,  there exists a sequence $(\rho_{0,n})\subset (0,1)$ such that $\ga_{0,n}\to0$, $\rho_{0,n}\to0$, $\rho_{0,n}/\ga_{0,n}\to\infty$, $u_n(\rho_{0,n})/\mu_n\to1$,   and $\|z_{0,n}-z_0\|_{C^1([0,\rho_{0,n}/\ga_{0.n}])}\to0$  where $z_0$ is  a function on $[0,\infty)$ defined by
\[
z_0(r)=\log{\frac{64}{(8+r^2)^2}}
\] 
satisfying 
\begin{equation}\label{leq}
\begin{cases}
-z_0''-\frac1r z_0'=e^{z_0}\ \text{ in }(0,\infty),\\
z_0(0)=0=z_0'(0),
\end{cases}
\end{equation}  
and further, 
\begin{equation}\label{eq:sq}
p\mu_n^{p-1}\int_0^{\rho_{0,n}}\la_n f(u_n)rdr\to \ \displaystyle \int_0^\infty e^{z_0}rdr=4
\end{equation}
as $n\to \infty$. Moreover, there exists a sequence $(r_{0,n})\subset (0,\rho_{0,n})$ such that $r_{0,n}/\ga_{0,n}\to 2\sqrt{2}$, $u_n(r_{0,n})/\mu_n\to1$, and 
\begin{equation}\label{eq:r0}
p\la_nr_{0,n}^2 u_n(r_{0,n})^{p-1}f(u_n(r_{0,n}))\to 2
\end{equation}
as $n\to \infty$.
\end{theorem}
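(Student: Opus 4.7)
The plan is to mimic the standard Liouville-rescaling strategy developed for the critical case in \cite{D} and \cite{MM1}, and to adapt it to a general exponent $p>0$ via (H1). The argument splits into three blocks.

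\emph{Scaled equation.} A direct chain-rule computation, combined with the normalization $p\la_n\mu_n^{p-1}f(\mu_n)\ga_{0,n}^2=1$, shows that
\[
-z_{0,n}''-\frac{1}{r}z_{0,n}' = \frac{f(u_n(\ga_{0,n}r))}{f(\mu_n)},\qquad z_{0,n}(0)=0=z_{0,n}'(0).
\]
A Taylor expansion gives $u_n(\ga_{0,n}r)^p - \mu_n^p = z_{0,n}(r) + O(z_{0,n}^2/\mu_n^p)$, while (H1) forces $|\log h(u_n)-\log h(\mu_n)|\le \e\mu_n^{p-1}|u_n-\mu_n|=\e|z_{0,n}|/p$; combining these I conclude that whenever $z_{0,n}$ is bounded, $f(u_n(\ga_{0,n}r))/f(\mu_n) = e^{z_{0,n}(r)}(1+o(1))$, uniformly on compact sets. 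The preliminary fact $\ga_{0,n}\to 0$, equivalently $\la_n\mu_n^{p-1}f(\mu_n)\to\infty$, I would settle by integrating $-(ru_n')'=\la_n rf(u_n)$ and using the boundary drop $u_n(0)-u_n(1)=\mu_n\to\infty$.

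\emph{Local convergence and identification.} Because $u_n$ is radially decreasing and $f$ is eventually increasing under (H1), $f(u_n(\ga_{0,n}r))/f(\mu_n)\le 1$ on $\{u_n\ge t_0\}$. Integrating the ODE on $[0,R]$ yields uniform $C^1$ bounds for $z_{0,n}$, which the equation itself promotes to $C^2$ bounds. Arzel\`a--Ascoli extracts a subsequence converging in $C^1_{\text{loc}}$ to a $C^2$ function $z$ solving $-z''-(1/r)z'=e^{z}$ on $(0,\infty)$ with $z(0)=0=z'(0)$. The Chen--Li classification \cite{CL} forces $z=z_0=\log(64/(8+r^2)^2)$; uniqueness of this limit upgrades subsequential to full sequential convergence on every fixed $[0,R]$.

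\emph{Divergent radius, energy, and $r_{0,n}$.} A diagonal argument produces $R_n\to\infty$ with $\|z_{0,n}-z_0\|_{C^1([0,R_n])}\to 0$ and $\rho_{0,n}:=\ga_{0,n}R_n\to 0$; from $u_n(\rho_{0,n})/\mu_n = 1 + z_{0,n}(R_n)/(p\mu_n^p)$ and $z_0(R_n)=O(\log R_n)$ I read off $u_n(\rho_{0,n})/\mu_n\to 1$. Changing variables $r=\ga_{0,n}s$,
\[
p\mu_n^{p-1}\int_0^{\rho_{0,n}}\la_n f(u_n)r\,dr = \int_0^{R_n}\frac{f(u_n(\ga_{0,n}s))}{f(\mu_n)}s\,ds \longrightarrow \int_0^\infty e^{z_0(s)}s\,ds = 4.
\]
The function $s\mapsto s^2 e^{z_0(s)}=64s^2/(8+s^2)^2$ attains its unique maximum $2$ at $s=2\sqrt{2}$; by uniform $C^1$ convergence the corresponding rescaled $n$-functional has a near-maximizer $s_n\to 2\sqrt{2}$, and $r_{0,n}:=\ga_{0,n}s_n$ satisfies \eqref{eq:r0}.

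The hard part will be the tail control needed in the energy passage to the limit: the pointwise/compact $C^1$ convergence must be upgraded into a uniform bound for $f(u_n(\ga_{0,n}s))/f(\mu_n)$ on the expanding interval $[0,R_n]$, strong enough to match the $s^{-3}$ tail of $e^{z_0(s)}s$. This is where (H1) is invoked in its sharpest form, through a comparison between $z_{0,n}$ and the explicit profile $z_0$ near $s=R_n$; once that estimate is secured, the remaining conclusions reduce to substitution and bookkeeping.
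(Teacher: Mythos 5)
Your proposal follows essentially the same scaling-and-limit strategy as the paper's proof (Lemmas~\ref{prop:thm1}, \ref{lem21}, and \ref{lem:thm1}): derive the scaled ODE for $z_{0,n}$, obtain uniform $C^1_{\text{loc}}\cap C^2_{\text{loc}}$ bounds from the inequality $f(u_n(\ga_{0,n}\cdot))/f(\mu_n)\le 1$, pass to a limit via Ascoli--Arzel\`a, identify $z_0$, extract a slowly growing radius $R_n$ with $\rho_{0,n}=\ga_{0,n}R_n$, and read off $r_{0,n}$ by evaluating the rescaled quantity near $R=2\sqrt 2$. Two small refinements: the limit $z$ is the unique solution of the radial initial-value problem with $z(0)=z'(0)=0$, so ODE uniqueness identifies $z=z_0$ directly and the Chen--Li classification (which would require first establishing $\int e^z<\infty$ for the limit) is unnecessary; and the ``tail control'' you flag is in fact avoided, because the paper's diagonal argument selects $\tilde R_n\to\infty$ slowly enough that the convergence of the energy $E_n(0,\ga_{0,n}\tilde R_n)\to 4$ is imposed simultaneously with the uniform $C^1$ convergence and the $h$-ratio estimate on $[0,\tilde R_n]$, so no bound on the integrand beyond that range is ever needed.
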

Here, we prove that the limit profile of the first bubble is given by the regular function $z_0$ as observed in previous works on $0<p\le2$ referred above. Moreover, \eqref{eq:sq} shows that for some appropriate sequence $(\rho_{0,n})$, the limit energy on the interval $[0,\rho_{0,n}]$ is also characterized by $z_0$. A natural question is that  if we can replace $\rho_{0,n}$ with $1$ there. We will see below that  the answer depends on our choice of $p$ and $h$. We also note that  \eqref{eq:r0} will be one of the key estimates in our oscillation analysis in Section \ref{sec:osc} below. 

Let us proceed to further analysis in each case of $p<2$, $p=2$, and $p>2$. 
\subsubsection{The case $p\le2$}
For the subcritical case $p<2$, we prove the following. 
\begin{theorem}\label{thm2} Assume $p\in(0,2)$, (H1), and  $\{(\la_n,\mu_n,u_n)\}$ is a sequence of solutions of \eqref{q} with $\mu_n\to \infty$ as $n\to \infty$. Then we get, up  to a subsequence, that 
\begin{equation}\label{sub1}
\lim_{n\to \infty}p\mu_n^{p-1} \int_0^1 \la_n f(u_n)rdr=4,
\end{equation}  
and 
\begin{equation}\label{sub10}
\lim_{n\to \infty}p\mu_n^{p-2} \int_0^1 \la_n u_nf(u_n)rdr=4.
\end{equation}  
Moreover, we have
\begin{equation}\label{sub2}
\lim_{n\to \infty}\frac{\log{\frac1{\la_n}}}{\mu_n^p}=\frac{2-p}{2}.
\end{equation}   
Especially,  we obtain  $\lim_{n\to \infty}\la_n=0$. Finally, we get
\begin{equation}\label{sub3}
\lim_{n\to \infty}p\mu_n^{p-1}u_n(r)= 4\log{\frac1{r}}\text{ in }C_{\text{loc}}^2((0,1]).
\end{equation}
\end{theorem}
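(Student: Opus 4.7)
The plan is to promote the single-bubble estimate of Theorem~\ref{thm1} to a global estimate on $[0,1]$ using the subcriticality $p<2$ in a crucial way. Introduce $v_n(r)=p\mu_n^{p-1}u_n(r)$ and the normalized mass $F_n(r)=p\mu_n^{p-1}\int_0^r\la_n s\,f(u_n(s))\,ds$. Integrating the radial equation in \eqref{q} once gives $-rv_n'(r)=F_n(r)$ with $v_n(1)=0$. Theorem~\ref{thm1} already yields $F_n(\rho_{0,n})\to 4$, and since $F_n$ is nondecreasing, everything in the statement of Theorem~\ref{thm2} reduces to the no-escape claim
\[
F_n(1)\longrightarrow 4,
\]
because then \eqref{sub1} is immediate, \eqref{sub3} follows by integrating $-rv_n'=F_n$ from $r$ to $1$ (upgraded to $C^2_{\mathrm{loc}}((0,1])$ by interior elliptic estimates once the right-hand side vanishes on each compact set of $(0,1]$), and \eqref{sub10} is obtained from $p\mu_n^{p-2}\la_n u_n f(u_n)=p\mu_n^{p-1}\la_n (u_n/\mu_n)f(u_n)$ together with $u_n/\mu_n\to 1$ on the inner bubble and the bound $u_n/\mu_n\le 1$ elsewhere, so dominated convergence applies.

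To prove no-escape, I would split $[\rho_{0,n},1]$ into a near-bubble piece $[\rho_{0,n},\de]$ and a far piece $[\de,1]$ and show both contributions to $F_n$ vanish. Writing the integrand as $\ga_{0,n}^{-2}(f(u_n)/f(\mu_n))\,s$ via $p\la_n\mu_n^{p-1}f(\mu_n)\ga_{0,n}^2=1$, the near-bubble piece is governed by the Liouville tail: from $z_0(r)=\log(64/(8+r^2)^2)\sim -4\log r$ one formally obtains $f(u_n)/f(\mu_n)\sim (\ga_{0,n}/s)^4$ in the transition annulus $\ga_{0,n}\ll s\ll 1$, turning the integral into an integrable Liouville tail that vanishes as $\rho_{0,n}/\ga_{0,n}\to\infty$. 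On the far piece, monotonicity of $u_n$ together with a pointwise outer envelope of the type used to establish Theorem~\ref{thm1} gives $u_n(r)/\mu_n\to 0$ uniformly on $[\de,1]$; here subcriticality is decisive, since the resulting inequality $u_n^p\le \mu_n^p - 4\log(s/\ga_{0,n}) + O(1)$, propagated up to scale $s\sim 1$, would force the total mass to balance only when $1-p/2>0$, and the formal exponential bound on the integrand breaks down precisely when $p\ge 2$.

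Once no-escape gives \eqref{sub1} and \eqref{sub3}, I derive \eqref{sub2} via matching at the transition point $r_{0,n}$ of Theorem~\ref{thm1}: from $u_n(r_{0,n})/\mu_n\to 1$ and the now-established outer profile $v_n(r_{0,n})\to 4\log(1/r_{0,n})$, one reads off $p\mu_n^p\sim 4\log(1/r_{0,n})$, i.e.\ $r_{0,n}\sim e^{-p\mu_n^p/4}$; combining this with $r_{0,n}/\ga_{0,n}\to 2\sqrt 2$, the defining relation $\ga_{0,n}^2=1/(p\la_n\mu_n^{p-1}f(\mu_n))$, and (H1), which yields $\log f(\mu_n)=\mu_n^p(1+o(1))$, produces $\log(1/\la_n)=\tfrac{2-p}{2}\mu_n^p+o(\mu_n^p)$, hence \eqref{sub2} and $\la_n\to 0$. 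The main obstacle will be the rigorous execution of the envelope step: passing from the $C^1$ convergence $z_{0,n}\to z_0$ available only on $[0,\rho_{0,n}/\ga_{0,n}]$ to a quantitative pointwise upper bound $u_n(s)^p\le \mu_n^p-4\log(s/\ga_{0,n})+O(1)$ valid on the full transition annulus, which is exactly where the pointwise technique of \cite{D} and \cite{MM1} will need to be adapted to general $p\in(0,2)$.
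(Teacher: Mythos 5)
Your overall reduction (first concentration from Theorem~\ref{thm1}, then ``no--escape'' $E_n(0,1)\to4$, then reading off \eqref{sub1}--\eqref{sub3}) matches the paper's structure (Lemmas~\ref{lem21}, \ref{lem:thm21}, \ref{lem:thm22}). However, three steps as written have genuine gaps.

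First, the Liouville-tail heuristic $f(u_n)/f(\mu_n)\sim(\ga_{0,n}/s)^4$ is \emph{wrong} for $p>1$. Writing $u_n^p-\mu_n^p=\mu_n^p\{(1+z_{0,n}/(p\mu_n^p))^p-1\}$, the linearization $u_n^p-\mu_n^p\approx z_{0,n}$ only holds when $|z_{0,n}|\ll\mu_n^p$, i.e.\ near the core, not across the transition annulus where $|z_{0,n}|$ reaches order $\mu_n^p$. What is true, by the elementary inequality $(1+x)^p\le 1+x$ on $[-1,0]$ for $p\ge1$, is $u_n^p-\mu_n^p\le z_{0,n}/p$, so the integrand decays like $r^{-4/p}$, not $r^{-4}$. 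Integrability of $r\cdot r^{-4/p}$ near $\infty$ requires $4/p>2$, which is \emph{exactly} $p<2$; this is precisely where the subcritical threshold appears in the paper (the constant $c_p$ in Lemma~\ref{lem:thm21}), and your heuristic obscures it. For the same reason, the ``propagated'' inequality $u_n^p\le\mu_n^p-4\log(s/\ga_{0,n})+O(1)$ you write for the far piece carries a missing factor $1/p$ when $p>1$.

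Second, the claim ``$u_n(r)/\mu_n\to0$ uniformly on $[\de,1]$'' is not available at this stage: the pointwise envelope of Lemma~\ref{lem21} gives $u_n(\de)/\mu_n\le 1-\tfrac{4+o(1)}{p}\tfrac{\log(1/\ga_{0,n})}{\mu_n^p}$, and a priori $\log(1/\ga_{0,n})/\mu_n^p$ is only known to lie in $[0,p/4+o(1)]$ (Lemma~\ref{lem210}); establishing $\log(1/\ga_{0,n})/\mu_n^p\to p/4$ is Lemma~\ref{lem:thm22}, which is proved \emph{after} no-escape. As written, the far-piece argument is circular. The paper avoids this by splitting not at a fixed radius but at a level set $u_n(r_n)=\e\mu_n$ with $\e=\{(1-p/2)/2\}^{1/p}$, and controlling $E_n(r_n,1)$ directly through the bound $\la_n\le e^{-(1-p/2+o(1))\mu_n^p}$ deduced from Lemma~\ref{lem210} — no a priori smallness of $u_n$ at fixed $r=\de$ is needed.

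Third, deriving \eqref{sub2} by ``reading off'' $v_n(r_{0,n})\to4\log(1/r_{0,n})$ from \eqref{sub3} is not valid: \eqref{sub3} is $C^2_{\text{loc}}((0,1])$ convergence, and $r_{0,n}\to0$ escapes every compact subset. The paper obtains the lower bound $\liminf\log(1/\ga_{0,n})/\mu_n^p\ge p/4$ from the moment identity \eqref{id1}, $p\mu_n^p=p\mu_n^{p-1}\int_0^1\la_nf(u_n)r\log(1/r)\,dr$, combined with \eqref{sub1}; your matching argument reaches the right conclusion but would need to be replaced by such an identity to be rigorous. The remaining items (\eqref{sub10} by Fatou/dominated convergence using $u_n/\mu_n\le1$; \eqref{sub3} by integrating $-rv_n'=F_n$) are correct in outline and match the paper.
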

\eqref{sub1} ensures that  the limit energy in whole interval is simply characterized by the limit function $z_0$.  Then, it allows us to complete the precise asymptotic formulas \eqref{sub2} and \eqref{sub3}. From \eqref{sub2}, we see that $\la_n\to0$ as $n\to \infty$. On the other hand, \eqref{sub3} implies that $u_n(r)\to 0$ if $1<p<2$ and $u_n(r)\to 4\log{(1/r)}$ if $p=1$ in $C^2_{\text{loc}}((0,1])$ and $u_n(r)\to \infty$  for all $r\in[0,1)$  if $p\in(0,1)$ as $n\to \infty$. Hence \eqref{sub3} unifies some consequences in \cite{OS} and \cite{NS}. In addition, we remark that \eqref{sub10} implies the divergence of  the variational energy 
\[
\lim_{n\to \infty}\int_0^1 \la_n u_nf(u_n)rdr= \infty
\]
since $0<p<2$. 

Next, the situation becomes  more delicate in the critical case $p=2$. We give  a less complete but reasonable result.
\begin{theorem}\label{thm:crit}
Assume $p=2$, (H1),  and  $\{(\la_n,\mu_n,u_n)\}$ is a sequence of solutions of \eqref{q} with $\mu_n\to \infty$ as $n\to \infty$. Then, up to a subsequence, there exists a sequence $(r_n)\subset (0,1)$ such that 
 $u_n(r_n)/\mu_n\to0$ as $n\to \infty$, and 
\begin{equation}\label{cri1}
\lim_{n\to \infty}2\mu_n\int_0^{r_n}\la_n f(u_n)rdr= 4.
\end{equation}
Moreover,  we get
\begin{equation}\label{cri2}
\lim_{n\to \infty}\frac{\log{\frac1{\la_n}}}{\mu_n^2}=0.
\end{equation}
\end{theorem}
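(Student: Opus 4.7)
The plan is to push the first-bubble analysis of Theorem \ref{thm1} (applied with $p=2$) a bit past the radius $\rho_{0,n}$, and then read the $\la_n$-asymptotic off from the normalization $2\la_n\mu_n f(\mu_n)\ga_{0,n}^2=1$ built into Theorem \ref{thm1}. From that theorem with $p=2$ we already have $\ga_{0,n}\to 0$, $\rho_{0,n}/\ga_{0,n}\to\infty$, $u_n(\rho_{0,n})/\mu_n\to 1$, and $2\mu_n F_n(\rho_{0,n})\to 4$, where I write $F_n(r):=-ru_n'(r)=\int_0^r\la_n f(u_n)s\,ds$. Since $z_0(R)=\log\bigl(64/(8+R^2)^2\bigr)\sim -4\log R$ at infinity and $\int_R^\infty e^{z_0}S\,dS=32/(8+R^2)\to 0$, the scaled profile satisfies $u_n(\ga_{0,n}R)\approx\mu_n-(2/\mu_n)\log R$ at the outer edge of the bubble, which suggests the logarithmic ansatz $u_n(r)\approx (2/\mu_n)\log(1/r)$ on the annulus $r\gg \ga_{0,n}$.

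To implement this, I would take $r_n$ to be the first radius greater than $\rho_{0,n}$ at which $u_n(r_n)=\sqrt{\mu_n}$, so that $u_n(r_n)/\mu_n=\mu_n^{-1/2}\to 0$ automatically. The lower bound $\liminf_n 2\mu_n F_n(r_n)\ge 4$ is then free from monotonicity of $F_n$, and the real work is the matching upper bound \eqref{cri1}. On $[\rho_{0,n},r_n]$ one has $u_n\le \mu_n$, and the factorization $e^{u_n^2}-e^{\mu_n^2}e^{(u_n-\mu_n)(u_n+\mu_n)}$ combined with the normalization $\la_n f(\mu_n)\ga_{0,n}^2=1/(2\mu_n)$ and (H1) (to control $h(u_n)/h(\mu_n)$) shows that once $u_n$ is macroscopically below $\mu_n$ the nonlinearity $\la_n f(u_n)$ is exponentially suppressed; in the transition region near $u_n\approx\mu_n$ one instead extends the Liouville scaling from Theorem \ref{thm1}. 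Combining the two regimes gives $F_n(r_n)-F_n(\rho_{0,n})=o(1/\mu_n)$, which is \eqref{cri1}.

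For \eqref{cri2} I would feed this into the global identity $\mu_n=\int_0^1 F_n(s)/s\,ds$. Since $u_n(\rho_{0,n})\sim\mu_n$ the integral over $[0,\rho_{0,n}]$ equals $\mu_n-u_n(\rho_{0,n})=o(\mu_n)$, and the bounds $(2+o(1))/\mu_n\le F_n(s)$ on $[\rho_{0,n},1]$ (from monotonicity) together with the matching upper bound $F_n(s)\le (2+o(1))/\mu_n$ on $[\rho_{0,n},r_n]$ (established in the previous paragraph, extended to the remaining outer portion by a crude estimate) force
\[
\mu_n(1+o(1))=\int_{\rho_{0,n}}^1\frac{F_n(s)}{s}\,ds=\frac{2+o(1)}{\mu_n}\log(1/\rho_{0,n}),
\]
so $\log(1/\rho_{0,n})\sim\mu_n^2/2$. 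Since $\rho_{0,n}/\ga_{0,n}\to\infty$ subexponentially in $\mu_n^2$ (a constraint built into the $C^1$ convergence of $z_{0,n}$ to $z_0$ on $[0,\rho_{0,n}/\ga_{0,n}]$, given that $z_0(R)\to -\infty$ logarithmically), this upgrades to $\log(1/\ga_{0,n})\sim\mu_n^2/2$. Taking logarithms in the normalization $2\la_n\mu_n f(\mu_n)\ga_{0,n}^2=1$, writing $f(\mu_n)=h(\mu_n)e^{\mu_n^2}$, and absorbing $\log h(\mu_n)$ into $o(\mu_n^2)$ by (H1) gives $\log(1/\la_n)=\mu_n^2+2\log\ga_{0,n}+o(\mu_n^2)=o(\mu_n^2)$, which is \eqref{cri2}.

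The main obstacle is the upper bound on $F_n$ in the outer region: in sharp contrast with the subcritical case of Theorem \ref{thm2}, for $p=2$ the nonlinearity $e^{u^2}$ is exactly at the borderline for a secondary Liouville bubble carrying a further quantum of scaled energy $4$ to form inside $(\rho_{0,n},1)$, and there is no purely linear way to rule this out. The rescue — and the reason why \eqref{cri1} is only a partial integration to $r_n$ rather than to $1$ as in \eqref{sub1} — is that we stop at the first radius where $u_n/\mu_n$ becomes small, so by monotonicity of $u_n$ any would-be second bubble lies beyond $r_n$ and the first-bubble tail estimate remains valid up to $r_n$. Making this stopping argument rigorous, by iterating the scaling procedure of Theorem \ref{thm1} around any candidate secondary maximum and deriving a contradiction with the stopping rule via the quantized energy $4$, is the technically delicate step.
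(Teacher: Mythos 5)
Your architecture — pick $r_n$ with $u_n(r_n)/\mu_n\to 0$, show the rescaled energy up to $r_n$ is $4+o(1)$, then read \eqref{cri2} off the normalization $p\la_n\mu_n^{p-1}f(\mu_n)\ga_{0,n}^2=1$ — is indeed the paper's plan, but the way you propose to close the hardest step fails. You say the delicate part is to make the stopping argument rigorous ``by iterating the scaling procedure \dots around any candidate secondary maximum and deriving a contradiction with the stopping rule via the quantized energy $4$.'' There is no contradiction to derive: for $p=2$ a secondary concentration (residual mass) genuinely can and does form further out — the paper records this for $h(t)=te^{-\alpha t}$, $\alpha>0$, citing \cite{MT} — and the theorem only asserts the existence of \emph{some} admissible $(r_n)$, not that nothing accumulates beyond it. What the paper actually does (Lemmas \ref{lem:e10} and \ref{lem:e2} at $k=1$, with $\delta_1=0$) is a two-stage pointwise estimate on $\phi_n(r)=p\la_n r^2 u_n(r)^{p-1}f(u_n(r))$, culminating in the bound $\sup_{[r_n,s_n]}\phi_n\le e^{-\e\mu_n^{p}}$ via the auxiliary function $\zeta$, where $\e>0$ depends on the fixed target level $\delta$; the sequence $(\sigma_n)$ with $u_n(\sigma_n)/\mu_n\to0$ then comes from a diagonalization over $\delta\to0$, not from a preassigned rate such as $u_n(r_n)=\sqrt{\mu_n}$. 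Running your exponential-suppression estimate directly at $\delta_n=\mu_n^{-1/2}$ would require the $o(1)$'s in the pointwise bound $z_{0,n}(r)\le-(4+o(1))\log r$ (Lemma \ref{lem21}) and in the log-range estimates feeding the Gaussian exponent to be uniform as $\delta\to0$, which is exactly what degenerates at $p=2$, the borderline case for the quadratic correction $z_{0,n}^2/(4\mu_n^2)$.

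The derivation of \eqref{cri2} also has a hole as written: your ``crude estimate on the outer portion'' must be spelled out, since the upper bound $F_n(s)\le(2+o(1))/\mu_n$ is false on $[r_n,1]$ in general (again by the residual-mass example). The cure is the elementary identity $\int_{r_n}^1 F_n(s)\,ds/s = u_n(r_n)-u_n(1)=u_n(r_n)=o(\mu_n)$, after which your integral balance closes. The paper obtains the same thing more directly by evaluating \eqref{id1} at $s=r_n$ rather than $s=1$ and inserting the energy limit of \eqref{cri1} — see Lemma \ref{lem:crit2} — so that the region $(r_n,1)$ is never touched, and the residual mass issue disappears by construction.
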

Notice that  \eqref{cri1} extends the integral interval $[0,\rho_{0,n}]$ in \eqref{eq:sq} to a wider one. Moreover, \eqref{cri2} confirms that \eqref{sub2} holds also for $p=2$. There remains some questions which ask if \eqref{sub1}, \eqref{sub10}, and \eqref{sub3} still holds for $p=2$ and if $\lim_{n\to \infty}\la_n=0$. The answer depends on the growth condition on the perturbation term $h$.  Actually, both answers are positive if $h(t)=ut^{\alpha t^q}$ with $\alpha\ge0$ and $0<q<2$ by Theorem 2 in \cite{MM1}, (5), (6) of Theorem 2 in \cite{AD}, and Theorems 1.1 and 1.5 in \cite{N}.  On the other hand, both  are negative if  $h(t)=te^{-\alpha t}$ and $\alpha>0$ by Theorem 0.3 in \cite{MT}. 
This is because of the appearance of the residual mass in addition to the standard bubble. Hence, in the critical case, \eqref{p0} admits more various behaviors than those in the subcritical case. Further analysis seems to be still an interesting problem.  

In this paper, we   proceed to our main discussion on the supercritical case $p>2$.
\subsubsection{The case $p>2$}
Let us consider the supercritical case. In this case, we encounter a striking phenomenon, the  infinite sequences of bubbles, as  noted above. To see this, we assume $p> 2$. Put $a_0=2$, $\delta_0=1$. For any $k\in \mathbb{N}$, we define numbers $\delta_k\in (0,\delta_{k-1})$ and $a_k\in(0,2)$ by the relations,
\begin{equation}\label{eq:del1}
\frac{2p}{2+a_{k-1}}\left(1-\frac{\delta_{k}}{\delta_{k-1}}\right)-1+\left(\frac{\delta_{k}}{\delta_{k-1}}\right)^p=0
\end{equation}
and
\begin{equation}\label{eq:del2}
a_k=2-\left(\frac{\delta_k}{\delta_{k-1}}\right)^{p-1}(2+a_{k-1}).
\end{equation}
We can prove that if $p>2$, $\delta_k$ and $a_k$ are well-defined, strictly decreasing with respect to $k\in \mathbb{N}$, and converge to zero as $k\to \infty$. See Lemmas \ref{lem:bf}, \ref{lem:bf2}, and \ref{lem:div}.  These sequences of numbers, which are determined only by $p>2$, completely characterize the asymptotic profile, energy, and location of each bubble.  We get the following.  
\begin{theorem}\label{thm30}  Suppose $p>2$, (H1), and  $\{(\la_n,\mu_n,u_n)\}$ is a sequence of solutions of \eqref{q} with $\mu_n\to \infty$ as $n\to \infty$.  Moreover, choose a sequence $(\rho_{0,n})$  in Theorem \ref{thm1}. Then,  for all $k\in \mathbb{N}$, there exist sequences $(r_{k,n}),(\rho_{k,n}),(\bar{\rho}_{k,n})\subset (0,1)$ of values  such that $u_n(r_{k,n})/\mu_n\to \delta_k$,  
\begin{equation}\label{eq:rk}
p\la_n r_{k,n}^2 u_n(r_{k,n})^{p-1} f(u_n(r_{k,n}))\to \frac{a_k^2}{2},
\end{equation}
$u_n(\rho_{k,n})/\mu_n\to \delta_k$, $u_n(\bar{\rho}_{k,n})/\mu_n\to \delta_k$,  $\rho_{k-1,n}/\bar{\rho}_{k,n}\to 0$, $\bar{\rho}_{k,n}/r_{k,n}\to0$, $r_{k,n}/\rho_{k,n}\to 0$ as $n\to \infty$, and if we set sequences $(\ga_{k,n})$  of positive values and $(z_{k,n})$ of functions  so that 
\[
\ga_{k,n}^2 p\la_nu_n(r_{k,n})^{p-1}f(u_n(r_{k,n}))=1
\]
and
\[
z_{k,n}(r)=p u_n(r_{k,n})^{p-1}(u_n(\ga_{k,n}r)-u_n(r_{k,n}))
\]  
for all $r\in [0,1/\ga_{k,n}]$ and  $n\in \mathbb{N}$, then we have that, up to a subsequence,  $\ga_{k,n}\to0$ and
\[
\|z_{k,n}-z_k\|_{C^2([\bar{\rho}_{k,n}/\ga_{k,n},\rho_{k,n}/\ga_{k,n}])}\to0,
\]  
where  $z_k$ is a function on $(0,\infty)$ given by 
\[
z_k(r)=\log{\frac{2a_k^2 b_k }{r^{2-a_k}(1+b_k r^{a_k})^2}}
\]
with $b_k=\left(\sqrt{2}/a_k\right)^{a_k}$, which satisfies 
\[
\begin{cases}
-z_k''-\frac{1}{r}z_k'=e^{z_k}\ \ \text{ in }(0,\infty),\\
z_k(a_k/\sqrt{2})=0,\ -(a_k/\sqrt{2})z_k'(a_k/\sqrt{2})=2,
\end{cases}
\]
 and
\[
\int_0^\infty e^{z_k}rdr=2a_k.
\]
Moreover, we have
\begin{equation}\label{eq:en1}
p\mu_n^{p-1}\int_{\rho_{k-1,n}}^{\bar{\rho}_{k,n}}\la_n f(u_n)rdr \to0,\ \ \ \ p\mu_n^{p-1}\int_{\bar{\rho}_{k,n}}^{\rho_{k,n}}\la_n f(u_n)rdr \to\frac{2a_k}{\delta_k^{p-1}},
\end{equation}
and 
\begin{equation}\label{eq:en2}
p\int_{\rho_{k-1,n}}^{\bar{\rho}_{k,n}}\la_n u_n^{p-1}f(u_n)rdr \to0,\ \ \ \ p\int_{\bar{\rho}_{k,n}}^{\rho_{k,n}}\la_n u_n^{p-1} f(u_n)rdr \to 2a_k,
\end{equation}
as $n\to \infty$. 
\end{theorem}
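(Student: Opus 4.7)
My plan is to argue by induction on $k\in\mathbb{N}$, with the base case $k=0$ supplied by Theorem \ref{thm1}. Assuming the statement at level $k-1$, iterating \eqref{eq:del2} through the inductive hypothesis simplifies the running accumulated mass to
\[
p\mu_n^{p-1}\int_0^{\rho_{k-1,n}}\la_n f(u_n) r\,dr \;\to\; \sum_{j=0}^{k-1}\frac{2a_j}{\delta_j^{p-1}} \;=\; \frac{2+a_{k-1}}{\delta_{k-1}^{p-1}}.
\]
Integrating the radial equation gives $-r u_n'(r) = \int_0^r \la_n f(u_n) s\,ds$; in the transition region past bubble $k-1$ the integrand is negligible on the $p\mu_n^{p-1}$ scale, so $-r u_n'$ stays essentially constant, $u_n(r)/\mu_n$ decays logarithmically, and one obtains a point $\bar\rho_{k,n}$ with $u_n(\bar\rho_{k,n})/\mu_n\to\delta_k$.

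The central step is a blow-up analysis around $r_{k,n}$, chosen so that \eqref{eq:rk} holds (the existence of such a point following from monotonicity and the scaling scales). A direct computation gives
\[
-z_{k,n}''-\frac{1}{r}z_{k,n}' = \frac{f(u_n(\ga_{k,n} r))}{f(u_n(r_{k,n}))}, \qquad z_{k,n}(r_{k,n}/\ga_{k,n})=0,
\]
and the scaling identities force $r_{k,n}/\ga_{k,n}\to a_k/\sqrt{2}$. The identity $(r z_{k,n}'(r))\big|_{r_{k,n}/\ga_{k,n}} = p\, u_n(r_{k,n})^{p-1}\, r_{k,n}\, u_n'(r_{k,n})$ combined with the first-step mass accumulation forces $-(a_k/\sqrt{2})\, z_k'(a_k/\sqrt{2}) = 2$ in the limit. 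Using (H1) and Lemma \ref{lem:h} to absorb the subexponential $h$-factor, the right-hand side converges to $e^{z_k}$ on compact subsets of $(0,\infty)$, and elliptic regularity yields $C^2_{\mathrm{loc}}$-convergence. The unique radial solution of the Liouville equation on $(0,\infty)$ with the prescribed pointwise data at $a_k/\sqrt{2}$ is the announced $z_k$; direct integration delivers $\int_0^\infty e^{z_k}r\,dr = 2a_k$, and \eqref{eq:en1}, \eqref{eq:en2} follow after changing variables back.

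Both recurrences appear as matching conditions. The far-field asymptotic $z_k(r)\sim -(2+a_k)\log r$ translates, on the outer side of bubble $k$, to $p\mu_n^{p-1}(-r u_n'(r))\to (2+a_k)/\delta_k^{p-1}$; combined with the update $M_k = M_{k-1} + 2a_k/\delta_k^{p-1}$ and the inductive value $M_{k-1}=(2+a_{k-1})/\delta_{k-1}^{p-1}$, this is exactly \eqref{eq:del2}. For \eqref{eq:del1} I would compare two leading-order expressions for $\log(r_{k,n}/r_{k-1,n})$: one from the logarithmic length of the transition region (first step), giving $p\mu_n^p(\delta_{k-1}-\delta_k)\delta_{k-1}^{p-1}/(2+a_{k-1}) + o(\mu_n^p)$; the other from the ratio of the scaling identities at consecutive levels, which after invoking the exponential dominance of $f$ and (H1) yields $2\log(r_{k,n}/r_{k-1,n}) = (\delta_{k-1}^p-\delta_k^p)\mu_n^p + o(\mu_n^p)$. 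Equating leading orders produces \eqref{eq:del1}.

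The hardest part will be promoting the $C^2_{\mathrm{loc}}$-convergence of $z_{k,n}$ to full $C^2$-convergence across the widening annulus $[\bar\rho_{k,n}/\ga_{k,n},\rho_{k,n}/\ga_{k,n}]$, whose inner end tends to $0$ and outer end to $\infty$. One must control the approach to the singularity of $z_k$ at $r=0$ quantitatively, while simultaneously excluding any loss of mass before the next bubble would begin. I expect this to require an iterated pointwise estimate in the spirit of \cite{D} and \cite{MM1}, bootstrapped from the first-order control of $r u_n'$ provided by integration of the ODE, with the $h$-perturbation handled uniformly through (H1).
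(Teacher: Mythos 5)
Your inductive skeleton and the mechanism you propose for the two recurrences \eqref{eq:del1} and \eqref{eq:del2} are essentially the same as in the paper: \eqref{eq:del1} comes from comparing the logarithmic separation of consecutive bubbles via \eqref{id2} with a growth-driven estimate of $\log(r_{k,n}/r_{k-1,n})$ of the form $(\delta_{k-1}^p-\delta_k^p)\mu_n^p/2$, and \eqref{eq:del2} from matching the accumulated mass $\psi_n(r_{k,n})\to 2$ against the inductive total. The mass-accumulation identity and the pointwise control via $-ru_n'$ you invoke are also what the paper uses.

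The genuine gap is in the step you dismiss in one clause: ``blow-up analysis around $r_{k,n}$, chosen so that \eqref{eq:rk} holds (the existence of such a point following from monotonicity and the scaling scales).'' The existence of a point where $\phi_n(r):=p\la_n r^2 u_n(r)^{p-1}f(u_n(r))$ attains a value bounded away from zero \emph{beyond the previous concentration zone} is not automatic and is precisely the heart of the supercritical phenomenon. If $\phi_n$ simply decayed after the $(k-1)$th bubble the sequence would carry no further concentration, as it does for $p<2$. What the paper does is: first establish that nontrivial energy $E_n(\rho_{k-1,n},r_n)$ \emph{must} reappear for $\delta$ small enough (Lemma \ref{lem:e1}, via \eqref{id2} and the quantitative constant $2p(1-\delta/\delta_{k-1})/(1-(\delta/\delta_{k-1})^p)-(2+a_{k-1})/\delta_{k-1}^{p-1}>0$ which is positive only because $p>2$); then find the widest interval $[\rho_{k-1,n},\sigma_n]$ with vanishing energy (Lemmas \ref{lem:e10}, \ref{lem:e2}); and then, crucially, use the interaction between $\phi_n$ and $\psi_n$ (Lemmas \ref{lem:pp} and \ref{lemD}) to show that if $\phi_n$ remained uniformly small past $\sigma_n$, the energy on $(\sigma_n,\tau_n)$ would vanish — a contradiction. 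Only after this is a critical point $r_{k,n}$ of $\phi_n$ produced, inside a local blow-up that is centered at $\tau_n$, \emph{not} at $r_{k,n}$. Your argument does not supply any replacement for this mechanism.

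Two related points in your sketch are circular. First, you write that ``the scaling identities force $r_{k,n}/\ga_{k,n}\to a_k/\sqrt{2}$'': but $r_{k,n}/\ga_{k,n}=\sqrt{\phi_n(r_{k,n})}$, so this is equivalent to \eqref{eq:rk}, the thing being proved. Second, you use ``$-(a_k/\sqrt 2)z_k'(a_k/\sqrt 2)=2$ in the limit'' to pin down $z_k$, i.e.\ $\psi_n(r_{k,n})\to 2$; but in the paper this is a \emph{conclusion} of $r_{k,n}$ being a critical point of $\phi_n$ together with Lemma \ref{lem:pp}, not an input. You also flag the passage from $C^2_{\mathrm{loc}}$ to $C^2$ on the expanding annulus as the hardest part; that is handled by a choice of nearly-diagonal subsequences ($\bar R_n\to 0$, $R_n\to\infty$ slowly) plus the pointwise upper bound $z_{k,n}(r)\le -(2+a_k+o(1))\log r$ (Lemma \ref{lem:ff}); it is not the main difficulty. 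The main difficulty remains the existence of the next bubble, which your outline takes for granted.
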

The previous theorem shows that for all $k\in \mathbb{N}$, if we scale the sequence $(u_n)$ of solutions  around the sequence  $(r_{k,n})$ of points satisfying \eqref{eq:rk}, then it converges to a singular solution $z_k$ of the Liouville equation with the energy $\int_0^{\infty}e^{z_k}rdr=2a_k$. In particular, we detect an infinite sequence of bubbles characterized by the sequence of singular limit profiles $(z_k)$.  This phenomenon causes remarkable differences on several asymptotic formulas.  Among others, noting the fact  $\sum_{k=0}^{\infty}a_k=\infty$, which will be proved in Lemma \ref{lem:div} below, we get that the uniform boundedness of the energy is no longer true. We prove the following. 
\begin{theorem}\label{thm3} Suppose as in Theorem \ref{thm30}. Then, for any sequence $(r_n)\subset (0,1)$ such that $u_n(r_n)/\mu_n\to 0$, we get 
\begin{equation}\label{sup2}
\lim_{n\to \infty}p\int_0^{r_n}\la_nu_n^{p-1}f(u_n)rdr=\infty
\end{equation}
up to a subsequence. Especially, we see
\begin{equation}\label{sup222}
\lim_{n\to \infty}p\mu_n^{p-1}\int_0^{r_n}\la_nf(u_n)rdr=\infty.
\end{equation}
Moreover, we obtain   
\begin{equation}\label{sup3}
\lim_{n\to \infty}\frac{\log{\frac1{\la_n}}}{\mu_n^p}=0
\end{equation}
and
\begin{equation}\label{eq:rk2}
\lim_{n\to \infty}\frac{\log{\frac1{r_{k,n}}}}{\mu_n^p}= \frac{\delta_k^p}2
\end{equation}
for all $k\in \mathbb{N}\cup\{0\}$, and 
\begin{equation}\label{sup4}
\lim_{n\to \infty}p\mu_n^{p-1}u_n(r)= \infty
\end{equation}
for all $r\in[0,1)$.
\end{theorem}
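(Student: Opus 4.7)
The plan is to build on the bubble decomposition in Theorems \ref{thm1} and \ref{thm30}, the divergence $\sum_k a_k=\infty$ from Lemma \ref{lem:div}, and one global integral identity obtained from the ODE. First, for \eqref{sup2} and \eqref{sup222}, Theorem \ref{thm1} gives $p\mu_n^{p-1}\int_0^{\rho_{0,n}}\la_n f(u_n)rdr\to 4=2a_0$; because $u_n/\mu_n\to 1$ uniformly on $[0,\rho_{0,n}]$, the same limit holds for the weighted version $p\int_0^{\rho_{0,n}}\la_n u_n^{p-1}f(u_n)rdr$. Combining this with \eqref{eq:en2} on each subsequent bubble and the vanishing of the inter-bubble contributions yields
\[
p\int_0^{\rho_{K,n}}\la_n u_n^{p-1}f(u_n)rdr\to 2\sum_{k=0}^K a_k
\]
for every $K\in\mathbb{N}$. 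Since $u_n$ is monotone decreasing and $u_n(\rho_{K,n})/\mu_n\to \delta_K>0$, every sequence $(r_n)$ with $u_n(r_n)/\mu_n\to 0$ satisfies $r_n>\rho_{K,n}$ for $n$ large; letting $K\to\infty$ and invoking $\sum_k a_k=\infty$ establishes \eqref{sup2}, and \eqref{sup222} then follows from $u_n\le\mu_n$.

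For \eqref{sup3}, set $L_n=\log(1/\la_n)/\mu_n^p$. Taking logarithms in \eqref{eq:rk}, dividing by $\mu_n^p$, and using (H1) together with $u_n(r_{k,n})/\mu_n\to\delta_k$ gives
\[
L_n=\frac{2\log r_{k,n}}{\mu_n^p}+\delta_k^p+o(1)\le \delta_k^p+o(1),
\]
so $\limsup L_n\le \delta_k^p\to 0$ as $k\to\infty$. For the matching lower bound, extract a subsequence with $L_n\to L\le 0$ and apply the ODE identity
\[
\mu_n=\la_n\int_0^1 sf(u_n(s))\log(1/s)\,ds,
\]
obtained from $-ru_n'(r)=\la_n\int_0^r f(u_n)sds$ by integrating on $(0,1)$ and swapping the order of integration. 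Scaling $s=\ga_{k,n}t$ within the $k$-th bubble and using \eqref{eq:en1}, the bubble $k$ contribution to $p\mu_n^{p-1}\la_n\int sf(u_n)\log(1/s)ds$ is
\[
\log(1/r_{k,n})\cdot\frac{2a_k}{\delta_k^{p-1}}+\frac{J_k}{\delta_k^{p-1}}+o(\mu_n^p),
\]
where $J_k=\int_0^\infty te^{z_k(t)}\log\!\bigl(a_k/(\sqrt{2}\,t)\bigr)\,dt$ is finite by the power-law decay of $z_k$ at $0$ and $\infty$. Since $\log(1/r_{k,n})/\mu_n^p\to \ell_k:=(-L+\delta_k^p)/2$, dividing by $\mu_n^p$ leaves only $2a_k\ell_k/\delta_k^{p-1}$. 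Summing the first $K+1$ bubble contributions and using positivity of the integrand gives
\[
p\ge\sum_{k=0}^K \frac{2a_k\ell_k}{\delta_k^{p-1}}+o(1)=-L\sum_{k=0}^K \frac{a_k}{\delta_k^{p-1}}+\sum_{k=0}^K a_k\delta_k+o(1).
\]
If $L<0$, then $\sum_k a_k/\delta_k^{p-1}\ge\sum_k a_k=\infty$ (using $\delta_k\le 1$ and $p>1$) forces the right-hand side to exceed $p$ for $K$ large, a contradiction. Hence $L=0$, which is \eqref{sup3}; plugging $L=0$ back into the first display yields \eqref{eq:rk2}.

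For \eqref{sup4}, the ODE gives, for any fixed $r\in(0,1)$,
\[
u_n(r)=\int_r^1 \frac{1}{s}\la_n\int_0^s f(u_n(t))t\,dt\,ds\ge \log(1/r)\cdot\la_n\int_0^r tf(u_n(t))\,dt,
\]
by monotonicity of the inner integral. Either $u_n(r)/\mu_n\not\to 0$ along a subsequence, in which case $p\mu_n^{p-1}u_n(r)\ge c\, p\mu_n^p\to\infty$ for free, or $u_n(r)/\mu_n\to 0$ and \eqref{sup222} applied with $r_n=r$ forces the right-hand side above to diverge. The principal obstacle is the lower bound $L\ge 0$ in \eqref{sup3}: identifying the bubble contributions to the integral identity cleanly, controlling the subleading $J_k$ terms via the integrable behavior of $te^{z_k(t)}$, and using positivity of the integrand to bypass any explicit estimate of the "post-last-bubble" tail $(\rho_{K,n},1)$. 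The interchange $\lim_n\sum_{k=0}^K=\sum_{k=0}^K\lim_n$ is trivial for each finite $K$, so sending $K\to\infty$ closes the contradiction.
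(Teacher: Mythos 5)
Your proof is correct and, for most of the conclusions, follows the same route as the paper: \eqref{sup2} and \eqref{sup222} come from summing the bubble energies \eqref{eq:en2} and using $\sum_k a_k=\infty$ (Lemma \ref{lem:div}); the upper bound $\limsup L_n\le 0$ (where $L_n:=\mu_n^{-p}\log(1/\la_n)$) comes from taking logarithms in \eqref{eq:r0}/\eqref{eq:rk} and using $r_{k,n}<1$ together with $\delta_k\to 0$, which is precisely the content of Lemma \ref{lem34} and the contradiction step in the paper's proof; \eqref{eq:rk2} is then read off; and \eqref{sup4} is the same Green's-function bound $p\mu_n^{p-1}u_n(r)\ge E_n(0,r)\log(1/r)$ used in the paper, dispatched via \eqref{sup222}.

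Where you genuinely diverge is the lower bound $\liminf L_n\ge 0$. The paper gets this \emph{before} the bubble analysis, from the pointwise estimate in Lemma \ref{lem32} specialized at $r=1/\ga_{0,n}$, which yields Lemma \ref{lem33} and the constraint $\nu\in[0,1]$ that Lemma \ref{lem34} then presupposes. You instead integrate the Green identity \eqref{id1} at $s=1$ to get $p\mu_n^p=p\mu_n^{p-1}\int_0^1\la_n f(u_n)r\log(1/r)\,dr$, restrict to the disjoint bubble intervals $[\bar\rho_{k,n},\rho_{k,n}]$, use positivity of the integrand, the energy limits \eqref{eq:en1}, and the relation $\mu_n^{-p}\log(1/r_{k,n})\to(\delta_k^p-L)/2$ to show that $L<0$ forces $p\ge -L\sum_{k\le K}a_k/\delta_k^{p-1}+\sum_{k\le K}a_k\delta_k+o(1)\to\infty$. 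This is a valid alternative: it trades the elementary pointwise estimate of Lemma \ref{lem33} for a more global argument that makes explicit how the divergence of the bubble energy series pins down $\la_n$. Two small things worth tightening if you write this up: (i) the claim that the per-bubble correction term (your $J_k$) is $o(\mu_n^p)$ after division should be justified not by the finiteness of $\int_0^\infty te^{z_k}\log(1/t)\,dt$ alone, but by bounding $\bigl|\int \tfrac{f(u_n(\ga_{k,n}t))}{f(u_n(r_{k,n}))}t\log(1/t)\,dt\bigr|\le \max_t|\log t|\cdot\int \tfrac{f(u_n(\ga_{k,n}t))}{f(u_n(r_{k,n}))}t\,dt$ and noting that $\log(\rho_{k,n}/\ga_{k,n}),\,\log(\ga_{k,n}/\bar\rho_{k,n})=o(\mu_n^p)$ (deducible from $u_n(\rho_{k,n})/\mu_n,\,u_n(\bar\rho_{k,n})/\mu_n\to\delta_k$ together with the $C^2$-convergence $z_{k,n}\to z_k$); (ii) the identity $L_n=2\mu_n^{-p}\log r_{k,n}+\delta_k^p+o(1)$ has a $k$-dependent $o(1)$, which is harmless because you sum only finitely many $k$ at a time before letting $K\to\infty$.
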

\begin{remark}
Note that, by Theorem 3 in \cite{AtPe}, the basic  asymptotic formula for $\la_n$ is obtained. Adapting it to our problem with (H1), we get 
\[
\liminf_{n\to \infty}\frac{\log{\frac1{\la_n}}}{\mu_n^p}\ge -\left(\frac{p}{2}-1\right).
\]
Hence, we confirm that \eqref{sup3} improves the estimate.  Moreover, \eqref{eq:rk2} is the consequence of  \eqref{eq:r0}, \eqref{eq:rk}, and \eqref{sup3}. This gives the precise information on the location of the center of each concentrating bubble.   
\end{remark}
\begin{remark} Since $\delta_k\to0$ and $a_k\to0$ as $k\to \infty$ as noted above,  we deduce the asymptotic profile of the sequence of limit functions. That is, for all $r>0$, we get 
\[
z_k(r)-\log{\frac{a_k^2}{2}}\to 2\log{\frac1r}
\]
as $k\to \infty$.
\end{remark}
From \eqref{sup3} and \eqref{sup4}, we confirm that not only \eqref{sub1} but also  \eqref{sub2} or $\eqref{sub3}$ are no longer valid  for $p>2$ because of the appearance of the infinitely many bubbles.  Moreover, similarly to the critical case, the limits of $(\la_n)$ and $(u_n)$ are not clear from \eqref{sup3} and \eqref{sup4} in  general.  Hence, at the moment, it is reasonable to summarize the global behavior as the next alternative.  Here, we note  an additional assumption on $h$,
\begin{enumerate}
\item[(H2)] it holds that $\inf_{t>0}(f(t)/t)>0$.
\end{enumerate} 
Then  
we get the following.
\begin{theorem}\label{thm:gl} Let $p\ge2$ and assume  (H1). Suppose $\{(\la_n,\mu_n,u_n)\}$ is a sequences of solutions of \eqref{q} with $\mu_n\to \infty$ and  $\la_n\to \la_*$ for some $\la_*\in[0,\infty]$ as $n\to \infty$.  Then, either one of the next (i), (ii), and (iii) occurs up to a subsequence.
\begin{enumerate}
\item[(i)] $\la_*=0$ and $u_n\to 0$ in $C^2_{\text{loc}}((0,1])$. 
\item[(ii)] $\la_*\in (0,\infty)$ and there exists a smooth nonincreasing function $u_*$ on $(0,1]$ such that $u_n\to u_*$ in $C^2_{\text{loc}}((0,1])$. Furthermore, $(\la_*,u_*)$ satisfies  
\begin{equation}\label{q*}
\begin{cases} -u_*''-\frac1r u_*'=\la_*f(u_*),\ u_*\ge 0\text{ in }(0,1),\\
\ u_*(1)=0,\ \  \int_0^1 \la_* f(u_*)rdr<\infty.
\end{cases}
\end{equation}
Moreover,  if $f(0)>0$, we get $u_*>0$ in $(0,1)$.
\item[(iii)] $\la_*=\infty$ and $(u_n)$ is uniformly bounded in $C_{\text{loc}}((0,1])$.
\end{enumerate}
In addition, if we assume (H2), then (iii) does not  happen.
\end{theorem}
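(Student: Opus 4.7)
The approach is a trichotomy on the limit $\lambda_*$, driven by a single pointwise estimate from radial monotonicity together with the concentration estimates of Theorems~\ref{thm1}, \ref{thm:crit}, \ref{thm30}, and \ref{thm3}. Starting from the integrated ODE $-ru_n'(r) = \int_0^r \lambda_n f(u_n) s\,ds$ and using that $u_n$ is radially nonincreasing (and that $f$ is eventually monotone by (H1)), I first derive the workhorse bound
\[
u_n(r_0) \geq \frac{r_0^2 \log(1/r_0)}{2}\,\lambda_n f(u_n(r_0)), \qquad r_0 \in (0,1),
\]
equivalent to $\lambda_n f(u_n(r_0))/u_n(r_0) \leq C(r_0)$. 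Since (H1) forces $f(t)/t \to \infty$ as $t \to \infty$, this gives a uniform $L^\infty$ bound for $(u_n)$ on $[r_0,1]$ whenever $\lambda_n$ is bounded below, i.e., in cases (ii) and (iii).

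For case (ii), $\lambda_* \in (0,\infty)$, the uniform bound plus standard interior elliptic regularity for the radial ODE yields uniform $C^{2,\alpha}$ bounds on $[r_0,1]$; Arzelà--Ascoli and a diagonal argument then produce $u_n \to u_*$ in $C^2_{\text{loc}}((0,1])$ with $u_*$ nonincreasing and $u_*(1)=0$. Passing to the limit in the integrated equation yields the PDE in \eqref{q*}; the monotonicity of $r \mapsto -ru_*'(r)$ together with the fact that $-r_0u_n'(r_0)$ converges shows that $\ell:=\lim_{r \to 0^+}(-ru_*'(r))$ is finite, so $\int_0^1 \lambda_* f(u_*) r\,dr < \infty$. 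Strict positivity under $f(0) > 0$ follows since $u_*(r_0) = 0$ for some $r_0 \in (0,1)$ would force $u_* \equiv 0$ on $[r_0,1]$ by monotonicity, contradicting the equation evaluated at $r_0$. Case (iii), $\lambda_* = \infty$, is immediate from the $L^\infty$ bound. Under (H2) one has $\lambda_n c_0 \leq \lambda_n f(u_n(r_0))/u_n(r_0) \leq C(r_0)$, so $(\lambda_n)$ is bounded, ruling out (iii).

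The main obstacle is case (i), $\lambda_*=0$, where one must show $u_n \to 0$ rather than mere boundedness. Substituting $f(t) = h(t)e^{t^p}$ into the core estimate and using $\log h(t) = o(t^p)$ from (H1) gives $u_n(r_0)^p \leq (1+o(1))\log(1/\lambda_n)$, which combined with \eqref{cri2} for $p=2$ or \eqref{sup3} for $p>2$ delivers $u_n(r_0)/\mu_n \to 0$. Monotonicity then yields a pointwise subsequential limit $u_*$; ruling out $u_* \equiv +\infty$ on some interval $[0,r_1]$ (which would make the asymptotic on $(r_1,1]$ incompatible with the blow-up at $r_1$) shows $u_*$ is finite on $(0,1]$, and $\lambda_n f(u_n) \to 0$ uniformly on $[r_0,1]$ identifies $u_*(r) = c\log(1/r)$ for some $c \geq 0$. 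To rule out $c > 0$, I would use the Pohozaev-type identity
\[
\int_0^1 \lambda_n r F(u_n)\, dr = \tfrac{\lambda_n F(0)}{2} + \tfrac{u_n'(1)^2}{4}
\]
with $F(t)=\int_0^t f$, together with the bubble mass estimates \eqref{eq:sq}, \eqref{eq:en1}, \eqref{eq:en2}: the asymptotic $F(u) \sim f(u)/(pu^{p-1})$ implies the $F$-mass of each bubble is smaller than its $f$-mass by a factor of order $(\delta_k \mu_n)^{-(p-1)}$, so the total contribution vanishes as $n \to \infty$, forcing $u_n'(1) \to 0$. The integral representation
\[
u_n(r_0) = \log(1/r_0) \int_0^{r_0} \lambda_n f(u_n) s\,ds + \int_{r_0}^1 \lambda_n f(u_n) s \log(1/s)\,ds
\]
then yields $u_n(r_0) \to 0$, and elliptic regularity upgrades this to $C^2_{\text{loc}}((0,1])$. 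The most delicate technical point is controlling the tail of the infinite bubble sum for $p > 2$, which must be extracted from the recurrences \eqref{eq:del1}--\eqref{eq:del2} on $(a_k)$ and $(\delta_k)$.
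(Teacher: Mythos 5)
Your treatment of cases (ii), (iii), and the (H2) rider is essentially correct, and the (H2) argument is even a bit more direct than the paper's (which invokes a Kaplan eigenfunction comparison in Lemma~\ref{kap}): feeding $f(t)\geq c_0 t$ into the pointwise bound $u_n(r_0)\geq \frac{r_0^2\log(1/r_0)}{2}\lambda_n f(u_n(r_0))$ immediately forces $\lambda_n\leq 2/(c_0 r_0^2\log(1/r_0))$. (You should note that the pointwise bound as stated requires the eventual monotonicity of $f$ from Lemma~\ref{lem:h}, hence $u_n(r_0)\geq t_1$, but this is a harmless dichotomy.)

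The problem is case (i), which is where the real content lies, and there your argument has three genuine gaps. First, it is circular: you invoke \eqref{cri2} and \eqref{sup3}, but these are consequences of the concentration machinery in Sections~3--4, which itself relies repeatedly on Lemma~\ref{lem:gl} (equivalently on the present theorem) -- see Remark~\ref{rmk:A} and the proofs of Lemma~\ref{lem:f1} and Theorem~\ref{thm3}, all of which cite ``the final assertion in Lemma~\ref{lem:gl}'' to send $\gamma_{k,n}\to0$ and $r_{k,n}\to 0$. The paper deliberately proves Lemma~\ref{lem:gl} first, independently, precisely so the concentration analysis can stand on it. Second, even granting \eqref{cri2}/\eqref{sup3}, the conclusion $u_n(r_0)/\mu_n\to0$ says nothing about the actual size of $u_n(r_0)$: a finite nonzero limit $u_n(r_0)\to\alpha>0$ is perfectly consistent with the core estimate $\lambda_n f(u_n(r_0))\leq C(r_0)\,u_n(r_0)$ once $\lambda_n\to0$, so nothing so far excludes the limit profile $c\log(1/r)$ with $c>0$. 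Third, the strategy of killing $c$ via bubble-by-bubble $F$-mass estimates requires summing infinitely many contributions whose sizes are governed by $(a_k)$; since $\sum a_k=\infty$, this requires fine control of the bubble count $K(n)$ against $\mu_n$, a difficulty you acknowledge but do not resolve -- and which is entirely unnecessary.

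The paper's route avoids all of this with one stroke. The Pohozaev identity (Lemma~\ref{lem:po}) at $r=1$ together with \eqref{id0} gives $I_n^2 = 4\int_0^1 \la_n F(u_n)r\,dr$ where $I_n := \int_0^1 \la_n f(u_n)r\,dr$. Since $p>1$ and (H1) hold, l'H\^opital gives $F(t)/f(t)\to0$, so $F(t)\leq \e f(t) + M_\e$ and hence $I_n^2 \leq 4\e I_n + 2\la_n M_\e$ for every $\e>0$. This single inequality delivers all three alternatives at once: $\la_*=0$ forces $I_n\to0$ and hence $u_n\to0$ via $-ru_n'(r)\leq I_n$; $\la_*\in(0,\infty)$ gives $I_n$ bounded and hence compactness; $\la_*=\infty$ gives $\int_0^1 f(u_n)r\,dr\to0$ and hence uniform boundedness. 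This is self-contained, needs none of the concentration theorems, and is the argument you should substitute for your entire case-(i) discussion.
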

\begin{remark}
This theorem is simply the consequence of the Pohozaev identity and proved independently of the concentration estimates in the previous theorems.  
\end{remark}
As noted  above, for $p=2$, the case (i) is observed  in \cite{MM1} and \cite{N} and (ii) is found in \cite{MT} with $0<u_*(0)<\infty$. On the other hand,  for $p>2$,  our oscillation estimates in Section \ref{sec:osc} will give a useful approach to find $f$ which admits the case (ii).  See Theorem \ref{thm:app2} and also Remark \ref{rmk:B} there.  In view of them, in the supercritical case, we observe the behavior in (ii) for typical nonlinearities. See also an earlier work \cite{McMc} and a very recent one \cite{Ku} which generalizes the former one.

 The next sections are devoted to the proof of these results. We note that our second aim, the oscillation analysis, will be summarized in Section \ref{sec:osc}. 
\subsection{Idea and  organization}
As explained above, we apply the scaling techniques developed in the previous works on \eqref{p0}. Among them, we are especially inspired by the arguments in \cite{AD}, \cite{D}, \cite{GN},  \cite{MM1}, \cite{MM2}, \cite{N}, and \cite{OS}. We simply extend and utilize the scaling argument found in \cite{AD} for $p=2$ and also that in \cite{OS} for $0<p\le2$. The basic techniques to deduce and integrate the limit equation comes from \cite{GN} and also \cite{GGP}. Moreover, for our main argument on $p>2$, we apply the pointwise techniques and radial analysis in  \cite{D} and \cite{MM1} together with the Green type identities in Lemma \ref{id}, which is an extension of Lemma 2.2 in \cite{N}, below. Particularly, in order to detect more and more bubbles,  extending the idea in \cite{D} to our case $p>2$, we look for a suitable sequence $(r_n)\subset (0,1)$ such that $p\la_nr_n^2u_n(r_n)^{p-1}f(u_n(r_n))\to c$ as $n\to \infty$ for some $c\not=0$. To do this, we also extend the idea of Lemma 3 in Appendix A in \cite{D}.  See Lemma \ref{lemD} below.  A different point  from  \cite{D} is that we consider any radial positive solutions without the assumption for the boundedness of the energy. Hence our starting point is closer to  \cite{MM1} and also \cite{MM2}. Following them, after deducing each  concentration profile, we establish the suitable pointwise estimate for the scaled functions. Then, using this, we estimate the energy on the out side of the concentration region and extend the interval with no additional bubble.  Along this line, some of the proofs in Lemmas \ref{lem21}, \ref{lem:thm21}, \ref{lem:e10}, and  \ref{lem:ff} are inspired by them. We note that since we do not need their small order expansion of the energy, our calculation is rather rough and easier. The crucial different point  is that since we want to show  that the next bubble does appear,  we need to confirm that the nontrivial energy remains on the outside of the previous concentration region. This is accomplished by the argument based on the identities in Lemma \ref{id}. See Lemma \ref{lem:e1}. Then, we succeed in detecting the next bubble with the aid of Lemma \ref{lemD}. See Lemmas \ref{lem:e3} and \ref{lem:f1}. We also emphasize that we complete  the precise characterization of each bubble via the system of the recurrence formulas \eqref{eq:del1} and \eqref{eq:del2} which are determined by the suitable balance between adjacent two bubbles. These are obtained by carefully combining the identity \eqref{id2} and our concentration estimates.  See Lemmas \ref{lem:dl} and \ref{lem:ak}. Moreover, we note that \eqref{id2} will become also a key tool to connect our concentration estimates and oscillation ones in Section \ref{sec:osc}. See Theorem \ref{thm:osc} and its proof.  
  
The organization of this paper is the following. In Section \ref{sec:pre}, we collect some basic facts and key tools which will be  used in main sections. Next in Section \ref{sec:con1}, we carry out  the standard concentration analysis for the case $p>0$ and prove Theorems \ref{thm1}, \ref{thm2}, and some more estimates used in the following sections. Next, Section \ref{sec:con2} is devoted to deduce the infinite sequence of bubbles  for $p>2$. Then, we  next establish our second main results on the oscillation phenomena in Section \ref{sec:osc}.  Finally,  in Sections \ref{sec:lim}, we discuss what happens on the infinite sequence of bubbles in the limit cases $p\to2^+$ and $p\to \infty$ respectively. 

In this paper, we sometimes use the same characters $c,C>0$, $(r_n)$, $(s_n)$, $(t_n)$, $(R_n)$, $(\e_n)$, and so on to denote several constants and  sequences if there are no confusions. In addition, in our main argument, we often extract a subsequence from a given sequence without any change of its suffix for simplicity.  
\section{Preliminaries and key tools}\label{sec:pre}
In this section, we give some  basic facts and key tools for our proof. 
\subsection{Conditions on  nonlinearities}
We first show some basic facts deduced from our assumptions. The next lemma is a consequence of  (H1) which corresponds to Properties (P1) and (P3) in \cite{D} when $p=2$. 
\begin{lemma}\label{lem:h} Suppose (H1). Then  there exists a constant $t_1 >0$ such that  $f(t)$ is increasing for all $t\ge t_1$. Moreover, it holds that  
\begin{equation}\label{ha}
\lim_{t\to \infty}\frac{\log {h(t)}}{t^p}=0.
\end{equation}
In addition, for any number $A>0$ and sequence $(\xi_n)$ of values such that $\xi_n\to\infty$, we get
\begin{equation}\label{hb}
\lim_{n\to \infty}\max_{\xi\in\left[\xi_n-A/\xi_n^{p-1},\  \xi_n+A/\xi_n^{p-1}\right]}\left|\frac{h(\xi)}{h(\xi_n)}-1\right|=0.
\end{equation}
\end{lemma}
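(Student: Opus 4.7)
The plan is to handle the three assertions separately; each is a short direct consequence of (H1) together with (H0).

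For the monotonicity of $f(t)=h(t)e^{t^p}$, I would factor, wherever $h(t)>0$ (which includes a neighborhood of infinity by (H0)),
\[
\frac{f'(t)}{f(t)} = \frac{h'(t)}{h(t)} + pt^{p-1} = pt^{p-1}\left(1 + \frac{h'(t)}{p t^{p-1} h(t)}\right).
\]
The parenthesis tends to $1$ by (H1), so the right-hand side is positive on $[t_1,\infty)$ for some threshold $t_1$, which gives the first claim.

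For \eqref{ha}, the idea is that (H1) amounts to the quantitative bound $|(\log h)'(t)| \leq \e t^{p-1}$ for $t$ beyond some $T_\e$, once $\e>0$ is fixed. Integrating this bound from $T_\e$ to $t$ gives $|\log h(t)-\log h(T_\e)|\leq (\e/p)(t^p-T_\e^p)$; dividing by $t^p$, sending $t\to\infty$, and then $\e\to0$ delivers \eqref{ha}.

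For \eqref{hb}, I would apply the mean value theorem to $\log h$: for every $\xi$ in the interval $I_n := [\xi_n-A/\xi_n^{p-1},\,\xi_n+A/\xi_n^{p-1}]$ there is some $\eta$ between $\xi$ and $\xi_n$ with $\log h(\xi)-\log h(\xi_n) = (h'(\eta)/h(\eta))(\xi-\xi_n)$. The key quantitative observation is that $\eta/\xi_n\in[1-A/\xi_n^p,\,1+A/\xi_n^p]$ tends to $1$ uniformly (since $\xi_n\to\infty$ and $p>0$), so $\eta\to\infty$ and $\eta^{p-1}/\xi_n^{p-1}\to1$ uniformly in $\xi\in I_n$. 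Hence (H1) yields $h'(\eta)/h(\eta)=o(\eta^{p-1})=o(\xi_n^{p-1})$ uniformly; coupling this with $|\xi-\xi_n|\leq A/\xi_n^{p-1}$ gives $\log h(\xi)-\log h(\xi_n)=o(1)$ uniformly in $\xi\in I_n$, and exponentiating finishes \eqref{hb}. The only mild wrinkle will be the case $p<1$, where $I_n$ actually widens with $n$; however its width relative to $\xi_n$ is $A/\xi_n^p\to0$, so the uniform comparison between $\eta$ and $\xi_n$ still goes through and none of the preceding estimates is affected.
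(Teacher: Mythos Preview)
Your proposal is correct and follows essentially the same approach as the paper. The only cosmetic differences are that the paper invokes l'H\^opital's rule for \eqref{ha} (which amounts to your integration) and, for \eqref{hb}, applies the mean value theorem to $h$ itself rather than to $\log h$, then closes with a small bootstrap inequality; your route through $\log h$ reaches the same conclusion without that extra step.
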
 
\begin{proof} For any $t>0$, we have 
\[
f'(t)=pt^{p-1}h(t)\left(1+\frac{h'(t)}{pt^{p-1}h(t)}\right)e^{t^p}.
\]
 Then (H1) implies the first assertion. \eqref{ha}  is proved by the de l'H\^opital rule with (H1). Finally, the proof of \eqref{hb} is given by a trivial modification of that of Property (P3) in \cite{D}. For readers' convenience, we show the proof. First, we choose a sequence $(\alpha_n)\subset \left[\xi_n-A/\xi_n^{p-1},\  \xi_n+A/\xi_n^{p-1}\right]$ so that 
\[
\begin{split}
\max_{\left[\xi_n-A/\xi_n^{p-1},\  \xi_n+A/\xi_n^{p-1}\right]}\left|\frac{h(\xi)}{h(\xi_n)}-1\right|&=\left|\frac{h(\alpha_n)}{h(\xi_n)}-1\right|\end{split}
\]
for all $n\in \mathbb{N}$.  Then there exists a sequence $(\beta_n)\subset \left[\xi_n-A/\xi_n^{p-1},\  \xi_n+A/\xi_n^{p-1}\right]$ of values  such that 
\[
\begin{split}
\left|\frac{h(\alpha_n)}{h(\xi_n)}-1\right|=\left|\frac{h'(\beta_n)(\alpha_n-\xi_n)}{h(\xi_n)}\right|=\left|\frac{h'(\beta_n)}{\beta_n^{p-1}h(\beta_n)}\beta_n^{p-1}(\alpha_n-\xi_n)\frac{h(\beta_n)}{h(\xi_n)}\right|
\end{split}
\]
for all $n\in \mathbb{N}$. Then noting (H1) and the fact that $\beta_n^{p-1}(\alpha_n-\xi_n)$ is bounded uniformly for all $n\in \mathbb{N}$, we find a sequence $(\e_n)$ of positive values  such that $\e_n\to0$ as $n\to \infty$ and  
\[
\begin{split}
\left|\frac{h(\alpha_n)}{h(\xi_n)}-1\right|&=\e_n \left|\frac{h(\beta_n)}{h(\xi_n)}\right|\le \e_n\left( \left|\frac{h(\alpha_n)}{h(\xi_n)}-1\right|+1\right)
\end{split}
\]
for all $n\in \mathbb{N}$. This gives the desired conclusion. We complete the proof. 
\end{proof}
We also use the  next property which covers the case $f(t)$ is not increasing for all $t>0$.
\begin{lemma}\label{lem:h2} Assume (H1). Then there exists a constant $t_2>0$ such that  
\[
\sup_{0<s<t}\frac{f(s)}{f(t)}\le1
\] 
for all $t\ge t_2$.
\end{lemma}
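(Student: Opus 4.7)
The plan is to combine the eventual monotonicity of $f$ supplied by Lemma \ref{lem:h} with a compactness argument on the initial interval where $f$ may fail to be monotone. Since (H1) yields $f(t)\to\infty$ as $t\to\infty$ (as remarked right after the statement of (H1)), for $t$ large the values $f(s)$ with $s$ in the ``bad'' initial window will be dominated by $f(t)$.

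More precisely, first I would invoke Lemma \ref{lem:h} to fix a threshold $t_1>0$ such that $f$ is increasing on $[t_1,\infty)$. Next, using that $f$ is continuous and nonnegative on the compact interval $[0,t_1]$, set
\[
M:=\max_{s\in[0,t_1]} f(s)<\infty.
\]
Since $f(t)\to\infty$ as $t\to\infty$, choose $t_2\ge t_1$ such that $f(t)\ge M$ for all $t\ge t_2$. For any $t\ge t_2$ and any $s\in(0,t)$, split according to whether $s\le t_1$ or $s>t_1$: in the first case $f(s)\le M\le f(t)$, and in the second case $s\in(t_1,t)$ so the monotonicity of $f$ on $[t_1,\infty)$ yields $f(s)\le f(t)$. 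In either case $f(s)/f(t)\le 1$, and taking the supremum over $s\in(0,t)$ gives the claim.

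There is essentially no obstacle here; the only point that deserves a brief justification is why $f(t)\to\infty$, but this is exactly the consequence of (H1) highlighted in the paragraph introducing (H1) (namely that $h$ grows slower than $e^{t^p}$ while the exponential factor diverges), so I would simply cite that remark and Lemma \ref{lem:h} rather than redo the computation.
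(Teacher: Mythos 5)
Your proof is correct and takes essentially the same route as the paper: split the supremum at the threshold $t_1$ from Lemma \ref{lem:h}, use eventual monotonicity on $[t_1,t]$, and use $f(t)\to\infty$ (a consequence of (H1) via \eqref{ha}) to dominate the bounded initial piece. The only difference is that you name the bound $M$ and the threshold $t_2$ explicitly, which the paper leaves implicit.
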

\begin{proof} Let $t_1>0$ be the constant in the previous lemma and set $t> t_1$. Then, we have that $\sup_{t_1\le s<t}f(s)/f(t)\le1$. Moreover, since \eqref{ha} implies $f(t)\to \infty$ as $t\to \infty$, we get $\sup_{0<s< t_1}f(s)/f(t)\to0$ as $t\to \infty$. This shows the desired conclusion. We finish  the proof.  
\end{proof}
We use the next property for energy estimates.
\begin{lemma}\label{lem:h3} Suppose (H1). Let $(\xi_n)\subset (0,\infty)$, $M>0$, and $\delta\in(0,1)$ be any sequence and constants such that $\xi_n\to \infty$ as $n\to \infty$. Then we get
\[
\lim_{n\to \infty}\max_{-p(1-\delta)\xi_n^p\le s\le -M} \left|s^{-1}\log{\frac{h\left(\xi_n+\frac{s}{p\xi_n^{p-1}}\right)}{h(\xi_n)}}\right|=0.
\]
\end{lemma}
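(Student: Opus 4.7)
The plan is to convert the ratio $h(\xi_n+s/(p\xi_n^{p-1}))/h(\xi_n)$ into the exponential of an integral, invoke (H1) to control the integrand uniformly on the relevant range, and then compare the resulting integral against $|s|$. Setting $\eta_n(s)=\xi_n+s/(p\xi_n^{p-1})$, one writes
\[
\log\frac{h(\eta_n(s))}{h(\xi_n)}=-\int_{\eta_n(s)}^{\xi_n}\frac{h'(\tau)}{\tau^{p-1}h(\tau)}\,\tau^{p-1}\,d\tau.
\]
As $s$ ranges over $[-p(1-\de)\xi_n^p,-M]$, the endpoint $\eta_n(s)$ runs through $[\de\xi_n,\xi_n-M/(p\xi_n^{p-1})]$; in particular $\eta_n(s)\ge \de\xi_n\to\infty$. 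Given any $\e>0$, (H1) yields, for all sufficiently large $n$,
\[
\sup_{\tau\in[\de\xi_n,\xi_n]}\left|\frac{h'(\tau)}{\tau^{p-1}h(\tau)}\right|<\e,
\]
so that
\[
\left|\log\frac{h(\eta_n(s))}{h(\xi_n)}\right|\le \e\int_{\eta_n(s)}^{\xi_n}\tau^{p-1}\,d\tau.
\]

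The remaining step is to compare the right-hand integral to $|s|=p\xi_n^{p-1}(\xi_n-\eta_n(s))$ by an elementary monotonicity argument, split into cases. If $p\ge1$, then $\tau^{p-1}$ is nondecreasing, so $\int_{\eta_n(s)}^{\xi_n}\tau^{p-1}\,d\tau\le \xi_n^{p-1}(\xi_n-\eta_n(s))=|s|/p$. If $0<p<1$, then $\tau^{p-1}$ is nonincreasing and $\eta_n(s)\ge \de\xi_n$ gives
\[
\int_{\eta_n(s)}^{\xi_n}\tau^{p-1}\,d\tau\le \eta_n(s)^{p-1}(\xi_n-\eta_n(s))\le \de^{p-1}\,|s|/p.
\]
Combining these bounds yields, uniformly in $s\in[-p(1-\de)\xi_n^p,-M]$,
\[
|s|^{-1}\left|\log\frac{h(\eta_n(s))}{h(\xi_n)}\right|\le \frac{C_{p,\de}\,\e}{p},
\]
with $C_{p,\de}=\max(1,\de^{p-1})$. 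Since $\e>0$ was arbitrary, the lemma follows.

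I do not anticipate serious obstacles; the only point requiring minor care is treating $0<p<1$ and $p\ge 1$ separately so that the elementary bound for $\int_{\eta_n(s)}^{\xi_n}\tau^{p-1}\,d\tau$ remains linear in $|s|$. The lower bound $\eta_n(s)\ge\de\xi_n$ at the far end of the range and the fact that the entire interval $[\de\xi_n,\xi_n]$ is pushed to infinity are exactly what allow (H1) to be applied with a single small constant $\e$ for all admissible $s$.
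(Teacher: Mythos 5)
Your proof is correct and follows essentially the same route as the paper: both express the logarithm of the ratio as an integral of $(\log h)'=h'/h$, invoke (H1) to make $h'(\tau)/(\tau^{p-1}h(\tau))$ small uniformly on $[\delta\xi_n,\xi_n]$ (which is pushed to infinity), and then bound the remaining $\int \tau^{p-1}\,d\tau$ linearly in $|s|$ via the same case split on $p\ge 1$ versus $0<p<1$. The paper phrases the last step with the antiderivative $\xi_n^p-\eta_n(s)^p$ and a mean value theorem factor $(1+\theta_n s_n/(p\xi_n^p))^{p-1}$, whereas you bound the integral directly by the supremum of $\tau^{p-1}$ times the interval length; these are equivalent.
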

\begin{proof} Let $(s_n)\subset [-p(1-\delta)\xi_n^p, -M]$ be a sequence such that
\[
\left|s_n^{-1}\log{\frac{h\left(\xi_n+\frac{s_n}{p\xi_n^{p-1}}\right)}{h(\xi_n)}}\right|=\max_{-p\xi_n^p(1-\delta)\xi_n\le s\le -M} \left|s^{-1}\log{\frac{h\left(\xi_n+\frac{s}{p\xi_n^{p-1}}\right)}{h(\xi_n)}}\right|
\]
for all $n\in \mathbb{N}$. Take any  $\e>0$. From (H1), there exists a value $t_\e>0$ such that
\[
|(\log{(h(t))})'|\le pc_pt^{p-1}\e
\]
for all $t\ge t_\e$ where we put $c_p=1$ if $p\ge1$ and $c_p=\delta^{1-p}$ otherwise. It follows that  for any $t>s>t_\e$, 
\[
\left|\log{\frac{h(s)}{h(t)}}\right|\le  c_p(t^p-s^p)\e.
\]
Then there exists a sequence $(\theta_n)\subset (0,1)$ such that 
\[
\begin{split}
\left|\log{\frac{h\left(\xi_n+\frac{s_n}{p\xi_n^{p-1}}\right)}{h(\xi_n)}}\right|&\le c_p\e\left\{\xi_n^p-\left(\xi_n+\frac{s_n}{p\xi_n^{p-1}}\right)^p\right\}=c_p\e\left(1+\frac{\theta_n s_n}{p\xi_n^p}\right)^{p-1}|s_n|\\
&\le \e |s_n|
\end{split}
\]
for all large $n\in \mathbb{N}$.  We finish the proof. 
\end{proof}
The next one is a result of (H2). 
\begin{lemma}\label{kap}
Assume (H2). Then 
\[
\sup\{\la>0\ |\ \text{\eqref{p0} has a classical solution $u$} \}<\infty.
\]
\end{lemma}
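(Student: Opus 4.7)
The plan is to run the classical first-eigenfunction test. By (H2) there exists a constant $c>0$ such that $f(t)\ge c\,t$ for every $t>0$. Hence, if $u$ is any classical solution of \eqref{p0} for some $\la>0$, then $u$ satisfies pointwise the differential inequality
\[
-\Delta u=\la f(u)\ge \la c\, u\quad\text{in }\Omega,\qquad u=0\text{ on }\partial\Omega,
\]
so $u$ is a positive supersolution of the linear eigenvalue problem for $-\Delta$ with zero Dirichlet data.

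Next I would test this inequality against the positive first Dirichlet eigenfunction $\phi_1$ of $-\Delta$ on $\Omega$, associated with eigenvalue $\la_1=\la_1(\Omega)>0$. Multiplying by $\phi_1$, integrating over $\Omega$, and using integration by parts together with the vanishing of $u$ and $\phi_1$ on $\partial\Omega$, one gets
\[
\la_1\int_\Omega u\,\phi_1\,dx=\int_\Omega u(-\Delta\phi_1)\,dx=\int_\Omega (-\Delta u)\phi_1\,dx\ge \la c\int_\Omega u\,\phi_1\,dx.
\]
Since $u>0$ and $\phi_1>0$ in $\Omega$, the integral $\int_\Omega u\,\phi_1\,dx$ is strictly positive and can be cancelled, giving $\la\le \la_1/c$. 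Because this bound is independent of the particular solution, we conclude
\[
\sup\{\la>0\ |\ \text{\eqref{p0} admits a classical solution}\}\le \frac{\la_1}{c}<\infty,
\]
as required.

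There is essentially no obstacle: the only point that deserves a line of justification is that (H2) indeed yields $f(t)/t\ge c>0$ uniformly in $t>0$, and that the pointwise supersolution inequality can be tested against $\phi_1$ (which is legitimate because $u$ is a classical solution and $\phi_1\in C^2(\overline\Omega)$ for smooth $\Omega$). Note that this argument is independent of the concentration analysis and uses neither (H1) nor any specific information on $p$, only the linear coercivity provided by (H2).
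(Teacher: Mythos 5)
Your proof is correct and follows exactly the same Kaplan-type first-eigenfunction test used in the paper (which cites Proposition 3.3.1 of \cite{Dp}): multiply by the positive first Dirichlet eigenfunction, integrate by parts, use $f(t)\ge ct$ from (H2), and cancel the positive integral to get $\la\le\Lambda_1/c$.
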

\begin{proof} The proof is given by the Kaplan method. We here follow the proof in \cite{Dp} (See Proposition 3.3.1 there). Set a value $c>0$ so that $\inf_{t>0}(f(t)/t)=c$.  Let $u>0$ be a smooth  solution of \eqref{p0} and $\Lambda_1>0$ and $\Phi_1>0$ in $\Omega$ the  first eigenvalue and eigenfunction of $-\Delta$ on $\Omega$ respectively. Then, integrating by parts, we get
\[
\Lambda_1\int_\Omega u\Phi_1dx=\int_\Omega \la f(u) \Phi_1dx\ge c\la \int_\Omega  u \Phi_1dx.
\] 
Since $u,\Phi_1>0$ in $\Omega$, we have $\la \le \Lambda_1/c$. This completes the proof. 
\end{proof}

\subsection{Key tools}\label{sub:kt}
We next collect the key tools for our main arguments.  We first give the key identities which we often use in the proof below. In this subsection, we always suppose $\{(\la_n,\mu_n,u_n)\}$ is a sequence of solutions of \eqref{q}. 
\begin{lemma}\label{id} Suppose $0< s\le1$. Then we have  
\begin{equation}\label{id0}
-su_n'(s)=\int_{0}^{s} \la_nf(u_n(r))rdr
\end{equation}
and
\begin{equation}\label{id1}
u_n(0)-u_n(s)=\int_{0}^{s} \la_nf(u_n(r))r\log{\frac{s}{r}}dr.
\end{equation}
Let $0< s<t\le1$. Then, we get
\begin{equation}\label{id2}
u_n(s)-u_n(t)=\left(\log{\frac{t}{s}}\right)\int_{0}^{s} \la_nf(u_n(r))rdr+\int_{s}^{t} \la_nf(u_n(r))r\log{\frac{t}{r}}dr.
\end{equation}
\end{lemma}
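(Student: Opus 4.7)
The plan is to rewrite the radial equation in divergence form and then derive the three identities by successive integration, using Fubini to swap orders of integration. Observe that the ODE in \eqref{q} is equivalent to
\[
-(r u_n'(r))' = \la_n f(u_n(r)) r \quad \text{for } r \in (0,1).
\]
Integrating this from $0$ to $s$ and using the boundary condition $u_n'(0)=0$ immediately yields \eqref{id0}.

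For \eqref{id1}, I would divide \eqref{id0} by $\sigma$ to get $u_n'(\sigma) = -\sigma^{-1} \int_0^\sigma \la_n f(u_n(r)) r\, dr$, then integrate in $\sigma$ from $0$ to $s$. Since $u_n(s) - u_n(0) = \int_0^s u_n'(\sigma)\, d\sigma$, the resulting double integral
\[
u_n(0) - u_n(s) = \int_0^s \frac{1}{\sigma} \int_0^\sigma \la_n f(u_n(r)) r \, dr \, d\sigma
\]
has integration region $\{0 < r < \sigma < s\}$; applying Fubini to interchange the order gives the inner integral $\int_r^s d\sigma/\sigma = \log(s/r)$, producing \eqref{id1}.

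For \eqref{id2}, the same integration of $-u_n'(\sigma)$ but on the interval $[s,t]$ leads to
\[
u_n(s) - u_n(t) = \int_s^t \frac{1}{\sigma} \int_0^\sigma \la_n f(u_n(r)) r \, dr \, d\sigma.
\]
Splitting the inner integral as $\int_0^\sigma = \int_0^s + \int_s^\sigma$, the first piece factors out a $\log(t/s)$ contribution times $\int_0^s \la_n f(u_n) r\, dr$, while the second piece, after another Fubini swap on the triangular region $\{s < r < \sigma < t\}$, yields $\int_s^t \la_n f(u_n(r)) r \log(t/r)\, dr$, giving \eqref{id2}.

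There is no genuine obstacle here: the only matters requiring a moment of care are the justification of Fubini (immediate since $f\ge 0$ makes all integrands nonnegative, so Tonelli applies) and the behavior at the endpoint $r=0$ (harmless because $r$ and $\log(s/r) \cdot r$ are integrable near zero and $f(u_n)$ is continuous on $[0,1]$).
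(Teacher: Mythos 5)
Your proposal is correct. The paper derives \eqref{id1} and \eqref{id2} by multiplying the equation by the auxiliary test function $r\log r$, integrating by parts once over $[0,s]$ (resp. $[s,t]$), and then substituting \eqref{id0} to eliminate the boundary terms $s\,u_n'(s)\log s$ and $t\,u_n'(t)\log t$. Your route keeps \eqref{id0} as the starting point, divides by $\sigma$, integrates a second time, and applies Tonelli/Fubini to the resulting double integral over the triangular regions $\{0<r<\sigma<s\}$ and $\{s<r<\sigma<t\}$. These are essentially dual presentations of the same Green's-function computation: the $\log(s/r)$ and $\log(t/r)$ kernels you obtain by swapping the order of integration are exactly what the paper produces by testing against $r\log r$. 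Your version makes the nonnegativity of the integrand (so Tonelli is immediate) and the integrability near $r=0$ explicit, which the paper leaves tacit; the paper's version is marginally shorter at the price of pulling the test function $r\log r$ out of the air. Both are sound, and the minor care you flag about the endpoint $r=0$ is exactly the right thing to note.
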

\begin{proof} First, let $0<s\le1$. Multiplying the equation in \eqref{q} by $r$ and integrating from $0$ to $s$, we get \eqref{id0}. On the other hand, multiplying the equation by $r\log{r}$ and    integrating from $0$ to $s$,  we have
\[
\begin{split}
\int_0^s\la_n f(u_n)r\log{r}dr
&=-u_n'(s)s\log{s}+u_n(s)-u_n(0).\\
\end{split}
\] 
Then using \eqref{id0}, we readily get \eqref{id1}. Next let $0<s<t\le1$. Similarly, multiplying the equation by $r\log{r}$ and integrating from $s$ to $t$, we obtain 
\[
\begin{split}
\int_s^t\la_n f(u_n)r\log{r}dr
&=-u_n'(t)t\log{t}+u_n'(s)s\log{s}+u_n(t)-u_n(s)\\
\end{split}
\] 
Then using \eqref{id0} for the first two terms on the right-hand side, we get \eqref{id2} after simple calculations. We finish the proof.
\end{proof}
Next, we extend  some tools from the pointwise technique in \cite{D}. For any $r\in[0,1]$ and $n\in \mathbb{N}$, we set 
\[
\phi_n(r):=p\la_nr^2 u_n(r)^{p-1} f(u_n(r))
\]
and 
\[
\psi_n(r):=-pru_n(r)^{p-1}u_n'(r)=pu_n(r)^{p-1}\int_0^r\la_n f(u_n(s))ds
\]
by \eqref{id0}. To study the interaction between $\phi_n$ and $\psi_n$ is a key to detect a sequence of bubbles. The following two lemmas are inspired by Lemma 3 of Appendix in \cite{D}. We first get the following. 
\begin{lemma}\label{lem:pp}
Assume (H1). Let $(r_n)\subset  (0,1)$ be any sequence such that $u_n(r_n)\to \infty$ as $n\to \infty$. Then we have
\[
\begin{split}
\phi_n'(r_n)&=\frac{\phi_n(r_n)}{r_n}\left[2-\psi_n(r_n)\left\{\frac{f'(u_n(r_n))}{pu_n(r_n)^{p-1}f(u_n(r_n))}+\frac{p-1}{pu_n(r_n)^{p}}\right\}\right]
\\
&=\frac{\phi_n(r_n)}{r_n}\left[2-\psi_n(r_n)(1+o(1))\right]
\end{split}
\]
where $o(1)\to0$ as $n\to \infty$. In particular, if $\limsup_{n\to \infty }\psi_n(r_n)<2$, then $\phi_n'(r_n)>0$ for all large $n\in \mathbb{N}$. \end{lemma}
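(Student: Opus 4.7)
The plan is to prove the identity by a direct computation of $\phi_n'(r_n)$, then convert the derivative terms into $\psi_n(r_n)$ using the definition of $\psi_n$, and finally apply (H1) together with $u_n(r_n)\to\infty$ to get the $o(1)$ asymptotic. No new machinery is needed beyond the product rule and the equivalent reformulation of (H1) noted just after its statement, namely $f'(t)/(pt^{p-1}f(t))\to 1$ as $t\to\infty$.

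First I would differentiate $\phi_n(r)=p\la_n r^2 u_n(r)^{p-1}f(u_n(r))$ with respect to $r$. The product rule gives three terms, from the $r^2$, the $u_n^{p-1}$, and the $f(u_n)$ factors. Factoring $\phi_n(r)/r$ out, I obtain
\[
\phi_n'(r)=\frac{\phi_n(r)}{r}\left[2+\frac{(p-1)ru_n'(r)}{u_n(r)}+\frac{ru_n'(r)f'(u_n(r))}{f(u_n(r))}\right].
\]
Now I substitute the defining relation $ru_n'(r)=-\psi_n(r)/(pu_n(r)^{p-1})$ into the two derivative terms. This converts the bracket into
\[
2-\psi_n(r)\left\{\frac{f'(u_n(r))}{pu_n(r)^{p-1}f(u_n(r))}+\frac{p-1}{pu_n(r)^p}\right\},
\]
evaluated at $r=r_n$. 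That yields the first equality of the lemma.

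For the second equality I would use $u_n(r_n)\to\infty$. The second term inside the curly brackets is $(p-1)/(p\,u_n(r_n)^p)\to 0$. For the first term, the equivalent form of (H1) (stated in the paragraph introducing (H1)) gives $f'(t)/(pt^{p-1}f(t))\to 1$ as $t\to\infty$, so its value at $t=u_n(r_n)$ tends to $1$. Hence the quantity in braces equals $1+o(1)$ as $n\to\infty$, which is exactly the asymptotic form stated. The final assertion then follows immediately: if $\limsup_{n\to\infty}\psi_n(r_n)<2$, then $2-\psi_n(r_n)(1+o(1))>0$ for all large $n$, while $\phi_n(r_n)/r_n>0$ always, so $\phi_n'(r_n)>0$.

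I do not expect any real obstacle here: the argument is essentially an exercise in differentiation and substitution, and the only analytic input is (H1), which is used in exactly the form the author already reformulated right after its statement. The one small care point is keeping the signs straight when substituting $ru_n'=-\psi_n/(pu_n^{p-1})$, since $u_n'<0$ (solutions are radially decreasing) while $\psi_n>0$.
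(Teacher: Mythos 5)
Your proposal is correct and follows essentially the same approach as the paper: differentiate $\phi_n$ by the product rule, rewrite the two derivative terms using $ru_n'(r)=-\psi_n(r)/(pu_n(r)^{p-1})$ from the definition of $\psi_n$, and then invoke the equivalent form of (H1), $f'(t)/(pt^{p-1}f(t))\to 1$, together with $u_n(r_n)\to\infty$ to obtain the $1+o(1)$ factor. The paper's own proof is a two-line remark asserting exactly this computation; you simply spell it out.
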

\begin{proof} Differentiating $\phi_n(r)$ and using the definition of $\psi_n$, we readily deduce the first equality.  Then (H1) shows the second one.  We finish the proof.
\end{proof}
For any $0\le s<t\le1$ and $n\in\mathbb{N}$, we put 
\begin{equation}\label{defE}
E_n(s,t)=p\mu_n^{p-1}\int_s^t \la_nf(u_n)rdr.
\end{equation}
We next obtain  the following.  
\begin{lemma}\label{lemD}
Suppose (H1). Let $(r_n),(s_n)\subset (0,1)$ be any sequences such that $r_n<s_n$ for all $n\in \mathbb{N}$ and $\liminf_{n\to \infty}(u_n(s_n)/\mu_n)>0$. Moreover, we assume that there exists a value $c\in (0,2)$ such that $\lim_{n\to \infty}\psi(r_n)=c$ and  $\lim_{n\to \infty}\sup_{r\in[r_n,s_n]}\phi_n(r)=0$. Then we have that 
\[\lim_{n\to \infty}E_n(r_n,s_n)=0.\]
\end{lemma}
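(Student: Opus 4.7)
The plan is to rewrite $E_n(r_n,s_n)$ as the increment of a monotone quantity and then estimate that increment using Lemma \ref{lem:pp}. First I would introduce
\[
V_n(r):=\psi_n(r)\left(\frac{\mu_n}{u_n(r)}\right)^{p-1},
\]
and observe that, by the definition of $\psi_n$ and \eqref{id0}, $V_n(r)=p\mu_n^{p-1}\int_0^r\la_n f(u_n)s\,ds=E_n(0,r)$, while a direct differentiation using the ODE \eqref{p} gives $V_n'(r)=(\mu_n/u_n(r))^{p-1}\phi_n(r)/r\ge 0$. Hence $E_n(r_n,s_n)=V_n(s_n)-V_n(r_n)$. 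Because $u_n$ is radially decreasing and $\liminf u_n(s_n)/\mu_n\ge\kappa>0$, for large $n$ the ratio $u_n(r)/\mu_n$ lies in $[\kappa/2,1]$ on $[r_n,s_n]$, so $(\mu_n/u_n)^{p-1}$ is uniformly bounded there by some $C(p,\kappa)$ and $u_n(r)\to\infty$ uniformly. The proof reduces to showing
\[
I_n:=\int_{r_n}^{s_n}\frac{\phi_n(r)}{r}\,dr\longrightarrow 0.
\]

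The main step is a continuity/bootstrap argument forcing $\psi_n<2-\delta$ throughout $[r_n,s_n]$, where $\delta:=(2-c)/2>0$. Set $\tau_n:=\sup\{r\in[r_n,s_n]:\psi_n<2-\delta\text{ on }[r_n,r]\}$; since $\psi_n(r_n)\to c$, $\tau_n>r_n$ for large $n$. Suppose for contradiction $\tau_n<s_n$, so $\psi_n(\tau_n)=2-\delta$. Lemma \ref{lem:pp} (with its $o(1)$ uniform in $r\in[r_n,s_n]$, since $u_n\to\infty$ uniformly there) yields $(\log\phi_n)'(r)\ge(\delta/2)/r$ on $[r_n,\tau_n]$ for $n$ large, so $\phi_n(r)\le\phi_n(\tau_n)(r/\tau_n)^{\delta/2}$, whence
\[
\int_{r_n}^{\tau_n}\frac{\phi_n(r)}{r}\,dr\ \le\ \frac{2}{\delta}\phi_n(\tau_n)\ \le\ \frac{2}{\delta}\sup_{[r_n,s_n]}\phi_n\ \longrightarrow\ 0.
\]
On the other hand, differentiating $\psi_n(r)=-pru_n(r)^{p-1}u_n'(r)$ and using \eqref{p} gives
\[
\psi_n'(r)=\frac{\phi_n(r)}{r}-\frac{p-1}{p\,u_n(r)^p}\cdot\frac{\psi_n(r)^2}{r},
\]
so for $p\ge 1$ we have $\psi_n'\le\phi_n/r$, and integrating over $[r_n,\tau_n]$ yields
\[
\delta+o(1)\ =\ \psi_n(\tau_n)-\psi_n(r_n)\ \le\ \int_{r_n}^{\tau_n}\frac{\phi_n}{r}\,dr\ \longrightarrow\ 0,
\]
a contradiction. (For $p<1$ the extra positive term contributes at most $O(\log(\tau_n/r_n)/\mu_n^p)$, and the a priori bound $\log(\tau_n/r_n)\le C\mu_n^p$ coming from $\int_{r_n}^{\tau_n}\psi_n/s\,ds=u_n(r_n)^p-u_n(\tau_n)^p\le\mu_n^p$ keeps that contribution negligible, yielding the same contradiction.) Hence $\tau_n=s_n$ for large $n$, and applying the same growth estimate on all of $[r_n,s_n]$ gives $I_n\le(2/\delta)\sup_{[r_n,s_n]}\phi_n\to 0$, so $E_n(r_n,s_n)\to 0$.

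The hard part will be closing this bootstrap cleanly: the growth estimate on $\phi_n$ requires the a priori condition $\psi_n<2-\delta$, while the control on $\psi_n$ uses the bound on $\int\phi_n/r$, so there is an apparent circularity that is only unravelled by the supremum definition of $\tau_n$ and the precise smallness of $\sup_{[r_n,s_n]}\phi_n$ furnished by the hypothesis. A secondary technical point is verifying that the $o(1)$ in Lemma \ref{lem:pp} is uniform in $r\in[r_n,s_n]$, which is exactly what the uniform bound $u_n(r)\ge(\kappa/2)\mu_n\to\infty$ provides via (H1).
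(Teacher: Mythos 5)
Your proof takes a genuinely different route from the paper's. The paper argues by contradiction: assuming $E_n(r_n,s_n)\not\to 0$, it picks intermediate points $(t_n)$ with $\lim E_n(r_n,t_n)\in(0,2-c)$ small, writes $\phi_n(t_n)-\phi_n(r_n)=o(1)$ via Lemma~\ref{lem:pp} and a mean-value intermediate point $t_n'$, and bounds $\psi_n(t_n')$ directly through the decomposition $\psi_n(t_n')=(u_n(t_n')/\mu_n)^{p-1}E_n(r_n,t_n')+(u_n(t_n')/u_n(r_n))^{p-1}\psi_n(r_n)$. You instead set up a continuity/bootstrap argument to push the a priori bound $\psi_n<2-\delta$ across the whole interval $[r_n,s_n]$, using the ODE for $\psi_n$ to control the growth of $\psi_n$ and Lemma~\ref{lem:pp} to get the exponential decay $\phi_n(r)\le\phi_n(\tau_n)(r/\tau_n)^{\delta/2}$. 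For $p\ge 1$ your argument is correct and self-contained (the rewriting $E_n(0,r)=V_n(r)$, the uniformity of the $o(1)$ in Lemma~\ref{lem:pp} via $u_n\ge(\kappa/2)\mu_n$, the ODE $\psi_n'=\phi_n/r-(p-1)\psi_n^2/(p\,u_n^p r)$, and the decay/integral estimates all check out), and it is arguably cleaner than the paper's because it produces the pointwise bound $\psi_n<2-\delta$ on all of $[r_n,s_n]$ rather than only at a mean-value point.

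However, your parenthetical treatment of $0<p<1$ does not close. There the extra term in $\psi_n'$ is \emph{positive}, so $\psi_n(\tau_n)-\psi_n(r_n)=\int_{r_n}^{\tau_n}\phi_n/r\,dr+\int_{r_n}^{\tau_n}\tfrac{(1-p)\psi_n^2}{p\,u_n^p\,r}\,dr$, and you need the second integral to be $o(1)$. Your bound gives $\le C\,\tfrac{\log(\tau_n/r_n)}{\mu_n^p}$ with $\log(\tau_n/r_n)\le C'\mu_n^p$, which yields only $O(1)$, not $o(1)$; the constant depends on $p,\kappa,c$ and need not be smaller than $\delta=(2-c)/2$, so the contradiction $\delta+o(1)\le o(1)+O(1)$ is vacuous. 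Calling this contribution ``negligible'' is therefore wrong. (Even the sharper rewriting $\psi_n/r=-(u_n^p)'$ only gives $2(1-p)\log\tfrac{u_n(r_n)}{u_n(\tau_n)}$, again merely $O(1)$.) That said, the paper applies this lemma only for $p>2$, where your argument is complete, and the paper's own proof is also not explicit about $p<1$ at the step $\psi_n(t_n')\le E_n(r_n,t_n)+c+o(1)$ and at the factor $c_1^{p-1}$; so the practical damage is limited. Still, you should either restrict your claim to $p\ge 1$, or find a genuinely sharper estimate for the extra curvature term when $p<1$.
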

\begin{proof} 
 If the assertion fails, after extracting a subsequence if necessary, we obtain a constant $c_0>0$ such that $E_n(r_n,s_n)\ge c_0$ for all $n\in \mathbb{N}$. Then there exists a sequence $(t_n)\subset (r_n,s_n)$ such that $\lim_{n\to \infty}E_n(r_n,t_n)\in(0,2-c)$. It follows from Lemma \ref{lem:pp} that  there exists a sequence $(t_n')\subset (r_n,t_n)$ such that 
\begin{equation}\label{eq:dd}
\begin{split}
o(1)&=\phi_n(t_n)-\phi_n(r_n)\\
&=\int_{r_n}^{t_n}\left[2-\psi_n(r)\left\{\frac{f'(u_n(r))}{pu_n(r)^{p-1}f(u_n(r))}+\frac{p-1}{pu_n(r)^{p}}\right\}\right]\frac{\phi_n(r)}{r}dr\\
&=\{2-\psi_n(t_n')(1+o(1))\}\int_{r_n}^{t_n}\frac{\phi_n(r)}{r}dr
\end{split}
\end{equation}
for all $n\in\mathbb{N}$. On the other hand, since
\[
\int_{0}^{t_n'}\la_n f(u_n)rdr=\int_{r_n}^{t_n'}\la_n f(u_n)rdr+\int_{0}^{r_n}\la_n f(u_n)rdr,
\] 
we get
\[
\begin{split}
\psi_n(t_n')&=\left(\frac{u_n(t_n')}{\mu_n}\right)^{p-1}E_n(r_n,t_n')+\left(\frac{u_n(t_n')}{u_n(r_n)}\right)^{p-1}\psi_n(r_n)\\
&\le E_n(r_n,t_n)+c+o(1) 
\end{split}
\]
for all $n\in \mathbb{N}$. Moreover, from our assumption, there exists a constant $c_1>0$ such that $u_n(r)\ge  c_1\mu_n$ for all $r\in[r_n,t_n]$ and large $n\in \mathbb{N}$. Using these estimates for \eqref{eq:dd}, we get
\[
o(1)\ge \left(2-c -E_n(r_n,t_n)+o(1)\right)c_1^{p-1} E_n(r_n,t_n)
\]
for all large $n\in \mathbb{N}$. Recalling our choice of $(t_n)$, we get a contradiction. This finishes the proof.
\end{proof}
\subsection{Global behaviors}
Next, we check  the global behavior of sequences of solutions for the case $p>1$. As in the proof of Theorem 2.1 in \cite{OS},  
 we use the following Pohozaev identity. Set $F(t)=\int_0^tf(s)ds$ for $t\ge0$.
\begin{lemma}\label{lem:po} Let $(\la,u)$ be a solution of \eqref{p}. Then for any $r\in (0,1]$, we have
\[
(ru'(r))^2=4\int_0^r\la F(u(s))sds-2\la F(u(r))r^2.
\]
\end{lemma}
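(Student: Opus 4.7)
The plan is to derive the identity by the standard Pohozaev-type trick: rewrite the radial equation in divergence form and multiply by an appropriate multiplier so that both sides become exact derivatives.

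First I would rewrite the equation in \eqref{p} as
\[
-(ru'(r))' = \la r f(u(r)),
\]
which is the convenient divergence form of the radial Laplacian. Multiplying both sides by $2ru'(r)$ produces, on the left, exactly $-\frac{d}{dr}\bigl[(ru'(r))^2\bigr]$, and on the right, $2\la r^2 f(u(r))u'(r) = 2\la r^2 (F(u(r)))'$ since $F' = f$.

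Next I would integrate the resulting equality over $[0,r]$. The boundary contribution at $0$ on the left vanishes because $0 \cdot u'(0) = 0$ (and in any case $u'(0)=0$ by \eqref{p}), while on the right, integration by parts against $r^2$ gives
\[
\int_0^r 2\la s^2 (F(u(s)))'\, ds = 2\la r^2 F(u(r)) - \int_0^r 4\la s F(u(s))\, ds,
\]
using that $F(u(0))$ is multiplied by $0^2 = 0$. Combining, one gets
\[
-(ru'(r))^2 = 2\la r^2 F(u(r)) - 4\la \int_0^r F(u(s)) s\, ds,
\]
which is precisely the claimed identity after multiplying by $-1$.

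There is no real obstacle here beyond bookkeeping: one only needs the boundary condition $u'(0)=0$ from \eqref{p} to discard the lower-limit boundary term, and the fact that $u\in C^2([0,1])$ by standard elliptic regularity so that the integration by parts is justified. The identity is purely algebraic once the correct multiplier $2ru'$ is chosen.
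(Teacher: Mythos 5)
Your proof is correct and follows essentially the same route as the paper: the paper multiplies the original radial equation by $r^2 u'$ to get $-\tfrac12\{(ru')^2\}' = \{\la F(u)\}' r^2$ and then integrates by parts over $[0,r]$, which is exactly your multiplication by $2ru'$ of the divergence form $-(ru')' = \la r f(u)$, up to an overall factor of $2$. Both are the standard Pohozaev-type computation with the same multiplier and the same use of $u'(0)=0$ to kill the lower boundary term.
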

\begin{proof} Multiplying the equation in \eqref{p} by $r^2u'$, we get
\[
-\frac12 \{(ru'(r))^2\}'=\{\la F(u(r))\}'r^2.
\]
Then  integrating by parts over $[0,r]$ gives the desired identity. This finishes the proof. 
\end{proof}
We prove the next alternative. 
\begin{lemma}\label{lem:gl} Assume $p>1$, (H1), and $\{(\la_n,\mu_n,u_n)\}$ is a sequence of solutions of \eqref{q} such that $\mu_n\to \infty$ and $\la_n\to \la_*$ as $n\to \infty$ for some $\la_*\in[0,\infty]$. Then, either one of the next (i), (ii), and (iii) occurs up to a subsequence.
\begin{enumerate}
\item[(i)] $\la_*=0$ and $u_n\to 0$ in $C^2_{\text{loc}}((0,1])$. 
\item[(ii)] $\la_*\in(0,\infty)$ and there exists a smooth nonincreasing function $u_*$ on $(0,1]$ such that $u_n\to u_*$ in $C^2_{\text{loc}}((0,1])$ as $n\to \infty$. Moreover, $(\la_*,u_*)$ satisfies \eqref{q*}. In addition,  if $f(0)>0$, we get $u_*>0$ in $(0,1)$.
\item[(iii)] $\la_*=\infty$ and $(u_n)$ is uniformly bounded in $C_{\text{loc}}((0,1])$.
\end{enumerate}
In particular, for every sequence $(r_n)\subset (0,1)$ such that $u_n(r_n)\to \infty$, we have $r_n\to0$ as $n\to \infty$.
\end{lemma}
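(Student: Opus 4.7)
The plan is to use Lemma \ref{lem:po} (Pohozaev) combined with the asymptotic $F(t)=(1+o(1))f(t)/(pt^{p-1})$ as $t\to\infty$ (a consequence of (H1) via l'H\^opital, which crucially requires $p>1$) to confine blow-up to the origin, then derive the trichotomy from the value of $\la_*$. The starting point is the estimate that if $r\in(0,1)$ and $u_n(r)\to\infty$ along a subsequence, then $\int_0^r \la_n f(u_n)s\,ds \le 8/(pu_n(r)^{p-1})\to 0$. This follows by combining \eqref{id0} with Lemma \ref{lem:po} into $\left(\int_0^r\la_n f(u_n)s\,ds\right)^2\le 4\int_0^r\la_n F(u_n)s\,ds$, then bounding $F(u_n(s))\le 2f(u_n(s))/(pu_n(s)^{p-1})\le 2f(u_n(s))/(pu_n(r)^{p-1})$ for $s\le r$ using the monotonicity of $u_n$. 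Together with the matching lower bound $\int_0^r \la_n f(u_n)s\,ds\ge \la_n f(u_n(r))r^2/2$ (from the eventual monotonicity of $f$ in Lemma \ref{lem:h}), this yields $\la_n\le C/(r^2u_n(r)^{p-1}f(u_n(r)))$, which in particular forces $\la_n\to 0$ whenever $u_n$ blows up away from the origin.

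I would then localize blow-up at $0$. For $\la_*\in(0,\infty]$ the previous paragraph immediately gives that $(u_n(r_0))$ is bounded for every $r_0>0$. For the delicate case $\la_*=0$, I argue by contradiction: if $u_n(r_0)\to\infty$ for some $r_0>0$, set $\bar r=\sup\{r:u_n(r)\to\infty\text{ up to subseq}\}\ge r_0$; the key estimate applied at a radius $r$ just below $\bar r$, together with the bound $\la_n f(u_n(r))\to 0$, also dominates $\int_r^1 \la_n f(u_n)s\,ds$, so $\int_0^1\la_n f(u_n)s\,ds\to 0$. Then \eqref{id2} with $s=r_0$ and $t=1$ gives $u_n(r_0)\to 0$, contradicting $u_n(r_0)\to\infty$. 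Hence in every case $(u_n)$ is uniformly bounded on $[r_0,1]$ for each $r_0>0$. The ``in particular'' conclusion is then immediate, since $r_n\to r_*>0$ together with $u_n(r_n)\to\infty$ would force $u_n(r_*/2)\to\infty$ by monotonicity, contradicting boundedness.

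Given the local boundedness, the trichotomy follows. The case $\la_*=\infty$ is exactly (iii). For $\la_*\in(0,\infty)$, standard elliptic regularity extracts a subsequence with $u_n\to u_*$ in $C^2_{\text{loc}}((0,1])$ where $u_*$ is nonincreasing, $u_*(1)=0$, and solves the limiting ODE; integrability $\int_0^1\la_*f(u_*)r\,dr<\infty$ comes from passing to the limit in \eqref{id0} at fixed $r<1$ and then sending the lower endpoint to $0$ via monotone convergence, while strict positivity when $f(0)>0$ follows from a standard rigidity argument using the $C^2$ smoothness of $u_*$ at any point where it would vanish. The genuinely difficult case is $\la_*=0$: extracting as above gives $u_*=-C\log r$ for some $C\ge 0$ from the limiting equation $-(ru_*')'=0$, and the remaining task is to rule out $C>0$. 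I would apply Lemma \ref{lem:po} at $r=1$, which gives $C^2=4\lim_n\int_0^1\la_n F(u_n)s\,ds$. Splitting at a small $\rho>0$, the outer piece $\int_\rho^1\la_n F(u_n)s\,ds\to 0$ since $u_n$ is bounded on $[\rho,1]$ and $\la_n\to 0$; on the inner piece I use $F(u_n(s))\le (1+\e)f(u_n(s))/(pu_n(\rho)^{p-1})$ (valid since $u_n(\rho)\to -C\log\rho\to\infty$ as $\rho\to 0$, so the tail asymptotic applies) together with $\int_0^\rho\la_n f(u_n)s\,ds=-\rho u_n'(\rho)\to C$, obtaining
\[
\frac{C^2}{4}=\lim_n \int_0^\rho \la_n F(u_n)s\,ds\le \frac{(1+\e)C}{p(-C\log\rho)^{p-1}}.
\]
Letting $\rho\to 0^+$ the right-hand side tends to $0$ since $p>1$, forcing $C=0$. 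Thus $u_n\to 0$, which is case (i).

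The principal obstacle is this last step: Pohozaev alone does not rule out $C>0$, because the profile $-C\log r$ satisfies the formal limit Pohozaev identity. The trick is the iterated limit $n\to\infty$ first and then $\rho\to 0^+$, which exploits the super-polynomial growth rate $F\ll f$ near infinity provided by (H1)---precisely where the hypothesis $p>1$ is essential.
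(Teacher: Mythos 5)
Your proof is correct, but it follows a genuinely different path from the paper's. The paper applies Lemma~\ref{lem:po} only at $r=1$ and uses $F(t)\le\e f(t)$ (for large $t$) to get the single quadratic inequality
\[
\left(\int_0^1\la_n f(u_n)r\,dr\right)^2\le 4\e\int_0^1\la_n f(u_n)r\,dr+2\la_n\max_{0\le t\le t_\e}F(t),
\]
from which all three cases follow at once: for $\la_*=0$ one gets $\int_0^1\la_n f(u_n)r\,dr\to 0$ directly (and then $-ru_n'(r)\to 0$ via \eqref{id0} gives $u_n\to 0$ in $C^2_{\mathrm{loc}}$); for $\la_*\in(0,\infty)$ a uniform bound on $\int_0^1\la_n f(u_n)r\,dr$ and hence on $u_n'$; for $\la_*=\infty$ one divides by $\la_n^2$ to get $\int_0^1 f(u_n)r\,dr\to 0$ and invokes the eventual monotonicity of $f$. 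You instead apply Pohozaev on each $[0,r]$ and pair it with the more precise asymptotic $F(t)\sim f(t)/(pt^{p-1})$ to get the pointwise a priori estimate $\la_n r^2 u_n(r)^{p-1}f(u_n(r))\le C$ whenever $u_n(r)$ is large (i.e.\ an a priori bound on $\phi_n(r)$ away from the origin, which is not stated elsewhere in the paper but is thematically natural). This confines blow-up to $r=0$ immediately for $\la_*\in(0,\infty]$, and for $\la_*=0$ you still need the contradiction argument followed by the iterated-limit Pohozaev computation to rule out the profile $u_*=C\log(1/r)$, $C>0$. That last computation is a nice trick, but it is considerably longer than the paper's direct derivation of $\int_0^1\la_n f(u_n)r\,dr\to0$, which kills $C>0$ in one line. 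Two places where your write-up is thin: (a) ``standard elliptic regularity'' is invoked to pass from local $L^\infty$-bounds to $C^2_{\mathrm{loc}}$ compactness, but you first need a uniform bound on $u_n'$ on $[r_0,1]$; this follows from a mean-value point $\xi_n\in(r_0,1)$ with $|u_n'(\xi_n)|\le \|u_n\|_{L^\infty[r_0,1]}/(1-r_0)$ together with integrating $(ru_n')'=-r\la_n f(u_n)$, but it should be said. (b) The bound $\int_r^1\la_n f(u_n)s\,ds\le \la_n f(u_n(r))/2$ used in the $\la_*=0$ contradiction step silently requires Lemma~\ref{lem:h2} (so that $f(u_n(s))\le f(u_n(r))$ for $s\ge r$ when $u_n(r)$ is large); cite it explicitly.
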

\begin{proof} First we follow the argument in the proof of Theorem 2.1 in \cite{OS}. From Lemma \ref{lem:po} with $r=1$ and \eqref{id0}, we get
\begin{equation}\label{po1}
\left(\int_0^1 \la_nf(u_n)rdr\right)^2=4\int_0^1 \la_nF(u_n(r))rdr.
\end{equation}
By the de l'H\^opital rule, (H1), and  our assumption $p>1$, we get
\[
\lim_{t\to \infty}\frac{F(t)}{f(t)}=\lim_{t\to \infty}\left(\frac{pt^{p-1}f(t)}{f'(t)}\frac1{pt^{p-1}}\right)=0.
\]
In particular, for any $\e>0$, there exists  a constant $t_\e>0$ such that $F(t)\le \e f(t)$ for any $t\ge t_\e$. Then using \eqref{po1}, we obtain 
\begin{equation}\label{po2}
\left(\int_0^1 \la_nf(u_n)rdr\right)^2\le 4\e \int_0^1 \la_nf(u_n(r))rdr+2\la_n\max_{0\le t\le t_\e}F(t).
\end{equation}
Hence, if  $\la_*=0$, we get  
\[
\limsup_{n\to \infty}\int_0^1 \la_nf(u_n)rdr=0.
\]
It follows from \eqref{id0} that  for any $r\in[0,1]$,  
\[
-ru_n'(r)\le \int_0^1 \la_nf(u_n)rdr\to0
\]
as $n\to \infty$. This implies that  $u_n\to 0$ in $C^1_{\text{loc}}((0,1])$ as $n\to \infty$. Then using the equation in \eqref{q}, we derive that $u_n\to 0$ in $C^2_{\text{loc}}((0,1])$. This proves (i). On the other hand, if $0<\la_*<\infty$,  \eqref{po2} implies that there exists a constant $C>0$ such that for any $r\in [0,1]$ and $n\in \mathbb{N}$,  
\[
-ru_n'(r)\le \int_0^1 \la_nf(u_n)rdr\le C.
\]
In particular, $(u_n)$ is uniformly bounded in $C^1_{\text{loc}}((0,1])$. Then using the Ascoli-Arzel\`a theorem and the equation, we obtain a smooth function $u_*$ in $(0,1]$ such that $u_n\to u_*$ in $C^2_{\text{loc}}(0,1]$ up to a subsequence. Then it is clear that $u_*$ satisfies the equation in \eqref{q*}, $u_*\ge0$ in $(0,1)$, and $u_*(1)=0$. Moreover, the monotonicity of $u_*$ obviously follows by that of $u_n$. Furthermore,  from \eqref{id2} and the Fatou lemma, we get for any $r\in(0,1)$, 
\begin{equation}\label{eq:*11}
u_*(r)\ge \int_0^r\la_* f(u_*)sds\log{\frac1r}.
\end{equation}
This ensures 
\[\int_0^1\la_* f(u_*)sds<\infty.\] 
In addition, assume $f(0)>0$. Then if there exists a point $r_0\in(0,1)$ such that $u_*(r_0)=0$, we get $u_*=0$ on $[r_0,1]$ from the monotonicity.  But then, \eqref{eq:*11} shows that for all $r\in(r_0,1]$,
\[
u_*(r)\ge \int_{r_0}^r\la_* f(u_*)sds\log{\frac1r}>0.
\]
This is a contradiction. Hence we have $u_*>0$ in $(0,1)$. This completes the case (ii). Lastly, if $\la_*=\infty$, we see from \eqref{po2} that 
\[
\int_0^1 f(u_n)rdr\to0
\]
as $n\to \infty$. Here recall that $f(t)$ is monotone increasing for all large $t>0$  and $f(t)\to \infty$ as $t\to \infty$ by Lemma \ref{lem:h}. Then noting  the monotonicity of $u_n(r)$ with respect to $r\in(0,1)$,  we get that $(u_n)$ is uniformly bounded in $C_{\text{loc}}((0,1])$. Moreover, using the previous conclusions, the final assertion is clearly proved. This finishes the proof.
\end{proof}
\begin{remark} In the last step of the proof of (iii) above, if we additionally assume $h(t)>0$ for all $t>0$, we get that $u_n\to 0$ locally uniformly in $(0,1]$.    
\end{remark}
\begin{remark} In the case of (ii), $u_*(r)$ may diverge to infinity as $r\to 0^+$. If it occurs, from \eqref{eq:*11} and the monotonicity of $f(t)$ and $u_*(r)$, we get that 
\[
u_*(r)\ge \frac12\la_*f(u_*(r))r^2\log{\frac1r}
\]
for all small $r\in(0,1)$. This and \eqref{ha} imply the upper bound,
\[
u_*(r)\le \left((2+o(1))\log{\frac1r}\right)^{\frac1p}
\]
for all small $r\in(0,1)$ where $o(1)\to0$ as $r\to0^+$. A similar curve will appear as the upper line of the oscillation of the blow-up solutions in the case $p>2$. See \eqref{de02} and \eqref{de11} in Theorem \ref{thm:osc}.  Analogous upper bounds for singular solutions are also found in  \cite{GGP2}. See, for instance, Corollary 7.7 there.  
\end{remark}
Here, we prove Theorem \ref{thm:gl}.
\begin{proof}[Proof of Theorem \ref{thm:gl}] The proof follows from Lemmas \ref{lem:gl} and \ref{kap}. We finish the proof. 
\end{proof}
\subsection{Definition of $a_k$ and $\delta_k$}
We lastly confirm that $\delta_k$ and $a_k$ are well-defined by \eqref{eq:del1} and \eqref{eq:del2} for any $k\in \mathbb{N}\cup\{0\}$.
\begin{lemma}\label{lem:bf} Let $p>2$, $a_0=2$, and $\delta_0=1$. Then for any $k\in \mathbb{N}$, $\delta_k\in (0,\delta_{k-1})$ and $a_k\in(0,2)$ are well-defined by \eqref{eq:del1} and \eqref{eq:del2}. 
\end{lemma}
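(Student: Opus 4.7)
The plan is to induct on $k$, reducing the implicit relation \eqref{eq:del1} to a single-variable root-finding problem. Setting $x=\delta_k/\delta_{k-1}$ and
\[
g_k(x) \;=\; \frac{2p}{2+a_{k-1}}(1-x)-1+x^p,
\]
equation \eqref{eq:del1} becomes $g_k(x)=0$. The induction hypothesis is that $\delta_{k-1}>0$ and $a_{k-1}\in(0,2]$, which holds for $k=1$ from the initial data $\delta_0=1$, $a_0=2$, and which I will show is preserved.

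The first step is to locate a unique root $x_k\in(0,1)$. Observe that $g_k(1)=0$ for every $k$, so this trivial root must be discarded. Three facts about $g_k$ do the job: (a) $g_k$ is strictly convex on $[0,\infty)$, since $g_k''(x)=p(p-1)x^{p-2}>0$ when $p>2$; (b) $g_k(0)=\tfrac{2p}{2+a_{k-1}}-1>0$, because $a_{k-1}\le 2<2p-2$ under $p>2$; (c) $g_k'(1)=p-\tfrac{2p}{2+a_{k-1}}=\tfrac{p\,a_{k-1}}{2+a_{k-1}}>0$. A strictly convex function that is positive at $0$, zero at $1$, and increasing at $1$ must dip below zero on some interval to the left of $1$, so by the intermediate value theorem together with strict convexity it admits exactly one further root $x_k\in(0,1)$. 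Defining $\delta_k:=x_k\delta_{k-1}$ gives $\delta_k\in(0,\delta_{k-1})$.

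It then remains to verify that $a_k:=2-x_k^{p-1}(2+a_{k-1})\in(0,2)$. The upper bound $a_k<2$ is immediate, since $x_k^{p-1}(2+a_{k-1})>0$. The lower bound $a_k>0$, i.e., $x_k^{p-1}(2+a_{k-1})<2$, is the main obstacle. I would handle it by rewriting $g_k(x_k)=0$ in the equivalent form
\[
\frac{1-x_k^p}{p(1-x_k)}\;=\;\frac{2}{2+a_{k-1}},
\]
and applying the mean value theorem to $t\mapsto t^p$ on $[x_k,1]$: the left-hand side equals $\xi^{p-1}$ for some $\xi\in(x_k,1)$, which strictly exceeds $x_k^{p-1}$ since $p>1$. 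Hence $x_k^{p-1}<\tfrac{2}{2+a_{k-1}}$, giving $a_k>0$. This closes the induction and yields $\delta_k\in(0,\delta_{k-1})$ and $a_k\in(0,2)$ for all $k\in\mathbb{N}$, as required.
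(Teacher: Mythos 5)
Your proof is correct and follows essentially the same inductive strategy as the paper: in both, one defines $x_k = \delta_k/\delta_{k-1}$ as the unique root in $(0,1)$ of $\frac{2p}{2+a_{k-1}}(1-x)-1+x^p=0$ and then uses the bound $x_k^{p-1}<2/(2+a_{k-1})$ to conclude $a_k\in(0,2)$. The only difference is that you spell out the justifications (strict convexity for uniqueness, the mean value theorem for the bound) that the paper asserts with "it is easy to see" and "we readily get."
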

\begin{proof} We argue by induction. It is clear that there exists a unique value $0<\delta_1<1$  such that   
\[
\frac p2(1-\delta_1)-1+\delta_1^p=0.
\]
Moreover, we readily get $\delta_1<(1/2)^{1/(p-1)}$. Then $a_1\in (0,2)$ is well-defined by \eqref{eq:del2}. This proves the case $k=1$. Next we suppose $\delta_k\in(0,\delta_{k-1})$ and $a_k\in(0,2)$ are well-defined by \eqref{eq:del1} and \eqref{eq:del2} for some $k\ge1$.  Then again it is easy to see that there exists a unique value $0<x_{k+1}<1$  such that
\[
\frac{2p}{2+a_k}(1-x_{k+1})-1+x_{k+1}^p=0.
\] 
Hence we can uniquely define the desired number $\delta_{k+1}\in (0,\delta_k)$ by $\delta_{k+1}=x_{k+1}\delta_k$. Since $x_{k+1}<\{2/(2+a_k)\}^{1/(p-1)}$, $a_{k+1}\in (0,2)$ is well-defined by \eqref{eq:del2}. We finish the proof.
\end{proof}
We use the next lemma in Section \ref{sec:con2}.
\begin{lemma}\label{lem:bg} Suppose $p>2$. For any $k\in \mathbb{N}\cup\{0\}$, let  $\delta_k$ and $a_k$ be constants defined as above and put $E_k=2a_k/\delta_k^{p-1}$. Then we have that
\[
\delta_k^{p-1}\sum_{i=0}^kE_i=2+a_k.
\]
\end{lemma}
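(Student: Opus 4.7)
The plan is straightforward induction on $k$, since the identity is purely algebraic and the recurrence \eqref{eq:del2} is tailor made to collapse the induction step.

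For the base case $k=0$, one just unpacks the definitions: $E_0 = 2a_0/\delta_0^{p-1} = 4$, so $\delta_0^{p-1}E_0 = 4 = 2 + a_0$.

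For the inductive step, assume $\delta_{k-1}^{p-1}\sum_{i=0}^{k-1}E_i = 2 + a_{k-1}$, and split the sum for $k$ as
\[
\delta_k^{p-1}\sum_{i=0}^{k}E_i = \left(\frac{\delta_k}{\delta_{k-1}}\right)^{p-1}\!\!\delta_{k-1}^{p-1}\sum_{i=0}^{k-1}E_i + \delta_k^{p-1}E_k.
\]
The induction hypothesis turns the first term into $(\delta_k/\delta_{k-1})^{p-1}(2+a_{k-1})$, which by \eqref{eq:del2} (rewritten as $(\delta_k/\delta_{k-1})^{p-1}(2+a_{k-1}) = 2 - a_k$) equals $2-a_k$. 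The second term equals $2a_k$ by the definition of $E_k$. Adding gives $2+a_k$, closing the induction.

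There is no real obstacle here; the only point to notice is that \eqref{eq:del2} is exactly the algebraic glue needed, expressing the ratio $(\delta_k/\delta_{k-1})^{p-1}(2+a_{k-1})$ in terms of $a_k$. The well-definedness of $\delta_k$ and $a_k$ from Lemma \ref{lem:bf} ensures all quantities involved are meaningful.
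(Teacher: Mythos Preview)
Your proof is correct and follows essentially the same argument as the paper: induction on $k$ with the base case $\delta_0^{p-1}E_0=4=2+a_0$, and the inductive step carried out by splitting off the last term and invoking \eqref{eq:del2}. The only cosmetic difference is that the paper indexes the step from $k$ to $k+1$ rather than from $k-1$ to $k$.
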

\begin{proof} We argue by induction. As  $\delta_0^{p-1}E_0=4=2+a_0$, we prove the case $k=0$. We suppose the desired formula holds for some $k\ge0$. Then we get
\[
\delta_{k+1}^{p-1}\sum_{i=0}^{k+1}E_i=\delta_{k+1}^{p-1}E_{k+1}+\left(\frac{\delta_{k+1}}{\delta_k}\right)^{p-1}\delta_k^{p-1}\sum_{i=0}^{k}E_i=2+a_{k+1}
\]
by the definition of $E_{k+1}$, our assumption,  and \eqref{eq:del2}. This completes the proof. 
\end{proof}
\section{Standard bubble}\label{sec:con1} 
Let us start the proof of our main theorems. Throughout this section, we assume, without further comments, $p>0$, (H1), $\{(\la_n,\mu_n,u_n)\}$ is a sequence of solutions of \eqref{q} such that $\mu_n\to \infty$ as $n\to \infty$,  and $(\ga_{0,n})$, $(z_{0,n})$, and $z_0$ are the sequences and function defined in Theorem \ref{thm1}. 
\subsection{Proofs of Theorems \ref{thm1} and \ref{thm2}}
Let us give the proof of Theorems \ref{thm1} and \ref{thm2}. To this end, we begin with the next lemma.
\begin{lemma}\label{prop:thm1}
We have that, up  to a subsequence, $\ga_{0,n}\to 0$, $h(u_n(\ga_{0,n}\cdot))/h(\mu_n)\to1$ in $C_{\text{loc}}([0,\infty))$, and $z_{0,n}\to z_0$ in $C^1_{\text{loc}}([0,\infty))\cap C^2_{\text{loc}}((0,\infty))$ as $n\to \infty$. 
\end{lemma}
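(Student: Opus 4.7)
The plan is three-fold: first deduce $\ga_{0,n}\to 0$, then establish uniform local bounds on $z_{0,n}$, and finally identify the limit as $z_0$ by uniqueness of the initial-value problem. To establish $\ga_{0,n}\to 0$ I would use the log-weighted identity \eqref{id1}. For $n$ large enough that $\mu_n\ge t_2$, Lemma \ref{lem:h2} gives $f(u_n(r))\le f(\mu_n)$ on $(0,1]$. Evaluating \eqref{id1} at $s=1$ and estimating $r\log(1/r)\le 1/e$ yields
\[
\mu_n=\int_0^1\la_n f(u_n(r))\,r\log(1/r)\,dr\le \frac{\la_n f(\mu_n)}{e},
\]
so $\la_n\mu_n^{p-1}f(\mu_n)\ge e\mu_n^p\to\infty$; the defining relation $p\la_n\mu_n^{p-1}f(\mu_n)\ga_{0,n}^2=1$ then forces $\ga_{0,n}\to 0$, and in particular $1/\ga_{0,n}\to\infty$, so every compact subset of $[0,\infty)$ eventually lies in the domain of $z_{0,n}$.

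Next, a direct computation using \eqref{q} and the definition of $\ga_{0,n}$ shows that on $[0,1/\ga_{0,n}]$,
\[
-z_{0,n}''-\frac1r z_{0,n}'=\frac{h(u_n(\ga_{0,n}r))}{h(\mu_n)}\exp\bigl(u_n(\ga_{0,n}r)^p-\mu_n^p\bigr),
\]
with $z_{0,n}(0)=0=z_{0,n}'(0)$. The monotonicity $u_n'<0$ gives $z_{0,n}\le 0$, so the exponential factor is bounded by $1$; writing $u_n(\ga_{0,n}r)=\mu_n+z_{0,n}(r)/(p\mu_n^{p-1})$ and invoking \eqref{hb} with $\xi_n=\mu_n$, the $h$-quotient also stays bounded as long as $z_{0,n}$ is. Integrating the equation from $0$ to $r$ yields $|rz_{0,n}'(r)|\le Cr^2$ and hence $|z_{0,n}(r)|\le Cr^2/2$, which bootstraps to uniform bounds for $z_{0,n}$ and $z_{0,n}'$ on every compact subset of $[0,\infty)$.

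By Arzela-Ascoli, extract a subsequence with $z_{0,n}\to z_*$ in $C^1_{\text{loc}}([0,\infty))$. Using the Taylor expansion
\[
u_n(\ga_{0,n}r)^p-\mu_n^p=\mu_n^p\Bigl[\bigl(1+z_{0,n}(r)/(p\mu_n^p)\bigr)^p-1\Bigr]=z_{0,n}(r)+O(\mu_n^{-p}),
\]
uniform on compacts, together with \eqref{hb}, both the $h$-prefactor and the exponential factor in the right-hand side converge locally uniformly, so $z_*$ satisfies \eqref{leq}. The Liouville equation with initial data $z_*(0)=0=z_*'(0)$ admits the unique solution $z_0(r)=\log\bigl(64/(8+r^2)^2\bigr)$ (verifiable directly, cf.\ \cite{CL}); hence $z_*=z_0$, and by the subsequence principle the full sequence converges. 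The upgrade to $C^2_{\text{loc}}((0,\infty))$ follows by solving for $z_{0,n}''$ from the ODE (the $1/r$ factor is benign away from the origin), and $h(u_n(\ga_{0,n}\cdot))/h(\mu_n)\to 1$ in $C_{\text{loc}}([0,\infty))$ is then immediate from \eqref{hb} combined with the local uniform bound on $z_{0,n}$.

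The main obstacle is the preliminary step $\ga_{0,n}\to 0$: once that is in hand, the standard elliptic rescaling argument proceeds smoothly using the tools already recorded in Section \ref{sec:pre}. Obtaining it requires knitting together Lemma \ref{lem:h2} (global monotonicity of $f$ for large arguments) and the representation formula \eqref{id1} with the correct weight $r\log(1/r)$, and a secondary subtlety is controlling the exponent $u_n^p-\mu_n^p$ on compacts, which is what makes the Taylor expansion above legitimate uniformly.
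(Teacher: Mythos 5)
You take a genuinely different route to $\ga_{0,n}\to 0$: instead of the paper's contradiction argument (assume $\ga_{0,n}$ bounded below, integrate $-(ru_n')'\le r$ twice to bound $\mu_n$), you apply the representation formula \eqref{id1} at $s=1$ directly with the pointwise bound $r\log(1/r)\le 1/e$ and Lemma \ref{lem:h2}, giving $\mu_n\le\la_n f(\mu_n)/e$ and hence $\la_n\mu_n^{p-1}f(\mu_n)\ge e\mu_n^p\to\infty$. This is shorter and avoids the case-by-case double integration; it buys a clean quantitative lower bound on the scaling factor, at the cost of leaning on \eqref{id1} rather than purely on the ODE.

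The one place where your argument is not yet airtight is the uniform local bound on $z_{0,n}$. You split the right-hand side of the rescaled equation into the $h$-quotient $h(u_n(\ga_{0,n}r))/h(\mu_n)$ and the exponential factor $\exp(u_n^p-\mu_n^p)$, bound the latter by $1$ via $z_{0,n}\le 0$, and then invoke \eqref{hb} for the $h$-quotient — but \eqref{hb} applies only on the window $|z_{0,n}|\le pA$, which is exactly what you are trying to establish. You acknowledge this by writing "as long as $z_{0,n}$ is [bounded]" and calling it a bootstrap, but a bootstrap here requires a continuation argument from $z_{0,n}(0)=0$, which you do not carry out. The paper avoids the issue entirely by bounding the product: Lemma \ref{lem:h2} gives $f(u_n(\ga_{0,n}r))/f(\mu_n)\le 1$ directly, so $-z_{0,n}''-\frac1r z_{0,n}'\le 1$ on the whole domain, and integrating once and twice immediately gives the needed uniform bounds. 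Once you have the a priori bound (either way), the rest of your argument — Arzel\`a–Ascoli, the Taylor expansion to pass to the limit equation, and the integration argument to pin down $z_0$ — matches the paper's and is correct.
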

\begin{proof}  First we show $\ga_{0,n}\to0$ as $n\to \infty$. If this does not hold, then we may suppose $(\ga_{0,n})$ is bounded from below by a positive value up to a subsequence. Then from \eqref{q} and Lemma \ref{lem:h2}, we have that
\[
-(ru_n'(r))'=\frac{f(u_n(r))}{\ga_{0,n}^2 p\mu_n^{p-1}f(\mu_n)}r\le   r
\]
for all $r\in [0,1]$ and large  $n\in \mathbb{N}$. After integration with $u_n'(0)=0$, we deduce
\[
-u_n'(r)\le \frac r2
\]
for all $r\in[0,1]$ and large $n\in \mathbb{N}$. Integrating over $[0,1]$ with $u_n(1)=0$  gives that $u_n(0)$ is uniformly bounded for all $n\in \mathbb{N}$.  This is a contradiction.  Next by \eqref{q}, our choice of $\ga_{0,n}$, and Lemma \ref{lem:h2}, we see that
\begin{equation}\label{zn0}
-z_{0,n}''-\frac1r z_{0,n}'=\frac{f(u_n(\ga_{0,n} \cdot))}{f(\mu_n)}\le 1\text{ on }\left(0,1/\ga_{0,n}\right)
\end{equation}
and $z_{0,n}(0)=0=z_{0,n}'(0)$ for all large $n\in \mathbb{N}$. We claim that $(z_{0,n})$ is uniformly bounded in $C^2_{\text{loc}}([0,\infty))$. To prove this, multiplying the inequality \eqref{zn0} by $r$ and integrating it with $z_{0,n}'(0)=0$, we get
\[
0\le -z_{0,n}'(r)\le \frac{r}{2}
\]
for all $r\in [0,\infty)$ and large $n\in \mathbb{N}$. Then by the integration again with $z_{0,n}(0)=0$, we prove that $(z_{0,n})$ is uniformly bounded in $C^1_{\text{loc}}([0,\infty))$. Consequently, using the previous formula and \eqref{zn0}, we prove the claim.  
In particular, for any $K>0$, there exists a constant $C>0$ such that 
\[
\mu_n -\frac{C}{p\mu_n^{p-1}}\le u_n(\ga_{0,n} r)\le \mu_n\]
for all $r\in [0,K]$ and $n\in \mathbb{N}$. Hence using \eqref{hb}, we derive  
\[
\frac{h(u_n(\ga_{0,n} \cdot))}{h(\mu_n)}\to 1\text{ uniformly in }[0,K]
\]
as $n\to \infty$. Since $K>0$ is arbitrary, this proves the second assertion of the lemma. Finally, noting 
\[
-z_{0,n}''-\frac1r z_{0,n}'=\frac{h(u_n(\ga_{0,n} \cdot))}{h(\mu_n)}e^{\mu_n^p\left\{\left(1+\frac{z_{0,n}}{p\mu_n^p}\right)^p-1\right\}}\text{ on }(0,1/\ga_{0,n})
\]
and the Ascoli-Arzel\`{a} theorem, we get a function $z$ such that $z_{0,n}\to z$ in $C^1_{\text{loc}}([0,\infty))\cap C^2_{\text{loc}}((0,\infty))$ as $n\to \infty$ and $z$ satisfies
\[
\begin{cases}
-z''-\frac1r z'=e^z,\ z\le0 \text{ in }(0,\infty),\\
z(0)=0=z'(0),
\end{cases}
\]
up to a subsequence. Integrating the equation (see the last part of the proof of Lemma 4.3 in \cite{GN}), we deduce $z=z_0$. This finishes the proof.  
\end{proof}
Next, we obtain  the  energy and pointwise estimates. After this, we often use the notation defined by \eqref{defE}.
\begin{lemma}\label{lem21} 
There exists  a sequence $(\rho_{0,n})\subset (0,1)$ of values  such that $\rho_{0,n}\to0$, $\rho_{0,n}/\ga_{0,n}\to \infty$, $\mu_n^{-p/2}\log{(\rho_{0,n}/\ga_{0,n})}\to0$, $u_n(\rho_{0,n})/\mu_n\to1$, $\|z_{0,n}-z_0\|_{C^1{([0,\rho_{0,n}/\ga_{0,n}])}}\to0$, $\|h(u_n(\ga_{0,n}\cdot))/h(\mu_n)-1\|_{C([0,\rho_{0,n}/\ga_{0,n}])}\to 0$, and  
\[
E_n(0,\rho_{0,n})\to 4
\]
as $n\to \infty$ up to a subsequence. Moreover, we have 
\[
z_{0,n}(r)\le -(4+o(1))\log{r}
\]
for all $r\in[\rho_{0,n}/\ga_{0,n},1/\ga_{0,n}]$ and $n\in \mathbb{N}$ where $o(1)\to 0$ as $n\to \infty$ uniformly for all $r$ in the interval. \end{lemma}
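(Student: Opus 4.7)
The plan is to construct $(\rho_{0,n})$ by a diagonal argument out of Lemma~\ref{prop:thm1}, deduce the energy estimate on this slowly expanding scale, and then bootstrap it via the identity \eqref{id0} into the desired pointwise decay for $z_{0,n}$.

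First I would use that Lemma~\ref{prop:thm1} gives, for every fixed integer $m$, $\|z_{0,n}-z_0\|_{C^1([0,m])}\to0$ and $\|h(u_n(\ga_{0,n}\cdot))/h(\mu_n)-1\|_{C([0,m])}\to0$, together with $\ga_{0,n}\to0$ and $\mu_n\to\infty$. A standard diagonal extraction then produces a sequence $R_n\to\infty$ along which all four quantities
\[
\|z_{0,n}-z_0\|_{C^1([0,R_n])},\ \ \|h(u_n(\ga_{0,n}\cdot))/h(\mu_n)-1\|_{C([0,R_n])},\ \ \ga_{0,n}R_n,\ \ \mu_n^{-p/2}\log R_n
\]
tend to zero. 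Setting $\rho_{0,n}:=\ga_{0,n}R_n$, the required conditions $\rho_{0,n}\to0$, $\rho_{0,n}/\ga_{0,n}\to\infty$, $\mu_n^{-p/2}\log(\rho_{0,n}/\ga_{0,n})\to0$, and the two norm convergences are immediate. Since $z_0(R_n)=-4\log R_n+O(1)$, the $C^0$ control yields $z_{0,n}(R_n)=-4\log R_n+O(1)$, hence $u_n(\rho_{0,n})/\mu_n=1+z_{0,n}(R_n)/(p\mu_n^p)\to1$, using $\log R_n=o(\mu_n^{p/2})=o(\mu_n^p)$.

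Next, for the energy, the change of variables $r=\ga_{0,n}s$ combined with our choice of $\ga_{0,n}$ gives
\[
E_n(0,\rho_{0,n})=\int_0^{R_n}\frac{h(u_n(\ga_{0,n}s))}{h(\mu_n)}\exp\!\Big\{\mu_n^p\Big[\big(1+\tfrac{z_{0,n}(s)}{p\mu_n^p}\big)^p-1\Big]\Big\}\,s\,ds.
\]
Since $|z_{0,n}(s)|\le|z_0(s)|+o(1)=O(\log R_n)=o(\mu_n^{p/2})$ uniformly on $[0,R_n]$, a Taylor expansion gives $\mu_n^p[(1+z_{0,n}/(p\mu_n^p))^p-1]=z_{0,n}(s)+o(1)$ uniformly on the same interval. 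Combined with the uniform convergence of $h(u_n(\ga_{0,n}\cdot))/h(\mu_n)$ to $1$, this reduces the task to showing $\int_0^{R_n}e^{z_{0,n}(s)}s\,ds\to 4$.

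To handle this integral I would split at a large fixed $M$. The piece $\int_0^M$ converges to $\int_0^M e^{z_0}s\,ds$ by dominated convergence. For the tail $[M,R_n]$, the crucial tool is the scaled form of \eqref{id0}, namely $-sz_{0,n}'(s)=E_n(0,\ga_{0,n}s)$ for $s\in(0,1/\ga_{0,n})$. Monotonicity of $E_n$ in its upper limit yields $-sz_{0,n}'(s)\ge E_n(0,\ga_{0,n}M)\to\int_0^M e^{z_0}s\,ds=:4-\e_M$ with $\e_M\to0$, so $z_{0,n}'(s)\le -(4-\e_M-o(1))/s$ on $[M,R_n]$. Integration gives $e^{z_{0,n}(s)}\le e^{z_{0,n}(M)}(M/s)^{4-\e_M-o(1)}$, whence
\[
\int_M^{R_n}e^{z_{0,n}(s)}s\,ds\le \frac{e^{z_{0,n}(M)}M^2}{2-\e_M-o(1)},
\]
which tends to $0$ on letting first $n\to\infty$ and then $M\to\infty$, since $e^{z_0(M)}M^2\to 0$. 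Combining both pieces, $E_n(0,\rho_{0,n})\to\int_0^\infty e^{z_0}s\,ds=4$.

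Finally, the pointwise bound follows by feeding this energy estimate back into the same identity. For $r\in[\rho_{0,n}/\ga_{0,n},1/\ga_{0,n}]=[R_n,1/\ga_{0,n}]$, monotonicity of $E_n$ gives $-rz_{0,n}'(r)=E_n(0,\ga_{0,n}r)\ge E_n(0,\rho_{0,n})=4+o(1)$. Integrating from $R_n$ to $r$ and inserting $z_{0,n}(R_n)=-4\log R_n+O(1)$ yields
\[
z_{0,n}(r)\le z_{0,n}(R_n)-(4+o(1))\log(r/R_n)=-4\log r+o(1)\log R_n+O(1),
\]
and since $\log r\ge\log R_n\to\infty$ on the interval considered, the remainder is $o(\log r)$, which is the announced bound. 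I expect the main difficulty to be exactly the tail control of Step~3: one has to convert a purely qualitative fact (convergence of $z_{0,n}$ on compacts) into quantitative polynomial decay on the growing set $[M,R_n]$, and the bootstrap via \eqref{id0} combined with monotonicity of $E_n$ is what makes this possible.
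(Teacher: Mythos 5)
Your proof is correct and follows essentially the same route as the paper's: a diagonal extraction from Lemma~\ref{prop:thm1} produces a slowly growing scale $R_n$, and the scaled form of \eqref{id0}, namely $-sz_{0,n}'(s)=E_n(0,\ga_{0,n}s)$, supplies the pointwise tail bound. The one organizational difference is in how the energy convergence $E_n(0,\rho_{0,n})\to4$ is obtained. The paper bakes it directly into the diagonal extraction by additionally requiring $|\tilde R_n z_{0,n}'(\tilde R_n)-\tilde R_n z_0'(\tilde R_n)|\to0$, which via the identity immediately gives $E_n(0,\ga_{0,n}\tilde R_n)=-\tilde R_nz_{0,n}'(\tilde R_n)\to -\lim_{r\to\infty}rz_0'(r)=4$. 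You instead omit this slope condition from the diagonal and prove the energy convergence explicitly by splitting the integral at a fixed $M$ and controlling the tail through the same identity applied at $s=M$. Both approaches use the same key tool; yours is a bit longer but makes the tail estimate transparent rather than leaving it implicit in the diagonal. Note that the tail control you work out is in fact needed to justify the paper's one-line diagonal as well, so making it explicit is a reasonable choice, not a detour.
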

\begin{proof} 
First we take a subsequence if necessary so that  all the assertions in the previous lemma hold. Then, we can choose a sequence $(\tilde{R}_n)$ of positive values  so that $\tilde{R}_n \ga_{0,n}\to0$, $\tilde{R}_n\to \infty$,  $\mu_n^{-p/2}\log{\tilde{R}_n}\to 0$, $\|z_{0,n}-z_0\|_{C^1([0,\tilde{R}_n])}\to0$, $\|h(u_n(\ga_{0,n}\cdot))/h(\mu_n)-1\|_{C([0,\tilde{R}_n])}\to 0$, $|\tilde{R}_nz_{0,n}'(\tilde{R}_n)-\tilde{R}_n z_0'(\tilde{R}_n)|\to0$, and 
\[
\begin{split}
E(0,\ga_{0,n}\tilde{R}_n)
&=\int_0^{\tilde{R}_n}\frac{h(u_n(\ga_{0,n} r))}{h(\mu_n)}e^{\mu_n^p\left\{\left(1+\frac{z_{0,n}(r)}{p\mu_n^p}\right)^p-1\right\}}rdr\to 4
\end{split}
\]
as $n\to \infty$. Then since $z_{0,n}(\tilde{R}_n)=z_0(\tilde{R}_n)+o(1)$, we also get that  $u_n(\ga_{0,n}\tilde{R}_n)/\mu_n\to1$ as $n\to \infty$. Furthermore, for any $r\in[\tilde{R}_n,1/\ga_{0,n}]$, we obtain 
\[
rz_{0,n}'(r)\le \tilde{R}_nz_{0,n}'(\tilde{R}_n)=\tilde{R}_nz_{0}'(\tilde{R}_n)+o(1)=-4+o(1).
\] 
Multiplying this by $1/r$ and integrating it,  we see
\[
\begin{split}
z_{0,n}(r)&\le z_{0}(\tilde{R}_n)-(4+o(1))\log{\frac{r}{\tilde{R}_n}}+o(1)\le -(4+o(1))\log{r}
\end{split}
\] 
for all $r\in [\tilde{R}_n,1/\ga_{0,n}]$. Putting $\rho_{0,n}=\ga_{0,n}\tilde{R}_n$ for all $n\in \mathbb{N}$, we prove all the assertions. 
 We finish the proof. 
\end{proof}
The next estimate will be used when we deduce the oscillation property. 
\begin{lemma}\label{lem:thm1} Assume $(\rho_{0,n})$ is a sequence obtained in Lemma \ref{lem21} and take any sequences $(r_n)\subset (0,\rho_{0,n}]$ and $(R_n)\subset (0,\infty)$  such that $r_n=R_n \ga_{0,n}$ for all $n\in \mathbb{N}$. Then we have that
\[
p \la_n r_n^2u_n(r_n)^{p-1}f(u_n(r_n))=\frac{64R_n^2}{(8+R_n^2)^2}(1+o(1))
\]
as $n\to \infty$.  In particular, we obtain that $\phi_n(\rho_{0,n})\to0$ as $n\to \infty$. 
\end{lemma}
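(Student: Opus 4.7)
The plan is to plug the normalization $p\lambda_n\mu_n^{p-1}f(\mu_n)\gamma_{0,n}^2=1$ into the quantity $p\lambda_n r_n^2 u_n(r_n)^{p-1}f(u_n(r_n))$ and express everything through the ratio of $f(u_n(r_n))$ to $f(\mu_n)$ and the scaled profile $z_{0,n}$. Writing $r_n=R_n\gamma_{0,n}$, the target quantity becomes
\[
R_n^{2}\left(\frac{u_n(r_n)}{\mu_n}\right)^{p-1}\frac{h(u_n(r_n))}{h(\mu_n)}\,e^{u_n(r_n)^p-\mu_n^p},
\]
and the goal is to show that the last three factors collectively tend to $e^{z_0(R_n)}$, uniformly in $R_n\in(0,\rho_{0,n}/\gamma_{0,n}]$, since $R_n^2e^{z_0(R_n)}=64R_n^2/(8+R_n^2)^2$.

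The key step is the uniform estimate $z_{0,n}(R_n)=z_0(R_n)+o(1)$, supplied directly by $\|z_{0,n}-z_0\|_{C^1([0,\rho_{0,n}/\gamma_{0,n}])}\to 0$ from Lemma \ref{lem21}. From the definition $u_n(r_n)=\mu_n+z_{0,n}(R_n)/(p\mu_n^{p-1})$, I would then write
\[
u_n(r_n)^p-\mu_n^p=\mu_n^p\Bigl[(1+x_n)^p-1\Bigr],\qquad x_n:=\frac{z_{0,n}(R_n)}{p\mu_n^p},
\]
apply the mean value theorem to get $u_n(r_n)^p-\mu_n^p=z_{0,n}(R_n)(1+\theta_n x_n)^{p-1}$ for some $\theta_n\in(0,1)$, and deduce $e^{u_n(r_n)^p-\mu_n^p}=e^{z_{0,n}(R_n)}(1+o(1))$. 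The ratio $(u_n(r_n)/\mu_n)^{p-1}$ goes to $1$ from $x_n=o(1)$, and the $h$-ratio goes to $1$ by the uniform estimate $\|h(u_n(\gamma_{0,n}\cdot))/h(\mu_n)-1\|_{C([0,\rho_{0,n}/\gamma_{0,n}])}\to 0$ already established in Lemma \ref{lem21}.

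The main obstacle is that $R_n$ is only required to lie in $(0,\rho_{0,n}/\gamma_{0,n}]$, so $R_n$ may go to infinity, in which case $z_0(R_n)\sim -4\log R_n\to -\infty$. This forces me to verify two non-trivial uniformities: first, that $x_n\to 0$ uniformly, and second, that the Taylor error term $z_{0,n}(R_n)\cdot[(1+\theta_n x_n)^{p-1}-1]=O\bigl(z_{0,n}(R_n)^2/\mu_n^p\bigr)$ stays $o(1)$ even when $|z_{0,n}(R_n)|\to\infty$. Both bounds follow from the crucial estimate $\mu_n^{-p/2}\log(\rho_{0,n}/\gamma_{0,n})\to 0$ of Lemma \ref{lem21}: since $|z_{0,n}(R_n)|\le|z_0(R_n)|+1\le 4\log(\rho_{0,n}/\gamma_{0,n})+C$ for large $n$, one obtains $z_{0,n}(R_n)^2=o(\mu_n^p)$, which simultaneously kills both potential obstructions.

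Once the asymptotic identity $p\lambda_n r_n^2 u_n(r_n)^{p-1}f(u_n(r_n))=R_n^2 e^{z_0(R_n)}(1+o(1))=\tfrac{64R_n^2}{(8+R_n^2)^2}(1+o(1))$ is in hand, the second assertion $\phi_n(\rho_{0,n})\to 0$ is immediate: it is the specialization $r_n=\rho_{0,n}$, i.e.\ $R_n=\rho_{0,n}/\gamma_{0,n}\to\infty$, and the explicit right-hand side $64R_n^2/(8+R_n^2)^2$ then visibly tends to zero.
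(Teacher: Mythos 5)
Your proposal is correct and follows essentially the same route as the paper's one-line calculation, which simply rewrites $p\lambda_n r_n^2 u_n(r_n)^{p-1}f(u_n(r_n))$ as $(u_n(r_n)/\mu_n)^{p-1}R_n^2\,f(u_n(r_n))/f(\mu_n)$ and invokes Lemma \ref{lem21}. What you have done is make explicit the uniformity check that the paper leaves implicit, correctly isolating $\mu_n^{-p/2}\log(\rho_{0,n}/\gamma_{0,n})\to 0$ as the estimate that makes the Taylor-error term $O(z_{0,n}(R_n)^2/\mu_n^p)$ vanish uniformly even when $R_n\to\infty$ along the allowed range.
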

\begin{proof}  It follows from Lemma \ref{lem21} that  
\[
\begin{split}
p \la_nr_n^2 u_n(r_n)^{p-1}f(u_n(r_n))&=\left(\frac{u_n(r_n)}{\mu_n}\right)^{p-1}\left(\frac{r_n}{\ga_{0,n}}\right)^2\frac{f(u_n(r_n))}{f(\mu_n)}\\
&=(1+o(1))R_n^2e^{z_0(R_n)+o(1)}.\\
\end{split}
\]
This shows the former  assertion. Moreover, noting  $\rho_{0,n}/\ga_{0,n}\to \infty$ as $n\to \infty$, we confirm the latter one.  This completes the proof.
\end{proof}
Now we prove our first theorem. 
\begin{proof}[Proof of Theorem \ref{thm1}] The former assertions are the consequences of Lemmas  \ref{prop:thm1} and \ref{lem21}. The latter ones are proved by Lemma \ref{lem:thm1} and the facts that 
\[
\max_{R>0}\frac{64R^2}{(8+R^2)^2}=2
\]
and the maximum is attained by $R=2 \sqrt{2}$. We finish the proof. 
\end{proof}
We proceed to the proof of Theorem \ref{thm2}. 
\begin{lemma}\label{lem210} We get
\[
\limsup_{n\to \infty}\frac{\log{\frac1{\ga_{0,n}}}}{\mu_n^p}\le\frac p4
\]
up to a subsequence. 
\end{lemma}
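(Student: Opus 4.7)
The plan is to apply the representation identity \eqref{id1} at $s=1$, namely
\[
\mu_n = \int_0^1 \la_n f(u_n(r))\, r\log(1/r)\, dr,
\]
and then use the concentration data from Lemma \ref{lem21} to extract a lower bound for $\mu_n$ in terms of $\log(1/\ga_{0,n})$. Since the integrand is nonnegative, discarding the contribution on $[\rho_{0,n},1]$ is free, and I am left with the inequality $\mu_n \ge \int_0^{\rho_{0,n}} \la_n f(u_n)\, r\log(1/r)\, dr$. On this truncated interval, I would split the logarithm as $\log(1/r) = \log(1/\rho_{0,n}) + \log(\rho_{0,n}/r)$ to separate a ``bulk'' contribution from a ``profile'' contribution.

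The first piece equals $\log(1/\rho_{0,n})\cdot E_n(0,\rho_{0,n})/(p\mu_n^{p-1})$, and Lemma \ref{lem21} gives both $E_n(0,\rho_{0,n}) \to 4$ and $\log(1/\rho_{0,n})>0$. The second piece, by a fresh application of \eqref{id1} now at $s = \rho_{0,n}$, is exactly $\mu_n - u_n(\rho_{0,n})$, which by the definition of $z_{0,n}$ equals $-z_{0,n}(\rho_{0,n}/\ga_{0,n})/(p\mu_n^{p-1})$. Here the pointwise upper bound $z_{0,n}(r) \le -(4+o(1))\log r$ supplied by Lemma \ref{lem21}, evaluated at the left endpoint $r = \rho_{0,n}/\ga_{0,n}$ of its validity range, produces
\[
\mu_n - u_n(\rho_{0,n}) \;\ge\; \frac{(4+o(1))\log(\rho_{0,n}/\ga_{0,n})}{p\mu_n^{p-1}}.
\]

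Adding the two contributions and using the elementary identity $\log(1/\ga_{0,n}) = \log(1/\rho_{0,n}) + \log(\rho_{0,n}/\ga_{0,n})$, together with the positivity of both summands (since $\rho_{0,n} \to 0$ and $\rho_{0,n}/\ga_{0,n} \to \infty$), I expect to conclude
\[
p\mu_n^p \;\ge\; (4+o(1))\log(1/\ga_{0,n}),
\]
which dividing by $\mu_n^p$ yields the claim. The only delicate point is merging the two independent $o(1)$ error terms — the one from $E_n(0,\rho_{0,n}) \to 4$ and the one from the pointwise bound on $z_{0,n}$ — into a single $o(1)$ multiplying $\log(1/\ga_{0,n})$. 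This is harmless because the separate prefactors $\log(1/\rho_{0,n})$ and $\log(\rho_{0,n}/\ga_{0,n})$ are both positive, so taking the worse of the two errors gives a clean bound and no cancellation can occur. No Pohozaev input is needed here; the argument rests entirely on identity \eqref{id1} and the first-bubble estimates already proved in this subsection.
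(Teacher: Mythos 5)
Your proof is correct, but it takes a noticeably longer route than the paper. The paper's proof is a one-liner: since $u_n(1)=0$ gives $z_{0,n}(1/\ga_{0,n})=p\mu_n^{p-1}(u_n(1)-\mu_n)=-p\mu_n^p$ exactly, one simply reads off the pointwise bound $z_{0,n}(r)\le -(4+o(1))\log r$ from Lemma \ref{lem21} at the \emph{right} endpoint $r=1/\ga_{0,n}$ of its validity interval and is done. Your argument instead applies \eqref{id1} at $s=1$, truncates to $[0,\rho_{0,n}]$, splits $\log(1/r)=\log(1/\rho_{0,n})+\log(\rho_{0,n}/r)$, identifies the second integral with $\mu_n-u_n(\rho_{0,n})$ via a second use of \eqref{id1}, and then invokes the energy estimate $E_n(0,\rho_{0,n})\to 4$ together with the pointwise bound at the \emph{left} endpoint $\rho_{0,n}/\ga_{0,n}$. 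Every step is sound, and your remark about merging the two $o(1)$ factors is fine: with both $\log(1/\rho_{0,n})\ge 0$ and $\log(\rho_{0,n}/\ga_{0,n})\ge 0$, the inequality $\alpha a+\beta b\ge\min(\alpha,\beta)(a+b)$ closes the gap. What you have in effect done is re-derive the value of the pointwise bound at $1/\ga_{0,n}$ from its value at $\rho_{0,n}/\ga_{0,n}$ plus the energy identity --- information that Lemma \ref{lem21} already packaged into the bound itself. Your decomposition is closer in spirit to the argument the paper reserves for the converse $\liminf$ estimate in Lemma \ref{lem:thm22}; for the present $\limsup$ direction, the direct evaluation at $r=1/\ga_{0,n}$ is strictly shorter.
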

\begin{proof}
From Lemma \ref{lem21}, we deduce
\[
\begin{split}
-p\mu_n^p&=z_{0,n}\left(1/\ga_{0,n}\right)\le -(4+o(1))\log{\frac1{\ga_{0,n}}}
\end{split}
\]
up to a subsequence. This shows the desired formula. We finish the proof. 
\end{proof}
Let us complete the energy estimate in the case $p<2$. 
\begin{lemma}\label{lem:thm21} Suppose $p<2$. Then we have
\[
\lim_{n\to \infty}E_n(0,1)=4
\]
up to a subsequence. 
\end{lemma}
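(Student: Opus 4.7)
The plan is to decompose $E_n(0,1)=E_n(0,\rho_{0,n})+E_n(\rho_{0,n},1)$ and invoke Lemma~\ref{lem21} to handle the first summand, which converges to $4$. The problem therefore reduces to proving that the tail $E_n(\rho_{0,n},1)\to 0$. After the substitution $r=\ga_{0,n}s$, and using the defining relation $p\la_n\mu_n^{p-1}f(\mu_n)\ga_{0,n}^2=1$, this tail becomes
\[
\int_{\rho_{0,n}/\ga_{0,n}}^{1/\ga_{0,n}}\frac{h(u_n(\ga_{0,n}s))}{h(\mu_n)}\,e^{u_n(\ga_{0,n}s)^p-\mu_n^p}\,s\,ds.
\]
The central ingredients are the pointwise bound $z_{0,n}(s)\le -(4+o(1))\log s$ on $[\rho_{0,n}/\ga_{0,n},1/\ga_{0,n}]$ from Lemma~\ref{lem21}, the $h$-ratio estimate of Lemma~\ref{lem:h3}, the asymptotic $\log h(t)=o(t^p)$ in \eqref{ha}, and the bound $\log(1/\ga_{0,n})\le(p/4+o(1))\mu_n^p$ of Lemma~\ref{lem210}.

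I would fix $\eta\in(0,1)$ with $\eta^p<1-p/2$, possible precisely because $p<2$, and partition $[\rho_{0,n},1]$ by monotonicity of $u_n$ into $I_A^n=\{u_n\ge\eta\mu_n\}$ and $I_B^n=\{u_n<\eta\mu_n\}$. On $I_A^n$, I would combine the elementary inequality $(1+y)^p-1\le \kappa_p\,p\,y$ for $y\in[-1,0]$, where $\kappa_p:=\min(1,1/p)$ (from concavity when $p\le 1$ and from $(1+y)^p\le 1+y$ when $p\ge 1$), which gives $e^{u_n^p-\mu_n^p}\le e^{\kappa_p z_{0,n}(s)}$, with Lemma~\ref{lem:h3}, which yields $h(u_n)/h(\mu_n)\le e^{o(|z_{0,n}|)}$ uniformly on the rescaled $I_A^n$. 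Together with the logarithmic bound on $z_{0,n}$, the integrand is majorized by $s^{1-4\kappa_p+o(1)}$, and since $p<2$ forces $\kappa_p>1/2$, integration starting from $\rho_{0,n}/\ga_{0,n}\to\infty$ gives a contribution of order $(\ga_{0,n}/\rho_{0,n})^{4\kappa_p-2+o(1)}\to 0$.

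On $I_B^n$, the monotonicity of $f$ at infinity (Lemma~\ref{lem:h2}) and \eqref{ha} give $f(u_n)\le f(\eta\mu_n)\le f(\mu_n)\,e^{(\eta^p-1+o(1))\mu_n^p}$, while Lemma~\ref{lem210} produces $1/\ga_{0,n}^2\le e^{(p/2+o(1))\mu_n^p}$. Combining them, the contribution is bounded by
\[
\tfrac12\,e^{(\eta^p-1+p/2+o(1))\mu_n^p},
\]
which is exponentially small by the choice of $\eta$. Summing the two estimates yields $E_n(\rho_{0,n},1)\to 0$, and hence $E_n(0,1)\to 4$.

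The main obstacle lies in the $I_A^n$-estimate: the resulting exponent of $s$ in the integrand bound is $1-4\kappa_p$, and the tail integral converges only when $4\kappa_p>2$, i.e., $\kappa_p>1/2$. This inequality is tight and holds exactly when $p<2$, breaking down at $p=2$; the corresponding breakdown on $I_B^n$ is the condition $\eta^p<1-p/2$, which again requires $p<2$. Thus the subcritical assumption enters essentially in both subregions, reflecting the more delicate behavior observed in the critical case.
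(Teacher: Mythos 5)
Your argument is correct and tracks the paper's own proof of this lemma in all essential respects: the same decomposition at $\rho_{0,n}$, the same split of the tail into a "bubble-adjacent" region (where the pointwise bound on $z_{0,n}$ from Lemma~\ref{lem21} combined with the convexity estimate $(1+y)^p-1\le\kappa_p\,py$ and Lemma~\ref{lem:h3} give a power-law integrand with exponent $<-1$) and a "far" region $\{u_n<\eta\mu_n\}$ (where the definition of $\ga_{0,n}$ and Lemma~\ref{lem210} give exponential decay), with the subcriticality $p<2$ entering in exactly the same two places. Your $\kappa_p=\min(1,1/p)$ is the paper's $c_p$, and fixing $\eta^p<1-p/2$ is a mild generalization of the paper's explicit choice $\eta^p=(1-p/2)/2$; nothing new in substance.
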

\begin{proof}
Let $(\rho_{0,n})$ be a sequence in Lemma \ref{lem21}. Then   it suffices to show 
\[
\lim_{n\to \infty}E_n(\rho_{0,n},1)=0.
\]
From Lemma \ref{lem210}, the definition of $\ga_{0,n}$, and \eqref{ha}, we derive that $\la_n\le e^{-(1-p/2+o(1))\mu_n^p}$. Noting this, we choose a constant $\e=\{(1-p/2)/2\}^{1/p}$ and a sequence $(r_n)\subset(\rho_{0,n},1)$ so that $u_n(r_n)=\e \mu_n$ for all $n\in \mathbb{N}$. Then we get from Lemma \ref{lem:h} that
\[
E_n(r_n,1)\le p\mu_n^{p-1} e^{-(1-p/2+o(1))\mu_n^p}\left(f(u_n(r_n))+\max_{0\le t\le t_1}f(t)\right)\to0
\]
as $n\to \infty$ where $t_1>0$ is the constant in Lemma \ref{lem:h}. Hence the rest of the proof is to confirm that $E_n(\rho_{0,n},r_n)\to0$ as $n\to \infty$. To conclude the proof, we note that there exists a value $M>0$ such that $z_{0,n}(\rho_{0,n}/\ga_{0,n})=z_0(\rho_{0,n}/\ga_{0,n})+o(1)\le -M$ for all $n\in \mathbb{N}$ by Lemma \ref{lem21}.  It follows that  $-p\mu_n^p(1-\e)\le z_{0,n}(r)<-M$ for all $r\in[\rho_{0,n}/\ga_{0,n},r_n/\ga_{0,n}]$ and $n\in \mathbb{N}$. Then from Lemma \ref{lem:h3}, there exists  a sequence $(\e_n)$ of values  such that $\e_n\to0$ as $n\to \infty$ and, writing  $R_n=\rho_{0,n}/\ga_{0,n}$ and $R_n'=r_n/\ga_{0,n}$, 
\[
\begin{split}
E_n(\rho_{0,n},r_n)&=\int_{R_n}^{R_n'}e^{\mu_n^p\left\{\left(1+\frac{z_{0,n}}{p\mu_n^p}\right)^p-1\right\}+\e_n z_{0,n}}rdr
\end{split}
\]
for all $n\in \mathbb{N}$. Here, if $p \ge1 $, using $0\le 1+z_{0,n}/(p\mu_n^p)\le1$, we get
\[
\mu_n^p\left\{\left(1+\frac{z_{0,n}}{p\mu_n^p}\right)^p-1\right\}\le z_{0,n}/p.
\]
On the other hand, if $0<p<1$, there exists a constant $\theta\in(0,1)$ such that
\[
\mu_n^p\left\{\left(1+\frac{z_{0,n}}{p\mu_n^p}\right)^p-1\right\}=z_{0,n}-\left(1+\frac{\theta z_{0,n}}{p\mu_n^p}\right)^{p-2} \frac{(1-p)z_{0,n}^2}{2p\mu_n^p} \le z_{0,n}. 
\]
Hence, setting $c_p=1/p$ if $1\le p<2$ and $c_p=1$ if $0<p<1$, we obtain from Lemma \ref{lem21} that  
\[
\begin{split}
E_n(\rho_{0,n},r_n)&\le \int_{R_n}^{R_n'}e^{(c_p+\e_n) z_{0,n}}rdr\le \int_{R_n}^{R_n'}e^{-4c_p(1+o(1)) \log{r}}rdr\\
&\to0
\end{split}
\]
since $4c_p>2$ and $R_n\to \infty$ as $n\to \infty$. This finishes the proof. 
\end{proof}
As  a result, we can improve  Lemma \ref{lem210}.
 \begin{lemma}\label{lem:thm22} Assume $p<2$. Then we obtain
\[
\lim_{n\to \infty}\frac{\log{\frac1{\ga_{0,n}}}}{\mu_n^p}=\frac p4
\]
up to a subsequence.
\end{lemma}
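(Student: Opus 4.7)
My plan is as follows. Lemma \ref{lem210} already supplies the upper bound $\limsup \log(1/\ga_{0,n})/\mu_n^p \le p/4$, so only the matching lower bound is needed. The key tool will be the Green type identity \eqref{id2} evaluated at $s=\rho_{0,n}$ and $t=1$, combined with the two ``energy'' estimates now available: $E_n(0,\rho_{0,n})\to 4$ from Lemma \ref{lem21}, and $E_n(\rho_{0,n},1)\to 0$ obtained in the proof of Lemma \ref{lem:thm21}. The former says that the inner bubble carries the full mass $4$; the latter expresses that for $p<2$ no residual mass escapes to the outside.

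Concretely, \eqref{id2} with those endpoints and $u_n(1)=0$ reads
\[
u_n(\rho_{0,n}) = \log\frac{1}{\rho_{0,n}}\int_0^{\rho_{0,n}}\la_n f(u_n)\,r\,dr + \int_{\rho_{0,n}}^1 \la_n f(u_n)\,r\log\frac{1}{r}\,dr.
\]
I would multiply by $p\mu_n^{p-1}$. Using $u_n(\rho_{0,n})/\mu_n\to 1$ from Lemma \ref{lem21}, the left-hand side becomes $p\mu_n^p(1+o(1))$. The first term on the right is $E_n(0,\rho_{0,n})\log(1/\rho_{0,n})=(4+o(1))\log(1/\rho_{0,n})$ by \eqref{eq:sq}. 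For the outer integral, the trivial monotonicity $\log(1/r)\le \log(1/\rho_{0,n})$ on $[\rho_{0,n},1]$ gives the upper bound
\[
p\mu_n^{p-1}\int_{\rho_{0,n}}^1 \la_n f(u_n)\,r\log\frac{1}{r}\,dr \le \log\frac{1}{\rho_{0,n}}\cdot E_n(\rho_{0,n},1) = \log\frac{1}{\rho_{0,n}}\cdot o(1).
\]
Collecting, $p\mu_n^p(1+o(1))=(4+o(1))\log(1/\rho_{0,n})$, so dividing by $\mu_n^p$ yields $\log(1/\rho_{0,n})/\mu_n^p\to p/4$.

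It remains to pass from $\rho_{0,n}$ to $\ga_{0,n}$, which is immediate from Lemma \ref{lem21}: the estimate $\log(\rho_{0,n}/\ga_{0,n})=o(\mu_n^{p/2})$ together with $p/2<p$ gives $\log(\rho_{0,n}/\ga_{0,n})=o(\mu_n^p)$, whence $\log(1/\ga_{0,n})/\mu_n^p\to p/4$ as well. I do not expect any serious obstacle; the only delicate point is the control of the outer integral, which leans essentially on the vanishing of the outer energy $E_n(\rho_{0,n},1)=o(1)$ that is specific to $p<2$. This is precisely the obstruction that breaks the clean identity argument in the critical and supercritical regimes, consistent with the structural dichotomy highlighted later in the paper.
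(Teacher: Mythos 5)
Your argument is correct, and it takes a genuinely different route from the paper's. The paper proves the missing lower bound by applying \eqref{id1} at $s=1$, writing $\log(1/r)=\log(1/\ga_{0,n})+\log(\ga_{0,n}/r)$, using $E_n(0,1)\to4$ from Lemma \ref{lem:thm21}, and then showing that the rescaled residual integral $\int_0^{1/\ga_{0,n}} \frac{f(u_n(\ga_{0,n}r))}{f(\mu_n)}\,r\log(1/r)\,dr$ is negative for large $n$ by splitting it at a large fixed $R$ and invoking the $C^1_{\text{loc}}$ convergence $z_{0,n}\to z_0$. You instead apply \eqref{id2} with $s=\rho_{0,n}$, $t=1$ — splitting precisely at the bubble boundary — so that the inner piece is governed by $E_n(0,\rho_{0,n})\to 4$ and the outer piece is killed outright by the trivial monotonicity bound $\log(1/r)\le\log(1/\rho_{0,n})$ together with $E_n(\rho_{0,n},1)\to0$, which follows directly from the stated conclusions of Lemmas \ref{lem21} and \ref{lem:thm21} since $E_n(\rho_{0,n},1)=E_n(0,1)-E_n(0,\rho_{0,n})\to 4-4=0$. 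Your route is a bit more transparent in that it avoids the sign argument on the residual scaled integral and also delivers $\log(1/\rho_{0,n})/\mu_n^p\to p/4$ as a by-product; the only extra step is the harmless transfer from $\rho_{0,n}$ to $\ga_{0,n}$ via $\log(\rho_{0,n}/\ga_{0,n})=o(\mu_n^{p/2})=o(\mu_n^p)$ from Lemma \ref{lem21}, which you handle correctly. One small point worth making explicit is that the outer integral is also nonnegative, so the upper bound $p\mu_n^p(1+o(1))\ge(4+o(1))\log(1/\rho_{0,n})$ comes for free and you indeed obtain the two-sided estimate, not just the lower bound; this is implicit in your ``collecting'' step but deserves a word to make it watertight.
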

\begin{proof}
In view of Lemma \ref{lem210}, it suffices to ensure that
\[
\liminf_{n\to \infty}\frac{\log{\frac1{\ga_{0,n}}}}{\mu_n^p}\ge \frac p4.
\]
Then we use \eqref{id1} with $s=1$ and Lemma \ref{lem:thm21} to get 
\[
\begin{split}
p\mu_n^p&=p\mu_n^{p-1}\int_0^1\la_n f(u_n)r\log{\frac1r}dr\\
&=(4+o(1))\log{\frac1{\ga_{0,n}}}+\int_0^{\frac1{\ga_{0,n}}}\frac{f(u_n(\ga_{0,n}r)}{f(\mu_n)}r\log{\frac1r}dr.
\end{split}
\]
Here choosing large $R>1$, we obtain by Lemma \ref{prop:thm1} that
\begin{equation}\label{eq:7}
\begin{split}
\int_0^{\frac1{\ga_{0,n}}}&\frac{f(u_n(\ga_{0,n}r))}{f(\mu_n)}r\log{\frac1r}dr\\
&=\int_0^R e^{z_0(r)} r \log\frac1r dr-\int_R^{\frac1{\ga_{0,n}}}\frac{f(u_n(\ga_{0,n}r))}{f(\mu_n)}r\log{r}dr+o(1)\\
&<0
\end{split}
\end{equation}
for all large $n\in \mathbb{N}$. Using this estimate for the previous formula, we get
\[
p\mu_n^p\le (4+o(1))\log{\frac1{\ga_{0,n}}}
\]
for all large $n\in \mathbb{N}$. This gives the desired conclusion. 
We complete the proof.
\end{proof}
Let us show Theorem \ref{thm2}.
\begin{proof}[Proof of Theorem \ref{thm2}] First, \eqref{sub1} is proved by  Lemma \ref{lem:thm21}. Then since 
\[
\begin{split}
\int_0^{1/\ga_{0,n}}\left(1+\frac{z_{0,n}(r)}{p\mu_n^p}\right)&\frac{h(u_n(\ga_{0,n} r))}{h(\mu_n)}e^{\mu_n^p\left\{\left(1+\frac{z_{0,n}(r)}{p\mu_n^p}\right)^p-1\right\}}rdr\\
&=p\mu_n^{p-2}\int_0^1 \la_n u_nf(u_n)rdr\le p\mu_n^{p-1}\int_0^1 \la_n f(u_n)rdr,
\end{split}
\]
we get \eqref{sub10} by Lemma \ref{prop:thm1}, the Fatou lemma, and \eqref{sub1}. Next, from Lemma \ref{lem:thm22}, the definition of $\ga_{0,n}$, and \eqref{ha}, we calculate 
\[
\frac{p}{2}=\frac{\log{(p\la_n \mu_n^{p-1}f(\mu_n))}}{\mu_n^p}+o(1)=-\frac{\log{\frac1{\la_n}}}{\mu_n^p}+1+o(1).
\]
This shows  \eqref{sub2}. Finally, fix any $r_0\in(0,1)$. Since  
\[
E_n(0,\rho_{0,n})\le E_n(0,r_0)\text{ and }  E_n(r_0,1)\le E_n(\rho_{0,n},1) 
\]
for all large $n\in \mathbb{N}$, Lemmas \ref{lem21} and \ref{lem:thm21} imply that 
\begin{equation}\label{eq:ll}
E_n(0,r)\to 4\text{\ and }E(r,1)\to0 \text{ uniformly for all $r\in[r_0,1]$.} 
\end{equation}
Now, for any $r\in [r_0,1]$, using \eqref{id2}, we have
\[
p\mu_n^{p-1}u_n(r)=E_n(0,r)\log{\frac1{r}}+p\mu_n^{p-1}\int_r^1 \la_nf(u_n)s\log{\frac1s}ds.
\]
Then noting 
\[
p\mu_n^{p-1}\int_r^1 \la_nf(u_n)s\log{\frac1s}ds\le E_n(r,1)\log{\frac1r}
\]
and \eqref{eq:ll}, we get
\begin{equation}\label{pqp}
p\mu_n^{p-1}u_n(r)\to 4\log{\frac1{r}}\text{  uniformly for all $r\in[r_0,1]$ }
\end{equation}
as $n\to \infty$. Moreover, \eqref{id0} and \eqref{eq:ll} give
\[
-p\mu_n^{p-1}u_n'(r)=\frac{E_n(0,r)}{r}\to\frac 4r \text{  uniformly for all $r\in[r_0,1]$ }
\]
 as $n\to \infty$. In addition, from the equation in \eqref{q} and \eqref{id0} again, we see
\begin{equation}\label{ppq}
p\mu_n^{p-1}u_n''(r)=-\frac{E_n(0,r)}{r^2}-\mu_n^{p-1}\la_n f(u_n(r)).
\end{equation}
Here, if $1\le p<2$, since $(u_n)$ is uniformly bounded  in $[r_0,1]$ by \eqref{pqp}, it follows from \eqref{sub2} that there exists a value $C>0$ such that
\[
\mu_n^{p-1}\la_n f(u_n(r))\le Ce^{-(1-p/2+o(1))\mu_n^p}
\]
for all $r\in[r_0,1]$ and $n\in \mathbb{N}$. On the other hand, if $0<p<1$, we note  $u_n(r_0)=p^{-1}\mu_n^{1-p}\{4\log{(1/r_0)}+o(1))\}$ by \eqref{pqp}. Then it follows from Lemma \ref{lem:h2}, \eqref{ha}, and \eqref{sub2}  again that, if $n\in \mathbb{N}$ is large enough, 
\[
\mu_n^{p-1}\la_n f(u_n(r))\le \mu_n^{p-1}\la_n f(u_n(r_0))  \le e^{-(1-p/2+o(1))\mu_n^p}
\]
for all $r\in[r_0,1]$. Using these two estimates and \eqref{eq:ll} for \eqref{ppq}, we complete the proof of \eqref{sub3}. We finish the proof.
\end{proof}
\subsection{Basic estimates for  $p\ge  2$}
We finish this section by giving some more estimates which will be useful for our study in the case $p \ge 2$. We begin with the next lemma.   
\begin{lemma}\label{lem31} Assume $(r_n)\subset (0,1)$, $(\delta_n)\subset (0,1)$, and $\delta\in[0,1]$ are any sequences of values and constant such that  $u_n(r_n)=\delta_n \mu_n$ for all $n\in \mathbb{N}$, $r_n/\ga_{0,n}\to \infty$, $\delta_n\to \delta$ as $n\to \infty$, and $\liminf_{n\to \infty}u_n(r_n)>t_1$ where $t_1$ the number in Lemma \ref{lem:h}. Then putting $R_n=r_n/\ga_{0,n}$,  we have 
\[
\frac{p \mu_n^p}{E(0,r_n)}\left(1-\delta_n\right) \le \log{R_n} \le \frac{\mu_n^p}2\left(1-\delta^p+o(1)\right)
\]
for all $n\in \mathbb{N}$ up to a subsequence where $o(1)\to0$ as $n\to \infty$. In particular, if $\delta\not=1$, we get
\[
-E(0,r_n)\log{R_n}\le z_n(R_n)\le-2p\left(\frac{1-\delta}{1-\delta^p}+o(1)\right)\log{R_n}
\]
for all $n\in \mathbb{N}$  where $o(1)\to \infty$ as $n\to \infty$.
\end{lemma}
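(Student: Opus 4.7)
Plan. First note the identity $z_{0,n}(R_n) = p\mu_n^{p-1}(u_n(r_n)-\mu_n) = -p\mu_n^p(1-\delta_n)$, immediate from the definitions of $z_{0,n}$ and $R_n$; this converts the second inequality pair into the first in the regime $\delta\neq 1$, so it suffices to prove the pair of bounds on $\log R_n$ itself.

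For the lower bound I would apply the identity \eqref{id2} with $s=\ga_{0,n}$ and $t=r_n$, which is legitimate since $R_n\to\infty$. Theorem \ref{thm1} gives $u_n(\ga_{0,n}) = \mu_n + O(\mu_n^{1-p})$, so the LHS equals $\mu_n(1-\delta_n) + O(\mu_n^{1-p})$. On the RHS, the nonnegative remainder integral is bounded by $(\log R_n) E(\ga_{0,n},r_n)/(p\mu_n^{p-1})$ using $\log(r_n/r)\le \log R_n$ on $[\ga_{0,n},r_n]$; combined with the first RHS term this becomes $(\log R_n) E(0,r_n)/(p\mu_n^{p-1})$. Multiplying through by $p\mu_n^{p-1}$ gives the lower bound $p\mu_n^p(1-\delta_n)\le E(0,r_n)\log R_n$ up to a $O(1)$ error (which is harmless in every later use of the estimate).

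For the upper bound I would first rewrite
\[
2\log R_n = \log\phi_n(r_n) + \log\frac{f(\mu_n)}{\delta_n^{p-1}f(\delta_n\mu_n)} = \log\phi_n(r_n) + (1-\delta_n^p)\mu_n^p + o(\mu_n^p),
\]
where \eqref{ha} absorbs $\log(h(\mu_n)/h(\delta_n\mu_n))$ and the hypothesis $\liminf u_n(r_n)>t_1$ (so $\delta_n\mu_n\ge t_1$ eventually, hence $|\log\delta_n|=O(\log\mu_n)$) controls the $\log\delta_n^{1-p}$ contribution. The upper bound thus reduces to $\log\phi_n(r_n)\le o(\mu_n^p)$. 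I would then argue in two steps: monotonicity of $u_n$ together with Lemma \ref{lem:h} (giving $f$ increasing past $t_1$) yields $E(0,r_n)\ge \phi_n(r_n)/(2\delta_n^{p-1})$, and \eqref{id2} with $s=r_n$, $t=1$, after dropping the nonnegative remainder, gives $E(0,r_n)\log(1/r_n)\le p\mu_n^p\delta_n$. Chaining these produces
\[
\log\phi_n(r_n) \le \log(2p\delta_n^p\mu_n^p) - \log\log(1/r_n) = O(\log\mu_n) - \log\log(1/r_n),
\]
which is $o(\mu_n^p)$ as long as $r_n$ stays bounded away from $1$.

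The main obstacle is precisely this last condition on $r_n$. I would invoke Lemma \ref{lem:gl}: in each of its three alternatives $u_n$ remains bounded on compact subsets of $(0,1]$, so $u_n(r_n)>t_1$ together with $\delta>0$ forces $u_n(r_n)\to\infty$ and hence $r_n\to 0$. The residual case $\delta=0$ with $u_n(r_n)$ bounded is handled directly from $r_n\le 1$ via the crude inequality $\log R_n\le \log(1/\ga_{0,n}) = (\mu_n^p/2)(1+o(1))$, which already matches the asserted upper bound when $\delta^p=0$.
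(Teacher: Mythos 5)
Your lower-bound step is a minor variant of the paper's argument. The paper applies \eqref{id1} at $s=r_n$ rather than \eqref{id2} at $s=\ga_{0,n}$: this gives the exact identity $p\mu_n^p(1-\delta_n)=E_n(0,r_n)\log R_n+\int_0^{R_n}\frac{f(u_n(\ga_{0,n}r))}{f(\mu_n)}r\log\frac1r\,dr$ whose residual is shown, as in \eqref{eq:7}, to be eventually nonpositive, so no slack appears. Your route through \eqref{id2} at $s=\ga_{0,n}$ replaces $u_n(0)$ by $u_n(\ga_{0,n})$ and therefore leaves an $O(1)$ error after multiplying by $p\mu_n^{p-1}$; this is indeed harmless in every later use, but the lemma asserts the inequality exactly, and the paper's choice of $s$ avoids the issue altogether.

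The upper-bound step, which takes a genuinely different route via $\phi_n$, has a real gap. From $2\log R_n=\log\phi_n(r_n)+(1-\delta_n^p)\mu_n^p+o(\mu_n^p)$ and the chain $\phi_n(r_n)\log(1/r_n)\le 2p\mu_n^p\delta_n^p$ you are left with the term $-\log\log(1/r_n)$, which is only $o(\mu_n^p)$ if $r_n$ stays away from $1$. Your exclusion of $r_n\to1$ via the final assertion of Lemma \ref{lem:gl} works when $\delta>0$, but the fallback for the residual case $\delta=0$, $u_n(r_n)$ bounded, relies on $\log(1/\ga_{0,n})\le(\mu_n^p/2)(1+o(1))$, and that estimate is not yet available: at the point where Lemma \ref{lem31} is proved only Lemma \ref{lem210} is at hand, which gives $\limsup_n\log(1/\ga_{0,n})/\mu_n^p\le p/4$, strictly weaker than $1/2$ precisely when $p>2$. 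The improved bound with $1/2$ is Lemma \ref{lem33}, proved afterward through Lemma \ref{lem32}, which itself rests on Lemma \ref{lem31} -- so invoking it here is circular. The paper avoids the obstruction entirely: from \eqref{id1} at $s=r_n$, after the change of variables $r\mapsto r_nr$, it obtains $p\mu_n^p(1-\delta_n)\ge c\,R_n^2\,f(u_n(r_n))/f(\mu_n)$ with $c=\int_0^1 r\log(1/r)\,dr=1/4$, using only that $u_n$ is decreasing and $f$ increasing past $t_1$; taking logarithms and applying \eqref{ha} to the $h$-factors then yields $2\log R_n\le\mu_n^p(1-\delta^p+o(1))$ without ever dividing by $\log(1/r_n)$, so no case distinction on $r_n$ is needed.
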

\begin{proof} 
Using \eqref{id1}, we obtain
\[
\begin{split}
u_n(0)-u_n(r_n)
&=\log{R_n}\int_{0}^{r_n} \la_nf(u_n(r))rdr+\int_{0}^{r_n} \la_nf(u_n(r))r\log{\frac{\ga_{0,n}}{r}}dr.
\end{split}
\]
Multiplying this by $p\mu_n^{p-1}$, we get 
\[
\begin{split}
p\mu_n^p(1-\delta_n)&= E_n(0,r_n)\log{R_n}+\int_{0}^{R_n} \frac{f(u_n(\ga_{0,n}r))}{f(\mu_n)}r\log{\frac{1}{r}}dr\\
&\le E_n(0,r_n)\log{R_n}
\end{split}
\]
for all large $n\in \mathbb{N}$ where noting $R_n\to \infty$ as $n\to \infty$ we estimated the second term on the right-hand side of the first equality similarly to \eqref{eq:7}. It follows that 
\begin{equation}\label{eq:1k}
\frac{p\mu_n^p(1-\delta_n)}{E_n(0,r_n)}\le \log{R_n}
\end{equation}
for all large $n\in \mathbb{N}$. On the other hand, we see from \eqref{id1} and the first assertion in Lemma \ref{lem:h} with our assumption  that there exists a constant $c>0$ such that 
\[
\begin{split}
p\mu_n^p(1-\delta+o(1))
&=r_n^2 p\mu_n^{p-1}\la_nf(u_n(r_n))\int_{0}^{1} \frac{f(u_n( r_n r))}{f(u_n(r_n))}r\log{\frac{1}{r}}dr\\
&\ge c r_n^2 p\mu_n^{p-1}\la_nf(u_n(r_n))=c R_n^2 \frac{f(u_n(r_n))}{f(\mu_n)}
\end{split}
\]
for all large $n\in \mathbb{N}$. Here using \eqref{ha}, our assumption, and the fact that $h(t)>0$ for all $t>t_1$, we get $\mu_n^{-p}\log{h(u_n(r_n))}\to 0$
as $n\to \infty$. It follows that 
\[
\begin{split}
p\mu_n^p(1-\delta+o(1))&\ge c R_n^2 e^{-\mu_n^p\left(1-\delta^p+o(1)\right)}
\end{split}
\]
for all large $n\in \mathbb{N}$. 
This implies that
\[
\log{R_n}\le \frac{\mu_n^p(1-\delta^p+o(1))}{2}
\]
for all large $n\in \mathbb{N}$. This and \eqref{eq:1k} prove the first formula of this lemma. Then noting
\[
z_n(R_n)=p\mu_n^{p-1}(u_n(r_n)-\mu_n)=-p\mu_n^p(1-\delta_n),
\]
the second conclusion readily follows from the first one. This finishes the proof.
\end{proof}
Using the previous estimates, we shall prove that more energy appears on the outside of the first concentration interval $(0,\rho_{0,n})$ if $p>2$. Moreover, we can improve the pointwise estimate in Lemma \ref{lem21} in this case.
\begin{lemma}\label{lem32}  
Let $(r_n)$, $(\delta_n)$, and $\delta$ be as in the previous lemma and $\delta\not=1$. Then, up to a subsequence,  we get  
\[
\liminf_{n\to \infty} E(0,r_n)\ge2p\left(\frac{1-\delta}{1-\delta^p}\right)
\]
and 
\[
z_n(r)\le -2p\left(\frac{1-\delta}{1-\delta^p}+o(1)\right)\log{r}
\]
for any $r\in [r_n/\ga_{0,n},1/\ga_{0,n}]$ and all $n\in \mathbb{N}$ where $o(1)\to0$ as $n\to \infty$. 
\end{lemma}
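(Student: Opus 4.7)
My plan is to combine the two bounds from Lemma~\ref{lem31} to obtain the energy lower bound, and then to extend the pointwise estimate for $z_n$ from the endpoint $R_n := r_n/\gamma_{0,n}$ to the full interval $[R_n, 1/\gamma_{0,n}]$ via the monotonicity of $r \mapsto r z_n'(r)$.

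For the first assertion, the two inequalities in Lemma~\ref{lem31} sandwich $\log R_n$; dividing the lower bound $p\mu_n^p(1-\delta_n)/E(0,r_n) \leq \log R_n$ by the upper bound $\log R_n \leq (\mu_n^p/2)(1-\delta^p + o(1))$ and cancelling $\mu_n^p$ yields
\[
E(0, r_n) \ \geq\ \frac{2p(1-\delta_n)}{1-\delta^p + o(1)},
\]
and passing to the limit $\delta_n \to \delta$ gives the desired bound $\liminf_{n \to \infty} E(0, r_n) \geq 2p(1-\delta)/(1-\delta^p)$.

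For the pointwise estimate, I would first use the equation
\[
-z_n'' - \tfrac{1}{r}z_n' \ =\ \frac{f(u_n(\gamma_{0,n}\,\cdot\,))}{f(\mu_n)} \ \geq\ 0,
\]
valid on $(0, 1/\gamma_{0,n})$ for large $n$ by Lemma~\ref{lem:h2}, to conclude that $(r z_n'(r))' \leq 0$, so $r z_n'(r)$ is nonincreasing. Identity \eqref{id0} combined with the definition of $z_n$ gives $R_n z_n'(R_n) = -E(0, r_n)$, and hence for every $r \in [R_n, 1/\gamma_{0,n}]$, $r z_n'(r) \leq -E(0, r_n)$. Dividing by $r$, integrating from $R_n$ to $r$, and using $z_n(R_n) = -p\mu_n^p(1-\delta_n)$ produces
\[
z_n(r) \ \leq\ -p\mu_n^p(1-\delta_n)\ -\ E(0, r_n)\,\log(r/R_n).
\]
Now the upper bound on $\log R_n$ from Lemma~\ref{lem31} rewrites $-p\mu_n^p(1-\delta_n)$ as $-(2p(1-\delta)/(1-\delta^p) + o(1))\log R_n$, while the first assertion of the present lemma converts $-E(0,r_n)$ into $-(2p(1-\delta)/(1-\delta^p) + o(1))$. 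Since both $\log R_n$ and $\log(r/R_n)$ are nonnegative, the two error contributions can be absorbed into a single $o(1)$ multiple of $\log r = \log R_n + \log(r/R_n)$, delivering the claimed bound $z_n(r) \leq -(2p(1-\delta)/(1-\delta^p) + o(1)) \log r$.

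The only real technical point, and hence the main (mild) obstacle, is the bookkeeping of the $o(1)$ error terms: they must be uniform in $r$ over $[R_n, 1/\gamma_{0,n}]$ so that a single $o(1)$ coefficient of $\log r$ suffices. This uniformity is built in because $R_n \to \infty$ forces $\log R_n \geq 0$ for large $n$, and restricting to $r \geq R_n$ keeps $\log(r/R_n)$ nonnegative as well.
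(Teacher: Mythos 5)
Your proof is correct and follows essentially the same approach as the paper: the energy lower bound comes from Lemma \ref{lem31}'s sandwich on $\log R_n$, and the pointwise bound from the monotonicity of $r\mapsto r z_n'(r)$ combined with the endpoint estimates from Lemma \ref{lem31}. Note only that the nonnegativity of the right-hand side of the scaled equation is immediate from (H0) rather than from Lemma \ref{lem:h2} (which supplies the upper bound $\le 1$), and that the paper obtains the first assertion via \eqref{id2} between $\rho_{0,n}$ and $r_n$ rather than by chaining Lemma \ref{lem31}'s two inequalities directly, but both routes yield the same constant.
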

\begin{proof} Take the sequence $(\rho_{0,n})$ in Lemma \ref{lem21}. Since $\delta\not=1$, we have that $\rho_{0,n}<r_n$ for all large $n\in \mathbb{N}$. Then we get from \eqref{id2}  that 
\[
\begin{split}
u_n(\rho_{0,n})-u_n(r_n)
&\le  \left(\log{\frac{r_n}{\rho_{0,n}}}\right)\int_{0}^{r_n} \la_nf(u_n(r))rdr
\end{split}
\]
for all large $n\in \mathbb{N}$. Then putting $R_n=r_n/\ga_{0,n}$ for all $n\in \mathbb{N}$, we estimate by the previous lemma and our choice of $(\rho_{0,n})$ that
\[
\begin{split}
1-\delta+o(1)&\le \left(\log{R_n}+\log{\frac{\ga_{0,n}}{\rho_{0,n}}}\right)\frac{E_n(0,r_n)}{p\mu_n^p}
\le \frac{(1-\delta^p+o(1))}{2p}E_n(0,r_n)
\end{split}
\]
for all large $n\in \mathbb{N}$.
This readily proves the first formula. Next again set $R_n=r_n/\ga_{0,n}$. Then for any $r\in [R_n,1/\ga_{0,n}]$, we have
\[
rz_n'(r)\le R_nz_n'(R_n)=-E_n(0,r_n)
\]
by \eqref{id0}. Multiplying the inequality by $1/r$ and integrating it, we derive from the latter assertion in the previous lemma and the former one of this lemma that if $r\in [R_n,1/\ga_{0,n}]$, then
\[
\begin{split}
z_n(r)&=z_n(R_n)-E_n(0,r_n)\log{\frac{r}{R_n}}
\le -2p\left(\frac{1-\delta}{1-\delta^p}+o(1)\right)\log{r}
\end{split}
\]
for all large $n\in \mathbb{N}$. This proves the second conclusion. We finish the proof.
\end{proof}
Lastly, we show that  \eqref{sub2} fails in the case $p>2$ as follows.
\begin{lemma}\label{lem33} Assume $p>1$. Then we have  that, up to a subsequence, 
\[
\limsup_{n\to \infty}\frac{\log{\frac1{\ga_{0,n}}}}{\mu_n^p}\le\frac12
\]
which implies that 
\[
\liminf_{n\to \infty}\frac{\log{\frac1{\la_n}}}{\mu_n^p}\ge0.
\]
\end{lemma}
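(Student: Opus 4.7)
The plan is to split $\log(1/\gamma_{0,n})$ at a well-chosen intermediate radius $r_n$ where $u_n$ has descended to a small fraction of $\mu_n$, writing
\[
\log(1/\gamma_{0,n})=\log(r_n/\gamma_{0,n})+\log(1/r_n).
\]
The first summand is controlled from above by Lemma~\ref{lem31} with $\delta=0$, while the second is controlled by applying the identity~\eqref{id2} on $(r_n,1)$ and using that the first-bubble energy $E_n(0,r_n)$ is bounded below by essentially $4$. The second conclusion of the lemma is then an immediate consequence of the first one via the defining relation of $\gamma_{0,n}$.

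Concretely, I would choose any sequence $\delta_n\to 0$ with $\delta_n\mu_n\to\infty$ (e.g.~$\delta_n=\mu_n^{-1/2}$) and let $r_n\in(0,1)$ satisfy $u_n(r_n)=\delta_n\mu_n$ via the intermediate value theorem. Since $u_n(\rho_{0,n})/\mu_n\to 1$ by Theorem~\ref{thm1} and $u_n$ is strictly decreasing, $r_n>\rho_{0,n}$ for all large $n$, hence $r_n/\gamma_{0,n}\to\infty$; moreover $u_n(r_n)=\delta_n\mu_n\to\infty$. Thus the hypotheses of Lemma~\ref{lem31} are met with $\delta=0$, yielding
\[
\log(r_n/\gamma_{0,n})\le \tfrac{1}{2}\mu_n^p(1+o(1)).
\]

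The key step is the use of \eqref{id2} with $s=r_n$ and $t=1$. Since $u_n(1)=0$ and the remainder $\int_{r_n}^{1}\la_n f(u_n)r\log(1/r)\,dr$ is nonnegative, multiplication by $p\mu_n^{p-1}$ gives
\[
\log(1/r_n)\cdot E_n(0,r_n)\le p\delta_n\mu_n^p.
\]
By monotonicity of $E_n(0,\cdot)$ in its upper endpoint and Lemma~\ref{lem21}, $E_n(0,r_n)\ge E_n(0,\rho_{0,n})\to 4$, so $\log(1/r_n)\le p\delta_n\mu_n^p/(4+o(1))=o(\mu_n^p)$. Summing the two bounds yields $\log(1/\gamma_{0,n})\le \tfrac{1}{2}\mu_n^p+o(\mu_n^p)$, which is the first assertion. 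For the second, taking logarithms in $p\la_n\mu_n^{p-1}f(\mu_n)\gamma_{0,n}^2=1$, dividing by $\mu_n^p$, and absorbing $(p-1)\log\mu_n/\mu_n^p\to 0$ together with $\log h(\mu_n)/\mu_n^p\to 0$ from \eqref{ha}, one obtains
\[
\frac{\log(1/\la_n)}{\mu_n^p}=1-\frac{2\log(1/\gamma_{0,n})}{\mu_n^p}+o(1),
\]
so $\liminf\log(1/\la_n)/\mu_n^p\ge 1-2\cdot\tfrac{1}{2}=0$.

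The main obstacle I anticipate is selecting the correct splitting scale. Lemma~\ref{lem31} controls only $\log(r_n/\gamma_{0,n})$ and not $\log(1/\gamma_{0,n})$ itself, so one must keep track of $\log(1/r_n)$, which is naively unbounded. Keeping $\delta_n$ bounded away from zero would spoil the $(1-\delta^p)/2$ gain in Lemma~\ref{lem31}, while letting $\delta_n\to 0$ too fast would send $u_n(r_n)$ below $t_1$, violating the hypothesis of that lemma; the compromise $\delta_n\to 0$ with $\delta_n\mu_n\to\infty$ balances the two requirements, and the lower bound $E_n(0,r_n)\gtrsim 4$ from the first bubble is what forces $\log(1/r_n)/\mu_n^p\to 0$ under this compromise.
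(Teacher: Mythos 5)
Your argument is correct, but it organizes the ingredients differently from the paper. The paper fixes a constant $\delta\in(0,1)$, chooses $r_n$ so that $u_n(r_n)=\delta\mu_n$, invokes the pointwise bound of Lemma~\ref{lem32}, namely $z_n(r)\le -2p\bigl(\tfrac{1-\delta}{1-\delta^p}+o(1)\bigr)\log r$ on $[r_n/\gamma_{0,n},1/\gamma_{0,n}]$, and evaluates at $r=1/\gamma_{0,n}$ (where $z_n=-p\mu_n^p$); letting $\delta\to 0$ at the end gives $\limsup\log(1/\gamma_{0,n})/\mu_n^p\le 1/2$. You instead bypass Lemma~\ref{lem32} entirely, using a moving level $\delta_n\to 0$ with $\delta_n\mu_n\to\infty$, bounding $\log(r_n/\gamma_{0,n})$ directly from Lemma~\ref{lem31} (with $\delta=0$), and controlling the residual $\log(1/r_n)$ by \eqref{id2} together with the first-bubble energy lower bound $E_n(0,r_n)\ge E_n(0,\rho_{0,n})\to 4$ from Lemma~\ref{lem21}. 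The two proofs draw on exactly the same underlying estimates (Lemma~\ref{lem31}, \eqref{id2}, the energy $\to 4$), but yours replaces the limit in $\delta$ by a single diagonalized choice $\delta_n$, which makes the $o(\mu_n^p)$ smallness of $\log(1/r_n)$ explicit rather than hidden inside the pointwise estimate of Lemma~\ref{lem32}. The crucial point you correctly identify is the balance $\delta_n\to 0$ while $\delta_n\mu_n\to\infty$: this simultaneously makes $1-\delta_n^p\to 1$ in Lemma~\ref{lem31} and keeps $u_n(r_n)$ above $t_1$ so that lemma applies and so that $\mu_n^{-p}\log h(u_n(r_n))\to 0$ holds via \eqref{ha}. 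Your derivation of the second assertion from the definition of $\gamma_{0,n}$ and \eqref{ha} is the same as the paper's.
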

\begin{proof} For any small $\e\in(0,1)$, we choose a constant $\delta\in(0,1)$ so that $1-\e=(1-\delta)/(1-\delta^p)$. Then take a sequence $(r_n)\subset (0,1)$ so that $u_n(r_n)=\delta \mu_n$ for all $n\in\mathbb{N}$. It follows from Lemma \ref{lem32} that 
\[
z_n(r)\le -2p(1-\e+o(1))\log{r}
\]
for all $r\in [r_n/\ga_{0,n},1/\ga_{0,n}]$ and all $n\in \mathbb{N}$ up to a subsequence. Then setting $r=1/\ga_{0,n}$, we get  
\[
-p\mu_n^p\le -2p (1-\e+o(1))\log{\frac1{\ga_{0,n}}}.
\]
This implies that 
\[
\limsup_{n\to \infty}\frac{\log{\frac1{\ga_{0,n}}}}{\mu_n^p}\le \frac1{2(1-\e)}.
\]
Since this holds for any small $\e\in(0,1)$, we get the first assertion. Using the previous estimate, the definition of $\ga_{0,n}$, and \eqref{ha}, we deduce
\[
\frac1{1-\e}+o(1)\ge \frac{\log{(p\la_n\mu_n^{p-1}f(\mu_n))}}{\mu_n^p}=-\frac{\log{\frac1{\la_n}}}{\mu_n^p}+1+o(1).
\]
It follows that 
\[
\liminf_{n\to \infty}\frac{\log{\frac1{\la_n}}}{\mu_n^p}\ge -\left(\frac{1}{1-\e}-1\right). 
\]
Since again this is true for all small $\e\in(0,1)$, we obtain the second assertion. We complete the proof.
\end{proof}

\section{Infinite sequence of bubbles}\label{sec:con2}
The main aim of this section is to prove Theorems \ref{thm:crit}, \ref{thm30}, and \ref{thm3}. In particular, we shall detect a sequence of bubbles in the case $p>2$. Throughout this section, we always assume $p\ge2$ and (H1) without further comments. Recall  the sequences $(\phi_n)$ and $(\psi_n)$ of functions  defined in Section \ref{sub:kt}.  
\subsection{Preliminaries}
We begin with some preliminaries. Let $(\ga_{0,n})$ and $(\rho_{0,n})$ be sequences of values in Theorem \ref{thm1}.  We first give the next lemma which will be used for the proof of \eqref{sup3}. 
\begin{lemma}\label{lem34} Up to  a subsequence, there exists a constant  $\nu\in[0,1]$ such that
\[
\lim_{n\to \infty}\frac{\log{\frac1{\ga_{0,n}}}}{\mu_n^p}=\frac \nu2.
\]
Moreover, if $(r_n)\subset (0,1)$ and $\delta\in [0,1]$ are a sequence and a value such that $r_n/\ga_{0,n}\to \infty$, $u_n(r_n)/\mu_n\to \delta$ as $n\to \infty$, and $\liminf_{n\to \infty}u_n(r_n)>t_1$ where $t_1$ is the constant in Lemma \ref{lem:h}, then we get 
\[
\lim_{n\to \infty}\frac{\log{\frac1{r_n}}}{\mu_n^p}\ge\frac{-1+\delta^p+\nu}2
\]
up to a subsequence. The equality holds if $\lim_{n\to \infty}(\mu_n^{-p}\log{\phi_n(r_n)})=0$. 
\end{lemma}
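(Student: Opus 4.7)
The plan is to treat the three assertions separately, using Lemma \ref{lem33} for the first, the upper estimate in Lemma \ref{lem31} for the inequality, and a direct expansion of $\log\phi_n(r_n)$ via (H1) for the equality case.

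First, for the existence of $\nu$, note that $\ga_{0,n}\to 0$ by Theorem \ref{thm1}, so $\log(1/\ga_{0,n})/\mu_n^p$ is eventually nonnegative. Combined with Lemma \ref{lem33}, the sequence lies in $[0,1/2]+o(1)$, and Bolzano--Weierstrass gives a subsequence converging to some $\nu/2$ with $\nu\in[0,1]$, which defines $\nu$.

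Next, for the inequality in the second assertion, I would apply Lemma \ref{lem31} to the given $(r_n)$ with $\de_n:=u_n(r_n)/\mu_n$ (whose hypotheses are all satisfied, including $r_n/\ga_{0,n}\to\infty$ and $\liminf u_n(r_n)>t_1$). Setting $R_n=r_n/\ga_{0,n}\to\infty$, the upper bound there reads
\[
\log R_n \le \frac{\mu_n^p}{2}(1-\de^p+o(1)),
\]
valid for any $\de\in[0,1]$. Splitting $\log(1/r_n)=\log(1/\ga_{0,n})-\log R_n$ and dividing by $\mu_n^p$ yields, after passing to a subsequence along which the limit exists,
\[
\lim_{n\to\infty}\frac{\log(1/r_n)}{\mu_n^p}\ge \frac{\nu}{2}-\frac{1-\de^p}{2}=\frac{\de^p+\nu-1}{2},
\]
which is the claimed lower bound.

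Finally, for the sharpness statement, the strategy is to unfold $\phi_n(r_n)=p\la_n r_n^2u_n(r_n)^{p-1}f(u_n(r_n))$ via the defining relation $p\la_n\mu_n^{p-1}f(\mu_n)\ga_{0,n}^2=1$. Taking logarithms, dividing by $\mu_n^p$, and invoking \eqref{ha} together with the hypothesis $\liminf u_n(r_n)>t_1$ (which keeps the $\log h(u_n(r_n))$- and $\log u_n(r_n)$-contributions of order $o(\mu_n^p)$ even when $\de=0$), every lower-order term cancels and one is left with
\[
\frac{\log\phi_n(r_n)}{\mu_n^p}=\nu+\de^p-1+\frac{2\log r_n}{\mu_n^p}+o(1).
\]
Rearranging, $\log(1/r_n)/\mu_n^p=(\de^p+\nu-1)/2-(1/2)\log\phi_n(r_n)/\mu_n^p+o(1)$, and the extra hypothesis makes the last term vanish, yielding the equality. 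The only genuinely delicate point is verifying that the $\log h$- and $\log\mu_n$-type contributions are indeed $o(\mu_n^p)$ uniformly in $n$: this is precisely where (H1) through \eqref{ha} is essential, complemented by $\liminf u_n(r_n)>t_1$ to cover the case $\de=0$.
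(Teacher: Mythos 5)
Your proof follows the paper's argument step by step: the first assertion from Lemma \ref{lem33} together with $\ga_{0,n}\to 0$, the inequality from the upper bound $\log R_n\le \tfrac{\mu_n^p}{2}(1-\delta^p+o(1))$ in Lemma \ref{lem31} combined with the split $\log(1/r_n)=\log(1/\ga_{0,n})-\log R_n$, and the equality case from expanding $\phi_n(r_n)=R_n^2(u_n(r_n)/\mu_n)^{p-1}f(u_n(r_n))/f(\mu_n)$ via \eqref{ha}. The approach and the handling of the $\delta=0$ edge case are both as in the paper.
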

\begin{proof} The former assertion is clearly follows from  the fact $\ga_{0,n}\to 0$ as $n\to \infty$ and Lemma \ref{lem33}. For the latter ones, we put $R_n=r_n/\ga_{0,n}$ for all $n\in \mathbb{N}$.  Then noting 
\begin{equation}\label{eq:rn}
\frac{\log{\frac1{r_n}}}{\mu_n^p}=\frac{-\log{R_n}+\log{\frac{1}{\ga_{0,n}}}}{\mu_n^p},
\end{equation}
we get from Lemma \ref{lem31} that, up to a subsequence, there exists a constant $\nu \in[0,1]$ such that 
\[
\begin{split}
  \frac{-(1-\delta^p)+\nu+o(1)}{2}\le \frac{\log{\frac1{r_n}}}{\mu_n^p}\le \frac{\nu+o(1)}{2}
\end{split}
\]
for all $n\in \mathbb{N}$. Hence, taking a subsequence again if necessary, we deduce 
\[
\lim_{n\to \infty}\frac{\log{\frac1{r_n}}}{\mu_n^p}\ge \frac{-1+\delta^p+\nu}{2}.
\]
This is the second conclusion. Finally, assume $\mu_n^{-p}\log{\phi_n(r_n)}\to0$ as $n\to \infty$. Then noting \eqref{ha} and our choice of $(r_n)$, we obtain
\[
\phi_n(r_n)=R_n^2\left(\frac{u_n(r_n)}{\mu_n}\right)^{p-1}\frac{f(u_n(r_n))}{f(\mu_n)}=R_n^2 e^{-\mu_n^p\left(1-\delta^p+o(1)\right)}.
\]
It follows that 
\[
\log{R_n}=\frac{\mu_n^p}{2}(1-\delta^p+o(1)).
\]
Using this for \eqref{eq:rn}, we obtain
\[
\lim_{n\to \infty}\frac{\log{\frac1{r_n}}}{\mu_n^p}=\frac{-1+\delta^p+\nu}2.
\]
This completes the proof.
\end{proof}
We will also use the next one. 
\begin{lemma}\label{lem:22} Let $1\ge \delta>\delta'>0$  and $(r_n),(s_n)\subset (0,1)$ be any constants and sequences such that $r_n/\ga_{0,n}\to \infty$, $u_n(r_n)/\mu_n\to\delta$, and $u_n(s_n)/\mu_n\to \delta'$ as $n\to \infty$. Then we have
\[
\liminf_{n\to \infty}\frac{\log{\frac{s_n}{r_n}}}{\mu_n^p}\ge \frac{p(\delta-\delta')}{\limsup_{n\to \infty}E_n(0,s_n)}.
\] 
up to a subsequence. Moreover, assume $\lim_{n\to \infty}(\mu_n^{-p}\log{\phi_n(r_n)})=0$. Then  we get
\[
\lim_{n\to \infty}\frac{\log{\frac{s_n}{r_n}}}{\mu_n^p}\le \frac{\delta^p-\delta'^p}{2}
\]
up to a subsequence. The equality holds if we additionally suppose that $\lim_{n\to \infty}(\mu_n^{-p}\log{\phi_n(s_n)})=0$. 
\end{lemma}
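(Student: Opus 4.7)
I will prove the three assertions in sequence. First observe that since $\delta>\delta'>0$ and $u_n$ is strictly decreasing, $s_n>r_n$ for all large $n$; combined with $r_n/\ga_{0,n}\to\infty$ this gives $s_n/\ga_{0,n}\to\infty$, while $u_n(r_n),u_n(s_n)\to\infty$ follows from $\mu_n\to\infty$, so Lemma \ref{lem34} applies to both sequences. For the lower bound I would apply the identity \eqref{id2} with $s=r_n$ and $t=s_n$:
\[
u_n(r_n)-u_n(s_n)=\Big(\log\frac{s_n}{r_n}\Big)\int_0^{r_n}\la_nf(u_n)rdr+\int_{r_n}^{s_n}\la_nf(u_n)r\log\frac{s_n}{r}dr.
\]
Multiplying by $p\mu_n^{p-1}$, the left side becomes $p\mu_n^p(\delta-\delta'+o(1))$, the first term on the right equals $\log(s_n/r_n)\,E_n(0,r_n)$, and the remaining integral is majorized by $\log(s_n/r_n)\,E_n(r_n,s_n)$ since $\log(s_n/r)\le\log(s_n/r_n)$ on $[r_n,s_n]$. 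Adding yields $\log(s_n/r_n)\,E_n(0,s_n)$ on the right; dividing by $\mu_n^p E_n(0,s_n)$ and taking $\liminf$ gives the first inequality.

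For the upper bound I would split $\log(s_n/r_n)=\log(1/r_n)-\log(1/s_n)$ and apply Lemma \ref{lem34} to each piece. After extracting a subsequence on which $\mu_n^{-p}\log(1/\ga_{0,n})\to\nu/2$ for some $\nu\in[0,1]$, the equality case of Lemma \ref{lem34} (valid by the hypothesis $\mu_n^{-p}\log\phi_n(r_n)\to0$) gives
\[
\lim_{n\to\infty}\frac{\log(1/r_n)}{\mu_n^p}=\frac{-1+\delta^p+\nu}{2},
\]
while the general inequality in the same lemma applied to $(s_n)$ yields, along a further subsequence,
\[
\lim_{n\to\infty}\frac{\log(1/s_n)}{\mu_n^p}\ge\frac{-1+(\delta')^p+\nu}{2}.
\]
Crucially, the constant $\nu$ depends only on $(\ga_{0,n})$ and is therefore the same in both applications; subtracting, the $\nu$'s cancel and one obtains $\lim\log(s_n/r_n)/\mu_n^p\le(\delta^p-(\delta')^p)/2$. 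For the final assertion, the additional hypothesis $\mu_n^{-p}\log\phi_n(s_n)\to0$ upgrades the second limit to an equality via the equality case of Lemma \ref{lem34}, and subtracting then forces equality in the bound.

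No step should present a serious obstacle: the heavy lifting has already been done in Lemmas \ref{lem31} and \ref{lem34}, and Part 1 is a direct application of the fundamental identity \eqref{id2}. The only point deserving conceptual care is recognising that $\nu$ is intrinsic to the sequence $(\ga_{0,n})$ and hence identical in the two applications of Lemma \ref{lem34}, which is exactly what makes the $\nu$-terms cancel and produces a sharp bound depending only on $\delta$ and $\delta'$.
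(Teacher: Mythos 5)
Your proposal is correct and follows essentially the same route as the paper's own proof: the lower bound comes from the identity \eqref{id2} with the integral $\int_{r_n}^{s_n}\la_n f(u_n)r\log(s_n/r)\,dr$ majorized by $\log(s_n/r_n)E_n(r_n,s_n)$, and the upper bound comes from the decomposition $\log(s_n/r_n)=\log(1/r_n)-\log(1/s_n)$ combined with the equality and inequality cases of Lemma \ref{lem34}, with the shared constant $\nu$ cancelling. Your observation that $\nu$ is intrinsic to $(\ga_{0,n})$ and hence common to both applications is precisely the point the paper also relies on.
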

\begin{proof} From \eqref{id2}, we get
\[
\begin{split}
u_n(r_n)-u_n(s_n)&\le \left(\log{\frac{s_n}{r_n}}\right)\int_{0}^{s_n} \la_nf(u_n)rdr.
\end{split}
\]
It follows that
\[
p\mu_n^p(\delta-\delta'+o(1))\le\left(\log{\frac{s_n}{r_n}}\right)E_n(0,s_n).
\]
This  gives the  first assertion.  Next, noting the previous lemma, we choose a subsequence and a constant $\nu\in[0,1]$ so that  
\[
\lim_{n\to \infty}\frac{\log{\frac1{\ga_{0,n}}}}{\mu_n^p}=\frac{\nu}2.
\]
Then using the previous lemma again, if $\lim_{n\to \infty}(\mu_n^{-p}\log{\phi_n(r_n)})=0$, we get that 
\[
\frac{\log{\frac{s_n}{r_n}}}{\mu_n^p}= \frac{\log{\frac{1}{r_n}}-\log{\frac{1}{s_n}}}{\mu_n^p}\le \frac{-1+\delta^p+\nu}2-\frac{-1+\delta'^p+\nu}2+o(1)=\frac{\delta^p-\delta'^p}2+o(1).
\]
This proves the second conclusion. Clearly, we ensure the equality in the formula above  if we assume in addition $\lim_{n\to \infty}(\mu_n^{-p}\log{\phi_n(s_n)})=0$. 
 We finish the proof.
\end{proof}
\subsection{Bubbles with singular limit profiles}
Now let us start our main discussion.  Recall the definitions of $\delta_k$, $a_k$ in \eqref{eq:del1} and \eqref{eq:del2}, $E_k$ in Lemma \ref{lem:bg}, and $E_n(s,t)$ in \eqref{defE}. Moreover for any $k\in \mathbb{N}$, we  introduce the next condition (A$_k$).
\begin{enumerate}
\item[(A$_k$)]  
If $k=1$, we define sequences $(\ga_{0,n})$ of values  so that 
\[
p\la_n \mu_n^{p-1}f(\mu_n)\ga_{0,n}^2=1
\]
and $(z_{0,n})$ of functions  by 
\[
z_{0,n}(r)=p \mu_n^{p-1}(u_n(\ga_{0,n}r)-\mu_n)
\] 
for all $r\in[0,1/\ga_{0,n}]$. Then there exists a sequence $(r_{0,n})\subset (0,1)$ such that $r_{0,n}/\ga_{0,n}\to 2\sqrt{2}$, $u_n(r_{0,n})/\mu_n\to1$, and $\phi_n(r_{0,n})\to2$ as $n\to \infty$. Moreover, there exists a sequence $(\rho_{0,n})\subset (0,1)$ of values  such that $\rho_{0,n}/\ga_{0,n}\to \infty$, $\mu_n^{-p/2}\log{(\rho_{0,n}/\ga_{0,n})}\to0$, $u_n(\rho_{0,n})/\mu_n\to1$, $E_n(0,\rho_{0,n})\to 4$, $\phi_n(\rho_{0,n})\to0$ as $n\to \infty$, 
and 
\begin{equation}\label{as:z0}
z_{0,n}(r)\le -(4+o(1))\log{r}
\end{equation}
for all $r\in[\rho_{0,n}/\ga_{0,n},1/\ga_{0,n}]$ and $n\in \mathbb{N}$ where $o(1)\to0$ as $n\to \infty$ uniformly for all $r$ in the interval. If $k\ge2$, there exists a sequence $(r_{k-1,n})\subset (0,1)$ of values  such that $u_n(r_{k-1,n})/\mu_n\to \delta_{k-1}$ and $\phi_n(r_{k-1,n})\to a_{k-1}^2/2$ as $n\to \infty$, and further, if we put sequences $(\ga_{k-1,n})$ of positive values  so that 
\[
p\la_n u_n(r_{k-1,n})^{p-1}f(u_n(r_{k-1,n}))\ga_{k-1,n}^2=1
\]
and $(z_{k-1,n})$ of functions  by 
\[
z_{k-1,n}(r)=p u_n(r_{k-1,n})^{p-1}(u_n(\ga_{k-1,n}r)-u_n(r_{k-1,n}))
\]
for all $r\in[0,1/\ga_{k-1,n}]$ and $n\in \mathbb{N}$, then there exists a sequence $(\rho_{k-1,n})\subset (0,1)$ of values  such that  
  $\rho_{k-1,n}/r_{k-1,n}\to \infty$, $\mu_n^{-p/2}\log{(\rho_{k-1,n}/r_{k-1,n})}\to0$, $u_n(\rho_{k-1,n})/\mu_n\to\delta_{k-1}$, 
\[
E_n(0,\rho_{k-1,n})\to\sum_{j=0}^{k-1}E_j, 
\]
$\phi_n(\rho_{k-1,n})\to0$ as $n\to \infty$, and
\begin{equation}\label{as:zk}
z_{k-1,n}(r)\le -(2+a_{k-1}+o(1))\log{r}
\end{equation}
for all $r\in[\rho_{k-1,n}/\ga_{k-1,n},1/\ga_{k-1,n}]$ and $n\in \mathbb{N}$  where $o(1)\to0$ as $n\to \infty$ uniformly for all $r$ in the interval. 
\end{enumerate}
\begin{remark}\label{rmk:A} We remark that the sequences $(\rho_{0,n})$ and $(r_{0,n})$ in Lemmas \ref{lem21}, \ref{lem:thm1}, and Theorem \ref{thm1} complete the conditions in  (A$_1$). On the other hand, if (A$_k$) is satisfied for some  $k\ge1$, we have that $r_{k-1,n}/\ga_{k-1,n}=\sqrt{\phi_n(r_{k-1,n})}\to a_{k-1}/\sqrt{2}$ as $n\to \infty$. 
Especially, this and the last assertion in Lemma \ref{lem:gl} imply  $\ga_{k-1,n}\to0$ as $n\to \infty$.  
\end{remark}
After this, we always assume, in addition to the basic assumptions noted in the first paragraph of this section, that (A$_k$) holds true for some  $k\in\mathbb{N}$. If $p=2$, we only consider the case $k=1$ and put $\delta_1=0$. We first prove  the next lemma which corresponds to the former  formula in Lemma \ref{lem32} if $k=1$.
\begin{lemma}\label{lem:e1} 
Let $(r_n)\subset (\rho_{k-1,n},1)$, $(\delta_n)\subset (0,\delta_{k-1})$, and $\delta\in(0,\delta_{k-1})$ be sequences and a value such that $u_n(r_n)=\delta_n \mu_n$ for all $n\in \mathbb{N}$ and $\delta_n\to \delta$ as $n\to \infty$. Then we have
\[
\liminf_{n\to \infty}E_n(\rho_{k-1,n},r_n)\ge 2p\left\{ \frac{1-(\delta/\delta_{k-1})}{1-(\delta/\delta_{k-1})^p}-\frac{2+a_{k-1}}{2p}\right\}\frac1{\delta_{k-1}^{p-1}}.
\]
\end{lemma}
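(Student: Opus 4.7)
The plan is to generalize the argument of Lemma \ref{lem32} by pivoting the identity \eqref{id2} at $\rho_{k-1,n}$ instead of $\rho_{0,n}$, while substituting the scale-$\ga_{0,n}$ upper bound from Lemma \ref{lem31} with the bubble-independent one furnished by Lemma \ref{lem:22}. The algebraic closing step will use the identity $\sum_{j=0}^{k-1}E_j=(2+a_{k-1})/\delta_{k-1}^{p-1}$ provided by Lemma \ref{lem:bg}.

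First I would apply \eqref{id2} with $s=\rho_{k-1,n}$ and $t=r_n$, bounding the second integrand by $\log(r_n/r)\le \log(r_n/\rho_{k-1,n})$ for $r\in[\rho_{k-1,n},r_n]$. Multiplying by $p\mu_n^{p-1}$ and inserting the asymptotics $u_n(\rho_{k-1,n})/\mu_n\to\delta_{k-1}$ and $u_n(r_n)/\mu_n\to\delta$ yields
\[
p\mu_n^p(\delta_{k-1}-\delta+o(1))\le \left(\log\frac{r_n}{\rho_{k-1,n}}\right)E_n(0,r_n).
\]
To upper-bound the logarithmic factor, I would invoke Lemma \ref{lem:22} with its $(r_n)$ taken as $(r_{k-1,n})$ and its $(s_n)$ taken as our $(r_n)$; the supplementary hypothesis $\mu_n^{-p}\log\phi_n(r_{k-1,n})\to 0$ holds since $\phi_n(r_{k-1,n})\to a_{k-1}^2/2>0$ by (A$_k$). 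This yields $\log(r_n/r_{k-1,n})\le (\mu_n^p/2)(\delta_{k-1}^p-\delta^p+o(1))$ along a subsequence, and combining with $\mu_n^{-p/2}\log(\rho_{k-1,n}/r_{k-1,n})\to 0$ from (A$_k$) transfers the same bound to $\log(r_n/\rho_{k-1,n})$ modulo an $o(\mu_n^p)$ term.

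Dividing the preceding inequality through by $\mu_n^p$ and passing to $\liminf$ then produces $\liminf_n E_n(0,r_n)\ge 2p(\delta_{k-1}-\delta)/(\delta_{k-1}^p-\delta^p)$. Splitting $E_n(0,r_n)=E_n(0,\rho_{k-1,n})+E_n(\rho_{k-1,n},r_n)$, using the convergence $E_n(0,\rho_{k-1,n})\to\sum_{j=0}^{k-1}E_j$ granted by (A$_k$), and applying Lemma \ref{lem:bg} gives
\[
\liminf_{n\to\infty}E_n(\rho_{k-1,n},r_n)\ge \frac{2p(\delta_{k-1}-\delta)}{\delta_{k-1}^p-\delta^p}-\frac{2+a_{k-1}}{\delta_{k-1}^{p-1}},
\]
and factoring out $1/\delta_{k-1}^{p-1}$ from the right-hand side is precisely the claimed bound.

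The main obstacle is verifying the hypothesis $r_{k-1,n}/\ga_{0,n}\to\infty$ required to invoke Lemma \ref{lem:22} when $k\ge 2$, as this ratio is not directly encoded in (A$_k$). The resolution is that the locally uniform convergence $u_n(\ga_{0,n}\cdot)/\mu_n\to 1$ established in Lemma \ref{prop:thm1} forces any sequence $(s_n)$ with $u_n(s_n)/\mu_n\to c<1$ to satisfy $s_n/\ga_{0,n}\to\infty$; applied to $s_n=r_{k-1,n}$ with $c=\delta_{k-1}<1$, this supplies the missing condition.
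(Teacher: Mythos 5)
Your decomposition, the pivot of \eqref{id2} at $\rho_{k-1,n}$, and the final bookkeeping via Lemma \ref{lem:bg} all match the structure of the paper's argument, and the algebraic reduction of $\frac{2p(\delta_{k-1}-\delta)}{\delta_{k-1}^p-\delta^p}$ to the stated bound is correct. The genuine difference from the paper is in how the logarithmic factor is controlled: the paper re-derives the estimate $\log(r_n/\ga_{k-1,n})\le\frac{1}{2}(\delta_{k-1}^p-\delta^p+o(1))\mu_n^p$ from scratch by applying \eqref{id1} and the exponential estimate at the scale $\ga_{k-1,n}$, whereas you recycle Lemma~\ref{lem:22} (which lives at scale $\ga_{0,n}$) and transfer via $\mu_n^{-p/2}\log(\rho_{k-1,n}/r_{k-1,n})\to0$. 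Your route is more economical when it applies, but it introduces a real gap at $k=1$.

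The gap: your invocation of Lemma~\ref{lem:22} takes its $(r_n)$ to be $(r_{k-1,n})$, and you verify the hypothesis $r_{k-1,n}/\ga_{0,n}\to\infty$ only ``when $k\ge2$'' via the observation that $\delta_{k-1}<1$ forces escape from the first bubble. When $k=1$, that argument is void because $\delta_0=1$, and in fact the hypothesis \emph{fails}: Theorem~\ref{thm1} gives $r_{0,n}/\ga_{0,n}\to 2\sqrt{2}$, a finite limit. So Lemma~\ref{lem:22} cannot be applied with $(r_n)=(r_{0,n})$, and the case $k=1$ — which is the base of the induction in the proof of Theorem~\ref{thm30} — is left unproved. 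Note also that falling back on Lemma~\ref{lem31} alone (your stated ``substitution'') does not close the gap for $k\ge2$: its lower bound $\log(\rho_{k-1,n}/\ga_{0,n})\ge p\mu_n^p(1-\delta_{k-1})/E_n(0,\rho_{k-1,n})$ is strictly weaker than the needed $\frac{1}{2}(1-\delta_{k-1}^p)\mu_n^p$ once $\delta_{k-1}<1$, so for $k\ge2$ you really do need the equality case of Lemma~\ref{lem:22}, which is why the $\phi_n\to\text{const}>0$ hypothesis matters.

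A clean repair within your framework is to pivot Lemma~\ref{lem:22} at $\rho_{k-1,n}$ rather than at $r_{k-1,n}$ for \emph{all} $k\ge1$. Then $\rho_{k-1,n}/\ga_{0,n}\to\infty$ holds automatically (since $\rho_{k-1,n}\ge\rho_{0,n}$ and $\rho_{0,n}/\ga_{0,n}\to\infty$ from (A$_1$)), and $u_n(\rho_{k-1,n})/\mu_n\to\delta_{k-1}$. The remaining hypothesis $\mu_n^{-p}\log\phi_n(\rho_{k-1,n})\to0$ is not listed in (A$_k$) and requires a short verification: for $k=1$ combine Lemma~\ref{lem:thm1} with $\mu_n^{-p/2}\log(\rho_{0,n}/\ga_{0,n})\to0$, and for $k\ge2$ use the bound $\phi_n(\rho_{k-1,n})=(1+o(1))\exp(-(a_{k-1}+o(1))\log(\rho_{k-1,n}/\ga_{k-1,n}))$ derived in the proof of Lemma~\ref{lem:e5} together with \eqref{eq:rr}. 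Alternatively, reproducing the paper's direct estimate at scale $\ga_{k-1,n}$ sidesteps both the $k=1$ issue and this extra verification in one stroke.
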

\begin{remark} Since $2+a_{k-1} \le 4$, we see that additional energy must appear in $(\rho_{k-1,n},r_n)$ if $p>2$ and $\delta>0$ is sufficiently small.
\end{remark}
\begin{proof} 
From \eqref{id2}, we get
\[
\begin{split}
u_n(\rho_{k-1,n})-u_n(r_n)
&\le \left(\log{\frac{r_n}{\rho_{k-1,n}}}\right)\int_{0}^{r_n} \la_nf(u_n)rdr. 
\end{split}
\]
It follows that
\begin{equation}\label{eq:a1}
\begin{split}
\delta_{k-1}-\delta+o(1)&\le
\left(\log{\frac{r_n}{\ga_{k-1,n}}}+\log{\frac{\ga_{k-1,n}}{\rho_{k-1,n}}}\right)\frac{E_n(0,r_n)}{p\mu_n^p}.
\end{split}
\end{equation}
On the other hand,  from \eqref{id1} and Lemma \ref{lem:h}, there exists a constant $c>0$ such that 
\[
\begin{split}
p\mu_n^p(1-\delta+o(1))
&=r_n^2p\mu_n^{p-1}\la_nf(u_n(r_n))\int_{0}^{1} \frac{f(u_n( r_n r))}{f(u_n(r_n))}r\log{\frac{1}{r}}dr\\
&\ge c r_n^2p\mu_n^{p-1}\la_nf(u_n(r_n))=\frac{c+o(1)}{\delta_{k-1}^{p-1}} \left(\frac{r_n}{\ga_{k-1,n}}\right)^2 \frac{f(u_n(r_n))}{f(u_n(r_{k-1,n}))}\\
&=\left(\frac{r_n}{\ga_{k-1,n}}\right)^2 e^{-\mu_n^p\left(\delta_{k-1}^p-\delta^p+o(1)\right)}
\end{split}
\]
by \eqref{ha}. It follows that
\[
\log{\left(\frac{r_n}{\ga_{k-1,n}}\right)}\le \frac{\delta_{k-1}^p-\delta^p+o(1)}{2}\mu_n^p.
\]
Using this for \eqref{eq:a1}, we have
\[
\delta_{k-1}-\delta+o(1)\le\frac{\delta_{k-1}^p-\delta^p+o(1)}{2p}E_n(0,r_n)
\]
where we noted (A$_k$) and Remark \ref{rmk:A}. Then, we derive
\[
E_n(0,r_n)\ge 2p \left(\frac{\delta_{k-1}-\delta}{\delta_{k-1}^p-\delta^p}\right)+o(1).
\]
Consequently, we obtain
\[
\begin{split}
E_n(\rho_{k-1,n},r_n)
&\ge 2p \left(\frac{\delta_{k-1}-\delta}{\delta_{k-1}^p-\delta^p}\right)-\frac{2+a_{k-1}}{\delta_{k-1}^{p-1}}+o(1)
\end{split}
\]
by (A$_k$) and Lemma \ref{lem:bg}. This proves the desired formula. We complete the proof.
\end{proof}
By next two  lemmas, we will find the maximal interval $(\rho_{k-1,n},\sig_n)$ where no additional bubble appears, that is,  $E_n(\rho_{k-1,n},\sig_n)\to0$ as $n\to \infty$. We begin with the next rough calculation.
\begin{lemma}\label{lem:e10} Let $\delta\in(0,\delta_{k-1})$ be any constant such that
\[
\delta>\frac{p-2}{p-1}\delta_{k-1}\text{ and }\left(\frac{\delta}{\delta_{k-1}}\right)^p>1-\frac{4pa_{k-1}}{(p-1)(2+a_{k-1})^2} 
\] 
and take a sequence $(r_n)\subset (\rho_{k-1,n},1)$ such that $u_n(r_n)=\delta \mu_n$ for all $n\in \mathbb{N}$. Then we have
\[
\lim_{n\to \infty}E_n(\rho_{k-1,n},r_n)=0.
\]
\end{lemma}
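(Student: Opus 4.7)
The plan is a direct pointwise estimate of the energy in the rescaled variable around the $(k-1)$-th bubble, extending the argument used in Lemma \ref{lem:thm21}. Set $\xi_n:=u_n(r_{k-1,n})$, $R_n:=\rho_{k-1,n}/\ga_{k-1,n}$, $S_n:=r_n/\ga_{k-1,n}$, and perform the substitution $r=\ga_{k-1,n}\rho$. Using the definition of $\ga_{k-1,n}$ in (A$_k$), we obtain
\[
E_n(\rho_{k-1,n},r_n)=\left(\frac{\mu_n}{\xi_n}\right)^{p-1}\int_{R_n}^{S_n}\frac{h(u_n(\ga_{k-1,n}\rho))}{h(\xi_n)}e^{\xi_n^p\left\{\left(1+\frac{z_{k-1,n}(\rho)}{p\xi_n^p}\right)^p-1\right\}}\rho\,d\rho.
\]
Since $\rho_{k-1,n}>r_{k-1,n}$ eventually by (A$_k$), we have $z_{k-1,n}(\rho)\le0$ on $[R_n,S_n]$, and $z_{k-1,n}(\rho)\ge -p\xi_n^p(1-\delta/\delta_{k-1}+o(1))$. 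The $h$-ratio is controlled by Lemma \ref{lem:h3}, contributing only a factor $\rho^{o(1)}$. For the exponential factor, the elementary inequality $(1+x)^p-1\le x$ for $x\in[-1,0]$ and $p\ge1$ yields $\xi_n^p\{(1+z_{k-1,n}/(p\xi_n^p))^p-1\}\le z_{k-1,n}/p$, exactly as in the proof of Lemma \ref{lem:thm21}. Together with the pointwise bound \eqref{as:zk} from (A$_k$), this gives
\[
\frac{f(u_n(\ga_{k-1,n}\rho))}{f(\xi_n)}\le \rho^{-(2+a_{k-1})/p+o(1)}
\]
uniformly on $[R_n,S_n]$.

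The next step is to control the upper endpoint $S_n$. Arguing as in the proof of Lemma \ref{lem:e1} (applying the identity \eqref{id1} at $r=r_n$ together with Lemma \ref{lem:h}), one obtains
\[
\log S_n \le \frac{p(\delta_{k-1}-\delta)\delta_{k-1}^{p-1}}{2+a_{k-1}}\mu_n^p+o(\mu_n^p),
\]
while $\log R_n=o(\mu_n^{p/2})$ by (A$_k$). Inserting the pointwise bound and integrating, for $p>2$ the resulting integral is of the form $\int_{R_n}^{S_n}\rho^{(p-2-a_{k-1})/p+o(1)}d\rho$; using the estimate on $\log S_n$, a careful bookkeeping shows the integral is at most $\exp(A(\delta)\mu_n^p)$ multiplied by a vanishing factor, where $A(\delta)$ is an explicit algebraic expression in $\delta,\delta_{k-1},a_{k-1},p$.

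The main obstacle is exactly to identify $A(\delta)$ and show that the two hypotheses force $A(\delta)<0$. The condition $\delta>(p-2)\delta_{k-1}/(p-1)$ bounds $(1-\delta/\delta_{k-1})$ away from the critical value $1/(p-1)$ and plays the role of keeping the exponent in the upper bound for $S_n$ under control; the second condition $(\delta/\delta_{k-1})^p>1-4pa_{k-1}/[(p-1)(2+a_{k-1})^2]$ bounds $(1-(\delta/\delta_{k-1})^p)$ so that the balance between the exponential decay provided by \eqref{as:zk} and the algebraic growth of $S_n^{(2p-2-a_{k-1})/p}$ goes in favor of decay. I would isolate the two terms cleanly, note that both conditions give strict inequalities that remain strict after absorbing the $o(1)$-errors from Lemma \ref{lem:h3} and the $o(\mu_n^p)$-error from the $\log S_n$ bound, and conclude $E_n(\rho_{k-1,n},r_n)\to0$.

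Once this rough estimate is in hand, the companion lemma (improving the conclusion to all $\delta\in(0,\delta_{k-1})$ up to some maximal $\sig_n$) will combine it with Lemma \ref{lemD}, since by then $\psi_n$ can be driven below $2$ and the hypotheses of Lemma \ref{lemD} become available.
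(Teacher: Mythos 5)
Your setup is fine — the rescaling, the prefactor $(\mu_n/\xi_n)^{p-1}\to\delta_{k-1}^{1-p}$, Lemma \ref{lem:h3} for the $h$-ratio, the lower bound on $z_{k-1,n}$, and the need for an upper bound on $\log S_n$ all match the paper. The gap is at the exponential bound. You use $(1+x)^p-1\le x$, hence $\xi_n^p\{(1+z_{k-1,n}/(p\xi_n^p))^p-1\}\le z_{k-1,n}/p$, and with \eqref{as:zk} you get $f(u_n(\ga_{k-1,n}\rho))/f(\xi_n)\le \rho^{-(2+a_{k-1})/p+o(1)}$. That was adequate in Lemma \ref{lem:thm21} because there the effective exponent exceeds $2$ and the integral over $\rho$ converges. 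Here $p>2$ and $a_{k-1}\le 2$, so $(2+a_{k-1})/p<2$; the integrand $\rho^{1-(2+a_{k-1})/p+o(1)}$ has exponent $>-1$, and the integral over $[R_n,S_n]$ is of order $S_n^{2-(2+a_{k-1})/p+o(1)}=\exp\bigl((2-(2+a_{k-1})/p)\log S_n\bigr)$. Since $\log S_n$ is comparable to $\mu_n^p$, this blows up exponentially in $\mu_n^p$. Your $A(\delta)$ is therefore strictly positive for every admissible $\delta$, and no choice of hypotheses on $\delta$ can force $A(\delta)<0$; the "vanishing factor" you allude to never materializes, because the only thing outside the integral is a constant. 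The sketch cannot close.

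The paper instead uses the second-order Taylor bound: for $z\le0$ and $p\ge2$, $\xi^p\{(1+z/(p\xi^p))^p-1\}\le z+\frac{p-1}{2p\xi^p}z^2$, keeping the leading coefficient $1$ (not $1/p$) on $z$. The first hypothesis $\delta>\frac{p-2}{p-1}\delta_{k-1}$ is exactly what guarantees the quadratic $z\mapsto z+\frac{p-1}{2p\xi^p}z^2$ is monotone increasing on the actual range of $z_{k-1,n}$, so that \eqref{as:zk} can be substituted. This gives an integrand $\rho^{-1-a_{k-1}+o(1)}\exp\bigl(K_n^2\log^2\rho\bigr)$ with $K_n^2=\frac{(p-1)(2+a_{k-1}+o(1))^2}{2p\mu_{k-1,n}^p}$. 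Setting $s=\log\rho$ and completing the square produces the Gaussian prefactor $\exp\bigl(-\tfrac{pa_{k-1}^2}{2(p-1)(2+a_{k-1})^2}\mu_{k-1,n}^p(1+o(1))\bigr)$, the genuine vanishing factor, against a tail integral $\int e^{t^2}dt$; its boundary contribution is controlled by $\log S_n\le\frac{1-(\delta/\delta_{k-1})^p+o(1)}{2}\mu_{k-1,n}^p$, obtained from Lemma \ref{lem:22} (your bound on $\log S_n$ actually comes from \eqref{id2}, not \eqref{id1}, though either works). The second hypothesis on $(\delta/\delta_{k-1})^p$ is precisely what makes $-\bigl(\tfrac{a_{k-1}+o(1)}{2K_n}\bigr)^2+\hat r_n^2\to-\infty$. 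Both hypotheses thus play concrete, indispensable roles; neither has any effect in the first-order computation.
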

\begin{proof} We put $\mu_{k-1,n}=u_n(r_{k-1,n})$ for all $n\in \mathbb{N}$. From Lemma \ref{lem:22}, (A$_k$), and Remark \ref{rmk:A}, we get that 
\begin{equation}\label{eq:11}
\begin{split}
\log{\frac{r_n}{\ga_{k-1,n}}}&=\log{\frac{r_n}{\rho_{k-1,n}}}+\log{\frac{\rho_{k-1,n}}{\ga_{k-1,n}}}
\le \frac{1-(\delta/\delta_{k-1})^p+o(1)}{2}\mu_{k-1,n}^p.
\end{split}
\end{equation}
Moreover, by \eqref{as:zk}, there exists  a value $M>0$ such that 
\[
-p\left(1-\frac{\delta}{\delta_{k-1}}+o(1)\right)\mu_{k-1,n}^p\le z_{k-1,n}(r)\le -M
\]
for all $r\in[\rho_{k-1,n}/\ga_{k-1,n},r_n/\ga_{k-1,n}]$ and $n\in \mathbb{N}$. Then  putting $\tilde{r}_n:=r_n/\ga_{k-1,n}$ and $\tilde{\rho}_n:=\rho_{k-1,n}/\ga_{k-1,n}$ for all $n\in \mathbb{N}$ and using  Lemma \ref{lem:h3}, we get 
\[
\begin{split}
E_n(\rho_{k-1,n},r_n)&=(\delta_{k-1}^{1-p}+o(1))\int_{\tilde{\rho}_n}^{\tilde{r}_n}\frac{f(u_n(\ga_{k-1,n}r))}{f(\mu_{k-1,n})}rdr\\
&=
(\delta_{k-1}^{1-p}+o(1))\int_{\tilde{\rho}_n}^{\tilde{r}_n}e^{\mu_{k-1,n}^p\left\{\left(1+\frac{z_{k-1,n}}{p \mu_{k-1,n}^p}\right)^p-1\right\}+o\left(z_{k-1,n}\right)}rdr\\
&\le (\delta_{k-1}^{1-p}+o(1)) \int_{\tilde{\rho}_n}^{\tilde{r}_n}e^{(1+o(1))z_{k-1,n}+\frac{p-1}{2p \mu_{k-1}^p}z_{k-1,n}^2}rdr\\
&\le (\delta_{k-1}^{1-p}+o(1))
\int_{\tilde{\rho}_n}^{\tilde{r}_n}e^{-(2+a_{k-1}+o(1))\log{r}+\frac{(p-1)(2+a_{k-1}+o(1))^2}{2p\mu_{k-1,n}^p}\log^2{r}}rdr
\end{split}
\]
for all large $n\in \mathbb{N}$ where for the last inequality we used the former condition on $\delta$ and our assumptions \eqref{as:z0} and \eqref{as:zk}.  Changing the variable with $s=\log{r}$ and putting $\bar{r}_n:=\log{\tilde{r}_n}$, $\bar{\rho}_n:=\log{\tilde{\rho}_n}$, and 
\[
K_n:=\sqrt{\frac{(p-1)(2+a_{k-1}+o(1))^2}{2p\mu_{k-1,n}^p}},
\]
we get
\[
\begin{split}
&E_n(\rho_{k-1,n},r_n)=O\left(e^{-\left(\frac{a_{k-1}+o(1)}{2K_n}\right)^2}\int_{\bar{\rho}_n}^{\bar{r}_n}e^{\left(K_ns-\frac{a_{k-1}+o(1)}{2K_n}\right)^2}ds\right).
\end{split}
\]
Changing the variable again with $t=K_ns-(a_{k-1}+o(1))(2K_n)^{-1}$ and setting $\hat{r}_n=K_n\bar{r}_n-(a_{k-1}+o(1))(2K_n)^{-1}$ and $\hat{\rho}_n=K_n\bar{\rho}_n-(a_{k-1}+o(1))(2K_n)^{-1},$
we have
\begin{equation}\label{eq:ij}
\begin{split}
E_n(\rho_{k-1,n},r_n)&=O\left(\mu_{k-1,n}^{p/2}e^{-\left(\frac{a_{k-1}+o(1)}{2K_n}\right)^2}\int_{\hat{\rho}_n}^{\hat{r}_n}e^{t^2}dt\right).
\end{split}
\end{equation}
Note that  (A$_k$) and \eqref{eq:11} imply  $\hat{\rho}_n, \hat{r}_n=O(\mu_n^{p/2})$. Now, we choose a constant $\e_0>0$ so small that
\[
\frac{pa_{k-1}^2}{2(p-1)(2+a_{k-1})^2}-\e_0>0
\]
and put
\[
I_n=\mu_{k-1,n}^{p/2}e^{-\left(\frac{a_{k-1}+o(1)}{2K_n}\right)^2}\int_{[\hat{\rho}_n,\hat{r}_n]\cap \{|t|\le \sqrt{\e_0} \mu_{k-1,n}^{p/2}\}}e^{t^2}dt
\]
and
\[
J_n=\mu_{k-1,n}^{p/2}e^{-\left(\frac{a_{k-1}+o(1)}{2K_n}\right)^2}\int_{[\hat{\rho}_n,\hat{r}_n]\cap \{|t|> \sqrt{\e_0} \mu_{k-1,n}^{p/2}\}}e^{t^2}dt.
\] 
From the definition of $K_n$ and our choice of $\e_0>0$, we obtain  
\[
I_n\le \mu_{k-1,n}^{p/2}e^{-\left(\frac{p(a_{k-1}+o(1))^2}{2(p-1)(2+a_{k-1}+o(1))^2}-\e_0\right)\mu_{k-1,n}^{p}}(|\hat{\rho}_n|+|\hat{r}_n|)\to0
\]
as $n\to \infty$. On the other hand, since $|t|> \sqrt{\e_0} \mu_{k-1,n}^{ p/2}$ implies $1<|t|/(\sqrt{\e_0}\mu_{k-1,n}^{p/2})$, we get
\[
\begin{split}
J_n&=O\left(e^{-\left(\frac{a_{k-1}+o(1)}{2K_n}\right)^2}\int_{\hat{\rho}_n}^{\hat{r}_n}|t|e^{t^2}dt\right)\\
&=O\left(e^{-\left(\frac{a_{k-1}+o(1)}{2K_n}\right)^2+\hat{r}_n^2}\right)+O\left(e^{-\left(\frac{a_{k-1}+o(1)}{2K_n}\right)^2+\hat{\rho}_n^2}\right)+o(1).
\end{split}
\]
Here we calculate
\[
\begin{split}
&e^{-\left(\frac{a_{k-1}+o(1)}{2K_n}\right)^2+\hat{r}_n^2}\\
&=\exp{\left[-(a_{k-1}+o(1))\left(1-\frac{(p-1)(2+a_{k-1}+o(1))^2}{2p(a_{k-1}+o(1))\mu_{k-1,n}^p}\log{\tilde{r}_n}\right)\log{\tilde{r}_n}\right]}\\
&\to0
\end{split}
\]
as $n\to \infty$ by \eqref{eq:11} and the latter condition on $\delta$. On the other hand, from (A$_k$), we similarly compute 
\[
\begin{split}
&e^{-\left(\frac{a_{k-1}+o(1)}{2K_n}\right)^2+\hat{\rho}_n^2}
\to0
\end{split}
\]
as $n\to \infty$. Hence we prove $J_n\to0$ as $n\to \infty$. Using these conclusions for \eqref{eq:ij}, we deduce $E_n(\rho_{k-1,n},r_n)\to0$ as $n\to \infty$. This finishes the proof.
\end{proof}
We further extend the interval with no additional bubble. 
\begin{lemma}\label{lem:e2} 
There exists  a sequence $(\sig_n)\subset (\rho_{k-1,n},1)$ such that $u_n(\sig_n)/\mu_n\to \delta_k$, $\phi_n(\sig_n)\to0$, and
\[
E_n(\rho_{k-1,n},\sig_n)\to0
\]
as $n\to \infty$ up  to a subsequence. 
\end{lemma}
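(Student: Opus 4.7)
The plan is to construct $\sigma_n$ by approximation: use Lemma~\ref{lem:e10} to reach target levels $\delta\in(\delta_k,\delta_{k-1})$ arbitrarily close to $\delta_k$, and then diagonalize, all the while monitoring $\phi_n$ via the sign structure from Lemma~\ref{lem:pp}. Concretely, pick a decreasing sequence $\delta^{(m)}\downarrow\delta_k$ of admissible levels for Lemma~\ref{lem:e10}; when $p=2$ (so $k=1$ and $\delta_1=0$) every $\delta^{(m)}\in(0,1)$ works, while for $p>2$ one takes $\delta^{(m)}$ just above $\max\{(p-2)\delta_{k-1}/(p-1),\,\delta_{k-1}[1-4pa_{k-1}/((p-1)(2+a_{k-1})^2)]^{1/p}\}$ when that maximum exceeds $\delta_k$, or any $\delta^{(m)}$ slightly above $\delta_k$ otherwise. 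For each fixed $m$ select $s_n^{(m)}\in(\rho_{k-1,n},1)$ with $u_n(s_n^{(m)})=\delta^{(m)}\mu_n$; Lemma~\ref{lem:e10} then yields $E_n(\rho_{k-1,n},s_n^{(m)})\to0$ as $n\to\infty$.

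The next task is to control $\phi_n$ on $[\rho_{k-1,n},s_n^{(m)}]$. The vanishing of the tail energy combined with Lemma~\ref{lem:bg} and (A$_k$) gives the asymptotic $\psi_n(r)\to(u_n(r)/\mu_n)^{p-1}(2+a_{k-1})/\delta_{k-1}^{p-1}$, which exceeds $2$ precisely when $u_n(r)/\mu_n>\delta^{*}:=\delta_{k-1}(2/(2+a_{k-1}))^{1/(p-1)}$. The strict inequality $\delta_k<\delta^{*}$ from Lemma~\ref{lem:bf} places the crossing point $\tau_n^{(m)}$ strictly inside the interval, and Lemma~\ref{lem:pp} shows $\phi_n$ is decreasing on $[\rho_{k-1,n},\tau_n^{(m)}]$, so $\phi_n(\tau_n^{(m)})\le\phi_n(\rho_{k-1,n})\to0$ by (A$_k$). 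On $[\tau_n^{(m)},s_n^{(m)}]$, $\phi_n$ is increasing at rate $(2-\psi_n)/r$ which stays bounded above by $a_k+o(1)$; integrating this ODE against the pointwise bound \eqref{as:zk} and reusing the Gaussian-type estimate from the proof of Lemma~\ref{lem:e10} evaluated at the right endpoint (rather than as an integral) gives $\phi_n(s_n^{(m)})\to0$ for each fixed $m$. In the difficult case where Lemma~\ref{lem:e10}'s admissibility cuts off strictly above $\delta_k$, a bootstrap argument based on Lemma~\ref{lemD} and Lemma~\ref{lem:pp} extends the interval of smallness further down toward $\delta_k$, using that $\psi_n$ remains bounded away from $2$ on the extension. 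A final diagonal extraction $\sigma_n=s_n^{(m_n)}$ with $m_n\to\infty$ slowly enough produces a sequence satisfying $u_n(\sigma_n)/\mu_n\to\delta_k$, $\phi_n(\sigma_n)\to0$, and $E_n(\rho_{k-1,n},\sigma_n)\to0$ simultaneously.

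The main obstacle is pinning down $\phi_n(s_n^{(m)})\to0$: Lemma~\ref{lem:e10} delivers only smallness of the integral energy, not pointwise smallness of $\phi_n$ at the right endpoint, and once the trajectory passes $\tau_n^{(m)}$ the function $\phi_n$ may a priori grow on the regime $\psi_n<2$. Breaking the apparent circularity (since Lemma~\ref{lemD} in turn requires uniform smallness of $\phi_n$) demands revisiting the Gaussian bookkeeping inside the proof of Lemma~\ref{lem:e10} to extract a pointwise rather than integrated estimate; the strict gap $\delta^{(m)}-\delta_k>0$ provides the positive slack needed to drive $\phi_n(s_n^{(m)})$ to zero before the diagonal limit is taken.
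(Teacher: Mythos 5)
Your plan has the right skeleton (first use Lemma~\ref{lem:e10} to clear an intermediate level, then push down toward $\delta_k$, then diagonalize), and you correctly identify the crux: one needs a pointwise bound on $\phi_n$ over the interval below the admissible range of Lemma~\ref{lem:e10}. But the plan does not actually supply that bound, and the two devices you offer in its place do not close the gap.

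The monotonicity route fails quantitatively. On $[\tau_n^{(m)},s_n^{(m)}]$ you have $(\log\phi_n)'(r)\le (a_k+o(1))/r$, and $\log(s_n^{(m)}/\tau_n^{(m)})$ is of order $\mu_n^p$ (this follows from Lemma~\ref{lem:22} and the scale relations in (A$_k$)). Integrating therefore allows $\phi_n$ to grow by a factor $e^{(a_k+o(1))\cdot O(\mu_n^p)}$, whereas all you know at $\tau_n^{(m)}$ is the qualitative fact $\phi_n(\tau_n^{(m)})\le \phi_n(\rho_{k-1,n})\to 0$ from (A$_k$), with no exponential rate. So $\phi_n$ may a priori blow up before reaching $s_n^{(m)}$; you acknowledge this but your fallback does not fix it.

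The fallback of ``reusing the Gaussian-type estimate from Lemma~\ref{lem:e10} at the endpoint'' is exactly what does not reach $\delta_k$. The Taylor-expanded bound used inside Lemma~\ref{lem:e10} yields a pointwise bound of the form $\phi_n(r)\le \exp\bigl[-(a_{k-1}+o(1))\log R+\tfrac{(p-1)(2+a_{k-1})^2}{2p\,\mu_{k-1,n}^p}\log^2 R\bigr]$, which is small only if $\log R/\mu_{k-1,n}^p$ stays below $2pa_{k-1}/((p-1)(2+a_{k-1})^2)$ --- precisely the admissibility threshold of Lemma~\ref{lem:e10}. So evaluating at the endpoint merely reproduces the same restriction and does not extend the smallness of $\phi_n$ down to $\delta_k$. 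Your proposed Lemma~\ref{lemD} bootstrap is circular here, since Lemma~\ref{lemD} itself presupposes $\sup\phi_n\to0$.

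What the paper does instead, and what is missing from your plan, is a sharper unexpanded pointwise estimate. Substituting \eqref{as:zk} directly into $\phi_n(r)=R^2\,(u_n(r)/\mu_{k-1,n})^{p-1}f(u_n(r))/f(\mu_{k-1,n})$ gives
\[
\phi_n(r)\le \exp\Bigl[\mu_{k-1,n}^p\,\zeta\bigl(\log R/\mu_{k-1,n}^p\bigr)+o(\mu_{k-1,n}^p)\Bigr],\qquad \zeta(x)=\Bigl(1-\tfrac{2+a_{k-1}}{p}x\Bigr)^p-1+2x,
\]
and the key structural fact is that $\zeta\bigl(\tfrac{1-(\delta/\delta_{k-1})^p}{2}\bigr)<0$ is equivalent to the strict negativity of the left-hand side of \eqref{eq:del1} at the ratio $\delta/\delta_{k-1}$, which holds for all $\delta\in(\delta_k,\delta_{k-1})$ by the definition of $\delta_k$. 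Combined with the bracketing of $\log R/\mu_{k-1,n}^p$ from \eqref{eq:a10}, this yields $\sup_{[r_n,s_n]}\phi_n\le e^{-\e\mu_{k-1,n}^p}$ directly, from which both $\phi_n(s_n)\to0$ and $E_n(r_n,s_n)\le \delta^{1-p}e^{-\e\mu_{k-1,n}^p}\log(s_n/r_n)\to0$ follow, with no appeal to the monotonicity of $\phi_n$. Without introducing $\zeta$ (or an equivalent device) and recognizing its link to \eqref{eq:del1}, your plan does not establish the result.
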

\begin{proof} We again put $u_n(r_{k-1,n})=\mu_{k-1,n}$ for all $n\in \mathbb{N}$. Choose a constant $\bar{\delta}\in(\delta_k,\delta_{k-1})$ and a sequence $(r_n)\subset (0,1)$ so that
\[
\bar{\delta}>\frac{p-2}{p-1}\delta_{k-1}\text{,\ \  } \left(\frac{\bar{\delta}}{\delta_{k-1}}\right)^p>1-\frac{4pa_{k-1}}{(p-1)(2+a_{k-1})^2},
\] 
and $u_n(r_n)=\bar{\delta} \mu_n$ for all $n\in \mathbb{N}$. Then the previous lemma shows  $E_n(\rho_{k-1,n},r_n)=o(1)$. Moreover for any $\delta\in(\delta_k,\bar{\delta})$, take a sequence $(s_n)\subset (r_n,1)$ so that $u_n(s_n)=\delta \mu_n$ for all $n\in \mathbb{N}$. Then  we get 
\begin{equation}\label{eq:a10}
\begin{split}
\frac{p(1-\bar{\delta}/\delta_{k-1}+o(1))}{2+a_{k-1}}&\le \frac{\log{\frac{r_n}{\ga_{k-1,n}}}}{\mu_{k-1,n}^p}\le \frac{\log{\frac{s_n}{\ga_{k-1,n}}}}{\mu_{k-1,n}^p}\le\frac{1-(\delta/\delta_{k-1})^p+o(1)}{2}
\end{split}
\end{equation}
for all $n\in\mathbb{N}$. We shall show the first and last inequalities. First, (A$_k$) implies  $E_n(0,r_n)= \sum_{i=0}^{k-1}E_i+o(1)$. Then, using Lemma \ref{lem:22} and (A$_k$), we get 
\[
\begin{split}
\log{\frac{r_n}{\ga_{k-1,n}}}&=
\log{\frac{r_n}{\rho_{k-1,n}}}+\log{\frac{\rho_{k-1,n}}{\ga_{k-1,n}}}
\ge\frac{p(1-\bar{\delta}/\delta_{k-1}+o(1))}{\delta_{k-1}^{p-1}\sum_{i=0}^{k-1}E_i}\mu_{k-1,n}^p
\end{split}
\]
for all $n\in \mathbb{N}$. This and Lemma \ref{lem:bg} prove the first inequality. Next, from the second formula in Lemma \ref{lem:22} with (A$_k$) and Remark \ref{rmk:A}, we have
\[
\begin{split}
\log{\frac{s_n}{\ga_{k-1,n}}}&=\log{\frac{s_n}{\rho_{k-1,n}}}+\log{\frac{\rho_{k-1,n}}{\ga_{k-1,n}}}\le \frac{\delta_{k-1}^p-\delta^p+o(1)}{2}\mu_n^p
\end{split}
\]
for all $n\in \mathbb{N}$. This shows the last one. Now if  $r\in[r_n,s_n]$, putting $R=r/\ga_{k-1,n}$ and noting Lemma \ref{lem:h3}, \eqref{as:z0}, \eqref{as:zk}, and \eqref{eq:a10},  we get 
\begin{equation}\label{eq:a2}
\begin{split}
\phi_n(r)&=R^2\frac{u_n(r)^{p-1}f_n(u_n(r))}{\mu_{k-1,n}^{p-1}f_n(\mu_{k-1,n})}\\
&\le\exp{\left[\mu_{k-1,n}^p\left\{\left(1+\frac{z_{k-1,n}(R)}{p\mu_{k-1,n}^p}\right)^p-1+o\left(\frac{z_{k-1,n}(R)}{\mu_{k-1,n}^p}\right)+\frac{2\log{R}}{\mu_{k-1,n}^p}\right\}\right]}\\
&= \exp{\left[\mu_{k-1,n}^p\left\{\left(1-\frac{2+a_{k-1}}{p}\frac{\log{R}}{\mu_{k-1,n}^p}\right)^p-1+\frac{2\log{R}}{\mu_{k-1,n}^p}+o(1)\right\}\right]}.
\end{split}
\end{equation}
Here we set a function
\[
\zeta(x)= \left(1-\frac{2+a_{k-1}}{p}x\right)^p-1+2x
\]
for all $x\in [0,p/(2+a_{k-1})]$. Then we readily see that $\zeta(0)=0$ and there exists a number $0<x^*<p/(2+a_{k-1})$ such that $\zeta(x)$ is decreasing for all $0<x<x^*$ and increasing for all $x^*<x<p/(2+a_{k-1})$. Moreover, we claim 
\[
\zeta\left(\frac{1-(\delta/\delta_{k-1})^p}{2}\right)<0.
\]
Indeed, a direct calculation shows that the formula above is equivalent to
\[
\frac{2p}{2+a_{k-1}}\left(1-\frac{\delta}{\delta_{k-1}}\right)-1+\left(\frac{\delta}{\delta_{k-1}}\right)^p<0.
\] 
Then we can readily confirm that this formula is true by  \eqref{eq:del1} and our choice $\delta_k<\delta<\delta_{k-1}$.
This shows the claim. Consequently, there exists a constant $\e>0$ such that 
\[
\zeta(x)\le -2\e\text{\ \  for all \ \ }\frac{p(1-\bar{\delta}/\delta_{k-1})}{2+a_{k-1}}\le x\le\frac{1-(\delta/\delta_{k-1})^p}{2}.
\]
Using this for \eqref{eq:a2} with \eqref{eq:a10}, we get
\begin{equation}\label{eq:a4}
\sup_{r\in[r_n,s_n]}\phi_n(r)\le e^{-\e \mu_{k-1,n}^p} 
\end{equation}
for all large $n\in \mathbb{N}$. Therefore, we obtain
\[
\begin{split}
E_n(r_n,s_n)&=\int_{r_n}^{s_n}\left(\frac{\mu_n}{u_n(r)}\right)^{p-1}\frac{\phi_n(r)}{r}dr\le \frac{e^{-\e \mu_{k-1,n}^p}}{\delta^{p-1}}\log{\frac{s_n}{r_n}}
\end{split}
\]
for all large $n\in \mathbb{N}$. Then since \eqref{eq:a10} implies
\[
\log{\frac{s_n}{r_n}}=\log{\frac{s_n}{\ga_{k-1,n}}}-\log{\frac{r_n}{\ga_{k-1,n}}}=O(\mu_{k-1,n}^p),
\]
we conclude that
\[
\lim_{n\to \infty}E_n(r_n,s_n)=0.
\]
Lastly, recalling that $\delta\in(\delta_k,\bar{\delta})$ is arbitrary,  we find a sequence $(\sig_n)\subset (r_n,1)$ such that $u_n(\sig_n)/\mu_n\to \delta_k$, $\phi_n(\sig_n)\to0$, and $E_n(r_n,\sig_n)\to0$ as $n\to \infty$.
 This completes the proof.  
\end{proof}
Before proceeding to the next step, we here prove Theorem \ref{thm:crit}. For this purpose, we give the next lemma.
\begin{lemma}\label{lem:crit1} Assume $p=2$.  Then, up to a subsequence,  there exists a sequence $(r_n)\subset (0,1)$ such that $u_n(r_n)/\mu_n\to 0$ and  $E_n(0,r_n)\to4$ as $n\to \infty$.
\end{lemma}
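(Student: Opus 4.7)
The plan is to apply Lemma \ref{lem:e10} with $k=1$ and exploit the fact that its two conditions on $\delta$ collapse when $p=2$. Indeed, with $\delta_0=1$ and $a_0=2$, the first condition reads $\delta > (p-2)\delta_0/(p-1) = 0$ and the second becomes $\delta^p > 1 - 4pa_0/[(p-1)(2+a_0)^2] = 1 - 16/16 = 0$; both are therefore automatic for every $\delta \in (0,1)$. Since (A$_1$) holds automatically by Remark \ref{rmk:A}, I fix such a subsequence once and for all.

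Along this subsequence, for each $\delta \in (0,1)$ pick $(r_n^\delta) \subset (\rho_{0,n},1)$ with $u_n(r_n^\delta) = \delta\mu_n$. Lemma \ref{lem:e10} then yields $E_n(\rho_{0,n}, r_n^\delta) \to 0$, and combined with $E_n(0,\rho_{0,n}) \to 4$ from Theorem \ref{thm1} we obtain $E_n(0, r_n^\delta) \to 4$ as $n \to \infty$.

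To upgrade from "each fixed $\delta > 0$" to a sequence with $u_n(r_n)/\mu_n \to 0$, I would run a standard diagonal argument. Fix any $\delta_m \searrow 0$. For each $m$ choose $N_m$ with $N_1 < N_2 < \cdots$ so that $|E_n(0, r_n^{\delta_m}) - 4| < 1/m$ for all $n \geq N_m$. Setting $r_n := r_n^{\delta_m}$ for $n \in [N_m, N_{m+1})$ produces a sequence with $u_n(r_n)/\mu_n = \delta_m \to 0$ and $E_n(0, r_n) \to 4$, as required.

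The conclusion is essentially a direct corollary of Lemma \ref{lem:e10}: no genuinely new obstacle appears. The only real content is the observation that the two quantitative thresholds on $\delta$ in that lemma vanish precisely at the critical exponent $p=2$, so the full range $\delta \in (0,1)$ is admissible and $r_n$ can be pushed out to arbitrarily small values of $u_n(r_n)/\mu_n$ while preserving the first-bubble energy $4$. The diagonal selection is pure bookkeeping.
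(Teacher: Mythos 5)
Your proof is correct, and it takes a genuinely different (and more direct) route than the paper's.

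The paper proves this lemma by citing Lemma \ref{lem:e2}, which (after invoking Lemma \ref{lem:e10} for a single fixed $\bar\delta$) uses the $\zeta$-function pointwise estimate $\phi_n(r)\le e^{-\e\mu_{k-1,n}^p}$ on $[r_n,s_n]$ to push the estimate further toward $\delta_k=0$, then diagonalizes. You bypass Lemma \ref{lem:e2} entirely by observing that for $p=2$, $k=1$ (so $\delta_0=1$, $a_0=2$) both quantitative thresholds in Lemma \ref{lem:e10} degenerate to $\delta>0$, so that lemma already yields $E_n(\rho_{0,n},r_n^\delta)\to0$ for every $\delta\in(0,1)$, and the diagonalization that is internal to Lemma \ref{lem:e2} can be applied directly to Lemma \ref{lem:e10} instead. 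This is a legitimate shortcut and a nice observation: the very condition that forces Lemma \ref{lem:e2}'s extra pointwise step for $p>2$ (namely, the restricted $\delta$-range in Lemma \ref{lem:e10}) is vacuous at $p=2$. One small remark to keep the diagonalization airtight: the ``up to a subsequence'' hidden inside Lemma \ref{lem:e10} comes from Lemma \ref{lem:22}, which extracts a subsequence only to make $\mu_n^{-p}\log(1/\ga_{0,n})$ converge; since that extraction is $\delta$-independent, a single subsequence does serve all $\delta_m$ simultaneously, which is exactly what the diagonal step requires. The only content of Lemma \ref{lem:e2} that your argument discards is the extra convergence $\phi_n(\sig_n)\to0$, which is not asked for in Lemma \ref{lem:crit1}.
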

\begin{proof} As noted in Remark \ref{rmk:A}, we confirm that (A$_1$) holds true. Then, using Theorem \ref{thm1} and Lemma \ref{lem:e2} and recalling that $\delta_1=0$ if $p=2$, we get the desired conclusion. We finish the proof.
\end{proof}
Moreover,  we get the following. 
\begin{lemma}\label{lem:crit2} Suppose $p=2$. Then we have, up to a subsequence,
\[
\lim_{n\to \infty}\frac{\log{\frac{1}{\ga_{0,n}}}}{\mu_n^2}=\frac12.
\]
\end{lemma}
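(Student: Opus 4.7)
The plan is to match the upper bound $\limsup_{n\to\infty}\log(1/\ga_{0,n})/\mu_n^2\le 1/2$ already supplied by Lemma \ref{lem33} with the reverse inequality $\liminf_{n\to\infty}\log(1/\ga_{0,n})/\mu_n^2\ge 1/2$. The argument will parallel the subcritical case in Lemma \ref{lem:thm22}, the only change being that the identity \eqref{id1} is applied with $s=r_n$ rather than $s=1$, where $(r_n)\subset(0,1)$ is the sequence produced by Lemma \ref{lem:crit1} along which $u_n(r_n)/\mu_n\to 0$ and $E_n(0,r_n)\to 4$. In the critical case this stopping radius is the natural substitute for $s=1$, since the global energy $E_n(0,1)$ is no longer known to equal $4$.

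Concretely, applying \eqref{id1} with $s=r_n$, multiplying by $2\mu_n=p\mu_n^{p-1}$, changing variables $r=\ga_{0,n}t$, invoking the defining relation $2\la_n\mu_n f(\mu_n)\ga_{0,n}^2=1$, and using the split $\log(r_n/(\ga_{0,n}t))=\log(r_n/\ga_{0,n})+\log(1/t)$, I would arrive at
\[
2\mu_n\bigl(u_n(0)-u_n(r_n)\bigr)=E_n(0,r_n)\log\frac{r_n}{\ga_{0,n}}+\int_0^{r_n/\ga_{0,n}}\frac{f(u_n(\ga_{0,n}t))}{f(\mu_n)}\,t\log\frac{1}{t}\,dt.
\]
The left-hand side equals $2\mu_n^2(1+o(1))$ because $u_n(r_n)/\mu_n\to 0$, and the coefficient in front of $\log(r_n/\ga_{0,n})$ satisfies $E_n(0,r_n)\to 4$ by Lemma \ref{lem:crit1}.

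The residual integral will be controlled exactly as in \eqref{eq:7}: I fix a large $R>1$, use Lemma \ref{prop:thm1} on $[0,R]$ to replace the integrand by its pointwise limit $e^{z_0(t)}t\log(1/t)$, and observe that on $[R,r_n/\ga_{0,n}]$ (an interval which tends to $[R,\infty)$ because $r_n\ge\rho_{0,n}$ and $\rho_{0,n}/\ga_{0,n}\to\infty$) the factor $\log(1/t)$ is negative while the rest of the integrand is nonnegative. Hence the residual integral is bounded above by a constant depending only on $R$. Combining these facts yields
\[
2\mu_n^2(1+o(1))\le(4+o(1))\log\frac{r_n}{\ga_{0,n}}+O(1),
\]
so that $\log(r_n/\ga_{0,n})/\mu_n^2\ge 1/2+o(1)$. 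Since $0<r_n<1$ forces $\log(1/\ga_{0,n})\ge\log(r_n/\ga_{0,n})$, the desired lower bound $\liminf\log(1/\ga_{0,n})/\mu_n^2\ge 1/2$ follows, and together with Lemma \ref{lem33} the limit is exactly $1/2$.

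The substantive ingredient is Lemma \ref{lem:crit1}, which bridges the critical and subcritical situations by supplying a quantised energy of $4$ on an interval $[0,r_n]$ where the normalised height has already dropped to $0$; once this is in hand, the remainder of the argument is a routine adaptation of Lemma \ref{lem:thm22} and I do not anticipate any genuine obstacle. The only point requiring care is the sign of $\log(1/t)$ on the tail $[R,r_n/\ga_{0,n}]$, which is immediate once $R$ is fixed larger than $1$.
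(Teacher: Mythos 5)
Your proposal is correct and takes essentially the same route as the paper: reduce to the lower bound via Lemma \ref{lem33}, invoke Lemma \ref{lem:crit1} for the sequence $(r_n)$ with $u_n(r_n)/\mu_n\to0$ and $E_n(0,r_n)\to4$, apply \eqref{id1} at $s=r_n$, rescale, and control the residual integral exactly as in \eqref{eq:7}. The only cosmetic differences are that the paper first enlarges $\log(r_n/r)$ to $\log(1/r)$ and shows the residual integral is negative, whereas you keep $\log(r_n/\ga_{0,n})$, bound the residual by an $R$-dependent constant, and pass to $\log(1/\ga_{0,n})$ at the end; both manipulations yield the same estimate.
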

\begin{proof} In view of Lemma \ref{lem33}, it suffices to show that 
\[
\liminf_{n\to \infty}\frac{\log{\frac{1}{\ga_{0,n}}}}{\mu_n^2}\ge \frac12.
\]
Then, we select a sequence $(r_n)\subset(0,1)$ as in the previous lemma and compute from \eqref{id1} that
\[
\begin{split}
2\mu_n^2(1+o(1))&\le 2\mu_n\int_{0}^{r_n} \la_nf(u_n(r))r\log{\frac{1}{r}}dr\\
&=(4+o(1))\log{\frac{1}{\ga_{0,n}}}+\int_0^{\frac{r_n}{\ga_{0,n}}}\frac{f(u_n(\ga_{0,n}r))}{f(\mu_n)}r\log{\frac1r}dr.
\end{split}
\]
Since $r_n/\ga_{0,n}\to \infty$ as $n\to \infty$, as in the proof of Lemma \ref{lem:thm22}, we get
\[
2\mu_n^2(1+o(1))\le (4+o(1))\log{\frac{1}{\ga_{0,n}}}
\]
for all large $n\in \mathbb{N}$. This gives the desired estimate. We complete the proof.
\end{proof}
Now, we prove the theorem.
\begin{proof}[Proof of Theorem \ref{thm:crit}] Lemma \ref{lem:crit1} proves \eqref{cri1}. Moreover, using Lemma \ref{lem:crit2} and the definition of $\ga_{0,n}$, we confirm \eqref{cri2}  similarly to the proof of \eqref{sub2} in Theorem \ref{thm2}.  This completes the proof.
\end{proof}
After this, we focus on the case $p>2$. 
\begin{lemma}\label{lem:e3} Assume $p>2$. After extracting a subsequence if necessary, we choose the sequence  $(\sig_n)$ obtained in Lemma \ref{lem:e2} and any value  $e_0\in \mathbb{R}$ such that
\[
0<e_0<\min\left\{\frac{a_k}{\delta_k^{p-1}},2p\left(1-\frac{2+a_{k-1}}{2p}\right)\frac1{\delta_{k-1}^{p-1}}\right\}.
\]
Then there exist a sequence $(\tau_n)\subset (\sig_n,1)$ and a value $\tilde{\delta}_k\in(0,\delta_k]$ such that $u_n(\tau_n)/\mu_n\to\tilde{\delta}_k$, $E_n(\sig_n,\tau_n)\to e_0$ as $n\to \infty$,  
\begin{equation}\label{eq:st}
\lim_{n\to \infty}\sup_{r\in[\sig_n,\tau_n]}\psi_n(r)<2,
\end{equation}
and further,  there exists a constant $\eta>0$ such that $\phi_n(\tau_n)\to\eta$ as $n\to \infty$ up to a subsequence. Finally, we have that $\tau_n/\sig_n\to \infty$ as $n\to \infty$. 
\end{lemma}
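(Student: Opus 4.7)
The plan is to define $\tau_n$ to be the first radius beyond $\sig_n$ at which the accumulated energy reaches $e_0$, and then to verify the remaining properties one by one. To see that such $\tau_n$ exists, I would pick $\delta \in (0,\delta_k)$ small enough that the lower bound
\[
g(\delta) := \frac{2p}{\delta_{k-1}^{p-1}}\left\{\frac{1-\delta/\delta_{k-1}}{1-(\delta/\delta_{k-1})^p} - \frac{2+a_{k-1}}{2p}\right\}
\]
from Lemma \ref{lem:e1} exceeds $e_0$; this is possible because $g(\delta)\to 2p(1-(2+a_{k-1})/(2p))/\delta_{k-1}^{p-1}$ as $\delta\to 0^+$, a quantity strictly greater than $e_0$ by hypothesis. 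Since $u_n$ decreases continuously from $\approx \delta_k\mu_n$ to $0$ on $[\sig_n,1]$, choose $s_n$ with $u_n(s_n)/\mu_n\to\delta$; Lemma \ref{lem:e1} together with $E_n(\rho_{k-1,n},\sig_n)\to 0$ from Lemma \ref{lem:e2} yields $E_n(\sig_n,s_n)\ge e_0+\e$ for large $n$, so by continuity of $r\mapsto E_n(\sig_n,r)$ there is $\tau_n\in(\sig_n,s_n)$ with $E_n(\sig_n,\tau_n)=e_0$.

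Passing to a subsequence, $u_n(\tau_n)/\mu_n\to\tilde{\delta}_k\in[0,\delta_k]$. To exclude $\tilde{\delta}_k=0$, I would suppose the contrary; then for any $\delta'\in(0,\delta_k)$ eventually there is $s_n'\in(\sig_n,\tau_n)$ with $u_n(s_n')/\mu_n\to\delta'$, so that $E_n(\sig_n,s_n')\le e_0$ while Lemma \ref{lem:e1} forces $\liminf E_n(\sig_n,s_n')\ge g(\delta')$; letting $\delta'\to 0^+$ would give $e_0\ge g(0^+)$, contradicting the hypothesis. For \eqref{eq:st}, write $\psi_n(r)=(u_n(r)/\mu_n)^{p-1}E_n(0,r)$; the monotonicity of $u_n$ gives $\psi_n(r)\le(\delta_k+o(1))^{p-1}(E_n(0,\sig_n)+e_0+o(1))$ on $[\sig_n,\tau_n]$. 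Combining (A$_k$), Lemma \ref{lem:e2}, and the identity $\delta_k^{p-1}\sum_{i=0}^{k-1}E_i=2-a_k$ (which follows from Lemma \ref{lem:bg} together with $\delta_k^{p-1}E_k=2a_k$) yields $\psi_n(r)\le 2-a_k+\delta_k^{p-1}e_0+o(1)<2$, the strict inequality being precisely the constraint $e_0<a_k/\delta_k^{p-1}$.

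For the remaining claims I would combine Lemma \ref{lemD} with Lemma \ref{lem:pp}. If $\sup_{[\sig_n,\tau_n]}\phi_n\to 0$, then Lemma \ref{lemD} applied with $r_n=\sig_n$, $s_n=\tau_n$---using $\psi_n(\sig_n)\to 2-a_k\in(0,2)$ and $\liminf u_n(\tau_n)/\mu_n=\tilde{\delta}_k>0$---would force $E_n(\sig_n,\tau_n)\to 0$, contradicting $E_n(\sig_n,\tau_n)=e_0$; hence $\sup_{[\sig_n,\tau_n]}\phi_n\ge c_0>0$ along a subsequence. On the other hand, since $\psi_n<2$ uniformly on $[\sig_n,\tau_n]$ and $u_n(r)\ge\tilde{\delta}_k\mu_n/2\to\infty$ uniformly there, the exact identity underlying Lemma \ref{lem:pp} together with (H1) gives $\phi_n'(r)\ge c\,\phi_n(r)/r$ uniformly on $[\sig_n,\tau_n]$ for large $n$, so $\phi_n$ is strictly increasing there. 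Consequently $\phi_n(\tau_n)=\max_{[\sig_n,\tau_n]}\phi_n\ge c_0$, and along a further subsequence $\phi_n(\tau_n)\to\eta>0$. Finally,
\[
\frac{\phi_n(\tau_n)}{\phi_n(\sig_n)} = \left(\frac{\tau_n}{\sig_n}\right)^2 \left(\frac{u_n(\tau_n)}{u_n(\sig_n)}\right)^{p-1} \frac{f(u_n(\tau_n))}{f(u_n(\sig_n))},
\]
and since $u_n$ is decreasing and $f$ is increasing beyond $t_1$ (Lemma \ref{lem:h}), the last two factors are $\le 1$ for large $n$, whence $(\tau_n/\sig_n)^2\ge\phi_n(\tau_n)/\phi_n(\sig_n)\to\infty$ because $\phi_n(\sig_n)\to 0$ by (A$_k$); hence $\tau_n/\sig_n\to\infty$.

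The delicate point throughout is the strict inequality in \eqref{eq:st}: it hinges on the exact identity $\delta_k^{p-1}\sum_{i=0}^{k-1}E_i=2-a_k$ produced by Lemma \ref{lem:bg} and the recurrence defining $\delta_k$, which leaves a precise margin $a_k-\delta_k^{p-1}e_0$ beneath the threshold $2$. The applicability of Lemma \ref{lemD}, the monotonicity of $\phi_n$ via Lemma \ref{lem:pp}, and the positivity of $\eta=\lim\phi_n(\tau_n)$ all rest on this quantitative balance, and the hypothesis $e_0<a_k/\delta_k^{p-1}$ is invoked precisely to keep that margin positive.
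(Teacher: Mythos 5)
Your proposal follows essentially the same route as the paper: pick $\delta$ small so that the lower bound from Lemma \ref{lem:e1} exceeds $e_0$, build $\tau_n$ so that $E_n(\sig_n,\tau_n)\to e_0$, verify \eqref{eq:st} via $E_n(0,\sig_n)=\sum_{i=0}^{k-1}E_i+o(1)$ and the identity $\delta_k^{p-1}\sum_{i=0}^{k-1}E_i=2-a_k$, and then combine Lemma \ref{lem:pp} (monotonicity of $\phi_n$) with Lemma \ref{lemD} to show that $\phi_n(\tau_n)$ cannot tend to zero. The last ratio argument for $\tau_n/\sig_n\to\infty$ is also the paper's.

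There is, however, one genuine gap. After concluding $\phi_n(\tau_n)\ge c_0>0$ along a subsequence, you write that ``along a further subsequence $\phi_n(\tau_n)\to\eta>0$''. This extraction presupposes that $(\phi_n(\tau_n))$ is bounded above; otherwise the limit could be $+\infty$, which is not a valid value of $\eta$ and would invalidate both the statement of the lemma and your final step where $\phi_n(\tau_n)/\phi_n(\sig_n)$ is used. The paper supplies this uniform bound explicitly: from \eqref{eq:st}, $\psi_n(\tau_n)<2$, while the monotonicity of $u_n$ and of $f$ on $[t_1,\infty)$ (Lemma \ref{lem:h}) gives the elementary lower bound $\psi_n(\tau_n)=p u_n(\tau_n)^{p-1}\int_0^{\tau_n}\la_nf(u_n)r\,dr\ge\frac12 p\la_n\tau_n^2u_n(\tau_n)^{p-1}f(u_n(\tau_n))=\frac12\phi_n(\tau_n)$, so $\phi_n(\tau_n)<4$. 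You need to include this (or an equivalent) bound before extracting the convergent subsequence.

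A minor redundancy: your contradiction argument ruling out $\tilde\delta_k=0$ is unnecessary given your own construction. Since $\tau_n\in(\sig_n,s_n)$ and $u_n$ is strictly decreasing, $u_n(\tau_n)>u_n(s_n)$ and hence $\tilde\delta_k\ge\delta>0$ automatically; the paper observes exactly this by choosing $(\tau_n)\subset(\sig_n,t_n]$ and $\tilde\delta_k\in[\delta,\delta_k]$. The argument is correct but costs effort that the construction already pays for.
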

\begin{proof} We first select a value $\delta\in(0,\delta_k)$ so that 
\[
2p\left( \frac{1-(\delta/\delta_{k-1})}{1-(\delta/\delta_{k-1})^p}-\frac{2+a_{k-1}}{2p}\right)\frac1{\delta_{k-1}^{p-1}}> e_0.
\]
Take a sequence  $(t_n)\subset (\sigma_n,1)$ so that $u_n(t_n)=\delta\mu_n$ for all $n\in \mathbb{N}$. Then from Lemmas \ref{lem:e1} and \ref{lem:e2}, we get 
\[
E_n(\sig_n,t_n)\ge 2p\left( \frac{1-(\delta/\delta_{k-1})}{1-(\delta/\delta_{k-1})^p}-\frac{2+a_{k-1}}{2p}\right)\frac1{\delta_{k-1}^{p-1}}+o(1)> e_0
\]
for all large $n$.  Hence there exist a sequence $(\tau_n)\subset (\sigma_n,t_n]$ and a value $\tilde{\delta}_k\in[\delta,\delta_k]$ such that $E_n(\sig_n,\tau_n)\to e_0$ and $u_n(\tau_n)/\mu_n\to \tilde{\delta}_k$ as $n\to \infty$ up to a subsequence. It follows from (A$_k$) that
\[
\begin{split}
\sup_{r\in[\sig_n,\tau_n]}\psi_n(r)&\le (\delta_k+o(1))^{p-1}E_n(0,\tau_n)=\delta_k^{p-1}\left(e_0+\sum_{i=0}^{k-1}E_i\right)+o(1).
\end{split}
\] 
Then  recalling our choice of $e_0$, Lemma \ref{lem:bg}, and \eqref{eq:del2}, we obtain
\[
\begin{split}
\lim_{n\to \infty}\sup_{r\in[\sig_n,\tau_n]}\psi_n(r)
<a_k+\left(\frac{\delta_k}{\delta_{k-1}}\right)^{p-1}(2+a_{k-1})=2
\end{split}
\] 
up to a subsequence. This proves \eqref{eq:st}. Moreover, we claim that $\phi_n(\tau_n)$ is uniformly bounded for all $n\in \mathbb{N}$. In fact, from \eqref{eq:st} and the first assertion in Lemma \ref{lem:h}, we obtain for all large $n\in \mathbb{N}$ that
\[
2> pu_n(\tau_n)^{p-1}\int_0^{\tau_n}\la_n f(u_n)rdr\ge \frac12\phi_n(\tau_n).
\]
This proves the claim. Hence by extracting a subsequence if necessary we find a constant $\eta\ge0$ such that $\phi_n(\tau_n)\to \eta$ as $n\to \infty$. It follows that $\eta>0$. We suppose $\eta=0$ on the contrary. From \eqref{eq:st} and Lemma \ref{lem:pp}, $\phi_n(r)$ is increasing for all $r\in[\sig_n,\tau_n]$ if $n$ is large enough. Then our assumption $\eta=0$ implies that $\sup_{r\in[\sig_n,\tau_n]}\phi_n(r)\to0$ as $n\to \infty$. Then it follows from Lemma \ref{lemD} that $E_n(\sig_n,\tau_n)\to0$ as $n\to \infty$ up to a subsequence. This is a contradiction. 
 Lastly from the first conclusion in Lemma \ref{lem:h} again, we confirm that
\[
o(1)=\frac{\phi_n(\sig_n)}{\phi_n(\tau_n)}=\left(\frac{\sig_n}{\tau_n}\right)^2\frac{u_n(\sig_n)^{p-1}f(u_n(\sig_n))}{u_n(\tau_n)^{p-1}f(u_n(\tau_n))}\ge \left(\frac{\sig_n}{\tau_n}\right)^2.
\]
This shows the final assertion. We complete the proof. 
\end{proof}
Now we detect the next bubble.
\begin{lemma}\label{lem:f1}  
Suppose $p>2$. Let $(\sig_n),(\tau_n)\subset (\rho_{k-1,n},1)$, $\tilde{\delta}_k\in(0,\delta_k]$, and $\eta>0$ be the sequences and constants in the previous lemma.  Then there exists a sequence $(r_{k,n})\subset (\sigma_n,1)$ such that $u_n(r_{k,n})/\mu_n\to \tilde{\delta}_k$, $\phi_n(r_{k,n})\to \tilde{a}_k^2/2$, and $\psi_n(r_{k,n})\to2$ as $n\to \infty$ up to a subsequence where $\tilde{a}_k=\sqrt{(2-\lim_{n\to \infty}\psi_n(\tau_n))^2+2\eta}\in(0,2)$. Moreover, if we put $\ga_{k,n}=r_{k,n}/\sqrt{\phi_n(r_{k,n})}$ and  
\[
z_{k,n}(r)=pu_n(r_{k,n})^{p-1}(u(\ga_{k,n} r)-u_n(r_{k,n}))
\] 
for all $r\in [0,1/\ga_{k,n}]$ and $n\in \mathbb{N}$, then we have that $\ga_{k,n}\to0$ and there exists a function $z_k$ such that $z_{k,n}\to z_k$ in $C^2_{\text{loc}}((0,\infty))$ as $n\to \infty$ up to a subsequence where $z_{k}$ satisfies
\[
\begin{cases}
-z_k''-\frac{1}{r}z_k'=e^{z_k}\ \ \text{ in }(0,\infty),\\
z_k(\tilde{a}_k/\sqrt{2})=0,\ -(\tilde{a}_k/\sqrt{2})z_k'(\tilde{a}_k/\sqrt{2})=2,
\end{cases}
\]
which implies
\[
z_k(r)=\log{\frac{2\tilde{a}_k^2 \tilde{b}_k }{r^{2-\tilde{a}_k}(1+\tilde{b}_k r^{\tilde{a}_k})^2}}
\]
with $\tilde{b}_k=(\sqrt{2}/\tilde{a}_k)^{\tilde{a}_k}$.
\end{lemma}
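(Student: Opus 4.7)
Extract a subsequence so that $\psi_n(\tau_n)\to c_*$ for some $c_*\in[0,2)$, permissible by \eqref{eq:st}. The strategy is to define $r_{k,n}$ as the first $r>\tau_n$ at which $\psi_n(r)=2$ and to compute $\phi_n(r_{k,n})$ via an approximate conservation law for the pair $(\phi_n,\psi_n)$. Differentiating $\psi_n=-p r u_n^{p-1}u_n'$ with the aid of \eqref{q} yields $\psi_n'(r)=\phi_n(r)/r-(p-1)\psi_n(r)^2/(pr u_n(r)^p)$, and combining this with Lemma \ref{lem:pp} gives
\[
\bigl[\phi_n+\tfrac12(\psi_n-2)^2\bigr]'(r)=\frac{\phi_n\,\psi_n\,(1-A_n)}{r}+\frac{(2-\psi_n)(p-1)\psi_n^2}{pr u_n^p},
\]
where $A_n(r)=f'(u_n(r))/(pu_n(r)^{p-1}f(u_n(r)))+(p-1)/(pu_n(r)^p)\to 1$ uniformly wherever $u_n(r)\to\infty$, by (H1).

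Existence and localization of $r_{k,n}$: on the set where $\psi_n<2$, both $\phi_n$ and $\psi_n$ are increasing by Lemma \ref{lem:pp} and the above $\psi_n'$ identity (since $\phi_n\ge\eta/2$ dominates the $O(\mu_n^{-p})$ correction). If $\psi_n$ stayed below $2$ up to $r^*_n$ with $u_n(r^*_n)=t_1+1$, then integrating $\psi_n'\ge\eta/(2r)$ would force $\log(r^*_n/\tau_n)$ bounded, but then $u_n(\tau_n)-u_n(r^*_n)=\int_{\tau_n}^{r^*_n}\psi_n/(pr u_n^{p-1})\,dr=O(\mu_n^{-(p-1)})$ would contradict a drop of order $\tilde\delta_k\mu_n$. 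So $\psi_n$ reaches $2$ at some first $r_{k,n}$, and the same bootstrap shows $\log(r_{k,n}/\tau_n)\le(2-c_*)/\eta+o(1)=O(1)$. In particular $u_n(\tau_n)-u_n(r_{k,n})=O(\mu_n^{-(p-1)})=o(\mu_n)$, so $u_n(r_{k,n})/\mu_n\to\tilde\delta_k$ and $u_n\to\infty$ uniformly on $[\tau_n,r_{k,n}]$. Integrating the displayed identity over $[\tau_n,r_{k,n}]$, both error terms are $o(1)$: the second by $\log(r_{k,n}/\tau_n)/\mu_n^p=o(1)$, the first by $\max_{[\tau_n,r_{k,n}]}|1-A_n|\cdot\int_{\tau_n}^{r_{k,n}}\phi_n/r\,dr=o(1)\cdot(2-c_*+o(1))$, where the inner integral is controlled by the $\psi_n'$ identity. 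This yields $\phi_n(r_{k,n})\to\eta+(2-c_*)^2/2=\tilde a_k^2/2$. Since $u_n(r_{k,n})\to\infty$, Lemma \ref{lem:gl} gives $r_{k,n}\to 0$, hence $\ga_{k,n}=r_{k,n}/\sqrt{\phi_n(r_{k,n})}\to 0$.

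By our choice of $\ga_{k,n}$ the scaled function satisfies $-z_{k,n}''(r)-z_{k,n}'(r)/r=f(u_n(\ga_{k,n}r))/f(u_n(r_{k,n}))$, whose right-hand side tends to $e^{z_k(r)}$ on compact subsets of $(0,\infty)$ by Lemma \ref{lem:h} (the $h$-ratio tends to $1$) and the Taylor expansion $u_n(\ga_{k,n}r)^p-u_n(r_{k,n})^p=z_{k,n}(r)(1+o(1))$. Combined with $z_{k,n}(r_{k,n}/\ga_{k,n})=0$, $-r z_{k,n}'(r)|_{r=r_{k,n}/\ga_{k,n}}=\psi_n(r_{k,n})\to 2$, and $r_{k,n}/\ga_{k,n}=\sqrt{\phi_n(r_{k,n})}\to\tilde a_k/\sqrt 2$, a standard Ascoli--Arzel\`a argument with Picard uniqueness for the Liouville equation away from the origin delivers $z_{k,n}\to z_k$ in $C^2_{\mathrm{loc}}((0,\infty))$, where $z_k$ solves the claimed Cauchy problem; the explicit formula is then verified by direct substitution. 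The main obstacle is the bootstrap establishing $\log(r_{k,n}/\tau_n)=O(1)$ through the interplay of monotonicity of $\phi_n,\psi_n$ and the $\psi_n'$ identity, without which the error terms in the approximate conservation law cannot be closed.
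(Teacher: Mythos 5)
Your proof is correct in its essential strategy, but it takes a genuinely different route to the paper. The paper first blows up around $\tau_n$: it defines $w_n(r)=pm_n^{p-1}(u_n(\tau_n r)-m_n)$, establishes uniform $C^1$ bounds via explicit integral inequalities, passes to the limit to obtain a solution $w$ of $-w''-w'/r=\eta e^w$ with $w(1)=0$, $w'(1)=-\psi_0$, rescales $w$ to the standard Liouville equation, and only then locates $r_{k,n}$ as the argmax of $\phi_n$ in the blown-up variable, reading off $\phi_n(r_{k,n})\to a^2/2$ from the explicit formula $2a^2bR^a/(1+bR^a)^2$. You instead bypass the intermediate scaling entirely and work with the approximate first integral $\phi_n+\tfrac12(\psi_n-2)^2$, whose derivative you compute from the identities $\phi_n'=\phi_n(2-A_n\psi_n)/r$ and $\psi_n'=\phi_n/r-(p-1)\psi_n^2/(pru_n^p)$. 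Defining $r_{k,n}$ as the first zero of $\psi_n-2$ past $\tau_n$ and closing the error estimate via a bootstrap on $\log(r_{k,n}/\tau_n)=O(1)$ gives $\phi_n(r_{k,n})\to\eta+(2-c_*)^2/2=\tilde a_k^2/2$ directly. This is a shorter, more ODE-flavoured derivation of the bubble energy; what the paper's route buys in exchange is the explicit intermediate profile $w$ and the uniform bounds \eqref{wn'0}--\eqref{wninf}, which it then reuses (by reference) when establishing the $C^2_{\mathrm{loc}}$ convergence of $z_{k,n}$.

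One technical point needs a small repair. Your stopping radius $r^*_n$, defined by $u_n(r^*_n)=t_1+1$, is not admissible for the bootstrap: the error term $(p-1)\psi_n^2/(pru_n^p)$ is really $O(u_n^{-p}/r)$, and both this and the $o(1)$ in Lemma \ref{lem:pp} require $u_n\to\infty$ along the interval, which fails once $u_n$ has fallen to the fixed level $t_1+1$. The fix is to stop instead at $u_n=c\mu_n$ for a fixed $c\in(0,\tilde\delta_k)$ (say $c=\tilde\delta_k/2$): on $[\tau_n,\min(r_{k,n},r^*_n)]$ one then has $u_n\ge c\mu_n\to\infty$ and $\psi_n<2$, so $\phi_n$ is nondecreasing by Lemma \ref{lem:pp} and $\psi_n'\ge(\eta-o(1))/r$, forcing $\log$ of the ratio to be $O(1)$; the resulting $u_n$-drop of $O(\mu_n^{-(p-1)})$ then contradicts $u_n$ reaching $c\mu_n$, so $\psi_n$ hits $2$ first, $\log(r_{k,n}/\tau_n)=O(1)$, and $u_n(r_{k,n})/\mu_n\to\tilde\delta_k$. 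With that modification the rest of your argument (integrating the displayed identity, reading off $\phi_n(r_{k,n})$, and then applying the usual compactness argument to $z_{k,n}$ with the boundary data at $r=\tilde a_k/\sqrt2$) goes through, at essentially the same level of rigor as the paper's own treatment of the final $z_{k,n}$ convergence.
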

\begin{proof} 
First notice that  $\tau_n\to0$ as $n\to \infty$ by the last assertion in Lemma \ref{lem:gl}. Then putting $m_n=u_n(\tau_n)$, we  define 
\[
w_n(r)=pm_n^{p-1}(u_n(\tau_nr)-m_n)
\]
for all $r\in[0,1/\tau_n]$. It follows from \eqref{q} that $w_n$ satisfies
\begin{equation}\label{wn}
\begin{cases}
-w_n''-\frac{1}{r}w_n'=\phi_n(\tau_n)\frac{f(u_n(\tau_n\cdot))}{f(m_n)}=\phi_n(\tau_n)\frac{h(u_n(\tau_n\cdot))}{h(m_n)}e^{m_n^p\left\{\left(1+\frac{w_n}{pm_n^p}\right)^p-1\right\}}\text{ in }(0,1/\tau_n),\\
w_n(1)=0,\ w_n'(1)=-\psi_n(\tau_n).
\end{cases}
\end{equation}
 Here,  from (A$_k$) and Lemma \ref{lem:e2}, we get
\[
\begin{split}
 \psi_n(\tau_n)\ge (\tilde{\delta}_k+o(1))^{p-1}E_n(0,\sig_{n})=\tilde{\delta}_k^{p-1}\sum_{i=0}^{k-1}E_i+o(1).
\end{split}
\] 
Hence by \eqref{eq:st}, we have  
\begin{equation}\label{eq:ps0}
\psi_0:=\lim_{n\to \infty}\psi_n(\tau_n)\in(0,2)
\end{equation}
up to a subsequence. Now we shall show the uniform estimates for $(w_n)$. To this end, first fix any $r_0\in(0,1)$. We then claim that, 
\begin{equation}\label{wn'0}
\left(\tilde{\delta}_k^{p-1}\sum_{i=1}^{k-1} E_i+o(1)\right)\frac1r\le -w_n'(r)\le (\psi_0+o(1))\frac1r
\end{equation}
for all $r\in [r_0,1]$ and large $n\in \mathbb{N}$ where $o(1)\to0$ uniformly for all $r\in[r_0,1]$. To prove this, we confirm by Lemma \ref{lem:h} and (A$_k$) that 
\[
\left(\frac{\rho_{k-1,n}}{\tau_n}\right)^2\le \frac{\phi_n(\rho_{k-1,n})}{\phi_n(\tau_n)}=o(1).
\]
In particular, $\tau_nr_0>\rho_{k-1,n}$ for all large $n\in \mathbb{N}$. This implies that for all $r\in [r_0,1]$,  
\[
-rw_n'(r)\ge -r_0w_n'(r_0)
\ge  (\tilde{\delta}_k+o(1))^{p-1}E_n(0,\rho_{k-1,n})=\tilde{\delta}_k^{p-1}\sum_{i=1}^{k-1} E_i+o(1)
\]
if $n\in \mathbb{N}$ is sufficiently large. This shows the first inequality of \eqref{wn'0}. On the other hand, for any $r\le 1$, we confirm by \eqref{id0} that  
\[
\begin{split}
-rw_n'(r)\le pm_n^{p-1}\int_0^{\tau_n}\la_nf(u_n)sds=\psi_n(\tau_n)
\end{split}
\] 
for all $n\in \mathbb{N}$. This proves the second one. Hence we prove the claim. Then for any $r\in[r_0,1]$, integrating \eqref{wn'0} over $[r,1]$ and using $w_n(1)=0$, we get 
\begin{equation}\label{wn0}
\left(\tilde{\delta}_k^{p-1}\sum_{i=1}^{k-1} E_i+o(1)\right)\log{\frac1r} \le w_n(r)\le (\psi_0+o(1))\log{\frac1r}
\end{equation}
for all $n\in \mathbb{N}$. Next suppose $r\in[1,1/\tau_n]$. From the equation in \eqref{wn} and Lemma \ref{lem:h2}, we have 
\[
(-rw_n'(r))'\le (\eta+1) r
\]
for all large $n\in \mathbb{N}$. After integration over $[1,r]$ with $w_n'(1)=-\psi_n(\tau_n)$, we calculate
\begin{equation}\label{wn'inf}
0\le -w_n'(r)\le (\eta+1) \frac{r^2-1}{2r}+\psi_n(\tau_n)\frac1r
\end{equation}
for all $r\in[1,1/\tau_n]$ and large $n\in \mathbb{N}$. Integrating over $[1,r]$ with $w_n(1)=0$, we have
\begin{equation}\label{wninf}
0\le -w_n(r)\le (\eta+1)\left(\frac{r^2-1}{4}-\frac12\log{r}\right)+\psi_n(\tau_n)\log{r}
\end{equation}
for all $r\in[1,1/\tau_n]$ and large $n\in \mathbb{N}$. Consequently, \eqref{wn'0}, \eqref{wn0}, \eqref{wn'inf}, and \eqref{wninf} prove that $(w_n)$ is bounded in $C_{\text{loc}}^1((0,\infty))$. Then by the Ascoli-Arzel\`a theorem, we prove that there exists a function $w$ on $(0,\infty)$ such that $w_n\to w$ in $C_{\text{loc}}((0,\infty))$ as $n\to \infty$ by extracting a subsequence if necessary. \eqref{wn0} yields
\begin{equation}\label{sw}
\left(\tilde{\delta}_k^{p-1}\sum_{i=0}^{k-1}E_i\right)\log{\frac1r}\le w(r)\le \psi_0\log{\frac1r}
\end{equation}
for all $r\in(0,1)$.  Moreover, note that for each compact set $K\subset (0,\infty)$, there exists a constant $C>0$ such that $\sup_{r\in K}|w_n(r)|\le C$ for all $n\in \mathbb{N}$. It follows that  
\[
m_n-\frac{C}{pm_n^{p-1}}\le u_n(\tau_nr)\le m_n+\frac{C}{pm_n^{p-1}}
\]
for all $r\in K$ and $n\in \mathbb{N}$. Then \eqref{hb} proves that
\begin{equation}\label{eq:hh}
\frac{h(u_n(\tau_n\cdot))}{h(m_n)}\to1\text{ uniformly on }K
\end{equation}
as $n\to \infty$. Consequently, we get from the equation in \eqref{wn} and the Ascoli-Arzel\`a theorem  that $w_n\to w$ in $C^2_{\text{loc}}((0,\infty))$ up to a subsequence and thus, 
\begin{equation}\label{w}
\begin{cases}
-w''-\frac{1}{r}w'=\eta e^w\text{ in }(0,\infty),\\
w(1)=0,\ w'(1)=-\psi_0.
\end{cases}
\end{equation}
Now, put $z_{n}(r)=w_n(r/\sqrt{\phi_n(\tau_n)})$ for all $r\in(0,\sqrt{\phi_n(\tau_n)}/\tau_n)$ and $z(r)=w(r/\sqrt{\eta})$ for all $r\in (0,\infty)$. Then we clearly get that $z_{n}\to z$ in $C^2_{\text{loc}}((0,\infty))$ as $n\to \infty$. Moreover, it follows from \eqref{w} and \eqref{sw} that  
\begin{equation}\label{eq:z}
\begin{cases}
-z''-\frac{1}{r}z'= e^{z}\text{ in }(0,\infty),\\
z(\sqrt{\eta})=0,\ \sqrt{\eta}z'(\sqrt{\eta})=-\psi_0,
\end{cases}
\end{equation}
and 
\begin{equation}\label{eq:sz}
\left(\tilde{\delta}_k^{p-1}\sum_{i=0}^{k-1}E_i \right)\log{\frac{\sqrt{\eta}}r}\le z(r)\le \psi_0\log{\frac{\sqrt{\eta}}r}
\end{equation}
for all $r\in(0,\sqrt{\eta})$.  Integrating the equation in \eqref{eq:z} (see Proof of Proposition 3.1 in \cite{GGP}) shows that there exist constants $a,b>0$ such that   
\[
z(r)=\log{\frac{2a^2 b }{r^{2-a}(1+b r^{a})^2}}
\]
for all $r>0$.  \eqref{eq:sz} and \eqref{eq:ps0} yield $a\in(0,2)$.  Using $z(\sqrt{\eta})=0$ and $\sqrt{\eta}z'(\sqrt{\eta})=-\psi_0$, we derive $a=\sqrt{(2-\psi_0)^2+2\eta}$ and 
\[
b=\frac{a-2+\psi_0}{\sqrt{\eta}^a(a+2-\psi_0)}.
\]
Moreover, a direct calculation shows that
\[
\int_0^\infty e^zrdr=2a.
\] 
Let us conclude the proof. Suppose $R>0$. Noting $z_n(R)=z(R)+o(1)$ and \eqref{eq:hh}, we calculate that
\[
\begin{split}
&\phi_n(\tau_n R/\sqrt{\phi_n(\tau_n)})\\
&=R^2\left(1+\frac{z_n(R)}{pm_n^p}\right)^{p-1}\frac{h(u_n(\tau_n R/\sqrt{\phi_n(\tau_n)}))}{h(m_n)}e^{m_n^p\left\{\left(1+\frac{z_n(R)}{pm_n^p}\right)^p-1\right\}}\\
&=\frac{2a^2bR^{a}}{(1+bR^{a})^2}+o(1).
\end{split}
\]
Note that there exists a unique constant $R_*>0$ such that 
\[\max_{R>0}\frac{2a^2bR^{a}}{(1+bR^{a})^2}=\frac{2a^2bR_*^{a}}{(1+bR_*^{a})^2}=\frac{a^2}{2}.\]
Hence, there exists a sequence $(R_n)\subset (0,\infty)$ of values  such that putting $r_{k,n}=\tau_nR_n/\sqrt{\phi_n(\tau_n)}$ for all $n\in \mathbb{N}$, we have that $\phi_n'(r_{k,n})=0$ for all $n\in \mathbb{N}$, $\phi_n(r_{k,n})\to a^2/2$, and $R_n\to R_*$ as $n\to \infty$. Notice that since $z_n(R_n)=z(R_*)+o(1)$, it holds that $u_n(r_{k,n})/\mu_n\to \tilde{\delta}_k$ as $n\to \infty$. Then Lemma \ref{lem:pp} shows that $\psi_n(r_{k,n})\to 2$ as $n\to \infty$. Moreover, it follows from the final conclusion in  Lemma \ref{lem:gl} that $r_{k,n}\to0$ as $n\to \infty$. The last assertion in the previous lemma ensures $(r_{k,n})\subset (\sig_n,1)$ up  to a subsequence. Finally, we set $\ga_{k,n}=r_{k,n}/\sqrt{\phi_n(r_{k,n})}$ for all $n\in \mathbb{N}$. This implies $\ga_{k,n}^2p\la_n u_n(r_{k,n})^{p-1}f(u_n(r_{k,n}))=1$ for all $n\in \mathbb{N}$ and $\ga_{k,n}\to0$ as $n\to \infty$. Furthermore, we define  
\[z_{k,n}(r)=pu_n(r_{k,n})^{p-1}(u_n(\ga_{k,n}r)-u_n(r_{k,n}))\]
 for all $r\in [0,1/\ga_{k,n}]$ and $n\in \mathbb{N}$. It follows from \eqref{q} that
\begin{equation}\label{eq:zk}
\begin{cases}
-z_{k,n}''-\frac{1}{r}z_{k,n}'= \frac{f(u_n(\ga_{k,n}\cdot))}{f(u_n(r_{k,n}))}\text{ in }(0,1/\ga_{k,n}),\\
z_{k,n}(r_{k,n}/\ga_{k,n})=0,\ (r_{k,n}/\ga_{k,n})z_{k,n}'(r_{k,n}/\ga_{k,n})=-\psi_n(r_{k,n}).
\end{cases}
\end{equation}
Then the similar argument as above proves that there exists a smooth function $z_k$ on $(0,\infty)$ such that $z_{k,n}\to z_k$ in $C^2_{\text{loc}}((0,\infty))$ and 
\begin{equation}\label{eq:z}
\begin{cases}
-z_k''-\frac{1}{r}z_k'= e^{z_k}\text{ in }(0,\infty),\\
z_k(a/\sqrt{2})=0,\ (a/\sqrt{2})z_k'(a/\sqrt{2})=-2.
\end{cases}
\end{equation}
Again integrating the equation, we find constants $\tilde{a}\in(0,2)$ and $\tilde{b}$ such that 
\[
z_k(r)=\log{\frac{2\tilde{a}^2 \tilde{b} }{r^{2-\tilde{a}}(1+\tilde{b} r^{\tilde{a}})^2}}
\]
for all $r>0$. Using the latter conditions in \eqref{eq:z}, we get $\tilde{a}=a$ and $\tilde{b}=(\sqrt{2}/a)^{a}$. We finish the proof. 
\end{proof}
We extend the concentration region around $r=r_{k,n}$.   
\begin{lemma}\label{lem:e5} Assume $p>2$.  
Let $\tilde{a}_k,\tilde{\delta}_k>0$ be the constants, $(r_{k,n})$, $(\ga_{k,n})$, and $(z_{k,n})$ the sequences, and $z_k$ the function in the previous lemma. Then, there exist sequences $(\bar{\rho}_{k,n})\subset (\rho_{k-1,n},r_{k,n})$ and $(\rho_{k,n})\subset (r_{k,n},1)$ such that $\rho_{k-1,n}/\bar{\rho}_{k,n}\to0$, $\bar{\rho}_{k,n}/r_{k,n}\to 0$, $r_{k,n}/\rho_{k,n}\to 0$, $\mu_n^{-p/2}\log{(\bar{\rho}_{k,n}/r_{k,n})}\to 0$, $\mu_n^{-p/2}\log{(\rho_{k,n}/r_{k,n})}\to 0$,  $u_n(\bar{\rho}_{k,n})/\mu_n\to \tilde{\delta}_k$, $u_n(\rho_{k,n})/\mu_n\to \tilde{\delta}_k$, $\phi_n(\bar{\rho}_{k,n})\to0$, $\phi_n(\rho_{k,n})\to0$, 
\[
\|z_{k,n}-z_k\|_{C^2([\bar{\rho}_{k,n}/\ga_{k,n},\rho_{k,n}/\ga_{k,n}])}\to0,
\] 
\[
\sup_{r\in[\bar{\rho}_{k,n}/\ga_{k,n},\rho_{k,n}/\ga_{k,n}]}|rz_{k,n}'(r)-rz_k'(r)|\to0,
\]
\[
\sup_{r\in[\bar{\rho}_{k,n}/\ga_{k,n},\rho_{k,n}/\ga_{k,n}]}\left|\frac{h(u_n(\ga_{k,n}r))}{h(u_n(r_{k,n}))}-1\right|\to0,
\]
\[
E_n(\bar{\rho}_{k,n},\rho_{k,n})\to 2\tilde{a}_k/\tilde{\delta}_k^{p-1},
\]
and 
\[
E_n(\rho_{k-1,n},\bar{\rho}_{k,n})\to0
\]
as $n\to \infty$ up to a subsequence.  
\end{lemma}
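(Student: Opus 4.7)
The plan is to extend the $C^2_{\text{loc}}((0,\infty))$ convergence $z_{k,n}\to z_k$ delivered by Lemma \ref{lem:f1} to a sliding window $[R_n,R_n']$ with $R_n\to 0^+$ and $R_n'\to \infty$, then set $\bar{\rho}_{k,n}=\ga_{k,n} R_n$ and $\rho_{k,n}=\ga_{k,n} R_n'$. A diagonal argument furnishes $R_n,R_n'$ along which, simultaneously, $\|z_{k,n}-z_k\|_{C^2([R_n,R_n'])}\to 0$, $\sup_{[R_n,R_n']}|rz_{k,n}'-rz_k'|\to 0$, and $\|h(u_n(\ga_{k,n}\cdot))/h(u_n(r_{k,n}))-1\|_{C([R_n,R_n'])}\to 0$ (the last via \eqref{hb} of Lemma \ref{lem:h}, using that $z_{k,n}$ stays bounded on the window). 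The pace is also chosen so that $\mu_n^{-p/2}\log(1/R_n)\to 0$, $\mu_n^{-p/2}\log R_n'\to 0$, and $\sig_n/\ga_{k,n}\ll R_n$. The last condition is realizable since $\sig_n/\ga_{k,n}\to 0$: this follows from $\sig_n/\tau_n\to 0$ (Lemma \ref{lem:e3}) together with boundedness of $\tau_n/\ga_{k,n}$, a consequence of $\phi_n(\tau_n)\to \eta>0$ and $\phi_n(r_{k,n})\to \tilde{a}_k^2/2$.

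The elementary verifications then follow. Since $r_{k,n}/\ga_{k,n}\to \tilde{a}_k/\sqrt{2}$, one has $\bar{\rho}_{k,n}/r_{k,n}\to 0$, $r_{k,n}/\rho_{k,n}\to 0$, and $\rho_{k-1,n}/\bar{\rho}_{k,n}\to 0$ (the latter via $\rho_{k-1,n}<\sig_n\ll \bar{\rho}_{k,n}$); the two $\mu_n^{-p/2}\log(\cdot/r_{k,n})\to 0$ claims inherit from those on $R_n,R_n'$. For pointwise values, $z_{k,n}(R_n)=z_k(R_n)+o(1)=O(\log(1/R_n))=o(\mu_n^{p/2})$, hence $u_n(\bar{\rho}_{k,n})-u_n(r_{k,n})=z_{k,n}(R_n)/(pu_n(r_{k,n})^{p-1})=o(1)$, giving $u_n(\bar{\rho}_{k,n})/\mu_n\to \tilde{\delta}_k$, and similarly at $\rho_{k,n}$. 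For $\phi_n$, the explicit form of $z_k$ yields $R^2e^{z_k(R)}\sim R^{\tilde{a}_k}\to 0$ as $R\to 0^+$ and $R^2 e^{z_k(R)}\sim R^{-\tilde{a}_k}\to 0$ as $R\to\infty$; combined with $\phi_n(\ga_{k,n} R)=R^2e^{z_k(R)}(1+o(1))$ uniform on the window, we conclude $\phi_n(\bar{\rho}_{k,n})\to 0$ and $\phi_n(\rho_{k,n})\to 0$. The bubble energy is evaluated by the change of variables $r=\ga_{k,n} s$ together with $p\la_n\ga_{k,n}^2 u_n(r_{k,n})^{p-1}f(u_n(r_{k,n}))=1$:
\[
E_n(\bar{\rho}_{k,n},\rho_{k,n})=\left(\frac{\mu_n}{u_n(r_{k,n})}\right)^{p-1}\int_{R_n}^{R_n'}\frac{f(u_n(\ga_{k,n} s))}{f(u_n(r_{k,n}))}\, s\, ds;
\]
the prefactor tends to $\tilde{\delta}_k^{1-p}$, and an expansion of the exponent $u_n(r_{k,n})^p\{(1+z_{k,n}/(pu_n(r_{k,n})^p))^p-1\}=z_k(s)+o(1)$ uniform on $[R_n,R_n']$ (relying on $|z_{k,n}|=o(\mu_n^{p/2})$ there) together with $h/h\to 1$ delivers the uniform limit $e^{z_k(s)}$; integrability of $e^{z_k}s$ at both endpoints then produces the stated limit $2\tilde{a}_k/\tilde{\delta}_k^{p-1}$.

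The main obstacle is showing $E_n(\rho_{k-1,n},\bar{\rho}_{k,n})\to 0$. After splitting as $E_n(\rho_{k-1,n},\sig_n)+E_n(\sig_n,\bar{\rho}_{k,n})$ and disposing of the first summand by Lemma \ref{lem:e2}, the same change of variables recasts the second as $\tilde{\delta}_k^{1-p}\int_{\sig_n/\ga_{k,n}}^{R_n}(f/f)\, s\, ds\,(1+o(1))$, where both endpoints tend to $0$. Compact $C^2$ convergence alone no longer controls $z_{k,n}$ on this shrinking interval, so an a priori pointwise estimate is required; from \eqref{id0} written in the rescaled variable one obtains $-sz_{k,n}'(s)=(u_n(r_{k,n})/\mu_n)^{p-1}E_n(0,\ga_{k,n} s)$, which for $\ga_{k,n} s\in(\sig_n,\bar{\rho}_{k,n})\subset(\sig_n,\tau_n)$ agrees with $\psi_n(\ga_{k,n} s)(1+o(1))$ and is therefore bounded above by $2-\eta'$ for some $\eta'>0$ thanks to the strict inequality $\sup_{[\sig_n,\tau_n]}\psi_n<2$ of Lemma \ref{lem:e3}. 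Integrating this bound inward from $R_n$ gives $z_{k,n}(s)\le z_{k,n}(R_n)+(2-\eta'+o(1))\log(R_n/s)$, whence $e^{z_{k,n}(s)}s\le e^{z_{k,n}(R_n)}R_n^{2-\eta'}s^{\eta'-1}(1+o(1))$; integrating over $(\sig_n/\ga_{k,n},R_n)$ leaves at most $e^{z_{k,n}(R_n)}R_n^2/\eta'\sim R_n^{\tilde{a}_k}/\eta'\to 0$, which closes the argument.
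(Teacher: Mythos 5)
Your overall scaffolding (diagonal choice of a slowly spreading window $[R_n,R_n']$, the change of variables, the $\mu_n^{-p/2}\log$ pacing, the pointwise identity $-sz_{k,n}'(s)=(u_n(r_{k,n})/u_n(\ga_{k,n}s))^{p-1}\psi_n(\ga_{k,n}s)$ and the bound $\psi_n<2-\eta'$ on $[\sig_n,\tau_n]$) is in line with the paper, and your treatment of $E_n(\bar{\rho}_{k,n},\rho_{k,n})$ is essentially correct. The problem is the last step, where you claim $E_n(\sig_n,\bar{\rho}_{k,n})\to 0$ by bounding $\int_{\sig_n/\ga_{k,n}}^{R_n}e^{z_{k,n}(s)}s\,ds$. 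That is not the integrand. After rescaling, the integrand is $\frac{f(u_n(\ga_{k,n}s))}{f(\mu_{k,n})}s=\frac{h(u_n(\ga_{k,n}s))}{h(\mu_{k,n})}\exp\!\bigl(\mu_{k,n}^p\{(1+\tfrac{z_{k,n}(s)}{p\mu_{k,n}^p})^p-1\}\bigr)s$, and on $(\sig_n/\ga_{k,n},R_n)$ one has $z_{k,n}(s)\ge 0$, so for $p>2$ the exponent is $\ge z_{k,n}(s)$, not $\le z_{k,n}(s)$. Worse, near $s\approx\sig_n/\ga_{k,n}$ one has $z_{k,n}(s)/\mu_{k,n}^p\to p(\delta_k/\tilde{\delta}_k-1)$, which need not vanish (at this stage $\tilde{\delta}_k=\delta_k$ is \emph{not} yet known—that is Lemma \ref{lem:dl}, which itself uses the present lemma), so the exponent can exceed $z_{k,n}(s)$ by a multiplicative factor bounded away from $1$, and the $h$-ratio is likewise uncontrolled there. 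Consequently your clean bound $\lesssim e^{z_{k,n}(R_n)}R_n^2/\eta'\sim R_n^{\tilde{a}_k}$ does not dominate the actual energy, and the argument as written does not close.

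The paper avoids this entirely. It splits $E_n(\rho_{k-1,n},\bar{\rho}_{k,n})$ and handles the piece $E_n(\sig_n,\bar{\rho}_{k,n})$ (when $\bar{\rho}_{k,n}>\sig_n$) \emph{without} expanding the exponent: from $\sup_{[\sig_n,\tau_n]}\psi_n<2$ (which is \eqref{eq:st}) and Lemma \ref{lem:pp}, $\phi_n$ is strictly increasing on $[\sig_n,\bar{\rho}_{k,n}]$, so $\sup_{[\sig_n,\bar{\rho}_{k,n}]}\phi_n=\phi_n(\bar{\rho}_{k,n})\to0$; then Lemma \ref{lemD}—with $\psi_n(\sig_n)\to 2-a_k\in(0,2)$ and $\liminf u_n(\bar{\rho}_{k,n})/\mu_n=\tilde{\delta}_k>0$—gives $E_n(\sig_n,\bar{\rho}_{k,n})\to0$ directly. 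You have all the ingredients in hand (you already invoked both $\psi_n<2$ and $\phi_n(\bar{\rho}_{k,n})\to0$), but you need to route them through Lemma \ref{lemD} rather than through a pointwise exponential estimate. As a smaller point, the parenthetical justification of $\|h(u_n(\ga_{k,n}\cdot))/h(\mu_{k,n})-1\|_{C([R_n,R_n'])}\to0$ "using that $z_{k,n}$ stays bounded on the window" is misstated—$z_{k,n}$ is unbounded there—but the diagonal argument you describe a sentence earlier does the job, applying \eqref{hb} on each fixed compact set.
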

\begin{proof} We put $\mu_{k,n}=u_n(r_{k,n})$ for all $n\in \mathbb{N}$. Since $z_{k,n}\to z_k$ in $C^2_{\text{loc}}((0,\infty))$ as $n\to \infty$, by \eqref{hb}, there exist sequences $(\bar{R}_n),(R_n)\subset (0,\infty)$ such that  $\bar{R}_n\to0$, $R_n\to\infty$, and 
\begin{equation}\label{hh0}
\|z_{k,n}-z_k\|_{C^2([\bar{R}_n,R_n])}\to0,
\end{equation}
\[
\sup_{r\in [\bar{R}_n,R_n]}|rz_{k,n}'(r)-rz_k'(r)|\to0,
\] 
\begin{equation}\label{hh}
\sup_{r\in[\bar{R}_n,R_n]}\left|\frac{h(u_n(\ga_{k,n}r))}{h(\mu_{k,n})}-1\right|\to0,
\end{equation}
\begin{equation}\label{eq:rr}
\mu_n^{-p/2}\log{\bar{R}_n}\to0,\ \mu_n^{-p/2}\log{R_n}\to0,   
\end{equation}
and 
\[
\begin{split}
E_n(\ga_{k,n}\bar{R}_n,\ga_{k,n}R_n)
&=\left(\frac{\mu_n}{\mu_{k,n}}\right)^{p-1}\int_{\bar{R}_n}^{R_n}\frac{h(u_n(\ga_{k,n}r))}{h(\mu_{k,n})}e^{\mu_{k,n}^p\left\{\left(1+\frac{z_{k,n}}{p\mu_{k,n}^p}\right)^p-1\right\}}rdr\\
&\to \frac1{\tilde{\delta}_k^{p-1}}\int_0^\infty e^{z_k}rdr,
\end{split}
\]
as $n\to \infty$. We set $\bar{\rho}_{k,n}=\ga_{k,n}\bar{R}_n$ and $\rho_{k,n}=\ga_{k,n} R_n$ for all $n\in \mathbb{N}$. These are the desired sequences. Indeed, since $r_{k,n}/\ga_{k,n}=\tilde{a}_k/\sqrt{2}+o(1)$, we get $\bar{\rho}_{k,n}/r_{k,n}\to0$ and $r_{k,n}/\rho_{k,n}\to0$ as $n\to \infty$.  Moreover, noting  $z_{k,n}(R_n)=z_k(R_n)+o(1)$, $z_{k,n}(\bar{R}_n)=z_k(\bar{R}_n)+o(1)$, and  \eqref{eq:rr}, we confirm $u_n(\rho_{k,n} )/\mu_n\to \tilde{\delta}_k$ and $u_n(\bar{\rho}_{k,n})/\mu_n\to \tilde{\delta}_k$ as $n\to \infty$. Then it clearly follows that $(\bar{\rho}_{k,n})\subset (\rho_{k-1,n},r_{k,n})$ and $(\rho_{k,n})\subset (r_{k,n},1)$ for all large $n\in \mathbb{N}$. Furthermore, we get from \eqref{hh}, \eqref{hh0}, and \eqref{eq:rr} that
\[
\begin{split}
&\phi_n(\rho_{k,n})\\
&=(1+o(1)) R_n^2e^{\mu_{k,n}^p\left\{\left(1+\frac{z_{k,n}(R_n)}{p\mu_{k,n}^p}\right)^p-1\right\}}\\
&=(1+o(1))\exp{\left[\mu_{k,n}^p\left\{\left(1-\frac{2+\tilde{a}_k+o(1)}{p\mu_{k,n}^p}\log{R_n}\right)^p-1+\frac{2\log{R_n}}{\mu_{k,n}^p}\right\}\right]}\\
&=(1+o(1))\exp{\left[-(\tilde{a}_k+o(1))\log{R_n})\right]} \to0
\end{split}
\]
as $n\to \infty$. Analogously, we also obtain  $\phi_n(\bar{\rho}_{k,n})\to0$ as $n\to \infty$. Finally, recall the sequences $(\sig_n)$ and $(\tau_n)$ in the previous lemma. If $(\bar{\rho}_{k,n})\subset (\rho_{k-1,n},\sig_n]$ for all large $n\in \mathbb{N}$, we obviously  have $E_n(\rho_{k-1,n},\bar{\rho}_{k,n})\to0$ by Lemma \ref{lem:e2}. On the other hand, suppose $(\bar{\rho}_{k,n})\subset (\sig_n,r_{k,n})$ up to a subsequence. From our choice of $(\ga_{k,n})$ and $(r_{k,n})$ in the proof of the previous lemma (recall the discussion above \eqref{eq:zk}), we get $\bar{\rho}_{k,n}/\tau_n=\bar{R}_n(\ga_{k,n}/\tau_n)\to0$ as $n\to \infty$. In particular, we see $\sigma_n<\bar{\rho}_{k,n}<\tau_n$ for all large $n\in \mathbb{N}$. Then from \eqref{eq:st} and Lemma \ref{lem:pp}, $\phi_n(r)$ is strictly increasing for all  $r\in[\sig_n,\bar{\rho}_{k,n}]$ and large $n\in \mathbb{N}$.  Hence we have 
 $\sup_{r\in[\sig_n,\bar{\rho}_{k,n}]}\phi_n(r)=\phi_n(\bar{\rho}_{k,n})\to0$ as $n\to \infty$. Therefore, again noting \eqref{eq:st}, we obtain $E_n(\sig_n,\bar{\rho}_{k,n})\to0$ as $n\to \infty$ by Lemma \ref{lemD}. Consequently, Lemma \ref{lem:e2} completes the final conclusion. Notice also that Lemma \ref{lem:22} proves that $\rho_{k-1,n}/\bar{\rho}_{k,n}\to0$ as $n\to \infty$. We finish the proof. 
\end{proof}
In the next two lemmas, we determine $\tilde{\delta}_k$ and $\tilde{a}_k$. First we discuss $\tilde{\delta}_k$.  
\begin{lemma}\label{lem:dl} We suppose $p>2$ and  $\tilde{\delta}_k\in(0,\delta_k]$ is the constant in the previous lemmas. Then, we get $\tilde{\delta}_k=\delta_k$.
\end{lemma}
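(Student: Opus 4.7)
The plan is to extract the recurrence \eqref{eq:del1} for $\tilde{\delta}_k$ by producing two independent asymptotics for $L_n:=\log(r_{k,n}/\rho_{k-1,n})$, one from identity \eqref{id2} and one from Lemma \ref{lem:22}; uniqueness in Lemma \ref{lem:bf} then forces $\tilde{\delta}_k=\delta_k$.

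First I would apply the identity \eqref{id2} with $s=\rho_{k-1,n}$ and $t=r_{k,n}$, multiply by $p\mu_n^{p-1}$, and analyze each term. The left side equals $p\mu_n^p(\delta_{k-1}-\tilde{\delta}_k)+o(\mu_n^p)$. By (A$_k$) combined with Lemma \ref{lem:bg}, $E_n(0,\rho_{k-1,n})\to(2+a_{k-1})/\delta_{k-1}^{p-1}$, so the first right-hand term contributes $L_n(2+a_{k-1})/\delta_{k-1}^{p-1}+o(L_n)$. The remaining integral $I_n:=\int_{\rho_{k-1,n}}^{r_{k,n}}p\mu_n^{p-1}\lambda_n f(u_n)\,r\log(r_{k,n}/r)\,dr$ I would estimate by splitting at $\bar{\rho}_{k,n}$: on $[\rho_{k-1,n},\bar{\rho}_{k,n}]$ the factor $\log(r_{k,n}/r)$ is bounded by $L_n$ while $E_n(\rho_{k-1,n},\bar{\rho}_{k,n})=o(1)$ by Lemma \ref{lem:e5}, so this portion is $o(L_n)$; on $[\bar{\rho}_{k,n},r_{k,n}]$, substituting $r=\ga_{k,n}\rho$ and invoking the $C^2_{\text{loc}}$-convergence $z_{k,n}\to z_k$ from Lemma \ref{lem:f1} reduces the contribution to $\tilde{\delta}_k^{-(p-1)}\int_0^{\tilde{a}_k/\sqrt{2}}e^{z_k(\rho)}\rho\log((\tilde{a}_k/\sqrt{2})/\rho)\,d\rho+o(1)=O(1)$. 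Comparing orders in the resulting identity forces $L_n=O(\mu_n^p)$, hence $I_n=o(\mu_n^p)$, and
\[
\lim_{n\to\infty}\frac{L_n}{\mu_n^p}=\frac{p\delta_{k-1}^{p-1}(\delta_{k-1}-\tilde{\delta}_k)}{2+a_{k-1}}.
\]

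Second I would apply the equality case of the second formula in Lemma \ref{lem:22} with $r_n=\rho_{k-1,n}$, $s_n=r_{k,n}$, $\delta=\delta_{k-1}$, $\delta'=\tilde{\delta}_k$ to obtain $\lim L_n/\mu_n^p=(\delta_{k-1}^p-\tilde{\delta}_k^p)/2$. The hypothesis $\mu_n^{-p}\log\phi_n(r_{k,n})\to 0$ is immediate since $\phi_n(r_{k,n})\to\tilde{a}_k^2/2>0$ by Lemma \ref{lem:f1}. For $\mu_n^{-p}\log\phi_n(\rho_{k-1,n})\to 0$, set $R_n:=\rho_{k-1,n}/\ga_{k-1,n}$: the pointwise estimate \eqref{as:zk} and the bound $\mu_n^{-p/2}\log R_n\to 0$ from (A$_k$) give $|z_{k-1,n}(R_n)|\le (2+a_{k-1}+o(1))\log R_n=o(\mu_n^{p/2})$; expanding $u_n(\rho_{k-1,n})^p-u_n(r_{k-1,n})^p=z_{k-1,n}(R_n)+o(1)=o(\mu_n^{p/2})$ then yields $\log\phi_n(\rho_{k-1,n})=2\log R_n+o(\mu_n^{p/2})=o(\mu_n^p)$.

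Equating the two expressions for $\lim L_n/\mu_n^p$ and multiplying by $2/\delta_{k-1}^p$ produces
\[
\frac{2p}{2+a_{k-1}}\left(1-\frac{\tilde{\delta}_k}{\delta_{k-1}}\right)-1+\left(\frac{\tilde{\delta}_k}{\delta_{k-1}}\right)^{p}=0,
\]
which is precisely \eqref{eq:del1} with $\tilde{\delta}_k$ in place of $\delta_k$. Since $\tilde{\delta}_k\in(0,\delta_k]\subset(0,\delta_{k-1})$, the uniqueness in $(0,1)$ of the zero of $x\mapsto\frac{2p}{2+a_{k-1}}(1-x)-1+x^p$ established in the proof of Lemma \ref{lem:bf} forces $\tilde{\delta}_k/\delta_{k-1}=\delta_k/\delta_{k-1}$, i.e.\ $\tilde{\delta}_k=\delta_k$. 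The main technical hurdle is the joint bookkeeping: extracting $I_n=o(\mu_n^p)$ from the identity and verifying $\mu_n^{-p}\log\phi_n(\rho_{k-1,n})\to 0$ for the application of Lemma \ref{lem:22} both depend on carefully propagating \eqref{as:zk} and the slow-growth condition from (A$_k$) across the transition region between the $(k{-}1)$-th and $k$-th bubbles, so that each side of the balance is pinned down exactly rather than merely up to an inequality.
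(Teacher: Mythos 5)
Your strategy---combining \eqref{id2} and Lemma \ref{lem:22} to pin down $L_n=\log(r_{k,n}/\rho_{k-1,n})$ in two independent ways, extracting the recurrence \eqref{eq:del1} for $\tilde{\delta}_k$, and invoking the uniqueness statement behind Lemma \ref{lem:bf}---is the same as the paper's. The paper differs only in bookkeeping: it takes $t=\bar{\rho}_{k,n}$ rather than $t=r_{k,n}$ in \eqref{id2}, so the residual integral is controlled simply by $E_n(\rho_{k-1,n},\bar{\rho}_{k,n})\to0$; and it applies Lemma \ref{lem:22} to the bubble centers $(r_{k-1,n},r_{k,n})$, where $\phi_n$ is bounded away from zero, then corrects by the small logs $\log(r_{k-1,n}/\rho_{k-1,n})$ and $\log(\bar{\rho}_{k,n}/r_{k,n})$ using (A$_k$) and Lemma \ref{lem:e5}. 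That choice avoids having to verify $\mu_n^{-p}\log\phi_n(\rho_{k-1,n})\to0$, which is where your variant runs into trouble.

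Two steps in your version need repair. First, \eqref{as:zk} says $z_{k-1,n}(R_n)\le -(2+a_{k-1}+o(1))\log R_n$, which is a \emph{lower} bound on $|z_{k-1,n}(R_n)|$, not the upper bound $|z_{k-1,n}(R_n)|\le(2+a_{k-1}+o(1))\log R_n$ that you wrote; the inequality goes the wrong way. The estimate you need does hold, but must be argued differently: from $u_n(\rho_{k-1,n})/\mu_n\to\delta_{k-1}$ and $u_n(r_{k-1,n})/\mu_n\to\delta_{k-1}$ one gets $z_{k-1,n}(R_n)=pu_n(r_{k-1,n})^{p-1}(u_n(\rho_{k-1,n})-u_n(r_{k-1,n}))=o(\mu_n^p)$, which already suffices for $\mu_n^{-p}\log\phi_n(\rho_{k-1,n})\to0$; the sharper two-sided bound $z_{k-1,n}(R_n)=-(2+a_{k-1}+o(1))\log R_n$ comes from the convergence $z_{k-1,n}(R_n)=z_{k-1}(R_n)+o(1)$ built into the choice of $\rho_{k-1,n}$ in Lemmas \ref{lem21} and \ref{lem:e5}, not from \eqref{as:zk}. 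Second, your claim that the integral over $[\bar{\rho}_{k,n},r_{k,n}]$ is $O(1)$ does not follow from $C^2_{\text{loc}}$ convergence alone, since the lower endpoint $\bar{\rho}_{k,n}/\ga_{k,n}\to0$ and the weight $\log(r_{k,n}/r)$ blows up there, so some uniform integrability near $\rho=0$ is required; the cruder bound $o(\mu_n^{p/2})$, obtained from $\log(r_{k,n}/\bar{\rho}_{k,n})=o(\mu_n^{p/2})$ together with $E_n(\bar{\rho}_{k,n},r_{k,n})=O(1)$ from Lemma \ref{lem:e5}, is easier to justify and already suffices for $I_n=o(\mu_n^p)$. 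With these two corrections your derivation of \eqref{eq:del1} and the conclusion $\tilde{\delta}_k=\delta_k$ go through.
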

\begin{proof} From \eqref{id2}, we have
\[
\begin{split}
u_n&(\rho_{k-1,n})-u_n(\bar{\rho}_{k,n})\\
&=\left(\log{\frac{\bar{\rho}_{k,n}}{\rho_{k-1,n}}}\right)\int_{0}^{\rho_{k-1,n}} \la_nf(u_n)rdr+O\left(\log{\frac{\bar{\rho}_{k,n}}{\rho_{k-1,n}}}\int_{\rho_{k-1,n}}^{\bar{\rho}_{k,n}} \la_nf(u_n)rdr\right).
\end{split}
\]
It follows that
\[
\begin{split}
\delta_{k-1}&-\tilde{\delta}_k+o(1)\\
&=\frac1{p\mu_n^p}\left(\log{\frac{\bar{\rho}_{k,n}}{\rho_{k-1,n}}}\right)E_n(0,\rho_{k-1,n})+O\left(\frac1{\mu_n^p}\log{\frac{\bar{\rho}_{k,n}}{\rho_{k-1,n}}}E_n(\rho_{k-1,n},\bar{\rho}_{k,n})\right).
\end{split}
\]
Here  Lemmas \ref{lem:22}, \ref{lem:e5}, and (A$_k$) imply 
\[
\begin{split}
\frac1{\mu_n^p}\log{\frac{\bar{\rho}_{k,n}}{\rho_{k-1,n}}}&=\frac1{\mu_n^p}\left(\log{\frac{r_{k,n}}{r_{k-1,n}}}+\log{\frac{\bar{\rho}_{k,n}}{r_{k,n}}}+\log{\frac{r_{k-1,n}}{\rho_{k-1,n}}}\right)\\
&=\frac{\delta_{k-1}^p-\tilde{\delta}_k^p}{2}+o(1).
\end{split}
\]
Using this for the previous formula, we get from (A$_k$) and Lemma \ref{lem:e5} that
\[
\begin{split}
\delta_{k-1}-\tilde{\delta}_k+o(1)
&=\frac{\delta_{k-1}^p-\tilde{\delta}_k^p}{2p}\sum_{i=0}^{k-1}E_i+o(1).
\end{split}
\]
It follows that
\[
\frac{2p}{\delta_{k-1}^{p-1}\sum_{i=0}^{k-1} E_i}\left(1-\frac{\tilde{\delta}_k}{\delta_{k-1}}\right)-1+\left(\frac{\tilde{\delta}_k}{\delta_{k-1}}\right)^p=0.
\]
Hence, we derive $\tilde{\delta}_k=\delta_k$ by Lemma \ref{lem:bg} and \eqref{eq:del2}. This finishes the proof. 
\end{proof}
We then decide $\tilde{a}_k$.  
\begin{lemma}\label{lem:ak}
Assume $p>2$ and $\tilde{a}_k\in (0,2)$ is the number  in the previous lemmas.  Then we have  $\tilde{a}_k=a_k$.
\end{lemma}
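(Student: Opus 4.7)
The plan is to combine two asymptotic relations already in hand: the normalization $\psi_n(r_{k,n})\to 2$ obtained in Lemma \ref{lem:f1}, and the scaling profile $z_{k,n}\to z_k$ established in the same lemma and sharpened in Lemma \ref{lem:e5}. Since $u_n(r_{k,n})/\mu_n\to\delta_k$ by Lemma \ref{lem:dl}, the first relation rewrites as
\[
\lim_{n\to\infty}E_n(0,r_{k,n})=\frac{2}{\delta_k^{p-1}}.
\]
I would then split this energy as $E_n(0,\rho_{k-1,n})+E_n(\rho_{k-1,n},\bar\rho_{k,n})+E_n(\bar\rho_{k,n},r_{k,n})$. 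By (A$_k$) the first piece tends to $\sum_{j=0}^{k-1}E_j$, and by Lemma \ref{lem:e5} the middle piece vanishes, so all the new information is concentrated in $E_n(\bar\rho_{k,n},r_{k,n})$.

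To evaluate that piece, I would rescale with $r=\ga_{k,n}s$: using $\ga_{k,n}^2p\la_nu_n(r_{k,n})^{p-1}f(u_n(r_{k,n}))=1$, the uniform convergence $z_{k,n}\to z_k$ on $[\bar R_n,r_{k,n}/\ga_{k,n}]$ with $r_{k,n}/\ga_{k,n}\to \tilde a_k/\sqrt2$, the $h$-ratio control in Lemma \ref{lem:e5}, and the fact that $e^{z_k(s)}s\sim s^{\tilde a_k-1}$ near $0$ (hence integrable since $\tilde a_k>0$), a dominated-convergence-type argument yields
\[
E_n(\bar\rho_{k,n},r_{k,n})\to \frac{1}{\delta_k^{p-1}}\int_0^{\tilde a_k/\sqrt2}e^{z_k(s)}s\,ds.
\]
I would then compute the integral explicitly via the substitution $t=\tilde b_k s^{\tilde a_k}$: because $\tilde b_k(\tilde a_k/\sqrt 2)^{\tilde a_k}=1$, the integrand reduces to $2\tilde a_k/(1+t)^2$ and one obtains
\[
\int_0^{\tilde a_k/\sqrt2}e^{z_k(s)}s\,ds=\int_0^1\frac{2\tilde a_k}{(1+t)^2}\,dt=\tilde a_k.
\]

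Putting these together gives
\[
\sum_{j=0}^{k-1}E_j+\frac{\tilde a_k}{\delta_k^{p-1}}=\frac{2}{\delta_k^{p-1}},
\]
and substituting the identity $\delta_{k-1}^{p-1}\sum_{j=0}^{k-1}E_j=2+a_{k-1}$ from Lemma \ref{lem:bg} and multiplying by $\delta_k^{p-1}$ produces
\[
\tilde a_k=2-\Bigl(\frac{\delta_k}{\delta_{k-1}}\Bigr)^{p-1}(2+a_{k-1}),
\]
which by \eqref{eq:del2} is exactly $a_k$. The main technical obstacle is the justification of the limit for $E_n(\bar\rho_{k,n},r_{k,n})$, since the lower endpoint $\bar R_n\to 0$, but the explicit singular profile of $z_k$ at the origin makes this standard; the rest of the argument is a clean algebraic collapse using the identities already proved.
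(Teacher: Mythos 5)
Your proposal is correct and takes essentially the same approach as the paper: decompose $E_n(0,r_{k,n})$ at $\rho_{k-1,n}$ and $\bar\rho_{k,n}$, use (A$_k$), Lemma \ref{lem:e5}, and Lemma \ref{lem:dl} to identify the first two pieces, then feed the result into the limit $\psi_n(r_{k,n})\to 2$ and close via Lemma \ref{lem:bg} and \eqref{eq:del2}. The only difference is presentational: where the paper writes $\delta_k^{p-1}\lim E_n(\bar\rho_{k,n},r_{k,n})=\tilde a_k$ without detail, you spell out the scaling/dominated-convergence argument and the explicit substitution showing $\int_0^{\tilde a_k/\sqrt2}e^{z_k}s\,ds=\tilde a_k$, which is a faithful expansion of the same step rather than a new route.
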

\begin{proof} We take the sequences $(\bar{\rho}_{k,n})$, $(r_{k,n})$, $(\rho_{k,n})$  obtained in Lemmas \ref{lem:f1} and \ref{lem:e5}. Then from Lemmas \ref{lem:f1}, \ref{lem:e5}, \ref{lem:dl},  and (A$_k$), we get
\[
\begin{split}
2&=\lim_{n\to \infty}\psi_n(r_{k,n})=\delta_k^{p-1}\lim_{n\to \infty}(E_n(0,\bar{\rho}_{k,n})+E_n(\bar{\rho}_{k,n},r_{k,n}))\\
&=\delta_k^{p-1}\sum_{i=0}^{k-1}E_i+\tilde{a}_k.
\end{split}
\]
It follows that 
\[
\tilde{a}_k=2-\delta_k^{p-1}\sum_{i=0}^{k-1}E_i=a_k
\]
by Lemma \ref{lem:bg} and \eqref{eq:del2}. This completes the proof.
\end{proof}
Finally, we give a pointwise estimate for $z_{k,n}$.
\begin{lemma}\label{lem:ff} Let  $p>2$ and $(\ga_{k,n})$, $(\rho_{k,n})$ and $(z_{k,n})$ the sequences in Lemmas \ref{lem:f1} and  \ref{lem:e5}. Then  we have that  
\[
z_{k,n}(r)\le -(2+a_k+o(1))\log{r}
\]
for all $r\in[\rho_{k,n}/\ga_{k,n},1/\ga_{k,n}]$ and $n\in \mathbb{N}$ where $o(1)\to 0$ uniformly for all $r$ in the interval.
\end{lemma}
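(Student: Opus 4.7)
The plan is to mimic the argument producing the pointwise bound \eqref{as:z0} in Lemma \ref{lem21}, now at the $k$-th scaling. A direct computation from $z_{k,n}(r)=pu_n(r_{k,n})^{p-1}(u_n(\ga_{k,n}r)-u_n(r_{k,n}))$ and the equation in \eqref{q}, combined with the normalization $\ga_{k,n}^2 p\la_n u_n(r_{k,n})^{p-1}f(u_n(r_{k,n}))=1$, yields
\[
-z_{k,n}''(r)-\frac{1}{r}z_{k,n}'(r)=\frac{f(u_n(\ga_{k,n}r))}{f(u_n(r_{k,n}))}\ge 0 \quad \text{on } (0,1/\ga_{k,n}).
\]
In particular $(rz_{k,n}'(r))'=-rf(u_n(\ga_{k,n}r))/f(u_n(r_{k,n}))\le 0$, so $r\mapsto rz_{k,n}'(r)$ is nonincreasing on the whole interval.

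The first step is to identify the limit of $(\rho_{k,n}/\ga_{k,n})z_{k,n}'(\rho_{k,n}/\ga_{k,n})$. Chain rule plus \eqref{id0} gives
\[
-(\rho_{k,n}/\ga_{k,n})z_{k,n}'(\rho_{k,n}/\ga_{k,n})=pu_n(r_{k,n})^{p-1}\int_0^{\rho_{k,n}}\la_n f(u_n)s\,ds=\Bigl(\tfrac{u_n(r_{k,n})}{\mu_n}\Bigr)^{p-1}E_n(0,\rho_{k,n}).
\]
By (A$_k$), Lemma \ref{lem:e5} (with Lemmas \ref{lem:dl} and \ref{lem:ak}) we have $E_n(\rho_{k-1,n},\bar\rho_{k,n})\to 0$ and $E_n(\bar\rho_{k,n},\rho_{k,n})\to 2a_k/\delta_k^{p-1}=E_k$, hence $E_n(0,\rho_{k,n})\to\sum_{i=0}^k E_i$. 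Together with $u_n(r_{k,n})/\mu_n\to\delta_k$ and Lemma \ref{lem:bg}, the expression above converges to $\delta_k^{p-1}\sum_{i=0}^k E_i=2+a_k$. Monotonicity of $rz_{k,n}'(r)$ then gives $rz_{k,n}'(r)\le -(2+a_k)+o(1)$ uniformly on $[\rho_{k,n}/\ga_{k,n},1/\ga_{k,n}]$.

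The second step is to divide by $r$ and integrate from $\rho_{k,n}/\ga_{k,n}$ up to $r$, which yields
\[
z_{k,n}(r)\le z_{k,n}(\rho_{k,n}/\ga_{k,n})-(2+a_k+o(1))\log\!\Bigl(\frac{r\ga_{k,n}}{\rho_{k,n}}\Bigr).
\]
The $C^2$ convergence on $[\bar\rho_{k,n}/\ga_{k,n},\rho_{k,n}/\ga_{k,n}]$ from Lemma \ref{lem:e5}, together with the explicit asymptotics $z_k(R)=-(2+a_k)\log R+O(1)$ as $R\to\infty$ (read off from the formula for $z_k$ in Lemma \ref{lem:f1} combined with $\tilde a_k=a_k$), gives $z_{k,n}(\rho_{k,n}/\ga_{k,n})=-(2+a_k)\log(\rho_{k,n}/\ga_{k,n})+O(1)$. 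Substituting and regrouping,
\[
z_{k,n}(r)\le -(2+a_k+o(1))\log r+o(1)\cdot \log(\rho_{k,n}/\ga_{k,n})+O(1).
\]
Since $\log r\ge\log(\rho_{k,n}/\ga_{k,n})\to\infty$ on the interval, both the $O(1)$ and the $o(1)\log(\rho_{k,n}/\ga_{k,n})$ error terms can be absorbed into the $o(1)$ coefficient of $\log r$, uniformly in $r$. The main obstacle is purely bookkeeping: ensuring that every appearance of $o(1)$ is genuinely uniform in $r\in[\rho_{k,n}/\ga_{k,n},1/\ga_{k,n}]$ after integration, which comes down exactly to the two facts $\log(\rho_{k,n}/\ga_{k,n})\to\infty$ and $\log r\ge\log(\rho_{k,n}/\ga_{k,n})$, so no further delicate estimate is needed.
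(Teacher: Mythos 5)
Your proof is correct and follows essentially the same strategy as the paper: monotonicity of $r\mapsto rz_{k,n}'(r)$, identification of the boundary value at $R_n=\rho_{k,n}/\ga_{k,n}$ as $-(2+a_k)+o(1)$, and integration combined with absorption of the $O(1)$ and $o(1)\log R_n$ error terms via $\log r\ge\log R_n\to\infty$. The only variation is in Step~1: the paper extracts $R_n z_{k,n}'(R_n)=-(2+a_k)+o(1)$ directly from the uniform bound $\sup_{[\bar{\rho}_{k,n}/\ga_{k,n},\rho_{k,n}/\ga_{k,n}]}|rz_{k,n}'(r)-rz_k'(r)|\to 0$ in Lemma \ref{lem:e5} together with the explicit form of $z_k$ (and $\tilde a_k=a_k$ by Lemma \ref{lem:ak}), whereas you recover the same limit from the identity \eqref{id0} and the energy convergence $E_n(0,\rho_{k,n})\to\sum_{i=0}^k E_i$ via Lemma \ref{lem:bg}; both routes are legitimate and use only material already available at this point in the induction.
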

\begin{proof} Put $R_n=\rho_{k,n}/\ga_{k,n}$ for all $n\in \mathbb{N}$. From Lemmas \ref{lem:e5} and \ref{lem:ak}, we have $R_n\to \infty$, $z_{k,n}(R_n)=-(2+a_k+o(1))\log{R_n}$, and $R_n z_{k,n}'(R_n)=-(2+a_k+o(1))$. Then for any $r\in[R_n,1/\ga_{k,n}]$, we get 
\[
\begin{split}
-rz_{k,n}'(r)&\ge -R_n z_{k,n}'(R_n)
=2+a_k+o(1).
\end{split}
\]
Multiplying by $1/r$ and integrating over $[R_n,r]$ give 
\[
\begin{split}
z_{k,n}(r)&\le z_k(R_n)-(2+a_k+o(1))\log{\frac{r}{R_n}}+o(1)\\
&\le -(2+a_k+o(1))\log{r}
\end{split}
\]
for all $r\in[R_n,1/\ga_{k,n}]$ and $n\in \mathbb{N}$. This finishes the proof.
\end{proof}

\subsection{Proofs of main theorems for $p> 2$}\label{sec:prf}
Finally, we show our theorems for $p>2$. We begin with Theorem \ref{thm30}.
\begin{proof}[Proof of Theorem \ref{thm30}] 
We first claim that (A$_k$) is true for all $k\in \mathbb{N}$. To see this we argue by induction. If $k=1$, we take the sequences $(r_{0,n}),(\rho_{0,n})$ in Theorem \ref{thm1} and Lemma \ref{lem21}. Then noting also the last assertion in Lemma \ref{lem:thm1}, we conclude that (A$_1$) is true.  We assume for a number $k\in\mathbb{N}$,  (A$_k$) is verified. Then from Lemmas \ref{lem:f1}, \ref{lem:e5}, \ref{lem:dl}, \ref{lem:ak}, and \ref{lem:ff}, we ensure that (A$_{k+1}$) is also satisfied. This proves the claim.  Now fix any $k\in \mathbb{N}$. 
Then from Lemmas \ref{lem:f1}, \ref{lem:e5}, \ref{lem:dl}, and \ref{lem:ak}, we get all the assertions of the theorem except for  \eqref{eq:en2}. We ensure the former conclusion in \eqref{eq:en2} since   
\[
p\int_{\rho_{k-1,n}}^{\bar{\rho}_{k,n}}\la_n u_n^{p-1}f(u_n)rd\le E_n(\rho_{k-1,n},\bar{\rho}_{k,n})\to0
\]
as $n\to \infty$ by \eqref{eq:en1}. On the other hand, putting $u_n(r_{k,n})=\mu_{k,n}$ as before, we calculate that  
\[
\begin{split}
&\left(\frac{u_n(\bar{\rho}_{k,n})}{\mu_n}\right)^{p-1}E_n(\bar{\rho}_{k,n},\rho_{k,n})\\
&\ \ \ \ \ge p\int_{\bar{\rho}_{k,n}}^{\rho_{k,n}}\la_n u_n^{p-1} f(u_n)rdr\\
&\ \ \ \ =\int_{\bar{\rho}_{k,n}/\ga_{k,n}}^{\rho_{k,n}/\ga_{k,n}}\left(1+\frac{z_{k,n}}{p\mu_{k,n}^p}\right)^{p-1}\frac{h(u_n(\ga_{k,n}r))}{h(\mu_{k,n})}e^{\mu_{k,n}^p\left\{\left(1+\frac{z_{k,n}}{p\mu_{k,n}^p}\right)^p-1\right\}}rdr.
\end{split}
\]
Then \eqref{eq:en1} and the Fatou lemma with Lemma \ref{lem:e5} show the latter one. We finish the proof. 
\end{proof}
Next, we shall  prove Theorem \ref{thm3}. To do this, we study some more properties of the sequences $(\delta_k)$ and $(a_k)$ defined in \eqref{eq:del1} and \eqref{eq:del2}. (We remark that most of the properties proved here are investigated in Section 2 in \cite{McMc} in a similar way. Since the context and expressions for \eqref{eq:del1} and \eqref{eq:del2} are different, we give our proofs here for the sake of completeness and readers' convenience.)  We first confirm the monotonicity and convergence of $(a_k)$.    
\begin{lemma}\label{lem:bf2}
Assume $p>2$ and put $d_k:=\delta_k/\delta_{k-1}$. Then we have $a_{k-1}>a_k$ and $d_k<d_{k+1}$ for any $k\in \mathbb{N}$. Moreover, we get  $a_k\to0$ and $d_k\to1$ as $k\to \infty$. 
\end{lemma}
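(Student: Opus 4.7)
The plan is to view the recurrences \eqref{eq:del1}--\eqref{eq:del2} as a scalar fixed-point iteration and reduce all four assertions to one sharp elementary inequality. For $a\in[0,2]$, let $d(a)\in(0,1]$ denote the root in the decreasing branch of
$$F(d,a):=d^p-1+\frac{2p(1-d)}{2+a}=0,$$
with $d(0)=1$ (since $F(1,0)=0$). On the interval $(0,(2/(2+a))^{1/(p-1)})$ we have $\partial_d F=p(d^{p-1}-2/(2+a))<0$ and $\partial_a F=-2p(1-d)/(2+a)^2<0$, so by the implicit function theorem $d(\cdot)$ is $C^1$ and strictly decreasing on $(0,2]$ and continuous at $0$. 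Setting $\Phi(a):=2-d(a)^{p-1}(2+a)$, the recurrences become $a_k=\Phi(a_{k-1})$ and $d_k=d(a_{k-1})$, so the problem reduces to studying the iteration of $\Phi$ starting at $a_0=2$.

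The key step is the strict contraction $\Phi(a)<a$ for every $a\in(0,2]$. By the definition of $\Phi$ this is equivalent to $d(a)^{p-1}>(2-a)/(2+a)$, which in turn (by monotonicity of $F(\cdot,a)$) amounts to $F(y(a),a)>0$ with $y(a):=((2-a)/(2+a))^{1/(p-1)}$. Plugging $y^{p-1}=(2-a)/(2+a)$ into $F$ and simplifying yields
$$F(y(a),a)=\frac{(2p-2-a)-(2p-2+a)\,y(a)}{2+a},$$
so after the change of variables $t=a/2\in(0,1]$, $q=p-1>1$, the required inequality becomes
$$\frac{1-t}{1+t}<\left(\frac{q-t}{q+t}\right)^q.$$
I will prove this by taking logarithms and invoking the power series $\log\frac{1-x}{1+x}=-2\sum_{k\ge 0}x^{2k+1}/(2k+1)$: the difference of the right and left logarithms rearranges to
$$2\sum_{k\ge 1}\frac{t^{2k+1}}{2k+1}\left(1-\frac{1}{q^{2k}}\right),$$
which is positive term-by-term for $q>1$, $t>0$. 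Note that the $k=0$ terms cancel exactly, so the inequality is not visible to any finite Taylor expansion of order $\le 2$ at $t=0$.

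Once $\Phi(a)<a$ is established, the remaining assertions are immediate. The sequence $(a_k)$ is strictly decreasing and bounded below by $0$, hence converges to some $a_*\ge 0$; continuity of $\Phi$ forces $\Phi(a_*)=a_*$, and since $\Phi(a)<a$ on $(0,2]$ we must have $a_*=0$. For the ratios, strict monotonicity of $d(\cdot)$ together with $a_{k-1}>a_k$ gives $d_k=d(a_{k-1})<d(a_k)=d_{k+1}$, and continuity of $d$ at $0$ with $d(0)=1$ yields $d_k\to 1$.

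The main obstacle is the elementary inequality in the middle paragraph; the series comparison handles it cleanly, but any attempt via a local Taylor expansion at $t=0$ fails because both sides agree to second order there. After that point, the argument is standard bookkeeping for a monotone scalar iteration with a unique attracting fixed point.
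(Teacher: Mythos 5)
Your proof is correct, and the inequality at its core coincides with the one in the paper (your $F(y(a),a)>0$ is precisely the paper's $\tilde g(x)>0$, just parametrized differently); the routes to that inequality, however, are genuinely different. The paper verifies $\tilde g(0)=\tilde g'(0)=0$ and then controls $\tilde g'$ through a second auxiliary function $\hat g$ with $\hat g(0)=\hat g'(0)=0$, $\hat g''>0$, which amounts to expanding to third order. You instead reduce to the scalar inequality $\frac{1-t}{1+t}<\left(\frac{q-t}{q+t}\right)^q$ and compare logarithms term by term via $\log\frac{1-x}{1+x}=-2\sum_{k\ge 0}\frac{x^{2k+1}}{2k+1}$, which shows directly that the leading terms cancel and every higher term favors the right side once $q>1$. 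This is cleaner and makes the ``vanishing to second order'' phenomenon transparent, where the paper's computation of $\hat g''$ leaves it opaque. Packaging the recurrence as iteration of $\Phi(a)=2-d(a)^{p-1}(2+a)$ along the monotone implicit branch $d(\cdot)$ is also a tidy reformulation of the bookkeeping, although the paper's proof that $g(a_{k-1})>0$ is effectively the same step in different clothing. One small point: the series argument requires $|t|<1$, so the initial value $a_0=2$ (i.e.\ $t=1$) should be handled separately; the inequality is trivial there since the left-hand side vanishes.
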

\begin{proof} 
Set $k\in \mathbb{N}$. Using $d_k$, we write \eqref{eq:del1} and \eqref{eq:del2} as
\begin{equation}\label{s1}
\frac{2p}{2+a_{k-1}}\left(1-d_k\right)-1+d_k^p=0
\end{equation}
and
\begin{equation}\label{s2}
a_k=2-d_k^{p-1}(2+a_{k-1}).
\end{equation} 
From \eqref{s2}, we see
\begin{equation}\label{w1}
d_k=\left(\frac{2-a_k}{2+a_{k-1}}\right)^{\frac1{p-1}}.
\end{equation}
Substituting this into \eqref{s1}, we get
\begin{equation}\label{w2}
\frac{2p}{2+a_{k-1}}\left\{1-\left(\frac{2-a_k}{2+a_{k-1}}\right)^{\frac1{p-1}}\right\}-1+\left(\frac{2-a_k}{2+a_{k-1}}\right)^{\frac p{p-1}}=0.
\end{equation}
Then we define a function 
\[
g(x)=\frac{2p}{2+a_{k-1}}\left\{1-\left(\frac{2-x}{2+a_{k-1}}\right)^{\frac1{p-1}}\right\}-1+\left(\frac{2-x}{2+a_{k-1}}\right)^{\frac p{p-1}}
\]
for all $x\in [0,2]$. It clearly follows that $g(x)$ is strictly increasing on $[0,2]$, $g(a_k)=0$, and $g(2)>0$. Moreover, we claim $g(a_{k-1})>0$. Once this is proved, we get the first conclusion $a_k<a_{k-1}$ from the monotonicity of $g$. Although the proof of the claim is still elementary, we show it for readers' convenience.  We put a function
\[
\tilde{g}(x)=\frac{2p}{2+x}\left\{1-\left(\frac{2-x}{2+x}\right)^{\frac1{p-1}}\right\}-1+\left(\frac{2-x}{2+x}\right)^{\frac p{p-1}}
\]
for all $x\in [0,2]$. Notice that $\tilde{g}(0)=0$. Moreover, we observe that
\[
\tilde{g}'(x)=\frac{2p}{(2+x)^3}\left(\frac{2-x}{2+x}\right)^{-\frac{p-2}{p-1}}\left\{ 2-\frac{p-3}{p-1}x-(2+x)\left(\frac{2-x}{2+x}\right)^{\frac {p-2}{p-1}}\right\}.
\]
It follows that $\tilde{g}'(0)=0$. Furthermore, setting
\[
\hat{g}(x)=2-\frac{p-3}{p-1}x-(2+x)\left(\frac{2-x}{2+x}\right)^{\frac {p-2}{p-1}}
\]
for all $x\in [0,2]$, a direct calculation shows that $\hat{g}(0)=0$, $\hat{g}'(0)=0$, and $\hat{g}''(x)>0$ for all $x\in [0,2)$. These facts imply that $\tilde{g}(x)>0$ for all $x\in(0,2]$. Then, since $a_{k-1}\in(0,2]$, we see $g(a_{k-1})=\tilde{g}(a_{k-1})>0$. This proves the claim.   Then \eqref{w1} ensures $d_k<d_{k+1}$. Consequently, there exist constants $a_*\in[0,2)$ and $d_*\in (0,1]$ such that $a_k\to a_*$ and $d_k\to d_*$ as $k\to \infty$. From \eqref{w1} and \eqref{w2}, we obtain
\[
d_*=\left(\frac{2-a_*}{2+a_*}\right)^{\frac1{p-1}}.
\]
and
\[
\frac{2p}{2+a_*}\left\{1-\left(\frac{2-a_*}{2+a_*}\right)^{\frac1{p-1}}\right\}-1+\left(\frac{2-a_*}{2+a_*}\right)^{\frac p{p-1}}=0.
\] 
Then recalling that $\tilde{g}(x)>0$ for all $x\in(0,2)$, we confirm $a_*=0$ by the latter formula.  Then the former one proves $d_*=1$. This completes the proof.
\end{proof}
We next give a preliminary lemma.
\begin{lemma}\label{pre0} Let $(\e_k)\subset (0,1)$ be any sequence such that $\e_k\to0$ as $k\to \infty$ and there exists a value $\alpha>0$ such that 
\begin{equation}\label{pre1}
\e_{k+1}\ge \e_k-\alpha \e_k^2
\end{equation}
for all $k\in \mathbb{N}$. Then there exist numbers $k_0\in \mathbb{N}$ and $\beta>0$ such that $\e_k\ge \beta/k$ for all $k\ge k_0$.
\begin{proof}
We first claim $k\e_k\not \to 0$ as $k\to \infty$. If not, there exists a sequence $(\xi_k)$ of positive values  such that $\xi_k\to0$ as $k\to \infty$ and $\e_k=\xi_k/k$ for all $k\in\mathbb{N}$. Then, we find  a sequence $(k_n)$ of natural numbers  such that $k_n\to \infty$ as $n\to \infty$  and $\xi_{k_n}\ge \xi_{k_n+1}$ for all $n\in\mathbb{N}$. Hence we get from \eqref{pre1} that
\[
1\ge \frac{\xi_{k_n+1}}{\xi_{k_n}}
\ge 1+\frac1{k_n}+o\left(\frac1{k_n}\right)
\]
as $n\to \infty$. This is a contradiction.  Next, from the assumption and the previous claim, there exist  numbers  $k_0 \in \mathbb{N}$  and $\beta\in(0,1/\alpha)$ such that $\e_k<1/(2\alpha)$ for all $k\ge k_0$, $\e_{k_0}\ge \beta /k_0$, and $k_0(1-\alpha \beta)-\alpha \beta\ge0$. We claim that  $\e_k\ge \beta/k$ for all $k\ge k_0$. We prove this by induction. In fact, it is clearly true for $k=k_0$. Moreover assume it is verified  for some $k\ge k_0$. Then since $\beta/k\le \e_k<1/(2\alpha)$, we get  from \eqref{pre1} that
\[
\begin{split}
\e_{k+1}&\ge \e_k(1-\alpha\e_k)\ge \frac \beta k\left(1-\frac {\alpha \beta }k\right)= \frac{\beta}{k+1}+\frac{\beta \left\{k(1-\alpha \beta)-\alpha \beta\right\}}{k^2 (k+1)}\ge \frac{\beta}{k+1}
\end{split}
\]
as $k \ge k_0$. This proves the claim. We complete the proof.  
\end{proof}
\end{lemma}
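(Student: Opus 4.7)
The plan is to linearize the recurrence by passing to reciprocals. Set $f_k := 1/\e_k$; since $\e_k \to 0$ we have $f_k \to \infty$, and I would first choose $k_0 \in \mathbb{N}$ large enough so that $\alpha \e_k \le 1/2$ for every $k \ge k_0$. For such $k$, the hypothesis yields $\e_{k+1} \ge \e_k(1-\alpha\e_k) > 0$, so taking reciprocals is legitimate and produces
\[
f_{k+1} \le \frac{f_k}{1-\alpha/f_k}.
\]

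Next I would invoke the elementary estimate $1/(1-x) \le 1+2x$ valid for $x \in [0,1/2]$. Applied with $x = \alpha \e_k$, this turns the above into $f_{k+1} \le f_k + 2\alpha \e_k f_k = f_k + 2\alpha$, where the cross term collapses because $\e_k f_k = 1$. Telescoping this linear recurrence from $k_0$ to $k$ gives $f_k \le f_{k_0} + 2\alpha(k-k_0)$, which is bounded by $Ck$ for all $k \ge k_0$ once $k_0$ is enlarged slightly. Taking reciprocals one more time delivers $\e_k \ge \beta/k$ with $\beta := 1/C$.

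I do not expect any real obstacle: the argument is purely elementary, and its mechanism is transparent once one recognizes that $\e_{k+1} \ge \e_k - \alpha \e_k^2$ is the discrete counterpart of the ODE $\e'(t) = -\alpha \e(t)^2$, whose exact solution $\e(t) = 1/(\alpha t + C)$ already exhibits the rate $\sim 1/k$. The only minor point requiring attention is the choice of $k_0$, which must be large enough both to guarantee $\alpha \e_k \le 1/2$ (so that the reciprocal manipulation is valid) and to absorb the constants $f_{k_0}$ and $-2\alpha k_0$ into a single multiplicative bound $Ck$.
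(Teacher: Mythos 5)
Your proof is correct and takes a genuinely different, and in fact cleaner, route than the paper's. The paper first establishes that $k\e_k \not\to 0$ via a contradiction argument (choosing a subsequence along which $(\xi_k)$ with $\e_k=\xi_k/k$ is nonincreasing, then deriving $1\ge 1+1/k_n + o(1/k_n)$), and only then proves $\e_k\ge \beta/k$ by a careful induction that requires arranging $k_0(1-\alpha\beta)-\alpha\beta\ge 0$. You instead pass to $f_k=1/\e_k$, use the sharp convexity bound $1/(1-x)\le 1+2x$ on $[0,1/2]$ to collapse the nonlinear recursion to the linear bound $f_{k+1}\le f_k+2\alpha$, and telescope; this makes the discrete analogue of the ODE $\e'=-\alpha\e^2$ visible at once and eliminates both the subsequence step and the delicate choice of $\beta$. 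The one place where you gesture rather than state is the passage from $f_k\le f_{k_0}+2\alpha(k-k_0)$ to $f_k\le Ck$; this does not actually require enlarging $k_0$, since for $k\ge k_0\ge 1$ one has $f_{k_0}\le (f_{k_0}/k_0)k$ and hence $f_k\le (f_{k_0}/k_0+2\alpha)k$, giving $\beta = (f_{k_0}/k_0+2\alpha)^{-1}$ directly. With that small cleanup the argument is complete and, in my view, preferable in exposition to the paper's version.
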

We then check the following.  
\begin{lemma}\label{lem:div} Suppose $p>2$. Then we have $\lim_{k\to \infty}\delta_k=0$ and $\sum_{i=0}^\infty a_i=\infty$. 
\end{lemma}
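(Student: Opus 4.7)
The plan is to derive a precise second-order asymptotic recurrence for $a_k$ from \eqref{eq:del1}--\eqref{eq:del2} in the regime $a_{k-1}\to 0$, which already holds by Lemma \ref{lem:bf2}. Writing $\eta_k:=1-d_k$, I will first show
\[
\eta_k=\frac{a_{k-1}}{p-1}+O(a_{k-1}^2)
\]
by rewriting \eqref{eq:del1} as $1-(1-\eta_k)^p=\frac{2p}{2+a_{k-1}}\eta_k$, dividing by $p\eta_k$, and matching the Taylor expansion of $1-(1-\eta_k)^p$ in $\eta_k$ against that of $\frac{2}{2+a_{k-1}}$ in $a_{k-1}$. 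Substituting this expansion together with the binomial expansion $(1-\eta_k)^{p-1}=1-(p-1)\eta_k+\frac{(p-1)(p-2)}{2}\eta_k^2+O(\eta_k^3)$ into \eqref{eq:del2} and collecting second-order terms carefully, I will obtain
\[
a_k=a_{k-1}-Ca_{k-1}^2+O(a_{k-1}^3)\qquad\text{as }k\to\infty,
\]
with $C=\frac{p-2}{3(p-1)}>0$, where the positivity uses $p>2$ essentially.

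Granting this expansion, $\sum_{i=0}^\infty a_i=\infty$ follows from Lemma \ref{pre0}. Since $a_k\to 0$, the cubic remainder is eventually dominated by $(C/2)a_{k-1}^2$, so there exists $k_1\in\mathbb{N}$ with $(a_k)_{k\ge k_1}\subset(0,1)$ and $a_k\ge a_{k-1}-\alpha a_{k-1}^2$ for all $k\ge k_1$, where $\alpha:=3C/2$. Applying Lemma \ref{pre0} to the shifted sequence $(a_{k_1-1+k})_{k\ge 1}$ gives constants $\beta>0$ and $k_0'\in\mathbb{N}$ with $a_{k_1-1+k}\ge\beta/k$ for all $k\ge k_0'$, and comparison with the harmonic series then yields $\sum_{i=0}^\infty a_i=\infty$.

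For the limit $\delta_k\to 0$, the leading expansion of $\eta_k$ gives $\eta_k\ge\frac{a_{k-1}}{2(p-1)}$ for all sufficiently large $k$, say $k\ge k_0$. Since $\log(1/d_k)=-\log(1-\eta_k)\ge\eta_k$, it follows that
\[
-\log\delta_k=\sum_{i=1}^k\log(1/d_i)\ge\frac{1}{2(p-1)}\sum_{i=k_0}^k a_{i-1}\longrightarrow\infty\quad\text{as }k\to\infty,
\]
hence $\delta_k\to 0$.

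The main technical obstacle is the second-order expansion of $a_k$: at first order all terms conspire to reproduce $a_{k-1}$ exactly, so the genuine dissipative term only emerges by simultaneously tracking the $a_{k-1}^2$ correction to $\eta_k$ (coming from the cubic coefficient $\frac{p(p-1)(p-2)}{6}$ in $1-(1-\eta_k)^p$) and the $\eta_k^2$ term in the expansion of $(1-\eta_k)^{p-1}$ used in \eqref{eq:del2}. A purely first-order analysis yields no useful information on the decay rate of $a_k$, which is precisely the input Lemma \ref{pre0} requires.
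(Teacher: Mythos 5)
Your proof is correct and takes essentially the same route as the paper: both derive the second-order recurrence $\e_{k+1}=\e_k-\frac{p-2}{3}\e_k^2+O(\e_k^3)$ (in your notation, $a_k=a_{k-1}-\frac{p-2}{3(p-1)}a_{k-1}^2+O(a_{k-1}^3)$, which is the same relation after the linear substitution $a_k\approx(p-1)\e_{k+1}$) and then invoke Lemma \ref{pre0} to get a harmonic-type lower bound. The only cosmetic difference is that the paper applies Lemma \ref{pre0} to $(\e_k)$ and reads off both conclusions from $\e_k\ge\beta/k$, whereas you apply it to the (eventually sub-unit) sequence $(a_k)$ and then back out $\delta_k\to0$ from the divergence of $\sum a_i$; this reordering is harmless.
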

\begin{proof} 
Let $(d_k)\subset (0,1)$ be a sequence of values  in Lemma \ref{lem:bf2}.   
We choose a sequence $(\e_k)\subset (0,1)$ so that $d_k=1-\e_k$ for all $k\in\mathbb{N}$. It follows that $\e_{k+1}<\e_k$ for all $k\in \mathbb{N}$ and $\e_k\to0$ as $k\to \infty$. Moreover, from \eqref{s1} and \eqref{s2}, we have 
\begin{equation}\label{eq:del3}
\frac{2p}{2+a_{k-1}}\e_k-1+(1-\e_k)^p=0
\end{equation}
and 
\begin{equation}\label{eq:del4}
\begin{split}
a_k
&=2-(1-\e_k)^{p-1} (2+a_{k-1}).
\end{split}
\end{equation}
From \eqref{eq:del3}, we get
\begin{equation}\label{eq:d5}
\begin{split}
a_{k-1}=(p-1)\e_k+\frac{(p-1)(p+1)}{6}\e_k^2+O(\e_k^3)
\end{split}
\end{equation}
where the last term on the right-hand side means that there exists a constant $C>0$ such that $|O(\e_k^3)|\le C \e_k^3$ for all $k\in \mathbb{N}$.  Substituting this into \eqref{eq:del4}, we obtain 
\begin{equation}\label{t1}
\begin{split}
a_k&=(p-1)\e_k-\frac{(p-1)(p-5)}{6}\e_k^2+O(\e_k^3).
\end{split}
\end{equation}
Then replacing $k$ with $k+1$ in \eqref{eq:d5} and substituting \eqref{t1} into it, we get 
\[
\e_k-\frac{(p-5)}{6}\e_k^2+O(\e_k^3)=\e_{k+1}+\frac{(p+1)}{6}\e_{k+1}^2+O(\e_{k+1}^3).
\]
Using the fact $\e_{k+1}\le \e_k$, we calculate
\[
\begin{split}
\e_{k+1}&=\e_k-\frac{p-5}{6}\e_k^2-\frac{p+1}{6}\e_{k+1}^2+O(\e_k^3)\\
&=\e_k-\frac{p-5}{6}\e_k^2-\frac{p+1}{6}(\e_k+O(\e_k^2))^2+O(\e_k^3)\\
&=\e_k-\frac{p-2}{3}\e_k^2+O(\e_k^3).
\end{split}
\]
Then from Lemma \ref{pre0}, there exist numbers $k_0\in \mathbb{N}$ and $\beta>0$ such that $\e_k\ge \beta/k$ for all $k\ge k_0$. Consequently, taking $k_0$ larger if necessary,  we get for all $k> k_0$ that
\[
\begin{split}
\log{\left(\frac{\delta_{k_0}}{\delta_k}\right)}&= \sum_{i=k_0+1}^k\log{\frac1{d_i}}=\sum_{i=k_0+1}^k \log{\left(\frac{1}{1-\e_i}\right)}\ge \frac12\sum_{i=k_1}^k \e_i\ge\frac \beta2\sum_{i=k_0}^k\frac1i\to\infty
\end{split}
\]
as $k\to \infty$. Hence we obtain $\delta_k\to0$ as $k\to \infty$. Moreover, recalling \eqref{t1} and choosing $k_0$ larger if necessary again, we obtain that for all $k\ge k_0$, 
\[
\sum_{i=0}^ka_i\ge \frac{p-1}{2}\sum_{i=k_0}^k  \e_i  \ge \frac{\beta(p-1)}{2}\sum_{i=k_0}^k  \frac1i\to \infty
\]
as $k\to \infty$.   This completes the proof.
\end{proof}
Let us complete the proof of our theorem.
\begin{proof}[Proof of Theorem \ref{thm3}] 
Let $(r_n)\subset (0,1)$ be any sequence such that $u_n(r_n)/\mu_n\to0$ as $n\to \infty$. Fix any $k\in \mathbb{N}$. Then, we choose the sequence $(\rho_{k,n})\subset (0,1)$ of values  from Theorem \ref{thm30}. Since $u_n(\rho_{k,n})/\mu_n\to \delta_k$ as $n\to \infty$, we have $\rho_{k,n}<r_n$ for all large $n\in \mathbb{N}$ which implies
\[
p\int_0^{r_n}\la_n u_n^{p-1}f(u_n)rdr>p\int_0^{\rho_{k,n}}\la_n u_n^{p-1}f(u_n)rdr. 
\]
It follows from \eqref{eq:en2} that 
\[
\liminf_{n\to \infty}E_n(0,r_n)\ge \liminf_{n\to \infty}p\int_0^{r_n}\la_n u_n^{p-1}f(u_n)rdr\ge \sum_{i=1}^k a_i.
\]
Since $k\in \mathbb{N}$ is arbitrary, we get the first conclusions  by Lemma \ref{lem:div}.  
  Next, noting Lemma \ref{lem34}, after extracting a subsequence if necessary, we find a constant $\nu\in [0,1]$ such that  
\[
\lim_{n\to \infty}\frac{\log{\frac1{\ga_{0,n}}}}{\mu_n^p}=\frac\nu 2.
\]
We shall prove $\nu=1$. We assume  $\nu\in[0,1)$ on the contrary. Then since $\delta_k\to0$ as $k\to \infty$ by Lemma \ref{lem:div}, there exists a number $k_0\in \mathbb{N}$ such that $\delta_{k_0}^p-1+\nu<0$. We choose a sequence $(r_{k_0,n})\subset (0,1)$ from Theorem \ref{thm30}. It follows that $r_{k_0,n}\to 0$, $u_n(r_{k_0,n})/\mu_n\to \delta_{k_0}$, and $\phi_n(r_{k_0,n})\to a_{k_0}^2/2$ as $n\to \infty$. Then Lemma \ref{lem34} yields
\[
\lim_{n\to \infty}\frac{\log{\frac1{r_{k_0,n}}}}{\mu_n^p}=\frac{\delta_{k_0}^p-1+\nu} 2<0.
\]
This is impossible. Hence we get
 \begin{equation}\label{eq:jj}
\lim_{n\to \infty}\frac{\log{\frac1{\ga_{0,n}}}}{\mu_n^p}=\frac1 2.
\end{equation}
Then, from the definition of $\ga_{0,n}$ and \eqref{ha}, we confirm \eqref{sup3}. Moreover, \eqref{eq:rk2} with $k=0$ is shown by \eqref{eq:jj} since $r_{0,n}/\ga_{0,n}\to 2\sqrt{2}$ as $n\to \infty$. \eqref{eq:rk2} with $k\ge1$ is proved by the  last assertion in Lemma \ref{lem34} with $\nu=1$.  
Lastly, take a sequence $(r_n)\subset (0,1)$ so that $u_n(r_n)\to \infty$ and $u_n(r_n)/\mu_n\to0$ as $n\to \infty$. The final conclusion  in Lemma \ref{lem:gl} implies $r_n\to0$ as $n\to \infty$. Then for any $r\in (0,1)$, we use \eqref{id2} and get
\[
p\mu_n^{p-1}u_n(r)\ge E_n(0,r)\log{\frac1r}\ge E_n(0,r_n)\log{\frac1r}
\]
for all large $n\in \mathbb{N}$. Hence, \eqref{sup222} proves \eqref{sup4}. This completes the proof. 
\end{proof}
\section{Concentration and oscillation}\label{sec:osc}
In this section, we directly deduce various oscillation estimates  from the concentration ones in the previous sections.  This is  our second main discussion in this paper.  In the following, for any $k\in \mathbb{N}\cup\{0\}$, we recall numbers $a_k$ and $\delta_k$, and sequences $(\ga_{k,n})$, $(r_{k,n})$, $(\rho_{k,n})$, and $(\bar{\rho}_{k,n})$, with regarding $\bar{\rho}_{0,n}=0$, in our main theorems in Section \ref{sec:intr}.  
\subsection{Oscillation estimates}
We first give several asymptotic formulas which contain the delicate information on the shapes of the graphs of blow-up solutions. This allows us to observe  the several oscillation behaviors depending on $p>0$. Particularly, we will find  that the infinite sequence of bubbles causes the infinite oscillation of the solutions in the supercritical case $p>2$. 
\begin{theorem}\label{thm:osc} Assume $p>0$ and (H1). Let $\{(\la_n,\mu_n,u_n)\}$ be a sequence of solutions of \eqref{q} with $\mu_n\to \infty$ as $n\to \infty$. Then for any sequence $(r_n)\subset (0,\rho_{0,n}]$ of values, taking a sequence $(R_n)\subset (0,\infty)$ so that $r_n=R_n\ga_{0,n}$ for all $n\in \mathbb{N}$,  we have
\begin{equation}\label{de0}
p\la_nr_n^2u_n^{p-1}(r_n)f(u_n(r_n))=\frac{64R_n^2}{(8+R_n^2)^2}(1+o(1))
\end{equation}
as $n\to \infty$ up  to a subsequence. 
If, in addition,  $\mu_n^{-p}\log{(r_n/\ga_{0,n})}\to0$ as $n\to \infty$, then the above formula implies that if $p\in(0,2)$,
\begin{equation}\label{de01}
\begin{split}
u_n(r_n)=\left\{\left(\frac{4}{p}+o(1)\right)\log{\frac1{r_n}}\right\}^{\frac1p}
\end{split}
\end{equation}
and if $p\ge2$, 
\begin{equation}\label{de02}
\begin{split}
u_n(r_n)=\left\{(2+o(1))\log{\frac1{r_n}}\right\}^{\frac1p}
\end{split}
\end{equation}
as $n\to \infty$. 
 Moreover, by extracting a subsequence if necessary, we obtain the following.
\begin{enumerate}
\item[(i)] Suppose $0<p<2$. Then  for any sequence $(r_n)\subset [\rho_{0,n},1)$, taking a sequence $(\delta_n)\subset (0,1)$ so that   $u_n(r_n)=\delta_n\mu_n$ for all $n\in \mathbb{N}$, we get
\begin{equation}\label{bo0}
u_n(r_n)=\left\{\left(\frac{4\delta_n^{p-1}}p+o(1)\right)\log{\frac1{r_n}}\right\}^{\frac1p}
\end{equation}
as  $n\to \infty$. 
\item[(ii)] Assume $p=2$. Then for any sequence $(r_n)\subset [\rho_{0,n},1)$ and constant $\delta\in(0,1]$ such that $u_n(r_n)/\mu_n\to \delta$ as $n\to \infty$, we have
\begin{equation}\label{bo1}
u_n(r_n)=\left\{(2\delta+o(1))\log{\frac1{r_n}}\right\}^{\frac12}
\end{equation}
as  $n\to \infty$.
\item[(iii)] Let $p>2$ and fix any number $k\in \mathbb{N}\cup \{0\}$. Then, for any sequence $(r_n)\subset [\rho_{k,n},\bar{\rho}_{k+1,n}]$ and value $\delta\in [\delta_{k+1},\delta_k]$ such  that   $u_n(r_n)/\mu_n\to\delta$ as $n\to \infty$, we obtain
\begin{equation}\label{bo2}
u_n(r_n)=\left\{(\beta_k(\delta)+o(1))\log{\frac1{r_n}}\right\}^{\frac1p}
\end{equation}
as  $n\to \infty$ where we defined the function $\beta_k: [\delta_{k+1},\delta_k]\to \mathbb{R}$ by  
\[
\beta_k(\delta)=\frac{2(2+a_k)(\delta/\delta_k)^p}{2+a_k-2p(1-\delta/\delta_k)}.
\]
Moreover, we assume $k\in \mathbb{N}$. Then, if  $(r_n)\subset [\bar{\rho}_{k,n},\rho_{k,n}]$, taking a sequence $(R_n)\subset (0,\infty)$ so that $r_n=R_n\ga_{k,n}$ for all $n\in \mathbb{N}$, 
 we get
\begin{equation}\label{de1}
p\la_nr_n^2u_n(r_n)^{p-1}f(u_n(r_n))=\frac{2a_k^2b_k R_n^{a_k}}{(1+b_kR_n^{a_k})^2}(1+o(1))
\end{equation}
which implies 
\begin{equation}\label{de11}
\begin{split}
u_n(r_n)=\left\{(2+o(1))\log{\frac1{r_n}}\right\}^{\frac1p}
\end{split}
\end{equation}
as  $n\to \infty$.
\end{enumerate}
\end{theorem}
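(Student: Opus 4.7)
My plan is to derive these asymptotic formulas from the Green-type identities of Lemma \ref{id}, combined with the bubble characterizations in Theorems \ref{thm1}--\ref{thm3} and the sharp asymptotics for $\la_n$ and $\ga_{k,n}$. Formula \eqref{de0} is essentially the content of Lemma \ref{lem:thm1}. To extract \eqref{de01} and \eqref{de02} under the hypothesis $\mu_n^{-p}\log(r_n/\ga_{0,n})\to 0$, I first note that $u_n(r_n)/\mu_n\to 1$ on this range, which follows from $z_{0,n}\to z_0$ in Lemma \ref{lem21} together with $z_0(r)=O(\log r)$ as $r\to\infty$. Taking the logarithm of \eqref{de0} yields
\[
\log(p\la_n) + 2\log r_n + (p-1)\log u_n(r_n) + \log h(u_n(r_n)) + u_n(r_n)^p = O(1);
\]
dividing by $\mu_n^p$ and invoking \eqref{ha} together with the asymptotics \eqref{sub2}, \eqref{cri2}, \eqref{sup3} for $\log(1/\la_n)/\mu_n^p$ lets me solve for $\log r_n/\mu_n^p$ and recover \eqref{de01} for $p<2$ and \eqref{de02} for $p\ge 2$.

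For case (i) I will apply \eqref{id2} with $s=r_n$ and $t=1$, multiply by $p\mu_n^{p-1}$, and bound the tail by $\log(1/r_n)\,E_n(r_n,1)$. Since $r_n\ge\rho_{0,n}$, Lemmas \ref{lem21} and \ref{lem:thm21} give $E_n(0,r_n)\to 4$ and $E_n(r_n,1)\to 0$, so $p\mu_n^{p-1}u_n(r_n)=4\log(1/r_n)(1+o(1))$, which rearranges to \eqref{bo0}. For case (ii) both thresholds on $\bar\delta$ in Lemma \ref{lem:e10} collapse to $\bar\delta>0$ when $p=2$, so that lemma together with the monotonicity of $u_n$ yields $E_n(\rho_{0,n},r_n)\to 0$, and hence $E_n(0,r_n)\to 4$ for every $r_n\in[\rho_{0,n},1)$ with $u_n(r_n)/\mu_n\to\delta\in(0,1]$. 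Then applying \eqref{id1} at $s=r_n$, scaling $r=\ga_{0,n}s$, and using that $\int_0^{R_n}(f(u_n(\ga_{0,n}s))/f(\mu_n))\,s\log s\,ds=O(1)$ (its integrand being dominated by an $L^1((0,\infty))$ function via the pointwise bound on $z_{0,n}$ in Lemma \ref{lem21}), I obtain $\log R_n=\mu_n^2(1-\delta)/2+o(\mu_n^2)$; Lemma \ref{lem:crit2} then gives $\log(1/r_n)/\mu_n^2\to\delta/2$, yielding \eqref{bo1}.

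Case (iii) is the main work. Formula \eqref{de1} is the exact analog of \eqref{de0} for the $k$-th bubble and follows from the uniform convergence $z_{k,n}\to z_k$ on $[\bar{\rho}_{k,n}/\ga_{k,n},\rho_{k,n}/\ga_{k,n}]$ in Theorem \ref{thm30} after expressing $\phi_n(r_n)$ through $z_{k,n}(r_n/\ga_{k,n})$; formula \eqref{de11} is then immediate from \eqref{eq:rk2} together with $\mu_n^{-p/2}\log(\rho_{k,n}/r_{k,n})\to 0$. The delicate piece is \eqref{bo2}: applying \eqref{id2} with $s=\rho_{k,n}$ and $t=r_n\in[\rho_{k,n},\bar{\rho}_{k+1,n}]$ and multiplying by $p\mu_n^{p-1}$ gives
\[
p\mu_n^p(\delta_k-\delta+o(1)) = \log(r_n/\rho_{k,n})\,E_n(0,\rho_{k,n}) + O\bigl(\log(r_n/\rho_{k,n})\,E_n(\rho_{k,n},r_n)\bigr).
\]
The residual is $o(\mu_n^p)$ because $E_n(\rho_{k,n},r_n)\to 0$ by \eqref{eq:en1} while $\log(r_n/\rho_{k,n})=O(\mu_n^p)$ follows from Lemma \ref{lem:22}, whose hypothesis $\mu_n^{-p}\log\phi_n(\rho_{k,n})\to 0$ I verify using \eqref{as:zk} and $\mu_n^{-p/2}\log(\rho_{k,n}/r_{k,n})\to 0$. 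With Lemma \ref{lem:bg} giving $E_n(0,\rho_{k,n})\to(2+a_k)/\delta_k^{p-1}$ and \eqref{eq:rk2} giving $\log(1/\rho_{k,n})/\mu_n^p\to\delta_k^p/2$, I arrive at $\log(1/r_n)/\mu_n^p\to \delta_k^p[(2+a_k)-2p(1-\delta/\delta_k)]/(2(2+a_k))$, and dividing $u_n(r_n)^p=\delta^p\mu_n^p(1+o(1))$ by this quantity produces exactly $\beta_k(\delta)$, which is \eqref{bo2}.

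The main obstacle I anticipate is controlling the residual integral in case (iii): the two factors $\log(r_n/\rho_{k,n})$ and $E_n(\rho_{k,n},r_n)$ are individually delicate, and showing their product is $o(\mu_n^p)$ requires carefully combining Lemma \ref{lem:22}, the pointwise bound \eqref{as:zk}, and the vanishing-energy estimate \eqref{eq:en1}. Once this residual is tamed, the rest---identifying $E_n(0,\rho_{k,n})$ via Lemma \ref{lem:bg} and transferring \eqref{eq:rk2} cleanly to $\rho_{k,n}$---amounts to essentially bookkeeping.
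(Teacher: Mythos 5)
Your proposal follows the paper's proof closely in all essential respects: \eqref{de0} from Lemma \ref{lem:thm1}; \eqref{de01}--\eqref{de02} by taking logarithms of \eqref{de0} and invoking the $\log(1/\la_n)/\mu_n^p$ asymptotics; case (i) via the \eqref{id2} sandwich and \eqref{sub1}; case (iii) by \eqref{id2} between $\rho_{k,n}$ and $r_n$ together with Lemma \ref{lem:bg} and \eqref{eq:rk2}, exactly as in the paper's Lemmas \ref{lem:osc1}--\ref{lem:osc2}. The algebra you sketch for $\log(1/r_n)/\mu_n^p$ and for producing $\beta_k(\delta)$ checks out, and your verification of the residual bound in (iii) via $\log(r_n/\rho_{k,n})=O(\mu_n^p)$ and $E_n(\rho_{k,n},r_n)\to 0$ matches the intended mechanism.

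The one place where you deviate from the paper and where your justification is incomplete is case (ii). The paper applies \eqref{id2} between $\rho_{0,n}$ and $r_n$, so the residual integral is supported on $[\rho_{0,n},r_n]$ and is controlled directly by $E_n(\rho_{0,n},r_n)\cdot\log(r_n/\rho_{0,n})=o(1)\cdot O(\mu_n^2)=o(\mu_n^2)$, which is all that is needed after multiplying by $2\mu_n$. You instead apply \eqref{id1} at $s=r_n$, which forces you to bound $\int_0^{R_n}(f(u_n(\ga_{0,n}s))/f(\mu_n))\,s\log s\,ds$, and you attribute the $O(1)$ bound to ``the pointwise bound on $z_{0,n}$ in Lemma \ref{lem21}.'' That alone is not sufficient for $p=2$: the exponent is $u_n^2-\mu_n^2=z_{0,n}+z_{0,n}^2/(4\mu_n^2)$, and the quadratic correction is positive and of comparable size to $z_{0,n}$ once $z_{0,n}$ is of order $\mu_n^2$ in the tail. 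The bound $z_{0,n}\le-(4+o(1))\log s$ by itself would only give decay like $s^{-2+o(1)}$ for the exponential factor, and $\int_1^{R_n}s^{-1+o(1)}\log s\,ds$ is not $O(1)$. What rescues the estimate is the additional fact $u_n(r_n)/\mu_n\to\delta>0$, which forces $-z_{0,n}/\mu_n^2\le 2(1-\delta)+o(1)$ on $[0,R_n]$, hence $1+z_{0,n}/(4\mu_n^2)\ge(1+\delta)/2+o(1)$, so the exponent is $\le z_{0,n}\cdot((1+\delta)/2+o(1))\le -(2(1+\delta)+o(1))\log s$ and the integrand decays like $s^{-1-2\delta+o(1)}\log s$, which is integrable precisely because $\delta>0$. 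So your claim is true but needs this extra step; as written, the justification is a gap. The paper's route via \eqref{id2} from $\rho_{0,n}$ is cleaner because it avoids re-examining this tail integral entirely.
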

\begin{remark}\label{rmk:osc1}
Note that if $p\ge2$, using \eqref{de02} for \eqref{de0} and \eqref{de11} for \eqref{de1} respectively, we can deduce more precise asymptotic formulas.  For example, if $p\ge2$ and $f(t)=u^me^{t^p+\alpha t^q}$ for all large $t\ge0$ where $m,\alpha\in \mathbb{R}$ and $0<q<p$ are given constants, we get from \eqref{de0} with \eqref{de02}  (and also \eqref{de1} with \eqref{de11} if $p>2$) that 
\[
\begin{split}
 u_n&(r_n)\\
&=\left\{2\log{\frac1{r_n}}-\alpha u_n(r_n)^q-(p-1+m)\log{u_n(r_n)}+\log{\frac{l(R_n)}{p\la_n}}+o(1)\right\}^{\frac1p}\\
&=\Bigg\{2 \log{\frac1{r_n}}-\alpha \left((2+o(1)) \log{\frac1{r_n}}\right)^{\frac qp}
\\
&\ \ \ \ \ \ \ \ \ \ \ \ \ \ \ \ \ \ \ \ \ -\left(1-\frac{1-m}{p}\right) \log{\left(2\log{\frac1{r_n}}\right)}+\log{\frac{l(R_n)}{p\la_n}}+o(1)
\Bigg\}^{\frac1p}
\end{split}
\]
as $n\to \infty$ where 
\[
l(R)=\frac{64R^2}{(8+R^2)^2 }\ \left(\text{and }\frac{2a_k^2b_kR^{a_k}}{(1+b_kR^{a_k})^2 }\text{ respectively}\right)\ \ (R>0).
\]
 Moreover, recalling \eqref{eq:r0}, \eqref{eq:rk}, and the facts
\[
\max_{R>0}\frac{64 R^2}{(8+R^2)^2}=2\ \text{ and } 
\max_{R>0}\frac{2a_k^2b_k R^{a_k}}{(1+b_kR^{a_k})^2}=\frac{a_k^2}{2}
\]
for all $k\in \mathbb{N}$, we find that the limit value of $l(R_n)$ in the previous formula is maximized when $(r_n)=(r_{k,n})$. These observations suggest that $u_n(r)$ attains the top of the oscillation at the center of the concentration  $r=r_{k,n}$. See more explanation below. 
\end{remark}
\begin{remark} The assertion (i) for $0<p<2$ can not be simply extended to the case $p=2$. This is because of the fact that  \eqref{sub1} is not true for $p=2$ in general. But,  as noted below Theorem \ref{thm:crit}, if $p=2$ and $h(t)=te^{\alpha t^q}$ for all $t\ge0$ with $\alpha\ge0$ and $0<q<2$, \eqref{sub1} still holds  true and thus, the same assertion with (i) is valid. See the proof of the theorem below.
\end{remark}
This theorem describes several oscillation behaviors of graphs of solutions by means of the curve 
\[
U_\beta(r)=\left(\beta \log{\frac1r}\right)^{\frac1p} \ \ (r\in(0,1])
\]
with $\beta>0$. 
 Let us begin with the standard case $1<p\le2$. In this case, we can interpret \eqref{de01}, \eqref{de02}, \eqref{bo0}, \eqref{bo1}, and the argument in the remarks above as follows. First notice that \eqref{de01} and \eqref{de02} show that $u_n(r)$ touches the curve $U_{4/p+o(1)}(r)$ at the center of the first concentration $r=r_{0,n}$. This  implies that starting from the finite value $\mu_n$ at the origin, as $r$ increases $0$ to $r_{0,n}$, $u_n(r)$ crosses $U_{\beta}(r)$ for all $0<\beta<4/p$ and finally arrives on the highest curve $U_{4/p+o(1)}(r)$ at $r=r_{0,n}$. After that, as $r$ increases again, then $u_n(r)/\mu_n$ decreases and thus, \eqref{bo0} and \eqref{bo1} imply that $u_n(r)$ passes the lower curve $U_{\beta}(r)$ for all $4/p>\beta>0$ again. This up-and-down behavior can be understood as the graph of $u_n(r)$ oscillates at least once as $r$ increases from $0$  to $1$. More precisely, if $1<p<2$, it oscillates only once while if $p=2$, the number of the oscillations may be more than one depending on the choice of $h$. Notice also that one bubble causes one oscillation here. On the other hand, in the case $0<p<1$, the conclusion is very different. After arriving on the curve $U_{4/p}(r)$ at $r=r_{0,n}$ as in the previous case, as $r$ increases again and then $u_n(r)/\mu_n$ decreases and thus, since the exponent  $p-1$ on $\delta_n$ in \eqref{bo0} is negative, $u_n(r)$ further passes the higher curve $U_{\beta}(r)$ for all $\beta\ge 4/p$. In particular, it crosses $U_{\beta}(r)$ for all $\beta>0$ as $r$ increases from $0$ to $1$ and  exhibits no oscillation behavior. This behavior can be understood as the result of the entire blow-up proved by \eqref{sub3}. Finally,  let us consider the most interesting case $p>2$. To do this, fix any $k\in \mathbb{N}\cup\{0\}$. 
 Then from  \eqref{de02} and \eqref{de11}, we first see that  the graph of $u_n(r)$ reaches the curve $U_{2+o(1)}(r)$ at $r=r_{k,n}$ which  is the center of the $(k+1)$th bubble. Next notice that $\beta_k(\delta_k)=\beta_k(\delta_{k+1})=2$ by \eqref{eq:del1} and $\beta_k(\delta)$ admits the unique minimum point $\delta_k^*$ in $(\delta_{k+1},\delta_k)$. Here, we have that
\[
\delta_k^*=\left(1-\frac{a_k}{2(p-1)}\right)\delta_k
\] 
and
\[
\beta_k^*:=\beta_k(\delta_k^*)=(2+a_k)\left(1-\frac{a_k}{2(p-1)}\right)^{p-1}\in(0,2).
\] 
Hence we can take a sequence $(r_{k,n}^*)$ in the interval $(\rho_{k,n},\bar{\rho}_{k+1,n})$, where no bubble appears, so that $u_n(r_{k,n}^*)/\mu_n\to \delta_k^*$ as $n\to \infty$. Then, \eqref{bo2} ensures that the graph of $u_n(r)$ touches the curve $U_{\beta_k^*+o(1)}(r)$ at $r=r_{k,n}^*$. As a result, we can understand that starting from the highest curve $U_{2+o(1)}(r)$ at $r=r_{k,n}$, as $r$ increases, $u_n(r)$ crosses the lower curve $U_{\beta}(r)$ for all $2>\beta>\beta_k^*$ and finally arrives on the lowest curve $U_{\beta_k^*+o(1)}(r)$ at $r=r_{k,n}^*$. Moreover, as $r$ again increases, it passes the higher curve $U_\beta(r)$ for all $\beta_k^*<\beta<2$ and, at last, reaches the highest curve $U_{2+o(1)}(r)$ at $r=r_{k+1,n}$ again. Since $k\in \mathbb{N}\cup\{0\}$ is arbitrary, this up-and-down behavior is repeated infinitely many times as $k\to \infty$. This shows  the infinite oscillation of the graph of $u_n(r)$. Notice that each oscillation has one to one correspondence to the appearance of each bubble. Hence we conclude that this infinite oscillation is caused by the infinite sequence of bubbles. As a remark, notice also that  since $(a_k)$ and $(\beta_k^*)$ monotonically converges to 0 and 2 respectively by Lemma \ref{lem:bf2}, the amplitudes of the oscillations get smaller and smaller as $k\to \infty$ and finally become vanishingly small for all large $k$. 
 Let us next summarize this observation as the following intersection property. Before that we remark on the argument  in \cite{McMc}.
\begin{remark}\label{rmk:mm} As remarked in Section \ref{sec:intr}, in the earlier work \cite{McMc}, McLeod-McLeod point out the oscillation behavior, bouncing process, of solutions which corresponds to our observation above. See the arguments under Theorem 1.1  there. Interestingly, we further encounter several correspondences among their heuristic discussions and our rigorous ones based on the concentration analysis. For example, the energy function $H$ they introduce in the beginning of Section 2 there corresponds to our $\log{\phi_n}$ where $(\phi_n)$ is the sequence of functions defined in Section \ref{sec:pre} in this paper. Notice that ours is introduced as a direct extension of a key  tool developed in \cite{D} which comes from  the scaling  structure of  the problem. Note also that in their discussion, the critical point of $H$ is interpreted as a point of bounce while in our argument, that of $\phi_n$ is the center $r_{k,n}$ of the $(k+1)$th bubble for all $k\in \mathbb{N}\cup\{0\}$. (Recall our choice of $r_{k,n}$ in the final part of the proof of Lemma \ref{lem:f1}.) In the previous discussion, we have already explained that they coincide with each other. Furthermore, one can confirm that the system of the recurrence formulas \eqref{eq:del1} and \eqref{eq:del2}, which completely characterize each bubble, in the present  paper are equivalent to (2.7) and  the combination of (2.11) and (2.12) there. Their derivation is still a heuristic way based on the analysis of the energy $H$ while ours is  the precise computation of the balance between adjacent two bubbles via the identity \eqref{id2}.  In this way, we observe several correspondences between these two independent works. From the viewpoint of their result, we may also say that  our present work justifies their heuristic observation, moreover, we characterize the bouncing process as a result of the infinite concentration phenomenon. Finally, we emphasize that our proof successfully provides much more precise information on the oscillation behavior as above. Thanks to this, we can readily proceed to the next steps to study the intersection properties and also the oscillations of the bifurcation diagrams.
\end{remark}
\subsection{Intersection properties}
We next deduce various intersection properties between blow-up solutions and singular functions from Theorem \ref{thm:osc}. Notice that in the results below, we give not only the estimates for the intersection numbers but also the precise information on the points where the intersections occur. For any interval $I\subset \mathbb{R}$ and function $u: I\to \mathbb{R}$,  we define $Z_I[(u)]$ as the number of zero points of $u$ on $I$. 
 We first study the intersection properties between  blow-up solutions and the function $U_\beta(r)$ defined in the previous argument.
\begin{corollary}\label{thm:int1} Suppose $p>0$, (H1), and $\{(\la_n,\mu_n,u_n)\}$ is a sequence of solutions of \eqref{q} with $\mu_n\to \infty$ as $n\to \infty$. 
 Then  setting $0<\beta<4/p$ if $0<p\le 2$ and $0<\beta<2$ if $p> 2$, we get, up to a subsequence,
\[
Z_{(0,r_{0,n}]}[u_n-U_{\beta}]\ge1
\]
for all large $n\in \mathbb{N}$.  Moreover,  we have the following. 
\begin{enumerate}
\item[(i)] Assume $0<p<1$. Then,  for any $\beta\ge p/4$, choosing $\delta\in(0,1)$ so that $4\delta^{p-1}/p=\beta$, we have a sequence $(r_n)\subset (0,1)$ such that ${u}_n(r_n)/\mu_n\to \delta$ as $n\to \infty$ and ${u}_n(r_n)=U_{\beta}(r_n)$ for all large $n\in \mathbb{N}$.  In particular, we get 
\[
Z_{(0,r_n]}[{u}_n-U_{\beta}]\ge1
\]
for all large $n\in \mathbb{N}$. 
\item[(ii)]
Suppose $1<p\le 2$. Then, for every $0<\beta<4/p$, selecting $\delta\in(0,1)$ so that $4\delta^{p-1}/p=\beta$, we find a sequence $(r_n)\subset ({r}_{0,n},1)$ such that ${u}_n(r_n)/\mu_n\to \delta$  as $n\to \infty$  and ${u}_n(r_n)=U_\beta(r_n)$ for all large $n\in \mathbb{N}$. Especially,  we obtain  
\[
Z_{(0,r_n]}[{u}_n-U_{\beta}]\ge2
\]
for all large $n\in \mathbb{N}$.
\item[(iii)] Assume $p>2$, $k\in \mathbb{N}\cup \{0\}$, and  $\beta_k^*$ and $\delta_k^*$ are the constants defined above. Then, for each  $\beta\in (\beta_k^*,2)$, taking $\delta_k>\delta>\delta_k^*>\delta'>\delta_{k+1}$ so that $\beta_k(\delta)=\beta_k(\delta')=\beta$, we get sequences $(r_n),(r_n')\subset ({r}_{k,n},{r}_{k+1,n})$ such that  ${u}_n(r_n)/\mu_n\to \delta$ and ${u}_n(r_n')/\mu_n\to \delta'$ as $n\to \infty$ and ${u}_n(r_n)=U_\beta(r_n)$ and ${u}_n(r_n')=U_\beta(r_n')$ for all large $n\in \mathbb{N}$. Particularly, 
 we have
\[
Z_{(0,r_n']}[{u}_n-U_{\beta}]\ge 1+2(k+1)
\]
for all large $n\in \mathbb{N}$.
\end{enumerate}
\end{corollary}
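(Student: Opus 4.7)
The plan is to combine the asymptotic formulas of Theorem~\ref{thm:osc} with the intermediate value theorem applied to $g_n := u_n - U_\beta$. For fixed $n$, $U_\beta(r)=(\beta\log(1/r))^{1/p}$ is strictly decreasing from $+\infty$ at $0^+$ to $0$ at $r=1$, while $u_n$ is strictly decreasing from $\mu_n$ to $0$; hence $g_n<0$ near $0$, and an intersection with $U_\beta$ is produced at every sign change of $g_n$. The strategy in each case is to exhibit such sign changes by evaluating $g_n$ at sequences $(s_n)$ for which $u_n(s_n)/\mu_n\to\delta$, computing the ``effective $\beta$'' of $u_n$ at such points via Theorem~\ref{thm:osc}, and comparing it against the true $\beta$.

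For the main claim, take $r_n=r_{0,n}$ and apply \eqref{de01} if $0<p<2$ or \eqref{de02} if $p\ge 2$: this gives $u_n(r_{0,n})\sim (\beta_0\log(1/r_{0,n}))^{1/p}$ with $\beta_0\in\{4/p,\,2\}$; since $\beta<\beta_0$, $g_n(r_{0,n})>0$ for large $n$ and IVT produces a zero in $(0,r_{0,n}]$. For cases (i) and (ii), fix $\delta\in(0,1)$ with $4\delta^{p-1}/p=\beta$ and $\e>0$ small, and pick $s_n^\pm\in[\rho_{0,n},1)$ with $u_n(s_n^\pm)=(\delta\pm\e)\mu_n$ (possible since $u_n$ decreases continuously from $\mu_n(1+o(1))$ to $0$ on $[\rho_{0,n},1]$). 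Using \eqref{bo0} and the strict monotonicity of $\delta\mapsto 4\delta^{p-1}/p$ (decreasing for $p<1$, increasing for $p>1$), $g_n(s_n^+)$ and $g_n(s_n^-)$ have opposite signs for large $n$, so IVT yields a zero $r_n^\e$ between them with $u_n(r_n^\e)/\mu_n$ within $\e$ of $\delta$; a standard diagonal extraction as $\e\downarrow 0$ delivers the required $(r_n)$. The edge case $\beta=p/4$ in (i), where $\delta=1$, is already covered by the main-claim intersection, since any such zero $r_n\le r_{0,n}$ satisfies $u_n(r_n)/\mu_n\to 1$. For case (ii), $r_n\ge \rho_{0,n}>r_{0,n}$, so the zero from this step is distinct from the one produced by the main claim, and $Z\ge 2$.

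Case (iii) repeats the sign-change scheme in each interval $[r_{j,n},r_{j+1,n}]$ for $j=0,\ldots,k$. By \eqref{de11}, $g_n(r_{j,n})>0$ at each bubble center; applying \eqref{bo2} to any $(r_{j,n}^*)\subset[\rho_{j,n},\bar{\rho}_{j+1,n}]$ with $u_n(r_{j,n}^*)/\mu_n\to\delta_j^*$ (which exists by continuity and monotonicity of $u_n$) yields $g_n(r_{j,n}^*)<0$ as soon as $\beta>\beta_j^*$. For $j<k$, IVT then supplies two zeros in $(r_{j,n},r_{j+1,n})$; for $j=k$ we run the Step-2 sign-change argument on each side of $\delta_k^*$, using that $\beta_k(\cdot)$ is strictly monotone on $(\delta_{k+1},\delta_k^*)$ and on $(\delta_k^*,\delta_k)$ (its unique interior critical point being the minimum $\delta_k^*$), to produce the specific sequences $(r_n)$ and $(r_n')$. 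Summing one zero from the main claim, two in each $(r_{j,n},r_{j+1,n})$ for $j<k$, and the two in $(r_{k,n},r_n']$ gives $Z\ge 1+2(k+1)$. The one nonroutine ingredient is the inequality $\beta_j^*\le\beta_k^*$ for all $j\le k$, required so that $\beta>\beta_k^*$ implies $\beta>\beta_j^*$; this reduces to checking that $\beta^*(a):=(2+a)\bigl(1-\tfrac{a}{2(p-1)}\bigr)^{p-1}$ is strictly decreasing on $(0,2]$, a direct logarithmic-derivative calculation, after which Lemma~\ref{lem:bf2} (strict monotonicity of $a_j$) finishes the comparison. This monotonicity check is the main obstacle; everything else is bookkeeping.
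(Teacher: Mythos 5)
Your proposal follows the same overall scheme as the paper: read off the ``effective $\beta$'' of $u_n$ at well-chosen points via Theorem~\ref{thm:osc}, compare it to the target $\beta$ to detect sign changes of $u_n-U_\beta$, and produce zeros by the intermediate value theorem. The main conclusion and cases (ii)--(iii) are handled essentially as in the paper, and your account of (iii) is in fact \emph{more} complete than the printed proof. The paper only constructs the two intersections in $(\rho_{k,n},\bar\rho_{k+1,n})$ and asserts $Z_{(0,r_n']}\ge 1+2(k+1)$; for $k\ge1$ the count requires, implicitly, that $\beta>\beta_j^*$ for every $j\le k$ so that the same sign-change argument can be run in each earlier interval, and that in turn requires $\beta_j^*\le\beta_k^*$. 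You correctly isolate this as the nontrivial ingredient, observe that Lemma~\ref{lem:bf2} alone does not give it (that lemma controls the $a_k$, not $\beta_k^*$), and supply the missing step: $a\mapsto(2+a)\bigl(1-\tfrac{a}{2(p-1)}\bigr)^{p-1}$ is strictly decreasing on $(0,2)$ when $p>2$ (the logarithmic derivative is $\tfrac{1}{2+a}-\tfrac{p-1}{2(p-1)-a}<0$ since this reduces to $pa>0$), whence $\beta_j^*=\phi(a_j)$ is strictly increasing in $j$ because $a_j$ is strictly decreasing. This is a genuine improvement in rigor.

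For (i) and (ii) you bracket $\delta$ by $\delta\pm\e$, detect the sign change with \eqref{bo0}/\eqref{bo1}, and recover $(r_n)$ by diagonal extraction. This is correct but more elaborate than the paper's route, which picks a single sign change between $\rho_{0,n}$ and a point with $u_n/\mu_n\to\delta'$, takes the resulting zero, and then deduces $u_n(r_n)/\mu_n\to\delta$ \emph{a posteriori} from \eqref{bo0} applied at the zero itself (since $u_n(r_n)=U_\beta(r_n)$ forces the effective $\beta$ at $r_n$ to match $\beta$, hence $\delta_n\to\delta$). That device avoids the extraction.

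One imprecision: for the edge case $\beta=4/p$ in (i) (the ``$\beta\ge p/4$'' in the statement appears to be a misprint for $\beta\ge 4/p$), you assert it is ``already covered by the main-claim intersection.'' But the main conclusion is proved only for $\beta<4/p$, so it does not directly produce a zero of $u_n-U_{4/p}$: the sign of $u_n(r_{0,n})-U_{4/p}(r_{0,n})$ is not determined by \eqref{de01} because the comparison there is exact to within the $o(1)$. The clean fix, essentially what the paper intends, is to take a zero $s_n$ of $u_n-U_{\beta''}$ from the main claim with $\beta''<4/p$ (so $u_n(s_n)<U_{4/p}(s_n)$), a zero $r_n$ of $u_n-U_{\beta'}$ from your part (i) with $\beta'>4/p$ (so $u_n(r_n)>U_{4/p}(r_n)$), and apply the intermediate value theorem to $u_n-U_{4/p}$ on $(s_n,r_n)$; the resulting zero $t_n$ satisfies $u_n(t_n)/\mu_n\to1$ either because $t_n<\rho_{0,n}$ and $u_n$ is decreasing, or because $t_n\ge\rho_{0,n}$ and \eqref{bo0} then forces the effective $\beta$ at $t_n$ to equal $4/p$. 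Your observation that any zero $\le r_{0,n}$ has $u_n/\mu_n\to1$ is correct, but you still have to manufacture the zero of $u_n-U_{4/p}$ itself.
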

Note that in the case (i), $u_n$ intersects $U_\beta$ at least once for any $\beta>0$. Moreover, the intersection number $2$ in (ii) is a consequence of the one time oscillation caused by one bubble. Finally, (iii) picks up two intersection points caused by down-and-up behavior between $(k+1)$th and $(k+2)$th bubble.
 
Next, let us next check  the delicate case $p\ge2$ and $\beta=2$ where the intersection numbers may diverge to infinity. To simplify the statement, we assume that $(\la_n)$ converges to a finite value  which is reasonable with the assumption  (H2) by the last assertion in Theorem \ref{thm:gl}.  Moreover, for the later application, we  focus on the following case.
\begin{enumerate}
\item[(H3)]  There exist values $c_0,\tau_0>0$ and $m\in \mathbb{R}$ such that $f(t)=c_0t^me^{t^p}$ for all $t\ge \tau_0$. 
\end{enumerate} 
Note that (H3) implies (H1). Under this condition,  for any $L\in \mathbb{R}$,  we set a function 
\[
\begin{split}
V_L(r)=\left\{2 \log{\frac1{r}}-\left(1-\frac{1-m}{p}\right) \log{\left(2\log{\frac1{r}}\right)}+L\right\}^{\frac1p}
\end{split}
\]
for all $r>0$. We also recall the function $U_\beta$ in the previous discussion. First we consider the case $p=2$.  
\begin{corollary}\label{thm:int2} Assume (H3) with $p=2$. Suppose $\{(\la_n,\mu_n,u_n)\}$ is a sequence of solutions of \eqref{q} with $\mu_n\to \infty$ and there exists a value $ \la_*\in[0,\infty)$ such that $\la_n\to \la_*$ as $n\to \infty$. Let $U(r)$ be a continuous function defined for all small $r>0$. Moreover, we assume the next (i) or (ii).
\begin{enumerate} \item[(i)] $\la_*\not=0$ and for any $\e\in(0,2)$, there exist  constants $r_0\in(0,1)$ and $L<\log{(1/(c_0\la_*))}$ such that 
\[
U_{2-\e}(r)\le U(r)\le V_L(r)
\]
for all $r\in(0,r_0)$.
\item[(ii)] $\la_*=0$ and for any $\e\in(0,2)$, there exist values $r_0\in(0,1)$ and $L\in \mathbb{R}$ such that
\[
U_{2-\e}(r)\le U(r)\le V_L(r)
\]
for all $r\in(0,r_0)$. 
\end{enumerate}
Then, up to a subsequence,  there exist sequences $(r_{0,n}^{\pm})\subset (0,1)$ such that $u_n(r_{0,n}^{\pm})/\mu_n\to 1$ as $n\to \infty$, $r_{0,n}^-<r_{0,n}<r_{0,n}^+$, and $u_n(r_{0,n}^\pm)=U(r_{0,n}^\pm)$ for all large $n\in \mathbb{N}$. In particular, we get
\[
Z_{(0,r_{0,n}^+]}[u_n-U]\ge2 
\]
for all large $n\in \mathbb{N}$.
\end{corollary}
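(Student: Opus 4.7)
The plan is to apply the intermediate value theorem to $g_n := u_n - U$ on two subintervals flanking the bubble center $r_{0,n}$. The central ingredient is the second-order asymptotic expansion of $u_n$ in the first bubble (essentially Remark \ref{rmk:osc1}): starting from \eqref{de0} and using (H3) together with the leading-order relation \eqref{de02}, one obtains, for any sequence $r_n = R_n \ga_{0,n} \in (0,\rho_{0,n}]$ with $\mu_n^{-2}\log R_n \to 0$,
\[
u_n(r_n)^2 - V_L(r_n)^2 \;=\; \log\frac{l(R_n)}{2 c_0 \la_n e^L} + o(1),\qquad l(R) := \frac{64R^2}{(8+R^2)^2},
\]
where $\max_R l = l(2\sqrt{2}) = 2$.

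Fix any $\e\in(0,1)$ and let $r_0,L$ be as in the hypothesis. I will evaluate the sign of $g_n$ at three points. First, at $r_{0,n}$: since $r_{0,n}/\ga_{0,n}\to 2\sqrt{2}$, the right-hand side above tends to $\log(1/(c_0\la_* e^L))>0$ in case (i) and to $+\infty$ in case (ii), so $u_n(r_{0,n}) > V_L(r_{0,n}) \ge U(r_{0,n})$ for large $n$. Second, choosing $\delta := 1-3\e/4 \in (0,1-\e/2)$ and $r_n^{+,0}>r_{0,n}$ with $u_n(r_n^{+,0})/\mu_n\to\delta$, the leading-order formula \eqref{bo1} gives $u_n(r_n^{+,0})^2 = (2\delta+o(1))\log(1/r_n^{+,0}) < U_{2-\e}(r_n^{+,0})^2 \le U(r_n^{+,0})^2$. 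Third, since Lemma \ref{lem:crit2} yields $\log(1/r_{0,n})/\mu_n^2\to 1/2$, the choice $r_n^{-,0} := \tfrac12 e^{-\mu_n^2/(2-\e)}$ lies in $(0,r_{0,n})$ for large $n$ and satisfies $U_{2-\e}(r_n^{-,0})^2 = \mu_n^2 + (2-\e)\log 2 > \mu_n^2 \ge u_n(r_n^{-,0})^2$.

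By the IVT, $g_n$ has a root on each side of $r_{0,n}$. Take $r_{0,n}^-$ to be the largest root of $g_n$ in $(r_n^{-,0},r_{0,n})$: the monotonicity of $u_n$ together with $u_n(r_{0,n})/\mu_n\to1$ (Theorem \ref{thm1}) forces $u_n(r_{0,n}^-)/\mu_n \in [u_n(r_{0,n})/\mu_n,1] \to 1$. Take $r_{0,n}^+$ to be the smallest root of $g_n$ above $r_{0,n}$: by minimality $r_{0,n}^+ \le r_n^{+,0}(\e)$ for every $\e\in(0,1)$, so monotonicity gives $u_n(r_{0,n}^+)/\mu_n \ge u_n(r_n^{+,0}(\e))/\mu_n \to 1-3\e/4$; since $\e$ is arbitrary and $u_n(r_{0,n}^+)\le\mu_n$, we conclude $u_n(r_{0,n}^+)/\mu_n\to 1$. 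The asserted bound $Z_{(0,r_{0,n}^+]}[u_n-U]\ge 2$ is then immediate from the existence of the two distinct roots $r_{0,n}^\pm$.

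The main subtlety lies in the derivation of the $V_L$-asymptotics displayed above, which crucially depends on the explicit form $h(t)=c_0 t^m$ in (H3) and must be stated with controlled error in both regimes $\la_*>0$ and $\la_*=0$; this is essentially the content of Remark \ref{rmk:osc1}. Once this is in hand, the IVT step is routine. The only further conceptual point is that the identification $u_n(r_{0,n}^+)/\mu_n\to 1$ genuinely requires sending $\e\to 0$, which is licensed by the fact that the hypothesis supplies the lower bound $U\ge U_{2-\e}$ for \emph{every} $\e\in(0,2)$.
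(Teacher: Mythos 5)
Your proof is correct and follows essentially the same route as the paper's: use the refined expansion at $r_{0,n}$ (Lemma~\ref{lem:int23}, equivalently your $V_L$-comparison derived from \eqref{de0} and \eqref{de02}) to show $u_n(r_{0,n})>U(r_{0,n})$, use \eqref{bo1} to show $u_n<U$ at a test point to the right, note $u_n<U$ near the origin, and conclude by the intermediate value theorem. The only difference is cosmetic: you make explicit a left test point $r_n^{-,0}$ and the ``send $\e\to0$'' argument for $u_n(r_{0,n}^+)/\mu_n\to1$, whereas the paper compresses the first to the observation that $u_n(0)<\infty$ while $U(r)\to\infty$ as $r\to0^+$, and cites \eqref{bo1} for the second without spelling out the subsequence/$\e$-limiting step.
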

In this theorem, we observe the two intersection points caused by the up-and-down behavior around the top of the oscillation or equivalently, around the center of the concentration. We next study the case $p>2$.  
\begin{corollary}\label{thm:int3} Suppose (H3) with $p>2$. Let $\{(\la_n,\mu_n,u_n)\}$ be a sequence of solutions of  \eqref{q} with $\mu_n\to \infty$ and there exists a number $ \la_*\in[0,\infty)$ such that $\la_n\to \la_*$ as $n\to \infty$. Assume $U(r)$ is a continuous function defined for all small $r>0$. Furthermore, we suppose  (i) or (ii) below.
\begin{enumerate} \item[(i)] $\la_*\not=0$ and for any $\e\in(0,2)$ and $L\in \mathbb{R}$, there exists a constant $r_0\in(0,1)$ such that 
\[
U_{2-\e}(r)\le U(r)\le V_L(r)
\]
for all $r\in(0,r_0)$.
\item[(ii)] $\la_*=0$ and for any $\e>0$, there exist values $r_0\in(0,1)$ and $L\in \mathbb{R}$  such that
\[
U_{2-\e}(r)\le U(r)\le V_L(r)
\]
for all $r\in(0,r_0)$. 
\end{enumerate}
Then,  up to a subsequence, for any $k\in \mathbb{N}\cup\{0\}$, there exist sequences $(r_{k,n}^{\pm})\subset (0,1)$ such that $u_n(r_{k,n}^{\pm})/\mu_n\to \delta_k$ as $n\to \infty$, $r_{k,n}^-<r_{k,n}<r_{k,n}^+$,  and $u_n(r_{k,n}^\pm)=U(r_{k,n}^\pm)$ for all large $n \in\mathbb{N}$. In particular, we get 
\[
\lim_{n\to \infty}Z_{(0,r_{k,n}^+]}[u_n-U]\ge 2(k+1)
\]
for all large $n\in \mathbb{N}$ and further, for any sequence $(r_n)\subset (0,1)$ with $u_n(r_n)/\mu_n\to0$ as $n\to \infty$, it holds that
\[
\lim_{n\to \infty}Z_{(0,r_n)}[u_n-U]=\infty.
\]
\end{corollary}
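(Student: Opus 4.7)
The plan is to apply the bubble-by-bubble oscillation structure from Theorem~\ref{thm:osc} together with the sandwich $U_{2-\e}\le U\le V_L$ and the intermediate value theorem. After extracting a single subsequence along which the sequences $(r_{k,n})$, $(\rho_{k,n})$, $(\bar{\rho}_{k,n})$ of Theorem~\ref{thm30} and the low points $(r_{k,n}^*)\subset(\rho_{k,n},\bar{\rho}_{k+1,n})$ with $u_n(r_{k,n}^*)/\mu_n\to \delta_k^*$ (as constructed in the discussion following Theorem~\ref{thm:osc}) are all simultaneously available, I would compare $u_n$ with $U$ at these canonical points and locate intersections by continuity.

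At the center $r_{k,n}$ of the $(k{+}1)$-st bubble, Remark~\ref{rmk:osc1} under (H3) gives
\[
u_n(r_{k,n})^p-V_L(r_{k,n})^p=\log\frac{a_k^2}{2pc_0\la_n}-L+o(1),
\]
so in case (i) the hypothesis permits choosing $L<L_k^*:=\log(a_k^2/(2pc_0\la_*))$, while in case (ii) the right-hand side tends to $+\infty$ for any fixed $L$ since $\la_n\to 0$; either way $u_n(r_{k,n})>V_L(r_{k,n})\ge U(r_{k,n})$ for large $n$. At the low point $r_{k,n}^*$, \eqref{bo2} yields $u_n(r_{k,n}^*)^p=\beta_k^*\log(1/r_{k,n}^*)(1+o(1))$ with $\beta_k^*<2$; selecting $\e\in(0,2-\beta_k^*)$ then forces $u_n(r_{k,n}^*)<U_{2-\e}(r_{k,n}^*)\le U(r_{k,n}^*)$ for large $n$, and the same inequality at $r_{k-1,n}^*$ is used when $k\ge1$. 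For $k=0$ on the left I would instead use $r_n':=\exp(-\mu_n^p/(2-\e))$, which is much smaller than $\ga_{0,n}\sim e^{-\mu_n^p/2}$ by \eqref{sup3}: then $U_{2-\e}(r_n')=\mu_n>u_n(r_n')$ strictly, hence $u_n(r_n')<U(r_n')$.

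The intermediate value theorem applied to $u_n-U$ on $(r_{k-1,n}^*,r_{k,n})$ (respectively $(r_n',r_{0,n})$ when $k=0$) and on $(r_{k,n},r_{k,n}^*)$ produces intersections $r_{k,n}^\pm$ with $r_{k,n}^-<r_{k,n}<r_{k,n}^+$ and $u_n(r_{k,n}^\pm)=U(r_{k,n}^\pm)$. To identify the asymptotic level, monotonicity of $u_n$ pinches $u_n(r_{k,n}^\pm)/\mu_n$ between $\delta_k+o(1)$ and the value at the outer endpoint. Suppose for $k\ge 1$ that along a subsequence $u_n(r_{k,n}^-)/\mu_n\to\delta_0>\delta_k$: since $u_n/\mu_n\to\delta_k$ uniformly on the bubble region $[\bar{\rho}_{k,n},\rho_{k,n}]$, the point $r_{k,n}^-$ must lie in the transition region $[\rho_{k-1,n},\bar{\rho}_{k,n}]$, so \eqref{bo2} with index $k-1$ gives $u_n(r_{k,n}^-)^p\sim\beta_{k-1}(\delta_0)\log(1/r_{k,n}^-)$. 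Applying the lower bound $U\ge U_{2-\e}$ at $r_{k,n}^-$ for an arbitrary $\e\in(0,2)$, dividing by $\log(1/r_{k,n}^-)$ and letting $n\to\infty$ yields $\beta_{k-1}(\delta_0)\ge 2-\e$, so $\beta_{k-1}(\delta_0)\ge 2$; this contradicts $\beta_{k-1}(\de)<2$ strictly on $(\delta_k,\delta_{k-1})$ because $\delta_0\le\delta_{k-1}^*<\delta_{k-1}$. The symmetric argument at $r_{k,n}^+$ uses $\beta_k$ on $(\delta_{k+1},\delta_k)$, and the $k=0$ case on the left follows immediately by pinching since $u_n(r_n')/\mu_n\le 1$ and $u_n(r_{0,n})/\mu_n\to 1$.

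Finally, the $2(k+1)$ intersections $r_{j,n}^{\pm}$ for $j=0,\ldots,k$ are pairwise distinct for large $n$ because their scales $\log(1/r_{j,n}^\pm)/\mu_n^p\to\delta_j^p/2$ are all different by~\eqref{eq:rk2}, and they all lie in $(0,r_{k,n}^+]$, yielding $Z_{(0,r_{k,n}^+]}[u_n-U]\ge 2(k+1)$. For any sequence $(r_n)$ with $u_n(r_n)/\mu_n\to 0$, monotonicity together with $u_n(r_{K,n}^+)/\mu_n\to\delta_K>0$ forces $r_n>r_{K,n}^+$ eventually for every fixed $K$, so the intersection count on $(0,r_n)$ exceeds $2(K+1)$ for all $K$, giving the infinite-intersection conclusion. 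The main obstacle is the asymptotic-level identification: pinning $u_n(r_{k,n}^{\pm})/\mu_n$ to $\delta_k$ rather than to some value in $[\delta_k,\delta_{k-1}^*]$, which hinges on exploiting the lower bound $U\ge U_{2-\e}$ for \emph{every} $\e>0$ simultaneously (each on its own $\e$-dependent neighborhood of~$0$) together with the strict inequality $\beta_{k-1}(\de)<2$ on the open interval.
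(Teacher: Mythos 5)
Your proposal is correct and follows essentially the same route as the paper: compare $u_n$ with $U$ at the bubble centers $r_{k,n}$ (where Lemma~\ref{lem:int23} forces $u_n>V_L\ge U$) and at the trough points $r_{k,n}^*$ (where \eqref{bo2} forces $u_n<U_{2-\e}\le U$), then apply the intermediate value theorem and count. Two places where you go further than the paper's text: (1) for the level identification $u_n(r_{k,n}^\pm)/\mu_n\to\delta_k$, the paper simply cites \eqref{bo2}, while you spell out the actual argument — exploiting $U\ge U_{2-\e}$ for \emph{every} $\e>0$ together with the strict inequality $\beta_{k-1}(\delta)<2$ on the open interval — which is exactly what makes that step rigorous; (2) for the $k=0$ left boundary you introduce an auxiliary point $r_n'=\exp(-\mu_n^p/(2-\e))$ to make $u_n<U$ visible, whereas the paper relies on $u_n(0)<\infty$ versus $U(r)\to\infty$ as $r\to0^+$; both work. (Incidentally, the paper's final paragraph reads ``$r_{k,n}^+>r_n$,'' which is a typo for the inequality you correctly derive, $r_{k,n}^+<r_n$.)
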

In this corollary, we similarly find that the two intersection points appear around the center of $(k+1)$th bubble for all $k\in \mathbb{N}\cup\{0\}$. As a consequence, we finally show the divergence of the intersection numbers. For example, suppose $f(t)=e^{t^p}$ for all large $t>0$ with $p>2$. For any $\theta>1-1/p$, if we choose  a continuous function $U(r)$ so that 
\[
U(r)=\left\{2\log{\frac1r}-\theta \log{\left(2\log{\frac1{r}}\right)}+o\left(\log{\left(2\log{\frac1{r}}\right)}\right)\right\}^{\frac1p}
\]
as $r\to 0^+$. Then the previous theorem proves that   
\[
\lim_{n\to \infty}Z_{(0,1)}[u_n-U]=\infty.
\]

Finally, for the next application, we give a similar intersection result for any sequence $\{(\bar{r}_n,\bar{\mu}_n,\bar{u}_n)\}$ which solves  the next problem,  
\begin{equation}\label{q1}
\begin{cases}
-\bar{u}_n''-\frac1r\bar{u}_n'=f(\bar{u}_n),\ \bar{u}_n>0\text{ in }(0,\bar{r}_n),\\
\bar{u}_n(0)=\bar{\mu}_n,\ \bar{u}_n'(0)=0=\bar{u}_n(\bar{r}_n).
\end{cases}
\end{equation}
We get the following.
\begin{corollary}\label{thm:int4} Assume $p>2$ and (H3). Let  $\{(\bar{r}_n,\bar{\mu}_n,\bar{u}_n)\}$ be a sequence of solutions of  \eqref{q1} with $\bar{\mu}_n\to \infty$ as $n\to \infty$. Moreover, suppose that $\bar{U}(r)$ is a continuous function defined for all small $r>0$ and that for any values $\e\in (0,2)$ and $L\in \mathbb{R}$, there exists a constant $r_0\in(0,1)$ such that
\[
U_{2-\e}(r)\le \bar{U}(r)\le V_L(r)
\] 
for all  $r\in(0,r_0)$. Then  for any sequence $(r_n)\subset (0,\bar{r}_n]$ such that $\bar{\mu}_n^{-p}\log{(1/r_n)}\to0$ as $n\to \infty$,  we have that
\[
\lim_{n\to \infty}Z_{(0,r_n)}[\bar{u}_n-\bar{U}]=\infty
\]
up to a subsequence.
\end{corollary}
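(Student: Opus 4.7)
The plan is to rescale $\bar{u}_n$ into the form of \eqref{q} and then carry out a bubble-by-bubble sign analysis of $\bar{u}_n-\bar{U}$, following the same strategy as Corollaries \ref{thm:int2}--\ref{thm:int3}. Set $u_n(r):=\bar{u}_n(\bar{r}_n r)$ on $[0,1]$; a direct change of variable shows that $\{(\bar{r}_n^2,\bar{\mu}_n,u_n)\}$ solves \eqref{q}, and since $p>2$, \eqref{sup3} gives $|\log\bar{r}_n|/\bar{\mu}_n^p\to 0$. Applying Theorem \ref{thm30} to $(u_n)$ produces bubble centers $r_{k,n}^{(u)}$ for each $k\in \mathbb{N}$, and pulling back through $\bar{r}_{k,n}:=\bar{r}_n\,r_{k,n}^{(u)}$ yields an infinite sequence of bubbles for $\bar{u}_n$ with $\bar{u}_n(\bar{r}_{k,n})/\bar{\mu}_n\to \delta_k$ and, combining \eqref{eq:rk2} with $|\log\bar{r}_n|/\bar{\mu}_n^p\to 0$,
\[
\frac{\log(1/\bar{r}_{k,n})}{\bar{\mu}_n^p}\to \frac{\delta_k^p}{2}.
\]
Monotonicity and continuity of $\bar{u}_n$ also produce valley points $\bar{r}_{k,n}^*\in(\bar{r}_{k,n},\bar{r}_{k+1,n})$ with $\bar{u}_n(\bar{r}_{k,n}^*)/\bar{\mu}_n\to \delta_k^*$, where $\delta_k^*$ is the unique minimizer of $\beta_k$ on $[\delta_{k+1},\delta_k]$ and $\beta_k^*:=\beta_k(\delta_k^*)\in(0,2)$.

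Under (H3) one may apply Remark \ref{rmk:osc1} (with $\alpha=0$) to $(u_n)$ at $r=r_{k,n}^{(u)}$; the two $\log\bar{r}_n$ contributions coming from $2\log(1/r)$ and from $\log(1/(p\lambda_n c_0))$ cancel, so after passing back to $\bar{u}_n$ the formula at the $(k+1)$th bubble center reads
\[
\bar{u}_n(\bar{r}_{k,n})^p=2\log(1/\bar{r}_{k,n})-(p-1+m)\log\bar{u}_n(\bar{r}_{k,n})+\log\frac{a_k^2}{2pc_0}+o(1).
\]
Using $\bar{u}_n(\bar{r}_{k,n})^p=(1+o(1))\cdot 2\log(1/\bar{r}_{k,n})$ to rewrite $(p-1+m)\log\bar{u}_n(\bar{r}_{k,n})$ as $(1-(1-m)/p)\log(2\log(1/\bar{r}_{k,n}))+o(1)$, this becomes $\bar{u}_n(\bar{r}_{k,n})^p=V_{L_k^\circ}(\bar{r}_{k,n})^p+o(1)$ with $L_k^\circ:=\log(a_k^2/(2pc_0))$. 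At the valley, translating \eqref{bo2} to $\bar{u}_n$ yields $\bar{u}_n(\bar{r}_{k,n}^*)^p=(\beta_k^*+o(1))\log(1/\bar{r}_{k,n}^*)$.

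Fix any $k\in \mathbb{N}\cup\{0\}$. Take $L_k:=L_k^\circ-1$ and $\e_k:=(2-\beta_k^*)/2\in(0,2)$; the hypothesis supplies an $r_0=r_0(\e_k,L_k)$ so that $U_{2-\e_k}\le \bar{U}\le V_{L_k}$ on $(0,r_0)$. Since $\bar{r}_{k,n},\bar{r}_{k,n}^*\to 0$ as $n\to\infty$ for fixed $k$, the sharp asymptotics above force
\[
\bar{u}_n(\bar{r}_{k,n})>V_{L_k}(\bar{r}_{k,n})\ge\bar{U}(\bar{r}_{k,n}),\qquad \bar{u}_n(\bar{r}_{k,n}^*)<U_{2-\e_k}(\bar{r}_{k,n}^*)\le\bar{U}(\bar{r}_{k,n}^*)
\]
for all large $n$, while $\bar{u}_n(\bar{r})\to\bar{\mu}_n$ as $\bar{r}\to 0^+$ and $\bar{U}(\bar{r})\to\infty$. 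The intermediate value theorem applied successively on the $2(k+1)$ consecutive intervals $(0,\bar{r}_{0,n})$, $(\bar{r}_{0,n},\bar{r}_{0,n}^*)$, $(\bar{r}_{0,n}^*,\bar{r}_{1,n})$, $\ldots$, $(\bar{r}_{k,n},\bar{r}_{k,n}^*)$ then yields at least $2(k+1)$ sign changes of $\bar{u}_n-\bar{U}$, all located inside $(0,\bar{r}_{k,n}^*)$.

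To conclude, translating \eqref{bo2} also gives $\log(1/\bar{r}_{k,n}^*)/\bar{\mu}_n^p\to(\delta_k^*)^p/\beta_k^*>0$, whereas the hypothesis on $(r_n)$ provides $\log(1/r_n)/\bar{\mu}_n^p\to 0$; hence $\bar{r}_{k,n}^*<r_n$ for all large $n$, so $Z_{(0,r_n)}[\bar{u}_n-\bar{U}]\ge 2(k+1)$ eventually, and the arbitrariness of $k$ yields divergence. The main technical hurdle will be the precise transfer of Remark \ref{rmk:osc1}'s sharp formula through the rescaling---verifying that $\log\bar{r}_n$ cancels cleanly to leave the constant $\log(a_k^2/(2pc_0))$---together with exploiting the strength of the hypothesis by letting $L_k$ depend on $k$, which is what compensates for $a_k\to 0$ and allows every bubble, however small, to produce a pair of intersection points.
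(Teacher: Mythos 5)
Your proposal is correct and follows essentially the same route as the paper's own proof: rescale $\bar u_n$ by $\bar r_n$ to obtain a sequence solving \eqref{q} with $\lambda_n=\bar r_n^2$, apply Theorems \ref{thm1}, \ref{thm30}, and \eqref{bo2} to locate bubble-peak sequences $(\bar r_{k,n})$ and valley sequences $(\bar r^*_{k,n})$, translate the sharp peak asymptotics (via the analogue of Lemma \ref{lem:int23} / Remark \ref{rmk:osc1}) to the $\bar u_n$ variable where the $\log\bar r_n$ terms cancel against $\log(1/\lambda_n)$, compare against $U_{2-\e}\le\bar U\le V_L$ to force sign changes, and finally use \eqref{sup3}, \eqref{eq:rk2} to place these intersections inside $(0,r_n)$. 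The only noticeable variation is that you extract two intersections per bubble (one on either side of each peak, giving $\ge 2(k+1)$) whereas the paper contents itself with one intersection per peak-to-valley interval, giving $\ge k+1$; both suffice for divergence. One small care that your write-up leaves implicit but that is needed for the $2(k+1)$ count: for the fixed pair $(\e_k,L_k)$ you must have $\bar u_n(\bar r_{j,n})>V_{L_k}(\bar r_{j,n})$ and $\bar u_n(\bar r^*_{j,n})<U_{2-\e_k}(\bar r^*_{j,n})$ simultaneously for all $j=0,\dots,k$, which does hold because $(a_j)$ is decreasing and $(\beta_j^*)$ is increasing (Lemma \ref{lem:bf2}), but this monotonicity should be cited explicitly.
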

We apply this result to prove  the infinite oscillation of the bifurcation diagram of \eqref{p}.
\subsection{Oscillations of bifurcation diagrams}\label{sec:app}
Finally, we shall demonstrate how our oscillation estimates work for getting the oscillation of the bifurcation diagram  of \eqref{p}. To this end, we  consider the condition,  inspired by  Lemma 5.2 in \cite{GGP2} and  Proposition 2.1 in \cite{IRT}, 
\begin{enumerate}
\item[(H4)] $p>2$, $f\in C^2([0,\infty))$, and there exists a value $\tau_0>0$ such that  
\[
f(t)=\frac{4(p-1)}{p^2}t^{1-2p}e^{t^p}
\]
for all $t\ge \tau_0$. 
\end{enumerate}
Notice that   $(H4)$ implies  (H3). Moreover, we can obviously construct  specific examples of $f$ satisfying (H4).  This assumption  is reasonable for our aim since we can easily check the existence of a solution with a suitable singularity at the origin. See Lemma \ref{lem:sg} below. Moreover, under (H4), we confirm that for any $\mu>0$, there exists  a  unique pair $(\la(\mu),u(\mu,\cdot))$ of a number and a function in $C^{2,\alpha}([0,1])$  such that $(\la,u)=(\la(\mu),u(\mu,\cdot))$ solves \eqref{p} and $u(\mu,0)=\mu$. For the proof, see Theorem 3 in \cite{AtPe} or Lemma 2.1 in \cite{AKG} with   Theorem 2.1 in \cite{NT} and the standard regularity theory. Then, combining this fact with Theorem 2.1 in \cite{K}, one sees that the map $\mu \mapsto (\la(\mu),u(\mu,\cdot))$ draws a $C^1$ curve in $\mathbb{R}\times C^{2,\alpha}([0,1])$. Let us show the  oscillation behavior of this solutions curve as $\mu\to \infty$.  

For this purpose, for each $\mu>0$, we set $\bar{u}(\mu,r)=u(\mu,r/\sqrt{\la})$ for all $r\in[0,\sqrt{\la}]$. Then $\bar{u}=\bar{u}(\mu,\cdot)$ satisfies 
\begin{equation}\label{p1}
\begin{cases}
-\bar{u}''-\frac1r\bar{u}'=f(\bar{u})\text{ in }(0,\bar{r}),\\
\bar{u}(0)=\bar{\mu},\ \bar{u}'(0)=0=\bar{u}(\bar{r}),
\end{cases}
\end{equation}
with $\bar{r}=\sqrt{\la}$ and $\bar{\mu}=\mu$.  Furthermore, for any $\bar{R}>0$, we consider the next probelm, 
\begin{equation}\label{p**}
\begin{cases} -\bar{U}''-\frac1r \bar{U}'=f(\bar{U}),\ \bar{U}> 0\text{ in }(0,\bar{R}),\\
\lim_{r\to 0^+}\bar{U}(r)=\infty,\ \bar{U}(\bar{R})=0.
\end{cases}
\end{equation}
Under (H4), we will prove in Lemma \ref{lem:sg} that there exists a pair of a value $\bar{R}^*$ and a smooth function $\bar{U}^*$on $(0,1]$  such that $\bar{U}^*(r)=(2\log{(1/r)})^{1/p}$  for all small $r>0$ and $(\bar{R},\bar{U})=(\bar{R}^*,\bar{U}^*)$ satisfies \eqref{p**}. Note  also that if we put  $\la^*=(\bar{R}^*)^2$ and $U^*(r)=\bar{U}^*(\sqrt{\la^*}r)$ for all $r\in(0,1]$, $(\la,U)=(\la^*,U^*)$ solves the next problem,
\begin{equation}\label{p*}
\begin{cases} -{U}''-\frac1r {U}'=\la f({U}),\ U> 0\text{ in }(0,1),\\
\lim_{r\to 0^+}{U}(r)=\infty,\ {U}(1)=0.
\end{cases}
\end{equation}
 Corollary \ref{thm:int4} succeeds in  proving that the intersection number between any solution $\bar{u}$ of \eqref{p1} and $\bar{U}^*$ diverges to infinity as $\bar{\mu}\to \infty$.  Then we may apply the argument in \cite{Mi} and get the desired result as follows. 
\begin{theorem}\label{thm:app2} Assume  (H4). Let $\mu \mapsto (\la(\mu),u(\mu,\cdot))$ be the solutions $C^1$-curve of \eqref{p} in $\mathbb{R}\times C^{2,\alpha}([0,1])$ with $u(\mu,0)=\mu$  and $(\la^*,U^*)$ the pair of the number and function obtained as above. Then, $\la(\mu)$ oscillates around $\la^*$ infinitely many times as $\mu\to \infty$. In particular,  we get the following.
\begin{enumerate}
\item[(i)] $(\la,U)=(\la^*,U^*)$ solves \eqref{p*}.
\item[(ii)] There exist a sequence $(\mu_n)\subset (0,\infty)$ and a nonincreasing function $u^*$ on $(0,1]$ such that $\mu_n\to \infty$, $\la(\mu_n)\to \la^*$,  $u(\mu_n,\cdot)\to u^*$ in $C^2_{\text{loc}}((0,1])$ as $n\to \infty$, and $(\la^*,u^*)$ solves \eqref{q*}.  
\item[(iii)] For each $N\in \mathbb{N}$, there exists a value $\e\in(0,\la^*)$ such that \eqref{p} admits at least $N$ distinct solutions for all $\la^*-\e<\la<\la^*+\e$ 
 and infinitely many solutions for $\la=\la^*$.
\item[(iv)] If (H2) holds, we have that 
\[
\lim_{\mu\to \infty}Z_{(0,1)}[u(\mu,\cdot)-U^*]=\infty.
\]
\end{enumerate}   
\end{theorem}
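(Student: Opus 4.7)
\emph{Proof proposal.} The plan is to combine the intersection count of Corollary~\ref{thm:int4} with a Sturmian continuation argument along the $C^1$ branch $\mu\mapsto(\la(\mu),u(\mu,\cdot))$, as announced in the paragraphs preceding the theorem. Part (i) is immediate: by Lemma~\ref{lem:sg}, $(\bar R^*,\bar U^*)$ solves \eqref{p**}, and the rescaling $r\mapsto\sqrt{\la^*}\,r$ transports this to \eqref{p*} for $(\la^*,U^*)$. For the remaining parts, I pass to the scaled solutions $\bar u(\mu,\cdot)$ on $[0,\bar r(\mu)]$ with $\bar r(\mu)=\sqrt{\la(\mu)}$, which solve \eqref{p1} (equivalently \eqref{q1}). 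Under (H4) the exponent in (H3) is $m=1-2p$, which makes $1-(1-m)/p=-1$ in the definition of $V_L$; hence the singular profile $\bar U^*(r)=(2\log(1/r))^{1/p}$ (valid for $r$ near $0$) satisfies $U_{2-\e}(r)\le\bar U^*(r)\le V_L(r)$ in a punctured neighborhood of $0$ for every $\e\in(0,2)$ and every $L\in\R$: the lower bound is trivial, and the upper bound reduces to $V_L(r)^p-\bar U^*(r)^p=\log(2\log(1/r))+L\to\infty$. Thus Corollary~\ref{thm:int4} is applicable with $\bar U=\bar U^*$.

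Given any sequence $\mu_n\to\infty$, I set $r_n:=\min(\bar r(\mu_n),\bar R^*)$. Applying Theorem~\ref{thm3} to $\{(\la(\mu_n),\mu_n,u(\mu_n,\cdot))\}$, formula \eqref{sup3} yields $\mu_n^{-p}\log(1/\bar r(\mu_n))=\tfrac12\mu_n^{-p}\log(1/\la(\mu_n))\to 0$, and since $\bar R^*$ contributes only $O(\mu_n^{-p})$, also $\mu_n^{-p}\log(1/r_n)\to 0$. Corollary~\ref{thm:int4} then gives $Z_{(0,r_n)}[\bar u(\mu_n,\cdot)-\bar U^*]\to\infty$ along a subsequence. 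All such intersections lie in the common domain of comparison, so this is the same as
\[
N(\mu_n):=Z_{(0,\min(\bar r(\mu_n),\bar R^*))}[\bar u(\mu_n,\cdot)-\bar U^*]\to\infty,
\]
and a standard subsequence argument upgrades this to $N(\mu)\to\infty$ as $\mu\to\infty$. This already gives (iv) under (H2).

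To convert $N(\mu)\to\infty$ into the oscillation of $\la(\mu)$ around $\la^*$, I would observe that $w_\mu:=\bar u(\mu,\cdot)-\bar U^*$ satisfies the linear ODE
\[
-w_\mu''-r^{-1}w_\mu'=c_\mu(r)\,w_\mu,\qquad c_\mu(r):=\int_0^1 f'\bigl(\bar U^*+s(\bar u(\mu,\cdot)-\bar U^*)\bigr)\,ds,
\]
so its zeros are simple and move continuously with $\mu$. Since $\bar U^*\to\infty$ at $0$, $w_\mu$ is uniformly strictly negative near $r=0$, so no zero can escape the counting interval from the left. At the right endpoint $\min(\bar r(\mu),\bar R^*)$ one has $w_\mu=-\bar U^*(\bar r(\mu))<0$ when $\la(\mu)<\la^*$ and $w_\mu=\bar u(\mu,\bar R^*)>0$ when $\la(\mu)>\la^*$, so no zero can leave from the right either as long as $\la(\mu)\ne\la^*$. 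Consequently $N(\mu)$ is constant on each connected component of $\{\mu:\la(\mu)\ne\la^*\}$; since $N(\mu)\to\infty$, there must be infinitely many such components, producing a sequence $\mu_n\to\infty$ with $\la(\mu_n)=\la^*$. Feeding this sequence into Theorem~\ref{thm:gl} and excluding its alternatives (i) and (iii) via $\la^*\in(0,\infty)$ produces (ii); claim (iii) then follows by continuity of the branch combined with $N(\mu)\to\infty$ as in \cite{Mi} — choosing $N$ consecutive positive bumps and $N$ consecutive negative bumps of $\la(\cdot)-\la^*$ yields a uniform $\e>0$ such that each bump contributes two preimages for every $\la\in(\la^*-\e,\la^*+\e)$.

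The main obstacle is the Sturmian step: one must rigorously exclude both (a) the spontaneous appearance or annihilation of interior zero pairs within a single component, and (b) zeros drifting out through the moving right endpoint without $\la$ crossing $\la^*$. Both are handled by the combination of simplicity of zeros for the linear equation with the endpoint sign analysis of $w_\mu$ above; with this in hand, every remaining assertion reduces to material already developed in the paper.
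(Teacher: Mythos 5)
Your strategy is the same as the paper's: part (i) is immediate, the computation under (H4) that $1-(1-m)/p=-1$ (hence $\bar U^*\le V_L$ near $0$ for every $L$) is correct, the reduction via \eqref{sup3} to Corollary~\ref{thm:int4} is sound, and this settles (iv). The linearization $-w_\mu''-r^{-1}w_\mu'=c_\mu(r)w_\mu$ and the simplicity of interior zeros, together with the endpoint sign computation (negative at $r\to0^+$; negative at $\bar r(\mu)$ when $\la(\mu)<\la^*$; positive at $\bar R^*$ when $\la(\mu)>\la^*$), do indeed show that $N(\mu)$ is locally constant on $\{\la(\mu)\ne\la^*\}$, and from $N(\mu)\to\infty$ you extract $\mu_n\to\infty$ with $\la(\mu_n)=\la^*$; combined with Theorem~\ref{thm:gl}, this gives (ii). So far everything is correct and parallels Lemma~\ref{lem:app1} through Claims~1 and~2.

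The gap is in the passage to the actual oscillation claim (and hence to (iii)). Knowing that $\la(\mu)=\la^*$ for infinitely many $\mu$ does \emph{not} yield, as you tacitly assume when invoking ``$N$ consecutive positive bumps and $N$ consecutive negative bumps of $\la(\cdot)-\la^*$,'' that $\la-\la^*$ changes sign infinitely often. A priori, $\la(\mu)$ could stay $\le\la^*$ for all large $\mu$ and merely graze $\la^*$; then (iii) would fail on the $\la>\la^*$ side, and the theorem's oscillation conclusion (in the sense of the Remark that follows it) would not hold. Your Sturmian step only controls the count on each component of $\{\la\ne\la^*\}$; it says nothing about how $N(\mu)$ changes \emph{across} a touch of $\la^*$, and a one-sided family of grazes could conceivably push $N(\mu)\to\infty$. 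What is actually needed is precisely the paper's Claims~3 and~4 (the Cases~A/B bookkeeping): one shows that when $r(\mu)\le r^*$ throughout, each touch of $r^*$ contributes a net change of at most one zero to the count, because a zero that is created at $r^*$ with $v_r(r^*)<0$ must later disappear at $r^*$ again, while one that enters with $v_r(r^*)>0$ never raised the count in the first place; hence $N(\mu)\le N(\mu_0)+1$, contradicting $N(\mu)\to\infty$. A symmetric argument handles $r(\mu)\ge r^*$. Only after both one-sided scenarios are excluded does one obtain sequences with $\la(\mu_n')<\la^*<\la(\mu_n)$, which is what (iii) and the oscillation statement require. So your proposal needs to be supplemented by this more delicate analysis of how zeros enter and exit through $r=r^*$ when $\la(\mu)$ does not change sign.
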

\begin{remark} In this theorem, we mean by the oscillation of $\la(\mu)$  that  there exist sequences $(\mu_n),(\mu_n')\subset (0,\infty)$ such that $\mu_n,\mu_n'\to\infty$ as $n\to \infty$ and $\la(\mu_n')<\la^*<\la(\mu_n)$ for all $n\in \mathbb{N}$. 
\end{remark}
As stated in the present theorem, our oscillation estimates successfully allow us to prove the infinite oscillation of the bifurcation diagram.  Surely, it will enable  us to find more nonlinearities $f$ which yield the infinite oscillation of the bifurcation diagram of \eqref{p}  once the existence and the asymptotic behavior of the solutions of \eqref{p**} are confirmed. Fortunately, this is done for the general  nonlinearity by \cite{FIRT} very recently.  Thanks to this, we can confirm that the same oscillation results hold true for the standard  examples of $f$. See Remark \ref{rmk:B} below.  Finally, we remark that, in (ii) of the previous theorem, we do not prove  that  $u^*=U^*$ or $(\la(\mu),u(\mu,\cdot))\to (\la^*,U^*)$ as $\mu\to \infty$ entirely  since the proofs seem to  require more arguments and to be  beyond the main aim of this paper. Hence, we leave them open here as an interesting question in the future. Let us end this subsection by remarking on a fruitful consequence of the interaction between the existence result in \cite{FIRT} and our oscillation estimates. 
\begin{remark}\label{rmk:B}
As noted above, Fujishima-Ioku-Ruf-Terraneo \cite{FIRT} prove the existence of solutions of \eqref{p**} with precise asymptotic formulas under some general assumptions on $f$. See Theorems 2.1 and 2.2. Moreover, in Examples 4.1 and 4.2, they pick up two typical examples $f(t)=t^me^{t^p}$ and $f(t)=e^{t^p+t^q}$ for large $t>0$, where $m\in\mathbb{R}$, $p>1$, and $0<2q<p$. Then, if $p>2$, combining their results with our oscillation estimates, in each case,  we can check that  the intersection numbers between any blow-up solutions of \eqref{p1} and their singular solution diverge to infinity. Consequently, we get the same assertions with those in Theorem \ref{thm:app2}, that is, we observe the infinite oscillation of the bifurcation diagram for each typical nonlinearity. We will explain more details about the intersection property used here in Remark \ref{rmk:D} below. 
\end{remark}
In the following, we prove the theorems and corollaries above.
\subsection{Proof of Theorem \ref{thm:osc}}
We first give the next two lemmas for the case $p>2$. Let $\{(\la_n,\mu_n,u_n)\}$ be a sequence of solutions of \eqref{q} as usual. The next one  proves \eqref{de1}.
\begin{lemma}\label{lem:osc1} Assume $p>2$, (H1), and $k\in \mathbb{N}$. Let $(r_n)\subset [\bar{\rho}_{k,n},\rho_{k,n}]$ and $(R_n)\subset (0,\infty)$ be sequences of values such that $r_n=R_n \ga_{k,n}$ for all $n\in \mathbb{N}$. Then we get
\[
p\la_nr_n^2 u_n(r_n)^{p-1}f(u_n(r_n)) =\frac{2a_k^2 b_k R_n^{a_k}}{(1+b_k R_n^{a_k})^2}(1+o(1))
\]
as $n\to \infty$.
\end{lemma}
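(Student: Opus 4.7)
The plan is to follow the same template as the proof of Lemma \ref{lem:thm1}, but now with the scaling $\gamma_{k,n}$ centered at the $k$th bubble instead of at the origin. Using the defining relation $\gamma_{k,n}^2 p\lambda_n u_n(r_{k,n})^{p-1} f(u_n(r_{k,n})) = 1$ and $r_n = R_n \gamma_{k,n}$, I first rewrite
\[
p\lambda_n r_n^2 u_n(r_n)^{p-1} f(u_n(r_n)) = R_n^2 \left(\frac{u_n(r_n)}{u_n(r_{k,n})}\right)^{p-1}\frac{h(u_n(r_n))}{h(u_n(r_{k,n}))}\exp\bigl[u_n(r_n)^p - u_n(r_{k,n})^p\bigr],
\]
and substitute $u_n(r_n) = u_n(r_{k,n}) + z_{k,n}(R_n)/(pu_n(r_{k,n})^{p-1})$, so that $u_n(r_n)^p - u_n(r_{k,n})^p = u_n(r_{k,n})^p\{(1 + z_{k,n}(R_n)/(pu_n(r_{k,n})^p))^p - 1\}$.

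Next, I would show that all ``non-exponential'' factors converge to $1$ and that the exponent converges to $z_k(R_n)$ uniformly in $R_n \in [\bar{\rho}_{k,n}/\gamma_{k,n}, \rho_{k,n}/\gamma_{k,n}]$. The key ingredients, all already in hand from Theorem \ref{thm30} (or Lemma \ref{lem:e5}): uniform $C^2$ convergence $z_{k,n}\to z_k$ on the interval; uniform convergence $h(u_n(\gamma_{k,n}\cdot))/h(u_n(r_{k,n}))\to 1$; and the boundary controls $\mu_n^{-p/2}\log(\rho_{k,n}/r_{k,n})\to 0$ and $\mu_n^{-p/2}\log(\bar{\rho}_{k,n}/r_{k,n})\to 0$. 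Since $r_{k,n}/\gamma_{k,n}\to a_k/\sqrt{2}$ is bounded, the boundary controls yield $|\log R_n| = o(\mu_n^{p/2})$, hence $|z_{k,n}(R_n)| \le |z_k(R_n)| + o(1) = O(|\log R_n|) = o(\mu_n^{p/2})$ uniformly. Therefore the relative perturbation $\xi := z_{k,n}(R_n)/(pu_n(r_{k,n})^p)$ satisfies $|\xi| = o(\mu_n^{-p/2})$ uniformly, so $(1+\xi)^{p-1} = 1 + o(1)$ directly.

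For the exponential factor, a Taylor expansion gives
\[
u_n(r_{k,n})^p\{(1+\xi)^p - 1\} = z_{k,n}(R_n) + O\!\left(\frac{z_{k,n}(R_n)^2}{\mu_n^p}\right) = z_k(R_n) + o(1),
\]
where the error bound uses $z_{k,n}(R_n)^2/\mu_n^p = o(\mu_n^p)/\mu_n^p = o(1)$ from the estimate above, and the final equality uses uniform convergence $z_{k,n}\to z_k$. Combining everything yields
\[
p\lambda_n r_n^2 u_n(r_n)^{p-1}f(u_n(r_n)) = R_n^2 e^{z_k(R_n)}(1+o(1)) = \frac{2a_k^2 b_k R_n^{a_k}}{(1+b_k R_n^{a_k})^2}(1+o(1)),
\]
using the explicit formula for $z_k$.

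The main thing to watch is the uniformity of the Taylor expansion as $R_n$ approaches either endpoint of $[\bar{\rho}_{k,n}/\gamma_{k,n},\rho_{k,n}/\gamma_{k,n}]$, where $|z_{k,n}(R_n)|$ can be as large as $(2+a_k)|\log R_n|$. The argument works only because the ``energy budget'' built into (A$_k$) forces $|\log R_n| = o(\mu_n^{p/2})$, which is exactly the square-root threshold needed to make both $(1+\xi)^{p-1} - 1$ and the quadratic remainder $z_{k,n}^2/\mu_n^p$ vanish in the limit. This scaling matches the one used in the proofs of Lemmas \ref{lem:thm1} and \ref{lem:e5}, so no genuinely new ideas are required.
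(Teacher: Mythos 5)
Your proof is correct and follows essentially the same route as the paper: the paper's proof is the one-liner ``similarly to Lemma \ref{lem:thm1}, using the results in Lemma \ref{lem:e5}, we compute \dots,'' and you have simply made explicit the Taylor expansion of $u_n(r_n)^p-u_n(r_{k,n})^p$ and correctly pinned down that the boundary controls $\mu_n^{-p/2}\log(\bar\rho_{k,n}/r_{k,n}),\ \mu_n^{-p/2}\log(\rho_{k,n}/r_{k,n})\to 0$ from Lemma \ref{lem:e5} are exactly what makes both $(1+\xi)^{p-1}-1$ and the quadratic remainder $O(z_{k,n}^2/\mu_n^p)$ vanish uniformly.
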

\begin{proof} Put $u_n(r_{k,n})=\mu_{k,n}$ for all $n\in \mathbb{N}$. Similarly to the proof of Lemma \ref{lem:thm1}, using the results in Lemma \ref{lem:e5}, we compute
\[
\begin{split}
p \la_nr_n^2 u_n(r_n)^{p-1}f(u_n(r_n))&=\left(\frac{u_n(r_n)}{\mu_{k,n}}\right)^{p-1}R_n^2\frac{f(u_n(r_n))}{f(\mu_{k,n})}
\\
&=(1+o(1))\frac{2a_k^2 b_k R_n^{a_k}}{(1+b_k R_n^{a_k})^2}.
\end{split}
\]
This proves the lemma. 
\end{proof}
The next one shows \eqref{bo2}.
\begin{lemma}\label{lem:osc2} Assume $p>2$, (H1), and $k\in \mathbb{N}\cup\{0\}$. Then for any sequence $(r_n)\subset [\rho_{k,n},\bar{\rho}_{k+1,n}]$ and value $\delta\in [\delta_{k+1},\delta_k]$ such that $u_n(r_n)/\mu_n\to \delta$ as $n\to \infty$, we obtain
\[
u_n(r_n)=\left\{(\beta_k(\delta)+o(1))\log{\frac1{r_n}}\right\}^{\frac1p}
\]
as $n\to \infty$. 
\end{lemma}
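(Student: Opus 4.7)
The plan is to combine identity \eqref{id2} anchored at $s=\rho_{k,n}$ with the location information for $\rho_{k,n}$ established in Theorems \ref{thm30} and \ref{thm3}, and to extract the constant $\beta_k(\delta)$ by elementary algebra.

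First I would apply \eqref{id2} with $s=\rho_{k,n}$ and $t=r_n$ and multiply by $p\mu_n^{p-1}$ to obtain
\[
p\mu_n^{p-1}\bigl(u_n(\rho_{k,n})-u_n(r_n)\bigr)=E_n(0,\rho_{k,n})\log{\frac{r_n}{\rho_{k,n}}}+p\mu_n^{p-1}\int_{\rho_{k,n}}^{r_n}\la_n f(u_n)r\log{\frac{r_n}{r}}dr.
\]
Theorem \ref{thm30} gives $u_n(\rho_{k,n})/\mu_n\to\delta_k$, and by hypothesis $u_n(r_n)/\mu_n\to\delta$, so the left-hand side equals $p\mu_n^p(\delta_k-\delta+o(1))$; Theorem \ref{thm30} combined with Lemma \ref{lem:bg} gives $E_n(0,\rho_{k,n})\to\sum_{i=0}^k E_i=(2+a_k)/\delta_k^{p-1}$; and the remainder integral is bounded by $E_n(\rho_{k,n},r_n)\log{(r_n/\rho_{k,n})}\le E_n(\rho_{k,n},\bar{\rho}_{k+1,n})\log{(r_n/\rho_{k,n})}$, which is $o(\log{(r_n/\rho_{k,n})})$ by \eqref{eq:en1} applied at index $k+1$. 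Isolating the log factor then yields
\[
\frac{\log{(r_n/\rho_{k,n})}}{\mu_n^p}\to\frac{p\delta_k^{p-1}(\delta_k-\delta)}{2+a_k},
\]
and the boundary case $\delta=\delta_k$ is automatically subsumed, since there both sides vanish.

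Next I would use \eqref{eq:rk2} of Theorem \ref{thm3} to write $\log{(1/r_{k,n})}/\mu_n^p\to\delta_k^p/2$, together with the property $\mu_n^{-p/2}\log{(\rho_{k,n}/r_{k,n})}\to0$ from Theorem \ref{thm30}, to deduce $\log{(1/\rho_{k,n})}/\mu_n^p\to\delta_k^p/2$. Subtracting the previous display,
\[
\frac{\log{(1/r_n)}}{\mu_n^p}\to\frac{\delta_k^p}{2}-\frac{p\delta_k^{p-1}(\delta_k-\delta)}{2+a_k},
\]
and a short algebraic manipulation identifies the right-hand side with $\delta^p/\beta_k(\delta)$. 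Since $u_n(r_n)^p=\delta^p\mu_n^p(1+o(1))$, dividing yields $u_n(r_n)^p/\log{(1/r_n)}\to\beta_k(\delta)$, which is the claim.

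The main point requiring care is that the denominator $2+a_k-2p(1-\delta/\delta_k)$ of $\beta_k(\delta)$ does not vanish on $[\delta_{k+1},\delta_k]$, so that $\delta^p/\beta_k(\delta)$ is well-defined and the final division is legitimate. This denominator is affine in $\delta$, equals $2+a_k>0$ at $\delta=\delta_k$, and by the defining recurrence \eqref{eq:del1} equals $(2+a_k)(\delta_{k+1}/\delta_k)^p>0$ at $\delta=\delta_{k+1}$; hence it is strictly positive throughout the interval, and the limit formula is valid uniformly for all $\delta\in[\delta_{k+1},\delta_k]$.
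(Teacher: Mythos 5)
Your proof is correct and follows essentially the same route as the paper's: apply \eqref{id2} on $[\rho_{k,n},r_n]$, use the energy convergence $E_n(0,\rho_{k,n})\to(2+a_k)/\delta_k^{p-1}$ from Theorem \ref{thm30} and Lemma \ref{lem:bg}, the negligibility of the remainder integral from \eqref{eq:en1}, and the location estimate $\log(1/\rho_{k,n})/\mu_n^p\to\delta_k^p/2$ deduced from \eqref{eq:rk2}. The only cosmetic difference is that you first pass to the limit $\log(1/r_n)/\mu_n^p\to\delta^p/\beta_k(\delta)$ and then divide, whereas the paper assembles a single asymptotic equation in $u_n(r_n)^p$ and $\log(1/r_n)$ and solves for $u_n(r_n)$; your closing check that the affine denominator of $\beta_k(\delta)$ stays strictly positive on $[\delta_{k+1},\delta_k]$ is a sound verification that the paper leaves implicit.
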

\begin{proof} From \eqref{id2}, we get 
\[
 u_n(\rho_{k,n})-u_n(r_n)\ge\left(\log{\frac{r_n}{\rho_{k,n}}}\right)\int_{0}^{\rho_{k,n}} \la_nf(u_n(r))rdr,
\]
and also
\[
 u_n(\rho_{k,n})-u_n(r_n)
\le \left(\log{\frac{r_n}{\rho_{k,n}}}\right)\int_{0}^{\bar{\rho}_{k+1,n}} \la_nf(u_n(r))rdr.
\]
Hence using the first assertion in \eqref{eq:en1}, we have
\[
p\mu_n^{p-1}\left( u_n(\rho_{k,n})-u_n(r_n)\right)=\left(\log{\frac{1}{\rho_{k,n}}}-\log{\frac{1}{r_n}}\right)\left(E_n(0,\rho_{k,n})+o(1)\right).
\]
Here, recalling our choice of $(\rho_{k,n})$ in Lemmas \ref{lem21} and \ref{lem:e5} and noting \eqref{eq:rk2}, \eqref{eq:en1}, and Lemma \ref{lem:bg}, we get
\[
\frac{\log{\frac{1}{\rho_{k,n}}}}{\mu_n^p}=\frac{\log{\frac{1}{r_{k,n}}}+\log{\frac{r_{k,n}}{\rho_{k,n}}}}{\mu_n^p}=\frac{\delta_k^p}{2}+o(1)\text{\ \ and \ }E_n(0,\rho_{k,n})= \frac{2+a_k}{\delta_k^{p-1}}+o(1).
\]
Using these facts and our assumption $\mu_n=(\delta^{-1}+o(1))u_n(r_n)$ for the previous formula, we calculate 
\[
\begin{split}
p\left(\frac{\delta_k }{\delta}\right)^p&\left(1-\frac{\delta}{\delta_k}+o(1)\right)u_n(r_n)^p\\
&=\left\{\frac{1+o(1)}{2}\left(\frac{\delta_k}{\delta}\right)^pu_n(r_n)^p-\log{\frac{1}{r_n}}\right\}(2+a_k+o(1)).
\end{split}
\]
Solving this formula with respect to $u_n(r_n)$, we get the conclusion. This finishes the proof.  
\end{proof}
Let us show Theorem \ref{thm:osc}
\begin{proof}[Proof of Theorem \ref{thm:osc}]
Assume $p>0$.  First, select any sequence $(r_n)\subset(0,\rho_{0,n}]$. Then Lemma \ref{lem:thm1} proves \eqref{de0}. Suppose, in addition, $\mu_n^{-p}\log{(r_n/\ga_{0,n})}\to0$ as $n\to \infty$. Then since $\log{f(u_n(r_n))}=u_n(r_n)^p(1+o(1))$  from \eqref{ha} and $u_n(r_n)/\mu_n=1+o(1)$, we get \eqref{de01} by \eqref{de0} and \eqref{sub2} and obtain \eqref{de02} by  \eqref{de0}, \eqref{cri2},  and  \eqref{sup3}. Next let us check (i), (ii), and (iii). Assume $0<p<2$ and take any sequences $(r_n)\subset[\rho_{0,n},1)$ and $(\delta_n)\subset (0,1)$ as in (i). 
It follows from \eqref{id2} that 
\[
\left(\log{\frac{1}{r_n}}\right)\int_{0}^{r_n} \la_nf(u_n(r))rdr\le u_n(r_n)\le \left(\log{\frac{1}{r_n}}\right)\int_{0}^{1} \la_nf(u_n(r))rdr.
\]
Then using \eqref{eq:sq} and \eqref{sub1}, we get   
\[
p\mu_n^{p-1}u_n(r_n)=(4+o(1))\log{\frac1{r_n}}.
\]
Hence, substituting $\mu_n=u_n(r_n)\delta_n^{-1}$, we confirm \eqref{bo0}.  
 Next assume $p=2$. Let $(r_n)\subset [\rho_{0,n},1)$ and $\delta\in(0,1]$ be a sequence and a value such that $u_n(r_n)/\mu_n\to \delta$ as $n\to \infty$. Note that $E(0,r_n)=4+o(1)$ by \eqref{eq:sq} and \eqref{cri1}. Then using  \eqref{id2}, similarly to the argument in the proof of Lemma \ref{lem:osc2}, we obtain
\[
2\mu_n(u_n(\rho_{0,n})-u_n(r_n))=\left(\log{\frac{1}{\rho_{0,n}}}-\log{\frac{1}{r_n}}\right)(4+o(1)).
\]
Here, recalling Lemmas \ref{lem:crit2} and \ref{lem21}, we have
\[
\log{\frac1{\rho_{0,n}}}=\log{\frac1{\ga_{0,n}}}+\log{\frac{\ga_{0,n}}{\rho_{0,n}}}=\left(\frac12+o(1)\right)\left(\frac{u_n(r_n)}{\delta}\right)^2.
\]
Substituting this  into the previous formula, we obtain 
\[
2\left(\frac{u_n(r_n)}{\delta}\right)^2(1-\delta+o(1))=\left\{\left(\frac12+o(1)\right)\left(\frac{u_n(r_n)}{\delta}\right)^2-\log{\frac{1}{r_n}}\right\}(4+o(1)).
\]
Solving this with respect to $u_n(r_n)$, we prove \eqref{bo1}. Finally, suppose $p>2$. Then, \eqref{bo2} is proved by Lemma \ref{lem:osc2}. \eqref{de1} follows from Lemma \ref{lem:osc1}. Choose any number $k\in \mathbb{N}$ and sequence $(r_n)\subset [\bar{\rho}_{k,n},\rho_{k,n}]$. Then Lemma \ref{lem:e5} shows that $\mu_n^{-p}\log{(r_n/\ga_{k,n})}=o(1)$  and $u_n(r_n)/\mu_n=\delta_k+o(1)$. Consequently, we get \eqref{de11} by \eqref{de1}, \eqref{ha}, and \eqref{sup3}.  This completes the proof.
\end{proof}
Next we prove the corollaries. In the following proofs, we recall the constants $\delta_k^*$ and $\beta_k^*$ for all $k\in \mathbb{N}\cup\{0\}$ defined below Theorem \ref{thm:osc}.
 First we show Corollary \ref{thm:int1}. 
\begin{proof}[Proof of Corollary \ref{thm:int1}] First assume $p>0$. Set $\beta\in(0,4/p)$ if $0<p\le2$ and $\beta\in(0,2)$ if $p>2$. Recall that  $r_{0,n}/\ga_{0,n}\to 2\sqrt{2}$ as $n\to \infty$ by Theorem \ref{thm1}. Then \eqref{de01} and \eqref{de02} ensure that  if $0<p\le2$,
\begin{equation}\label{de011}
{u}_n(r_{0,n})^p=\left(\frac4p+o(1)\right)\log{\frac{1}{r_{0,n}}}>U_\beta(r_{0,n})^p
\end{equation}
and if $p>2$,
\begin{equation}\label{de021}
{u}_n(r_{0,n})^p=\left(2+o(1)\right)\log{\frac{1}{r_{0,n}}}>U_\beta(r_{0,n})^p
\end{equation}
for all large $n\in \mathbb{N}$. Then since $u_n(0)<\infty$ for all $n\in \mathbb{N}$ and $U_\beta(r)\to \infty$ as $r\to 0^+$, there exists a sequence $(r_n)\subset (0,r_{0,n})$ such that $u_n(r_n)=U_\beta(r_n)$ for all large $n\in\mathbb{N}$. This proves the first assertion. 

 Next we consider the case (i). Assume $0<p<1$. Set $\beta >4/p$ and choose a constant $\delta\in(0,1)$ so that $4\delta^{p-1}/p=\beta$. Then, for any value $\delta'\in(0,\delta)$ and sequence $(r_n')\subset (r_{0,n},1)$ such that $u_n(r_n')/\mu_n\to \delta'$ as $n\to \infty$, we get from \eqref{de01} and \eqref{bo0}  that
\[
u_n(\rho_{0,n})<U_\beta (\rho_{0,n})\ \text{ and } \ 
u_n(r_n')>U_\beta (r_n')
\]
for all large $n \in\mathbb{N}$.  Hence there exists a sequence $(r_n)\subset (\rho_{0,n},r_n')$ such that  $u_n(r_n)=U_\beta(r_n)$ for all large $n\in\mathbb{N}$. \eqref{bo0} yields $u_n(r_n)/\mu_n\to \delta$ as $n\to \infty$. This proves the case $\beta>4/p$. Moreover, note that this sequence satisfies  $U_{4/p}(r_n)<u_n(r_n)$ for all large $n\in \mathbb{N}$. Then recalling the first conclusion of this corollary, we  prove the case $\beta=4/p$. This finishes (i). 
 
Next let us show (ii). Assume $1<p\le2$. For any $\beta\in(0,4/p)$, take a constant $\delta\in(0,1)$ so that $4\delta^{p-1}/p=\beta$. Choose any value $\delta'\in(0,\delta)$ and  sequence $(r_n')\subset (\rho_{0,n},1)$ such that $u_n(r_n')/\mu_n\to \delta'$ as $n\to \infty$. Then we have from \eqref{de01}, \eqref{de02}, \eqref{bo0}, and \eqref{bo1} that
\[
u_n(\rho_{0,n})>U_\beta(\rho_{0,n})\text{ and }u_n(r_n')<U_\beta(r_n')
\] 
for all large $n\in \mathbb{N}$. Hence we find  a sequence $(r_n)\subset (\rho_{0,n},r_n')$ such that $u_n(r_n)=U_\beta(r_n)$ for all large $n\in \mathbb{N}$. \eqref{bo0} and \eqref{bo1} confirm $u_n(r_n)/\mu_n\to \delta$. This finishes (ii). 

Finally, assume $p>2$ and $k\in \mathbb{N}\cup\{0\}$. For any $\beta\in(\beta_k^*,2)$, take values $\delta_k>\delta>\delta_k^*>\delta'>\delta_{k+1}$ so that $\beta_k(\delta)=\beta_k(\delta')=\beta$. Choose any sequence $(r_{k,n}^*)\subset (\rho_{k,n},\bar{\rho}_{k+1,n})$ such that $u_n(r_{k,n}^*)/\mu_n\to \beta_k^*$ as $n\to \infty$. Then  from \eqref{de02}, \eqref{de11} and \eqref{bo2}, we get 
\[
u_n(\rho_{k,n})>U_\beta(\rho_{k,n}),\ u_n(r_{k,n}^*)<U_\beta(r_{k,n}^*),\text{ and }u_n(\bar{\rho}_{k+1,n})>U_\beta(\bar{\rho}_{k+1,n})
\]
for all large $n\in\mathbb{N}$. Therefore, there exist sequences $(r_n)\subset (\rho_{k,n},r_{k,n}^*)$ and $(r_n')\subset (r_{k,n}^*,\bar{\rho}_{k+1,n})$ such that $u_n(r_n)=U_\beta(r_n)$ and $u_n(r_n')=U_\beta(r_n')$ for all large $n\in\mathbb{N}$. \eqref{bo2} implies  that $u_n(r_n)/\mu_n\to \delta$ and  $u_n(r_n')/\mu_n\to \delta'$ as $n\to \infty$. This completes (iii). We finish the proof. 
\end{proof}
To prove Corollaries \ref{thm:int2} and \ref{thm:int3}, we give the next lemma. 
\begin{lemma}\label{lem:int23} Assume $p\ge2$, (H3), and $k=0$ if $p=2$ and $k\in \mathbb{N}\cup\{0\}$ if $p>2$.  Then we get
\[
\begin{split}
u_n(r_{k,n})=\left\{2\log{\frac1{r_{k,n}}}-\left(1-\frac{1-m}{p}\right) \log{\left(2\log{\frac1{r_{k,n}}}\right)}+\log{\frac{a_k^2}{2c_0p \la_n}}+o(1)\right\}^{\frac1p}
\end{split}
\]
as $n\to \infty$.
\begin{proof} The conclusion readily follows from \eqref{eq:r0} with \eqref{de02} if $k=0$ and \eqref{eq:rk} with \eqref{de11} if $k\in \mathbb{N}$.
\end{proof}
\end{lemma}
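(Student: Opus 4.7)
The plan is to start from the key asymptotic identities that pin down $\phi_n(r_{k,n})$, namely $\phi_n(r_{0,n})=p\lambda_n r_{0,n}^2 u_n(r_{0,n})^{p-1}f(u_n(r_{0,n}))\to 2=a_0^2/2$ from \eqref{eq:r0} and the analogous relation \eqref{eq:rk} for $k\ge1$. Since $u_n(r_{k,n})/\mu_n\to\delta_k>0$ and $\mu_n\to\infty$, eventually $u_n(r_{k,n})\ge\tau_0$, so (H3) applies and gives the explicit form $f(u_n(r_{k,n}))=c_0 u_n(r_{k,n})^m e^{u_n(r_{k,n})^p}$. Taking logarithms in the identity $p\lambda_n r_{k,n}^2 u_n(r_{k,n})^{p-1}f(u_n(r_{k,n}))=a_k^2/2+o(1)$ and isolating $u_n(r_{k,n})^p$, I would obtain
\[
u_n(r_{k,n})^p=2\log\frac{1}{r_{k,n}}-(p-1+m)\log u_n(r_{k,n})+\log\frac{a_k^2}{2c_0 p\lambda_n}+o(1).
\]

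The second step is to replace the remaining $\log u_n(r_{k,n})$ term by a $\log\log(1/r_{k,n})$ term, using \eqref{de02} when $k=0$ and \eqref{de11} when $k\ge1$, both of which yield $u_n(r_{k,n})^p=(2+o(1))\log(1/r_{k,n})$. Taking a logarithm of this relation and using $\log(2+o(1))=o(1)$ shows that $\log u_n(r_{k,n})=\frac{1}{p}\log\bigl(2\log(1/r_{k,n})\bigr)+o(1)$. Substituting this back into the previous display and noting that $(p-1+m)/p=1-(1-m)/p$ gives the announced expansion after taking the $p$-th root (the $p$-th root of the bracket preserves the $o(1)$ error because the leading term $2\log(1/r_{k,n})\to\infty$).

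The argument is essentially a bookkeeping computation rather than a genuinely delicate estimate; the only mild obstacle is making sure the error terms track correctly through the three operations (logarithm, substitution of the leading asymptotics for $\log u_n(r_{k,n})$, and $p$-th root), and in particular that the $\log\lambda_n$ contribution is kept explicit inside the bracket rather than absorbed into $o(1)$. No additional analytic input beyond \eqref{eq:r0}, \eqref{eq:rk}, \eqref{de02}, \eqref{de11}, and the explicit form of $f$ in (H3) is needed, which is precisely why the author's own proof is a one-liner citing these four ingredients.
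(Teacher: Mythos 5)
Your proof is correct and fills in exactly the computation the paper compresses into its one-line citation of \eqref{eq:r0}/\eqref{eq:rk}, \eqref{de02}/\eqref{de11}, and (H3); it is the same approach. One small slip: the intermediate assertion ``$\log(2+o(1))=o(1)$'' should be ``$\log\bigl((2+o(1))/2\bigr)=o(1)$'' (or equivalently $\log(1+o(1))=o(1)$ after factoring out the $2$), which is what actually yields $\log u_n(r_{k,n})=\tfrac1p\log\bigl(2\log(1/r_{k,n})\bigr)+o(1)$; the conclusion you state is the correct one. Also note that since the $o(1)$ stays inside the brace, the $p$-th root in the final step is exact rather than an estimate, so no justification about preserving the error is needed.
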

Let us complete the proof.  
\begin{proof}[Proof of Corollary \ref{thm:int2}] Choose a sequence $(r_n)\subset (r_{0,n},1)$ so that $u_n(r_n)/\mu_n\to 1/2$ as $n\to \infty$. Notice that $r_n\to0$ as $n\to \infty$ by the last assertion in Lemma \ref{lem:gl}. Then under the conditions in  (i) and (ii) respectively, Lemma \ref{lem:int23} and \eqref{bo1} prove that $U(r_{0,n})\le V_L(r_{0,n})<u_n(r_{0,n})$ and $U(r_n)\ge U_{3/2}(r_n)>u_n(r_n)$ for all large $n\in \mathbb{N}$. Hence noting also $u_n(0)<\infty$ for all $n\in \mathbb{N}$ and $U(r)\to \infty$ as $r\to0^+$, we find  sequences $(r_{0,n}^{\pm})\subset (0,1)$ such that $0<r_{0,n}^-<r_{0,n}<r_{0,n}^+<r_n$ and $u_n(r_{0,n}^\pm)=U(r_{0,n}^\pm)$ for all large $n\in \mathbb{N}$. Then  \eqref{bo1} yields that $u_n(r_{0,n}^\pm)/\mu_n\to1$ as $n\to \infty$. This completes the proof.  
\end{proof}
\begin{proof}[Proof of Corollary \ref{thm:int3}] Assume $k\in \mathbb{N}\cup\{0\}$. Choose a sequence $(r_{k,n}^*)\subset (\rho_{k,n},\bar{\rho}_{k+1,n})$ such that $u_n(r_{k,n}^*)/\mu_n\to \delta_k^*$ as $n\to \infty$. In addition, if $k\not=0$, select a sequence $(r_{k-1,n}^*)\subset (\rho_{k-1,n},\bar{\rho}_{k,n})$ so that $u_n(r_{k-1,n}^*)/\mu_n\to \beta_{k-1}^*$ as $n\to \infty$. Then fix  a number $\beta\in(\beta_k^*,2)$. Moreover, in the case of (i), we  choose any number $L<\log{(a_k^2/(2c_0p\la_*)}$. Then we have by the assumption that $U_\beta(r)\le U(r)\le V_L(r)$ for all small $r>0$. On the other hand, in the case of (ii), noting the assumption, we select a value $L>0$ so that $U_\beta(r)\le U(r)\le V_L(r)$ for all small $r>0$. Then, in each case, we get  from Lemma \ref{lem:int23} and \eqref{bo2} that $U(r_{k,n})\le V_L(r_{k,n})<u_n(r_{k,n})$ and $U(r_{k,n}^*)\ge U_{\beta}(r_{k,n}^*)>u_n(r_{k,n}^*)$ for all large $n\in \mathbb{N}$. Furthermore, if $k\ge1$, since $\beta_{k-1}^*<\beta_k^*$, we have that $U(r_{k-1,n}^*)\ge U_\beta (r_{k-1,n^*})>u_n(r_{k-1,n}^*)$ for all large $n\in \mathbb{N}$. Consequently,  we obtain sequences $(r_{k,n}^{\pm})\subset (0,1)$ such that $r_{k-1,n}^*<r_{k,n}^-<r_{k,n}<r_{k,n}^+<r_{k,n}^*$, where we regard $r_{k-1,n}^*=0$ when $k=0$, and  $u_n(r_{k,n}^{\pm})=U(r_{k,n}^{\pm})$ for all large $n\in \mathbb{N}$. Furthermore,  $u_n(r_{k,n}^\pm)/\mu_n\to \delta_k$ as $n\to \infty$ by \eqref{bo2}. This proves the former conclusion. Finally, take any sequence $(r_n)\subset (0,1)$ such that $u_n(r_n)/\mu_n\to0$ as $n\to \infty$. Then the former assertion implies that for any $k\in \mathbb{N}\cup\{0\}$,  we get that $r_{k,n}^+>r_n$ for all large $n\in \mathbb{N}$ and thus, 
\[
Z_{(0,r_n)}[u_n-U]\ge 2(k+1)
\]
for all large $n\in \mathbb{N}$. Since $k$ is arbitrary, we finish the proof.   
\end{proof}
Lastly, we prove Corollary \ref{thm:int4}.
\begin{proof}[Proof of  Corollary \ref{thm:int4}] 
Put $u_n(r)=\bar{u}_n(\bar{r}_nr)$ for all $r\in[0,1]$, $\la_n=\bar{r}_n^2$, and $\mu_n=\bar{\mu}_n$ for all $n\in \mathbb{N}$. Then $\{(\la_n,\mu_n,u_n)\}$ is a sequence of solutions of \eqref{q}. It follows from  Theorems \ref{thm1}, \ref{thm30}, and \eqref{bo2} that, for any $k\in \mathbb{N}\cup \{0\}$, there exist sequences $(r_{k,n}),(r_{k,n}^*)\subset (0,1)$ such that $u_n(r_{k,n})/\mu_n\to \delta_k$, $u_n(r_{k,n}^*)/\mu_n\to \delta_k^*$, 
\[
p\la_nr_{k,n}^2  u_n(r_{k,n})^{p-1}f(u_n(r_{k,n}))= \frac{a_k^2}{2}+o(1),
\]   
and
\[
u_n(r_{k,n}^*)=\left\{(\beta_k^*+o(1))\log{\frac1{r_{k,n}^*}}\right\}^{\frac1p}
\]
as $n\to \infty$. Putting $\bar{r}_{k,n}=\sqrt{\la_n}r_{k,n}$ and $\bar{r}_{k,n}^*=\sqrt{\la_n}r_{k,n}^*$ for all $n\in \mathbb{N}$, these two formulas imply
\begin{equation}\label{ww0}
p\bar{r}_{k,n}^2  \bar{u}_n(\bar{r}_{k,n})^{p-1}f(\bar{u}_n(\bar{r}_{k,n}))=\frac{a_k^2}{2}+o(1)
\end{equation}
and 
\begin{equation}\label{ww1}
\bar{u}_n(\bar{r}_{k,n}^*)=\left\{(\beta_k^*+o(1))\log{\frac{1}{\bar{r}_{k,n}^*}}\right\}^{\frac1p}
\end{equation}
where we used \eqref{sup3}, \eqref{eq:rk2}, and the fact that $r_{k,n}<r_{k,n}^*<r_{k+1,n}$ for all large $n\in \mathbb{N}$. Furthermore, similarly to the proof of Lemma \ref{lem:int23}, we deduce from \eqref{ww0} that 
\begin{equation}\label{ww2}
\begin{split}
&\bar{u}_n(\bar{r}_{k,n})
=\left\{2 \log{\frac1{\bar{r}_{k,n}}}
-\left(1-\frac{1-m}{p}\right) \log{\left(2\log{\frac1{\bar{r}_{k,n}}}\right)}+\log{\frac{a_k^2}{2c_0p }}+o(1)\right\}^{\frac1p}.
\end{split}
\end{equation}
Then by  \eqref{ww1}, \eqref{ww2}, and our assumption, we get $\bar{U}(\bar{r}_{k,n})<\bar{u}_n(\bar{r}_{k,n})$ and $\bar{u}_n(\bar{r}_{k,n}^*)<\bar{U}(\bar{r}_{k,n}^*)$ for all large $n\in \mathbb{N}$. Hence there exists a sequence $(\bar{s}_{k,n})\subset(\bar{r}_{k,n},\bar{r}_{k,n}^*)$ such that $\bar{u}_n(\bar{s}_{k,n})=\bar{U}(\bar{s}_{k,n})$. Finally, choose any sequence $(r_n)\subset (0,1)$ so that $\bar{\mu}_n^{-p}\log{(1/r_n)}\to 0$  as $n\to \infty$. Then, we have that $\log{(1/r_n)}/\log{(1/\bar{s}_{k,n})}
\to0$ as $n\to \infty$ by \eqref{sup3} and \eqref{eq:rk2}. In particular, $\bar{s}_{k,n}<r_n$ for all large $n\in \mathbb{N}$. This implies that $Z_{(0,r_n)}[\bar{u}_n-\bar{U}]\ge k+1$ for all large $n\in \mathbb{N}$. Since $k$ is arbitrary, we get $Z_{(0,r_n)}[\bar{u}_n-\bar{U}]\to \infty$ as $n\to \infty$. This finishes the proof. 
\end{proof}
\subsection{Proof of  Theorem \ref{thm:app2}}
Finally, we shall show Theorem \ref{thm:app2}. To this end, we first prove the existence of a solution of \eqref{p**}.
\begin{lemma}\label{lem:sg} Assume (H4). Then there exists a pair of a value $\bar{R}^*>e^{-\tau_0^p/2}$ and a $C^2$ function $\bar{U}^*$ on $(0,\bar{R}^*]$ such that $\bar{U}^*(r)=(2\log{(1/r)})^{1/p}$ for all $r\in(0,e^{-\tau_0^p/2}]$ and $(\bar{R},\bar{U})=(\bar{R}^*,\bar{U}^*)$ satisfies \eqref{p**}. 
\end{lemma}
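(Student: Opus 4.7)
The plan is to construct $\bar{U}^*$ by patching two pieces: an exact singular profile on an inner interval where the asymptotic formula $(2\log(1/r))^{1/p}$ already solves the equation, and an ODE extension on the outer part that carries the solution down to zero.

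First, I would observe that the peculiar exponent $1-2p$ in (H4) is engineered so that $\bar{U}_0(r):=(2\log(1/r))^{1/p}$ is an exact solution of $-\bar{U}''-\tfrac1r\bar{U}'=f(\bar{U})$ on the set where $\bar{U}_0\ge \tau_0$, which is precisely $(0,r_0]$ with $r_0:=e^{-\tau_0^p/2}$. Writing $v(r):=2\log(1/r)=\bar{U}_0^p$ so that $r^{-2}=e^{v}$ and $v^{1/p-2}=\bar{U}_0^{1-2p}$, a direct computation yields
\[
-\bar{U}_0''-\tfrac{1}{r}\bar{U}_0'=\tfrac{4(p-1)}{p^2}r^{-2}v^{1/p-2}=\tfrac{4(p-1)}{p^2}\bar{U}_0^{\,1-2p}e^{\bar{U}_0^{p}}=f(\bar{U}_0),
\]
with matching boundary data $\bar{U}_0(r_0)=\tau_0$ and $\bar{U}_0'(r_0)=-(2/p)\,r_0^{-1}\tau_0^{1-p}$.

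Second, since $f\in C^2([0,\infty))$, the initial value problem $-\bar{U}''-\tfrac1r\bar{U}'=f(\bar{U})$ with $\bar{U}(r_0)=\tau_0$ and $\bar{U}'(r_0)=-(2/p)\,r_0^{-1}\tau_0^{1-p}$ admits a unique $C^2$ solution $\bar{U}$ on some maximal forward interval $[r_0,R_{\max})$ by standard ODE theory (the coefficient $1/r$ is smooth away from the origin). I would then define $\bar{U}^*$ to agree with $\bar{U}_0$ on $(0,r_0]$ and with $\bar{U}$ on $[r_0,R_{\max})$. The $C^1$ pasting at $r_0$ holds by construction, while $C^2$ pasting is automatic because the ODE forces $\bar{U}^{*\prime\prime}(r_0)=-\bar{U}^{*\prime}(r_0)/r_0-f(\tau_0)$ from both sides.

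Third, I would argue that $\bar{U}^*$ must reach zero in finite forward distance. As long as $\bar{U}^*\ge 0$ on $[r_0,r)$, one has $f(\bar{U}^*)\ge 0$ by (H0), hence $(r\bar{U}^{*\prime})'=-rf(\bar{U}^*)\le 0$, so $r\bar{U}^{*\prime}$ is nonincreasing and bounded above by $r_0\bar{U}^{*\prime}(r_0)=-(2/p)\tau_0^{1-p}<0$. Integrating this bound gives
\[
\bar{U}^*(r)\le \tau_0-\tfrac{2}{p}\tau_0^{1-p}\log(r/r_0),
\]
which becomes negative before $r=r_0\,e^{p\tau_0^{p}/2}$. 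Standard bounds on $\bar{U}^{*\prime}$ (since $\bar{U}^*\in[0,\tau_0]$ and $f$ is bounded there) show that the solution does not blow up before that, so $\bar{U}^*$ has a first zero $\bar{R}^*\in(r_0,\,r_0 e^{p\tau_0^{p}/2})$. By construction $\bar{U}^*>0$ on $(0,\bar{R}^*)$, $\bar{U}^*(\bar{R}^*)=0$, and $\bar{U}^*(r)\to\infty$ as $r\to 0^+$, so $(\bar R,\bar U)=(\bar{R}^*,\bar{U}^*)$ solves \eqref{p**}. The only genuinely creative step is the first one: spotting that the coefficient $4(p-1)/p^2$ and the power $1-2p$ in (H4) are precisely tuned to make the naive singular ansatz $(2\log(1/r))^{1/p}$ an exact solution; everything afterwards is a routine ODE extension.
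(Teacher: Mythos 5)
Your proposal is correct and follows essentially the same strategy as the paper: observe that (H4) is tuned so that $(2\log(1/r))^{1/p}$ is an exact solution on $(0,e^{-\tau_0^p/2}]$, solve the IVP forward from the matching data at $r_0=e^{-\tau_0^p/2}$, and show the forward solution reaches zero in finite $r$. Your finiteness argument is in fact slightly more economical than the paper's: you exploit directly that $r\bar{U}^{*\prime}$ is nonincreasing and already negative at $r_0$ (so $r\bar{U}^{*\prime}\le -(2/p)\tau_0^{1-p}$), giving the explicit upper bound $\bar{U}^*(r)\le\tau_0-(2/p)\tau_0^{1-p}\log(r/r_0)$ and hence a concrete bound $\bar{R}^*\le r_0 e^{p\tau_0^p/2}$, whereas the paper reaches the same conclusion by contradiction after integrating the nonlinear contribution over $[R_0,R_0+1]$. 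Both routes are elementary; yours is a touch cleaner and also yields quantitative control on $\bar{R}^*$.
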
 
\begin{proof} We refer to the idea in the proof of  Proposition 2.1 in \cite{IRT} and Lemma 2.1 in \cite{AKG}. Set $R_0=e^{-\tau_0^p/2}$. First  it is easy to check that $-\{(2\log{(1/r)})^{1/p}\}''-\{(2\log{(1/r)})^{1/p}\}'/r=f((2\log{(1/r)})^{1/p})$ and $(2\log{(1/r)})^{1/p}>0$ for all $r\in(0,R_0]$. Next 
 we solve the initial value problem,
\begin{equation}\label{sg1}
\begin{cases}
-\bar{U}''-\frac1r \bar{U}'=f(\bar{U}) \text{ in }(0,\infty),\\
\bar{U}(R_0)=\tau_0, \bar{U}'(R_0)=-2/(p\tau_0^{p-1}R_0).
\end{cases}
\end{equation}
By (H4), there exists a value $\e>0$ such that \eqref{sg1} admits a  unique solution $\bar{U}$ on $(0,R_0+\e)$ and $\bar{U}(r)=(2\log{(1/r)})^{1/p}$ for all $r\in (0,R_0]$. Define the  value 
\[
\begin{split}
R^*:=\sup\{r>R_0\ |\ &\text{the solution $\bar{U}(s)$  of \eqref{sg1} exists and $\bar{U}(s)>0$ }\\
&\ \ \ \ \ \ \  \ \ \ \ \ \ \ \ \ \ \ \ \ \ \ \ \ \ \ \ \ \ \ \ \ \ \ \ \ \ \ \ \ \ \ \text{for all $s\in (0,r)$}\}.
\end{split}
\]
Then, for any $r\in(R_0,R^*)$,  integrating the equation gives  
\[
-r\bar{U}'(r)=-R_0\bar{U}'(R_0)+\int_{R_0}^{r}f(\bar{U})rdr>0
\]
by the initial condition and (H0).  In particular, $\bar{U}(r)$ is strictly decreasing on $(0,R^*)$.  Moreover, we have that $R^*<\infty$. If not, $\bar{U}>0$ on $(0,\infty)$. Then,  for any $r\in(R_0+1,\infty)$,  integrating the equation, we get that
\[
-\bar{U}'(r)
\ge\left(-R_0\bar{U}'(R_0)+\int_{R_0}^{R_0+1}f(\bar{U})sds\right)\frac1r
\]
by (H0). For any $r\in(R_0+1,\infty)$, integrating again gives
\[
0<\bar{U}(r)\le\bar{U}(R_0+1)-\left(-R_0\bar{U}'(R_0)+\int_{R_0}^{R_0+1}f(\bar{U})sds\right)\log{\frac{r}{R_0+1}}.
\]
Since the right-hand side diverges to $-\infty$ as $r\to \infty$, we get  a contradiction. Then noting the monotonicity, we obtain $\lim_{r\to R^{*-}}\bar{U}(r)=0$. Therefore, setting $\bar{R}^*=R^*$ and $\bar{U}^*=\bar{U}$ on $(0,\bar{R}^*)$ with $\bar{U}^*(\bar{R}^*)=0$, we get the desired solution. This finishes the proof.   
\end{proof}
Now, we can show the oscillation of the bifurcation diagram by combining our intersection estimates and the idea in the proof of Lemma 3.5 in \cite{Mi}. 
\begin{lemma}\label{lem:app1} Suppose (H4). Let $\mu \mapsto (\la(\mu),u(\mu,\cdot))$ be the solutions $C^1$-curve of \eqref{p} and $\la^*$ the constant  as in the assumption of Theorem \ref{thm:app2}.  Then we get that  $\la(\mu)$ oscillates around $\la^*$ infinitely many times as $\mu\to \infty$.  
\end{lemma}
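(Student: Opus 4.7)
The plan is to combine Corollary \ref{thm:int4}, which gives a divergent intersection count between blow-up solutions and the singular solution, with a Sturmian/topological bookkeeping of how that count can change as $\mu$ varies, mimicking the argument of Lemma 3.5 in \cite{Mi}.

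First I would pass to the rescaled formulation \eqref{p1}: for each $\mu>0$ set $\bar{u}_\mu(r):=u(\mu,r/\sqrt{\lambda(\mu)})$ on $[0,\bar{r}_\mu]$ with $\bar{r}_\mu:=\sqrt{\lambda(\mu)}$, so that $(\bar{r}_\mu,\mu,\bar{u}_\mu)$ solves \eqref{q1}, and the critical parameter corresponds to $\bar{R}^*=\sqrt{\lambda^*}$ through the singular solution $\bar{U}^*$ of \eqref{p**} furnished by Lemma \ref{lem:sg}. For any sequence $\mu_n\to\infty$, Theorem \ref{thm3} (in particular \eqref{sup3}) gives, after passing to a subsequence, $\mu_n^{-p}\log(1/\bar{r}_n)\to 0$. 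Since $\bar{U}^*$ equals $(2\log(1/r))^{1/p}$ near $0$ and is continuous up to $\bar{R}^*$, it satisfies the two-sided bound required in Corollary \ref{thm:int4} for every $\varepsilon\in(0,2)$ and every $L\in\mathbb{R}$. Hence Corollary \ref{thm:int4} yields
\[
Z_n:=Z_{(0,\bar{r}_n)}[\bar{u}_{\mu_n}-\bar{U}^*]\longrightarrow\infty.
\]

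Next I would control how the intersection count $Z(\mu):=Z_{(0,\min(\bar{r}_\mu,\bar{R}^*))}[\bar{u}_\mu-\bar{U}^*]$ depends on $\mu$, based on two facts. \emph{(Transversality.)} If $\bar{u}_\mu(r_0)=\bar{U}^*(r_0)$ and $\bar{u}_\mu'(r_0)=(\bar{U}^*)'(r_0)$ at an interior $r_0$, then by uniqueness for the initial value problem associated with the ODE in \eqref{p**}, the two solutions coincide on their common domain, which contradicts $\bar{u}_\mu(0)=\mu<\infty$ versus $\bar{U}^*(r)\to\infty$. \emph{(Boundary sign.)} The difference $w_\mu:=\bar{u}_\mu-\bar{U}^*$ satisfies $w_\mu(r)\to-\infty$ as $r\to 0^+$, so zeros cannot escape through $r=0$. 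At the right endpoint the endpoint value is $w_\mu(\min(\bar{r}_\mu,\bar{R}^*))\ge 0$ when $\lambda(\mu)\ge\lambda^*$ and $\le 0$ when $\lambda(\mu)\le\lambda^*$, with equality exactly when $\lambda(\mu)=\lambda^*$. Consequently, on any open interval of $\mu$ where $\lambda(\mu)\not=\lambda^*$, zeros move smoothly by the implicit function theorem, the boundary sign is constant, and no zero can enter or leave, so $Z(\mu)$ is locally constant.

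Combining these, I would assume toward contradiction that $\lambda$ does not oscillate around $\lambda^*$, meaning there exists $M$ such that $\lambda(\mu)\ge\lambda^*$ for all $\mu>M$ (the other case is symmetric). On the open set $\{\mu>M:\lambda(\mu)>\lambda^*\}$ the function $Z$ is locally constant. At each isolated point $\mu_0>M$ with $\lambda(\mu_0)=\lambda^*$, a careful analysis of the derivative $w_{\mu_0}'(\bar{R}^*)=\bar{u}_{\mu_0}'(\bar{R}^*)-(\bar{U}^*)'(\bar{R}^*)$ shows that at most one new interior zero can appear or disappear as $\mu$ passes through $\mu_0$, so the jump of $Z$ is bounded by one. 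Moreover, because $\lambda$ stays on the same side, the sign of $w_\mu(\bar{R}^*)$ returns to the same value on both sides of $\mu_0$, which constrains the jump at $\mu_0$ to be $0$: one needs the derivative condition to flip, but the boundary values of $\bar{u}_\mu(\bar{R}^*)$ form a one-sided family $\ge 0$ with $\bar{u}_\mu(\bar{R}^*)=0$ only at $\mu_0$. A bookkeeping argument then forces $Z(\mu)$ to stay bounded on $(M,\infty)$, contradicting $Z(\mu_n)\to\infty$. Hence $\lambda(\mu)$ must take values strictly on both sides of $\lambda^*$ arbitrarily far out, which is precisely the infinite oscillation.

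The main obstacle I anticipate is precisely the bookkeeping at the critical points $\{\mu:\lambda(\mu)=\lambda^*\}$ in this last step: one must exclude the scenario where $\lambda(\mu)$ tangentially touches $\lambda^*$ from above at infinitely many $\mu_k\to\infty$ while $Z$ jumps by $+1$ at each $\mu_k$. Ruling this out requires understanding the parity of the zero count of $w_\mu$ at the right endpoint, which is pinned down by combining transversality (i) with the explicit asymptotic $\bar{u}_\mu(\bar{R}^*)=O(\bar{r}_\mu-\bar{R}^*)$ near $\mu_0$, so that a creation of an interior zero on one side is necessarily compensated by its destruction on the other.
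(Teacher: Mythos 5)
Your proposal takes essentially the same approach as the paper: Corollary~\ref{thm:int4} supplies the divergent intersection count between the rescaled blow-up solutions and the singular profile, and then IVP uniqueness (transversality) together with the implicit function theorem and the sign of the boundary difference at $\min(\bar r_\mu,\bar R^*)$ forces the count to stay bounded if $\lambda(\mu)$ stayed on one side of $\lambda^*$, giving the contradiction. The bookkeeping at the touching points $\lambda(\mu_0)=\lambda^*$ that you flag as the main obstacle is exactly what the paper makes rigorous via a case analysis on the sign of the endpoint derivative and a supremum argument (Claims~2--4); what one actually proves is that the count exceeds its base value by at most $1$ on the whole ray, which is the same boundedness you need.
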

\begin{proof} 
 For every $\mu>0$, put $r(\mu)=\sqrt{\la(\mu)}$ and  $\bar{u}(\mu,r)=u(\mu,r/\sqrt{\la(\mu)})$ for all $r\in[0,r(\mu)]$. Note that from our assumption, $r(\mu)$ and $\bar{u}(\mu,r)$ are $C^1$ functions with respect to each variable. Moreover, let $(\la^*,U^*)$ be the pair of the number and  function  in the assumption of Theorem \ref{thm:app2}. Set $r^*=\sqrt{\la^*}$ and  $\bar{U}^*(r)=U^*(r/\sqrt{\la^*})$ for all $r\in (0,r^*]$.  Then for any $\mu>0$,  $\bar{u}=\bar{u}(\mu,\cdot),\bar{U}^*$  are solutions of the equation, 
\begin{equation}\label{ap1}
-\bar{u}''-\frac1r \bar{u}'=f(\bar{u}) \text{ in }(0,\min\{r(\mu),r^*\}].
\end{equation}
Put $v(\mu,r)=\bar{U}^*(r)-\bar{u}(\mu,r)$ for all $\mu>0$ and $r\in(0,\min\{r(\mu),r^*\}]$. Then, $v(\mu,r)$  is  a $C^1$ function with respect to each variable. It also holds that, for each $\mu>0$,
\begin{equation}\label{ap10}
\lim_{r\to 0^+}v(\mu,r)=\infty.
\end{equation}
 Moreover, we obtain that 
\begin{equation}\label{ap2}
\lim_{\mu\to \infty}Z_{(0,\min\{r(\mu),r^*\}]}[v(\mu,\cdot)]=\infty.
\end{equation}
If not, there exists a sequence $(\mu_n)\subset (0,\infty)$ such that $\mu_n\to \infty$ as $n\to \infty$ and 
\[
\sup_{n\in \mathbb{N}}Z_{(0,\min\{r(\mu_n),r^*\}]}[v(\mu_n,\cdot)]<\infty.
\]
Then noting  $\mu_n^{-p}\log{(\min\{r(\mu_n),r^*\})}\to 0$ as $n\to \infty$ by \eqref{sup3}, we get a contradiction  with Corollary \ref{thm:int4}. This proves \eqref{ap2}. Next, we claim the following. \vspace{0.1cm}\\
\textbf{Claim 1.} For each $\mu> 0$, $Z_{(0,\min\{r(\mu),r^*\}]}[v(\mu,\cdot)]$ is finite. If this is not true, for some $\mu>0$, noting \eqref{ap10}, we find  a value $r_0\in(0,\min\{r(\mu),r^*\}]$ and a sequence $(r_n)\subset (0,\min\{r(\mu),r^*\}]$ such that $r_n\not=r_0$ and  $v(r_n)=0$ for all $n\in \mathbb{N}$ and $r_n\to r_0$ as $n\to \infty$. It follows that $v(\mu,r_0)=0$ and $v_r(\mu,r_0)=0$ where $v_r$ denotes the partial derivative of $v$ with respect to $r$. This  contradicts the uniqueness of solutions of the initial value problem of \eqref{ap1}. This proves Claim 1. \vspace{0.1cm}

Next we prove the following. 
\vspace{0.1cm}\\ 
\textbf{Claim 2.} For any value $\mu_0>0$, let $\rho_0$ be any zero point of $v(\mu_0,\cdot)$ on $(0,\min\{r(\mu_0),r^*\})$. Then there exist a value $\e>0$ and a $C^1$ function $\rho(\mu)$ defined for all $\mu\in[\mu_0-\e,\mu_0+\e]$  such that $\rho(\mu_0)=\rho_0$ and that if $\mu\in[\mu_0-\e,\mu_0+\e]$ and $r\in[\rho_0-\e,\rho_0+\e]$, then $v(\mu,r)=0$ if and only if $r=\rho(\mu)$. In fact, from the uniqueness on \eqref{ap1} again, we get $v_r(\mu_0,\rho_0)\not=0$ and then, the implicit function theorem proves the claim. In addition, we claim that  if  $r(\mu_0)=r^*$ for some $\mu_0>0$, then there exists a value $\e>0$ such that $Z_{(0,\min\{r(\mu),r^*\}]\cap [r^*-\e,r^*]}[v(\mu,\cdot)]\le 1$ for all $\mu\in[\mu_0-\e,\mu_0+\e]$. If the claim does not hold, then there exist  sequences $(\mu_n)$ in some neighborhood  of $\mu_0$ and $(\rho_n),(\rho_n')\subset(0,\min\{r(\mu_n),r^*\}]$ such that $\rho_n<\rho_n'$ and  $v(\mu_n,\rho_n)=v(\mu_n,\rho_n')=0$ for all $n\in\mathbb{N}$ and $\mu_n\to \mu_0$ and $r^*\ge\rho_n'\ge\rho_n\to r^*$ as $n\to \infty$. It follows that  $v_r(\mu_0,r^*)=0$. But, this is impossible again by the uniqueness assertion on  \eqref{ap1}. 
\vspace{0.1cm} 

Claim 2 implies that, as $\mu$ increases or decreases, any zero point of $v(\mu,\cdot)$ on $(0,\min\{r(\mu),r^*\}]$ does not split into more than two ones and any two zero points do not collide with each other. Moreover noting also \eqref{ap10},  we have that any change of the number of zero points on $(0,\min\{r(\mu),r^*\}]$ occurs only at $r=r(\mu)$ and  only when $r(\mu)$ leaves  or touches $r^*$, and at the moment, the number changes by at most one. In particular, if  $r(\mu_0)<r^*$ ($r(\mu_0)>r^*$), then by \eqref{ap2} and Claim 1, we find  a value $\bar{\mu}>\mu_0$ such that $r(\mu)<r^*$ ($r(\mu)>r^*$ respectively) and $Z_{(0,\min\{r(\mu),r^*\}]}[v(\mu,\cdot)]=Z_{(0,\min\{r(\mu_0),r^*\}]}[v(\mu_0,\cdot)]$ for all $\mu_0\le \mu<\bar{\mu}$ and $r(\bar{\mu})=r^*$.  On the other hand, if  $r(\mu_0)=r^*$ for some $\mu_0>0$, we put $\tilde{\mu}=\sup\{\mu\ge \mu_0\ |\ r(m)=r^*\text{ for all }\mu_0\le m\le \mu\}$.  Then similarly we get $\tilde{\mu}<\infty$. Note also that $r(\tilde{\mu})=r^*$ by continuity and that $Z_{(0,r^*]}[v(\mu,\cdot)]=Z_{(0,r^*]}[v(\mu_0,\cdot)]$ for all $\mu_0\le \mu\le \tilde{\mu}$ by Claim 2.  Now we proceed to the conclusion of the proof.  To this end, we next claim the following. 
\vspace{0.1cm}\\
\textbf{Claim 3.} There exists a sequence $(\mu_n)\subset (0,\infty)$ such that $\mu_n\to \infty$ as $n\to \infty$ and $r(\mu_n)>r^*$ for all $n\in\mathbb{N}$. Otherwise, we have a value $\mu_0>0$ such that $\min\{r(\mu),r^*\}=r(\mu)$ for all $\mu\ge \mu_0$.  It suffices to consider  the next two cases A and B. In the following, for all $\mu>0$, let $\bar{r}(\mu)\in(0,r(\mu))$  be the largest zero point of $v(\mu,\cdot)$ on $(0,r(\mu))$. \vspace{0.1cm}\\
\textbf{Case A.} Assume $r(\mu_0)<r^*$. Then recall the number $\bar{\mu}>\mu_0$ above.  There are only two cases. The first one is that $\sup_{\mu\in [\mu_0,\bar{\mu}]}\bar{r}(\mu)<r^*$. This implies that a new zero point is added when $\mu$ reaches  $\bar{\mu}$ from below. In particular, we get  
 $Z_{(0,r(\bar{\mu})]}[v(\bar{\mu},\cdot)]=Z_{(0,r(\mu_0)]}[v(\mu_0,\cdot)]+1$. Moreover, since $v(\mu,r)>0$ for all $r\in(\bar{r}(\mu),r(\mu))$ and $\mu\in[\mu_0,\bar{\mu})$, we have that  $v(\bar{\mu},r)>0$ for all $\bar{r}(\bar{\mu})<r<r(\bar{\mu})=r^*$. In particular,  $v_r(\bar{\mu},r^*)<0$. The second case is that $r^*>r(\mu)>\bar{r}(\mu)\to r^*$ as $\mu\to \bar{\mu}$ from below. In this case, by Claim 2, $Z_{(0,r(\mu)]}[v(\mu,\cdot)]=Z_{(0,r(\mu_0)]}[v(\mu_0,\cdot)]$ for all $\mu\in[\mu_0,\bar{\mu}]$. Moreover, since $v(\mu,r)<0$ for all $r\in(\tilde{r}(\mu),\bar{r}(\mu))$ and $\mu\in[\mu_0, \bar{\mu})$ and $\sup_{\mu\in[\mu_0, \bar{\mu})}\tilde{r}(\mu)<r^*$, where $\tilde{r}(\mu)>0$ is the largest zero point of $v(\mu,\cdot)$ on $(0,\bar{r}(\mu))$, we have that $v(\bar{\mu},r)<0$ for all $\bar{r}(\bar{\mu})<r<r^*$. Especially, we have $v_r(\bar{\mu},r^*)>0$.\vspace{0.1cm} \\
\textbf{Case B.} Suppose $r(\mu_0)=r^*$. Set the constant  $\tilde{\mu}\ge \mu_0$ as above. From the uniqueness on \eqref{ap1}, we have only two cases $v_r(\mu_0,r^*)<0$ and $v_r(\mu_0,r^*)>0$. In the former case,  from the uniqueness again,  we get that $v_r(\mu,r^*)<0$ for all $\mu_0\le \mu\le \tilde{\mu}$. This yields $v(\tilde{\mu},r)>0$ for all  $\bar{r}(\tilde{\mu})<r<r(\tilde{\mu})$. Moreover, from the definition of $\tilde{\mu}$ and our assumption, when $\mu$ increases from $\tilde{\mu}$, $r(\mu)$ must begin to leave $r^*$ for the left direction. At the moment, the zero point at $r=r(\tilde{\mu})$ vanishes and thus,  there exists a number $\hat{\mu}>\tilde{\mu}$ such that $r(\hat{\mu})<r^*$ and $Z_{(0,r(\hat{\mu})]}[v(\hat{\mu},\cdot)]=Z_{(0,r(\tilde{\mu})]}[v(\tilde{\mu},\cdot)]-1=Z_{(0,r(\mu_0)]}[v(\mu_0,\cdot)]-1$. In the latter case, again from the uniqueness, $v_r(\mu,r^*)>0$ for all $\mu_0\le \mu\le \tilde{\mu}$. This implies that $v(\tilde{\mu},r)<0$ for all $\bar{r}(\tilde{\mu})<r<r^*$. Then, again when $\mu$ increases from $\tilde{\mu}$, $r(\mu)$ must start leaving $r^*$ for the left direction, and in this moment, we have that $v(\mu,r(\mu))>0$.
 Then, the zero point at $r=r(\tilde{\mu})$ does not vanish and continuously moves to the left direction as $\mu$ increases from $\tilde{\mu}$. In particular, we have a value $\hat{\mu}>\tilde{\mu}$ such that $r(\hat{\mu})<r^*$ and $Z_{(0,r(\hat{\mu})]}[v(\hat{\mu},\cdot)]=Z_{(0,r(\tilde{\mu})]}[v(\tilde{\mu},\cdot)]=Z_{(0,r(\mu_0)]}[v(\mu_0,\cdot)]$.\vspace{0.1cm}

Roughly speaking, in view of the consequences in Cases A and B, as $\mu$ increases from $\mu_0$ to infinity, $r(\mu)$ repeats the behaviors in these two cases. This yields that $Z_{(0,r(\mu)]}[v(\mu,\cdot)]\le Z_{(0,r(\mu_0)]}[v(\mu_0,\cdot)]+1$ for all $\mu\ge \mu_0$. In particular, by Claim 1,  $Z_{(0,r(\mu)]}[v(\mu,\cdot)]$ is uniformly  bounded for all $\mu\ge\mu_0$. Hence we get  a contradiction with  \eqref{ap2}. This proves Claim 3. More rigorous proof is the following. \vspace{0.1cm}

\noindent \textbf{Conclusion for Claim 3.} We claim that  $Z_{(0,r(\mu)]}[v(\mu,\cdot)]\le Z_{(0,r(\mu_0)]}[v(\mu_0,\cdot)]+1$ for all $\mu\ge \mu_0$. To prove this, noting the argument above and replacing $\mu_0$ with a larger value if necessary, we may assume $r(\mu_0)<r(\mu)$ without loss of  generality. Let $\mu^*:=\sup\{\mu\ge \mu_0\ |\ Z_{(0,r(m)]}[v(m,\cdot)]\le Z_{(0,r(\mu_0)]}[v(\mu_0,\cdot)]+1\text{ for all }\mu_0\le m \le \mu\}$. Suppose that $\mu^*<\infty$ on the contrary. Then, in view of Claim 2, we have that  $r(\mu^*)=r^*$ and $Z_{(0,r(\mu^*)]}[v(\mu^*,\cdot)]=Z_{(0,r(\mu_0)]}[v(\mu_0,\cdot)]+2$. From the continuity of $r(\mu)$, we have $\mu^*>\mu_0$. Moreover, in view of  the definition of $\mu^*$ and Claim 2, when $\mu$ decreases from $\mu^*$, $r(\mu)$ must leave $r^*$ for the left direction. Hence there exists a value $\mu_0<\mu_*<\mu^*$ such that $r(\mu_*)<r^*$ and $Z_{(0,r(\mu_*)]}[v(\mu_*,\cdot)]=Z_{(0,r(\mu_0)]}[v(\mu_0,\cdot)]+1$. Set $\mu_{**}:=\inf\{\mu\in [\mu_0,\mu_*]\ |\ Z_{(0,r(m)]}[v(m,\cdot)]= Z_{(0,r(\mu_0)]}[v(\mu_0,\cdot)]+1\text{ for all }\mu\le m \le \mu_*\}$. Claim 2, the continuity of $r(\mu)$, and the conclusions in Case B imply $r(\mu_{**})=r^*$, $\mu_0<\mu_{**}< \mu_*$, and  $Z_{(0,r(\mu_{**})]}[v(\mu_{**},\cdot)]= Z_{(0,r(\mu_0)]}[v(\mu_0,\cdot)]+1$. If $v_r(\mu_{**},r^*)<0$, recalling the former discussion in Case B, we find  a value $\mu_{**}<\mu^{**}<\mu_*$ such that $Z_{(0,r(\mu^{**})]}[v(\mu^{**},\cdot)]= Z_{(0,r(\mu_0)]}[v(\mu_0,\cdot)]$. This contradicts the definition of $\mu_{**}$. On the other hand, if  $v_r(\mu_{**},r^*)>0$, similarly to the latter argument in Case B, we can show that as $\mu$ slightly increases or decreases from $\mu_{**}$, the zero point at $r=r(\mu_{**})$ stays at $r^*$ or begins to  move  to the left direction but does not vanish. In particular, $Z_{(0,r(\mu)]}[v(\mu,\cdot)]=Z_{(0,r(\mu_0)]}[v(\mu_0,\cdot)]+1$ for all $\mu$ in some neighborhood of $\mu_{**}$. This again contradicts the definition of $\mu_{**}$. Consequently, we get $\mu^*=\infty$. Therefore, by Claim 1, we arrive at a contradiction with \eqref{ap2}.  This completes the proof of Claim 3.   
 \vspace{0.1cm}

We next show  the following. \vspace{0.1cm}\\
\textbf{Claim 4.} There exists a sequence $(\mu_n')\subset (0,\infty)$ such that $\mu_n'\to \infty$ as $n\to \infty$ and $r(\mu_n')<r^*$ for all $n\in \mathbb{N}$. Assume that there exists a value $\mu_0>0$ such that $r(\mu)\ge r^*$ for all $\mu\ge \mu_0$ on the contrary. Then we get the desired contradiction similarly to the argument for Claim 3. \vspace{0.1cm}

Consequently, Claims 3 and 4 show that $r(\mu)$ oscillates around $r^*$ infinitely many times as $\mu\to \infty$. Then, recalling that $\la(\mu)=r(\mu)^2$ and $\la^*=(r^*)^2$, we prove the desired conclusion of the lemma. This completes the proof.
\end{proof}
We shall show Theorem \ref{thm:app2}
\begin{proof}[Proof of Theorem \ref{thm:app2}] The former conclusion follows from Lemma \ref{lem:app1}. Moreover, (i)  is clear. (ii) is proved by the first conclusion and Theorem \ref{thm:gl}. (iii) is confirmed  by the continuity and the oscillating behavior of $\la(\mu)$ proved above. (iv) is a consequence of Corollary \ref{thm:int3}. We finish the proof.
\end{proof}
We end this section by giving the proof for the discussion in Remark \ref{rmk:B}.
\begin{remark}\label{rmk:D} Finally, let us check the intersection properties for the two examples in Remark \ref{rmk:B}. First assume $f$ satisfies $f(t)=t^m e^{t^p}$ for all large $t>0$ with $m\in \mathbb{R}$ and $p>2$  and some suitable conditions. In this case, by \cite{FIRT},  we confirm that \eqref{p**} admits a solution $\bar{U}$ such that
\[
\bar{U}(r)=\left\{2\log{\frac1r}-\left(2-\frac{1-m}{p}\right)\log{\left(2\log{\frac1{r}}\right)}+\log{\frac{4(p-1)}{p^2}}\right\}^{\frac1p}+o(1)
\] 
as $r\to0^+$. See Theorema 2.1, 2.2, and Example 4.1 there. Hence by Corollary \ref{thm:int4}, the intersection numbers between blow-up solutions of \eqref{q1} and $\bar{U}$ diverge to infinity. This allows us to obtain the same conclusion with \eqref{ap2} in the proof of Lemma \ref{lem:app1} and thus, we complete the desired oscillation assertions. Next suppose $f$ verifies $f(t)=e^{t^p+t^q}$ for all large $t>0$ with $p>2$ and $0<2q<p$ and some appropriate assumptions. Put 
\[
w(r)=\frac{p'}{4}r^2\left(\log{\frac1{r^2}}+1\right)
\]
where $1/p+1/p'=1$. Then from \cite{FIRT}, we ensure  the existence of solutions $\bar{U}$ of \eqref{p**} such that
\[
\begin{split}
\bar{U}(r)&=\left\{-\log{w}-(-\log{w})^{\frac qp}-\left(1-\frac{1}{p}\right)\log{(-\log{w})}+\log{\frac1p}\right\}^{\frac1p}+o(1)
\end{split}
\] 
as $r\to 0^+$. Check Example 4.2 with Remark 4.1 there. It follows that 
\begin{equation}\label{dd1}
\begin{split}
\bar{U}&(r)^p+\bar{U}(r)^q\\
&= 2\log{\frac1r}-\log{\left(2\log{\frac1r}\right)}-\left(1-\frac{1}{p}\right)\log{\left\{\log{\left(\frac1{r^2\left(\log{\frac1{r^2}}+1\right)}\right)}\right\}}+O(1)
\end{split}
\end{equation}
as $r\to0^+$. On the other hand, let $\{(\bar{r}_n,\bar{\mu}_n,\bar{u}_n)\}$ be any sequence of solutions of \eqref{q1} such that $\bar{\mu}_n\to \infty$ as $n\to \infty$. Then similarly to the proof of \eqref{ww2}, for each $k\in \mathbb{N}\cup\{0\}$, we find  a sequence $(\bar{r}_{k,n})$ such that $\bar{r}_{k,n}\to0$ and
\begin{equation}\label{dd2}
\begin{split}
&\bar{u}_n(\bar{r}_{k,n})^p+\bar{u}_n(\bar{r}_{k,n})^q=2 \log{\frac1{\bar{r}_{k,n}}}
-\left(1-\frac{1}{p}\right) \log{\left(2\log{\frac1{\bar{r}_{k,n}}}\right)}+O(1)
\end{split}
\end{equation}
as $n\to \infty$. In particular, $\bar{U}(\bar{r}_{k,n})<\bar{u}_{k,n}(\bar{r}_{k,n})$ for all large $n\in \mathbb{N}$. Hence using \eqref{dd1} and \eqref{dd2}, similarly to Corollary \ref{thm:int4}, we get that the intersection number between $\bar{u}_n$ and $\bar{U}$ diverges to infinity as $n\to \infty$. Then analogously to the former case, we arrive at the desired oscillation result. We finish the proof. 
\end{remark}
\section{Remarks on limit cases $p\to2^+$ and $p\to \infty$}\label{sec:lim}
As a final remark, we study the limit cases $p\to 2^+$ and $p\to \infty$. For each $p>2$, choose a function $h$ satisfying (H1). We write the dependence of $h$ on $p$ as $h=h_p$. For any sequence $(p_n)\subset (2,\infty)$, we put $f_n(t)=h_{p_n}(t)e^{t^{p_n}}$ and consider a sequence $\{(\la_n,\mu_n,v_n)\}$ of solutions  of the next problem,  
\begin{equation}\label{eq:q2}
\begin{cases}
-v_n''-\frac1r v_n'=\la_n f_n(v_n),\ v_n>0\text{ in }(0,1),\\
v_n(0)=\mu_n,\ v_n(1)=0=v_n'(0).
\end{cases}
\end{equation}
We shall remark on the behavior of $(v_n)$ when $\mu_n\to \infty$ as $n\to \infty$. 
\subsection{Infinite sequence of bubbles}
We begin with discussing what happens on the infinite sequence $(z_k)$ of the limit profiles, obtained in Theorem \ref{thm30}, in the limits $p\to 2^+$ and $\infty$ respectively. To do this, we recall the sequences $(\delta_k)$ and $(a_k)$ of numbers  defined by \eqref{eq:del1} and \eqref{eq:del2} and  $(\beta_k^*)$ below Theorem \ref{thm:osc}. For each $k\in \mathbb{N}\cup\{0\}$, we  write the dependence on $p>2$ by $\delta_k=\delta_k(p)$, $a_k=a_k(p)$, $\beta_k^*=\beta_k^*(p)$, and $z_k=z_k^{(p)}$. First we check the case $p\to2^+$. In this case, the singular limit profile turns into a regular one. We get the following.
\begin{proposition}[The case $p\to 2^+$]\label{prop:p2} Assume $p>2$. Then we have that for any $k\in \mathbb{N}$, $\delta_k(p)/\delta_{k-1}(p)\to0$ and  $a_k(p)\to2$ as $p\to 2$. In particular,  we get $\delta_k(p)\to0$ and $z_k^{(p)}(r)\to \tilde{z}_0(r)$ for any $r>0$ as $p\to 2$ where 
\begin{equation}\label{eq:tilz}
\tilde{z}_0(r)=\log{\frac{16}{(2+r^2)^2}}
\end{equation}
which satisfies
\[\begin{cases}
-\tilde{z}_0''-\frac1r \tilde{z}_0'=e^{\tilde{z}_0}\text{ in }(0,\infty),\\
\tilde{z}_0(\sqrt{2})=0,\ \tilde{z}_0'(\sqrt{2})=-\sqrt{2},
\end{cases}
\]
and 
\[
\int_0^\infty e^{\tilde{z}_0}rdr=4.
\] 
\end{proposition}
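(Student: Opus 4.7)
The plan is to prove the convergence by induction on $k \in \mathbb{N}$ via a sign analysis of the recurrence \eqref{eq:del1}--\eqref{eq:del2} in the limit $p \to 2^+$, and then pass to the explicit formula for $z_k^{(p)}$ given in Theorem \ref{thm30}.

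For the base case $k=1$, since $a_0 = 2$, relation \eqref{eq:del1} reduces to $g_p(d_1) = 0$ with $g_p(x) := (p/2)(1-x) - 1 + x^p$. A direct sign analysis (cf.\ the proof of Lemma \ref{lem:bf}) shows $g_p(0) = p/2 - 1 > 0$, that $g_p$ has a unique critical point at $x^*(p) = (1/2)^{1/(p-1)}$ on $(0,1)$, and $g_p(1) = 0$; hence $d_1(p)$ is the unique root in $(0, x^*(p))$. As $p \to 2^+$, $g_p$ converges uniformly on $[0,1]$ to $g_2(x) = x^2 - x$, and $x^*(p) \to 1/2$. Since $g_2$ has no zero on $(0, 1/2]$, every accumulation point of $d_1(p)$ must be $0$; thus $d_1(p) \to 0$. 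Then $a_1(p) = 2 - 4 d_1(p)^{p-1} \to 2$, because $d_1(p) \to 0^+$ and $p-1 \to 1$ give $(p-1)\log d_1(p) \to -\infty$, hence $d_1(p)^{p-1} \to 0$.

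For the inductive step, assume $a_{k-1}(p) \to 2$. Writing $\eta(p) = 2p/(2 + a_{k-1}(p)) - 1 \to 0$, the equation for $d_k$ becomes $\tilde g_p(d_k) = 0$ with $\tilde g_p(x) = (1+\eta)(1-x) - 1 + x^p$. The same sign analysis produces a unique root $d_k(p) \in (0, x_k^*(p))$ with $x_k^*(p) = ((1+\eta)/p)^{1/(p-1)} \to 1/2$, while $\tilde g_p$ converges uniformly on $[0,1]$ to $g_2$. The absence of zeros of $g_2$ on $(0, 1/2]$ again forces $d_k(p) \to 0$, and \eqref{eq:del2} yields $a_k(p) = 2 - d_k(p)^{p-1}(2 + a_{k-1}(p)) \to 2$ by the same argument as before. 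The assertion $\delta_k(p) = \delta_{k-1}(p) d_k(p) \to 0$ follows by induction.

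For the profile, substituting $a_k \to 2$ into the explicit expression from Theorem \ref{thm30} gives $b_k = (\sqrt{2}/a_k)^{a_k} \to 1/2$, so that for each fixed $r > 0$,
\[
z_k^{(p)}(r) = \log \frac{2 a_k^2 b_k}{r^{2-a_k}(1 + b_k r^{a_k})^2} \longrightarrow \log \frac{4}{(1 + r^2/2)^2} = \log \frac{16}{(2+r^2)^2} = \tilde z_0(r).
\]
The stated identities for $\tilde z_0$ (value and derivative at $r = \sqrt{2}$, the Liouville equation, and $\int_0^\infty e^{\tilde z_0} r\, dr = 4$) follow by direct computation. The only delicate point is in the inductive step, where one must rule out that $d_k(p)$ drifts toward the spurious root $x = 1$ of the limit equation $g_2 = 0$; this is handled automatically by the a priori upper bound $d_k(p) \le x_k^*(p) \to 1/2$ coming out of the sign analysis of $\tilde g_p$.
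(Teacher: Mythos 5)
Your proof is correct and follows essentially the same route as the paper's (Lemma \ref{lem:ad}): an induction on $k$, using the a priori bound $d_k(p) < (2/(2+a_{k-1}(p)))^{1/(p-1)} \to 1/2$ from Lemma \ref{lem:bf} to exclude the spurious root at $1$ of the limit equation $x(x-1)=0$, followed by substituting $a_k \to 2$, $b_k \to 1/2$ into the explicit formula for $z_k^{(p)}$. The paper phrases the root analysis via a compactness/subsequence contradiction, while you phrase it via uniform convergence of $g_p$ to $g_2$ on $[0,1]$; these are interchangeable, and the decisive point you flag at the end — that the a priori bound keeps $d_k(p)$ away from the root $x=1$ — is exactly the device the paper relies on as well.
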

Next, we consider the case $p\to \infty$. In this case, $z_k^{(p)}$ converges to another singular profile. To see this, we put $\hat{a}_0=2$ and $\hat{c}_0=1$.  Then, for all $k\in \mathbb{N}$, we define constants $\hat{a}_k\in(0,2)$ and $\hat{c}_k\in(0,\hat{c}_{k-1})$ by the relations,
\begin{equation}\label{eq:*1}
\frac{2}{2+\hat{a}_{k-1}}\log{\frac{\hat{c}_{k-1}}{\hat{c}_k}}-1+\frac{\hat{c}_k}{\hat{c}_{k-1}}=0
\end{equation}
and
\begin{equation}\label{eq:*2}
\hat{a}_k=2-\frac{\hat{c}_k}{\hat{c}_{k-1}}(2+\hat{a}_{k-1}).
\end{equation}
These numbers are well-defined. See Lemma \ref{lem:p1} below. Then we have the following. 
\begin{proposition}[The case $p\to \infty$]\label{prop:p*}
Suppose $p>2$. Then we get $\delta_k(p)^{p-1}\to \hat{c}_{k}$ and $a_{k}(p)\to \hat{a}_{k}$ as $p\to \infty$. Particularly, we obtain $\delta_k(p)\to1$ and  $z_k^{(p)}(r)\to \hat{z}_k(r)$ for all $r>0$ as $p\to \infty$ where  $\hat{z}_k$ is defined by 
\begin{equation}\label{hatz}
\hat{z}_k(r)=\log{\frac{2\hat{a}_k^2 \hat{b}_k}{r^{2-\hat{a}_k}(1+\hat{b}_kr^{\hat{a}_k})^2}}
\end{equation}
with $\hat{b}_k=(\sqrt{2}/\hat{a}_k)^{\hat{a}_k}$ and satisfies
\[
\begin{cases}
-\hat{z}_k''-\frac1r \hat{z}_k'=e^{\hat{z}_k}\text{ in }(0,\infty),\\
\hat{z}_k(\hat{a}_k/\sqrt{2})=0,\ (\hat{a}_k/\sqrt{2})\hat{z}_k'(\hat{a}_k/\sqrt{2})=2,
\end{cases}
\]
and
\[
\int_0^\infty e^{\hat{z}_k}rdr=2\hat{a}_k.
\]
Moreover, we have that $\hat{a}_k$ is strictly decreasing with respect to $k\in \mathbb{N}$, $\hat{a}_k\to0$ as $k\to \infty$, and  $\sum_{k=0}^\infty \hat{a}_k=\infty$. Especially, we deduce  
\[
\hat{z}_k(r)-\log{\frac{\hat{a}_k^2}{2}}\to 2\log{\frac1r}
\]
for any $r>0$ as $k\to \infty$.
\end{proposition}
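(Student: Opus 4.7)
The plan is to prove Proposition \ref{prop:p*} by simultaneous induction on $k$, tracking the two convergences $a_k(p)\to \hat{a}_k$ and $\delta_k(p)^{p-1}\to \hat{c}_k$ as $p\to \infty$. The base case $k=0$ is immediate since $a_0(p)=2=\hat{a}_0$ and $\delta_0(p)^{p-1}=1=\hat{c}_0$. For the inductive step, I would set $d_k(p):=\delta_k(p)/\delta_{k-1}(p)\in(0,1)$, so that \eqref{eq:del2} reads $a_k(p)=2-d_k(p)^{p-1}(2+a_{k-1}(p))$. Thus, once I establish $d_k(p)^{p-1}\to \hat{d}_k:=\hat{c}_k/\hat{c}_{k-1}$, the convergence $a_k(p)\to \hat{a}_k$ follows by comparison with \eqref{eq:*2}, and the corresponding convergence $\delta_k(p)^{p-1}=d_k(p)^{p-1}\cdot \delta_{k-1}(p)^{p-1}\to \hat{d}_k\hat{c}_{k-1}=\hat{c}_k$ is automatic.

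The heart of the argument is passing to the limit in \eqref{eq:del1}. First I would observe that $d_k(p)^{p-1}$ must stay bounded away from both $0$ and $1$: if it tended to $0$, then $d_k(p)^p=d_k(p)\cdot d_k(p)^{p-1}\to 0$ would force $2p(1-d_k(p))/(2+a_{k-1}(p))\to 1$, giving $d_k(p)\to 1$ and contradicting $d_k(p)^{p-1}\to 0$; a dual argument rules out limits at $1$. Writing $\alpha_p:=(p-1)\log d_k(p)$, so that $d_k(p)^{p-1}=e^{\alpha_p}$ and $d_k(p)=1+\alpha_p/(p-1)+O(1/p^2)$ for bounded $\alpha_p$, substitution into \eqref{eq:del1} yields after multiplying by $1$ and rearranging
\[
\frac{2}{2+a_{k-1}(p)}\bigl(-\alpha_p+O(1/p)\bigr)=1-e^{\alpha_p}d_k(p).
\]
Any subsequential limit $\alpha_\infty$ of $\alpha_p$ therefore satisfies $\tfrac{2}{2+\hat{a}_{k-1}}\log(1/\hat{d}_k)-1+\hat{d}_k=0$ with $\hat{d}_k=e^{\alpha_\infty}$, which is precisely \eqref{eq:*1}. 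Since the map $x\mapsto \tfrac{2}{2+\hat{a}_{k-1}}\log(1/x)-1+x$ is strictly monotone on $(0,1)$, the limit is unique, and the full limit $d_k(p)^{p-1}\to \hat{d}_k$ follows.

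Having established both inductive statements, $\delta_k(p)=\bigl(\delta_k(p)^{p-1}\bigr)^{1/(p-1)}\to 1$ is automatic, and $b_k(p)=(\sqrt{2}/a_k(p))^{a_k(p)}\to \hat{b}_k$ by continuity; the explicit formula for $z_k^{(p)}$ in Theorem \ref{thm30} then yields pointwise convergence to $\hat{z}_k$ as defined in \eqref{hatz}. The differential equation, boundary conditions at $\hat{a}_k/\sqrt{2}$, and integral identity $\int_0^\infty e^{\hat{z}_k}r\,dr=2\hat{a}_k$ all transfer to the limit either by direct verification using \eqref{hatz} or by passing to the limit in the corresponding identities for $z_k^{(p)}$.

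For the asymptotic properties of $(\hat{a}_k)$, I would mirror the arguments of Lemmas \ref{lem:bf2} and \ref{lem:div} applied to the limit recurrence. Substituting $\hat{d}_k=(2-\hat{a}_k)/(2+\hat{a}_{k-1})$ from \eqref{eq:*2} into \eqref{eq:*1} produces a scalar equation for $\hat{a}_k$ in terms of $\hat{a}_{k-1}$, whose sign analysis on $(0,2]$ gives $\hat{a}_k<\hat{a}_{k-1}$ and hence existence of a limit $\hat{a}_*\in[0,2)$; the same fixed-point equation forces $\hat{a}_*=0$. For divergence of $\sum \hat{a}_k$, I set $\hat{e}_k:=1-\hat{d}_k\in(0,1)$ and Taylor-expand \eqref{eq:*1}, \eqref{eq:*2} to obtain $\hat{a}_{k-1}=\hat{e}_k+\tfrac{1}{3}\hat{e}_k^2+O(\hat{e}_k^3)$ and $\hat{a}_k=2-(1-\hat{e}_k)(2+\hat{a}_{k-1})$, leading after elimination to a second-order recurrence $\hat{e}_{k+1}=\hat{e}_k-c\,\hat{e}_k^2+O(\hat{e}_k^3)$ for some $c>0$; Lemma \ref{pre0} then delivers $\hat{e}_k\ge \beta/k$ for large $k$, whence both $\sum \hat{e}_k=\infty$ and $\sum \hat{a}_k=\infty$. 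The final asymptotic $\hat{z}_k(r)-\log(\hat{a}_k^2/2)\to 2\log(1/r)$ is then immediate from \eqref{hatz} because $\hat{a}_k\to 0$ yields $\hat{b}_k=(\sqrt{2}/\hat{a}_k)^{\hat{a}_k}\to 1$. The main obstacle is the middle paragraph: obtaining a priori boundedness of $\alpha_p$ before the Taylor expansion can be used, and controlling the remainder terms uniformly enough that passage to the limit in \eqref{eq:del1} is rigorous; the subsequent divergence argument is mechanical but requires careful bookkeeping of the $O(\hat{e}_k^3)$ terms in the limit recurrence.
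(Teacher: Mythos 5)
Your approach is broadly the same as the paper's (induction on $k$, reduction to the ratio $d_k(p)=\delta_k(p)/\delta_{k-1}(p)$, passing to the limit in \eqref{eq:del1} after establishing a priori bounds on $d_k(p)^{p-1}$), but the middle paragraph — which you yourself flag as the main obstacle — has two genuine gaps, and they sit precisely in the a priori bound step.

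First, the bound away from $1$: your ``dual argument'' does not produce a contradiction. If $d_k(p)^{p-1}\to 1$, then also $d_k(p)^p\to 1$ and $p(1-d_k(p))\to 0$, and substituting into \eqref{eq:del1} gives $0-1+1=0$, which is consistent, not contradictory. The correct way to get the upper bound is to apply the mean value theorem to $1-d_k^p=p\xi^{p-1}(1-d_k)$ with $\xi\in(d_k,1)$ inside \eqref{eq:del1}, yielding $d_k(p)^{p-1}<\xi^{p-1}=2/(2+a_{k-1}(p))$. Combined with the inductive hypothesis $a_{k-1}(p)\to\hat{a}_{k-1}>0$, this gives $\limsup_p d_k(p)^{p-1}\le 2/(2+\hat{a}_{k-1})<1$; this is exactly the bound recorded in the proof of Lemma \ref{lem:bf} and used in the paper's Lemma \ref{lem:pre}. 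Relatedly, your claim that $x\mapsto\tfrac{2}{2+\hat{a}_{k-1}}\log(1/x)-1+x$ is strictly monotone on $(0,1)$ is false: its derivative $-\tfrac{2}{(2+\hat{a}_{k-1})x}+1$ vanishes at $x=2/(2+\hat{a}_{k-1})\in(0,1)$, so the map has an interior minimum, and identifying which zero the subsequential limit selects again needs the a priori bound $\limsup d_k^{p-1}\le 2/(2+\hat{a}_{k-1})$ to localize matters on the branch where the map is monotone. (Also, in your argument ruling out $d_k^{p-1}\to 0$, the step ``giving $d_k(p)\to 1$ and contradicting $d_k(p)^{p-1}\to 0$'' is a non sequitur as written, since $d_k\to1$ is compatible with $d_k^{p-1}\to 0$; the fix is to note that the forcing gives $(p-1)(1-d_k(p))\to(2+\hat{a}_{k-1})/2$, which in turn makes $d_k(p)^{p-1}=e^{(p-1)\log d_k(p)}$ converge to a strictly positive number.)

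The paper's Lemma \ref{lem:pre} abstracts this step cleanly: after the substitution $t=x^{p-1}$ one shows $\tilde{g}_p(t)=g_p(t^{1/(p-1)})\to\tilde{g}_*(t)=\tfrac{2}{A_*}\log(1/t)-1+t$ locally uniformly on $(0,1]$, while the two-sided bounds $1-A_p/(2p)<x_p<(2/A_p)^{1/(p-1)}$ keep $t_p=x_p^{p-1}$ in a compact subset of $(0,2/A_*)\subset(0,1)$ where $\tilde{g}_*$ has its unique zero. Your remaining steps (monotonicity and divergence of $(\hat{a}_k)$ via Taylor expansion and Lemma \ref{pre0}, the final asymptotics of $\hat{z}_k$) do mirror Lemmas \ref{lem:p2} and \ref{lem:p3} and are sound, modulo a coefficient slip: expansion of \eqref{eq:*1} gives $\hat{a}_{k-1}=\hat{e}_k+\tfrac{2}{3}\hat{e}_k^2+O(\hat{e}_k^3)$, not $\tfrac{1}{3}$, though this does not affect the divergence conclusion.
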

Noting the previous propositions, in each case of $p_n\to 2^+$ and $p_n\to \infty$ in \eqref{eq:q2}, we can expect the existence of an energy-unbounded sequence of blow-up solutions  which holds infinite sequence of bubbles characterized by the regular profile $\tilde{z}_0$ and by the sequence $(\hat{z}_k)$ of singular ones respectively. We remark that, in the former case, since $\delta_k(p_n)/\delta_{k-1}(p_n)\to0$ as $n\to \infty$ for every $k\in \mathbb{N}$, each bubble tend to be relatively away from the previous one while in the latter case, the fact $\delta_k(p_n)\to1$ suggests that all the bubbles tend to gather around the origin.   Let us finally describe  such a blow-up behavior of $(v_n)$.  We first give the statement for the case $p\to 2^+$. 
\begin{corollary}\label{thm4} 
Assume that for all $p>2$ which is sufficiently closed to $2$, $h_p$ satisfies (H1) and there exists a value $\mu_{p}>0$ such that \eqref{p} admits a solution $(\la,u)$ with $u(0)=\mu$ for all $\mu\ge\mu_{p}$. Let $(p_n)\subset (2,3)$ be any sequence of values such that $p_n\to2$ as $n\to \infty$. Then, for all large $n\in \mathbb{N}$, there exists a sequence $\{(\la_n,\mu_n,v_n)\}$ of solutions of  \eqref{eq:q2} such that $\mu_n\to \infty$ as $n\to \infty$ and for all $k\in \mathbb{N}\cup\{0\}$, there exists a sequence $(\tilde{r}_{k,n})\subset (0,1)$ such that $\tilde{r}_{k,n}\to0$, $v_n(\tilde{r}_{k,n})\to \infty$, $v_n(\tilde{r}_{k,n})/(\delta_k(p_n)\mu_n)\to 1$, 
and further, if we put sequences $(\tilde{\ga}_{k,n})$ of positive values  and $(\tilde{z}_{k,n})$  of functions  so that if $k=0$,
\[
p_n\la_nv_n(0)^{p-1}f_n(v_n(0))\tilde{\ga}_{0,n}^2=1
\]
and
\[
\tilde{z}_{0,n}(r)=p_n v_n(0)^{p-1}(v_n(\tilde{\ga}_{0,n}r)-v_n(0))
\]
for all $r\in [0,1/\tilde{\ga}_{0,n}]$ and $n\in \mathbb{N}$ and if $k\ge1$,
\[p_n\la_nv_n(\tilde{r}_{k,n} r)^{p-1}f_n(v_n(\tilde{r}_{k,n}))\tilde{\ga}_{k,n}^2=1
\]
and
\[
\tilde{z}_{k,n}(r)=p_n v_n(\tilde{r}_{k,n})^{p-1}(v_n(\tilde{\ga}_{k,n} r)-v_n(\tilde{r}_{k,n})) 
\]
for all $r\in [0,1/\tilde{\ga}_{k,n}]$  and $n\in \mathbb{N}$, then we have for all $k\in \mathbb{N}\cup\{0\}$ that  $\tilde{\ga}_{k,n}\to0$, $\tilde{z}_{0,n}\to z_0$ in $C^1_{\text{loc}}([0,\infty))$, and $\tilde{z}_{k,n}\to \tilde{z}_0$ in $C^2_{\text{loc}}((0,\infty))$ if $k\ge1$ where $\tilde{z}_0$ is the function defined by \eqref{eq:tilz}. Moreover, for all $k\in \mathbb{N}\cup\{0\}$, there exist sequences $(\tilde{\bar{\rho}}_{k,n}),(\tilde{\rho}_{k,n})\subset (0,1)$ of values  such that $\tilde{\bar{\rho}}_{0,n}=0$, $v_n(\tilde{\rho}_{0,n})/\mu_n\to1$,  
\[ 
p_nv_n(0)^{p_n-1}\int_0^{\tilde{\rho}_{0,n}}\la_n f_n(v_n)rdr \to 
4,
\]
and if $k\ge1$, $\tilde{\rho}_{k-1,n}/\tilde{\bar{\rho}}_{k,n}\to 0$, $\tilde{\bar{\rho}}_{k,n}/\tilde{r}_{k,n}\to0$,  $\tilde{r}_{k,n}/\tilde{\rho}_{k,n}\to 0$, $v_n(\tilde{\rho}_{k,n})/(\delta_k(p_n)\mu_n)\to1$, $v_n(\tilde{\bar{\rho}}_{k,n})/(\delta_k(p_n)\mu_n)\to1$, 
\[
p_nv_n(0)^{p_n-1}\int_{\tilde{\rho}_{k-1,n}}^{\tilde{\bar{\rho}}_{k,n}}\la_n f_n(v_n)rdr\to 0,
\]
and
\[
p_nv_n(\tilde{r}_{k,n})^{p_n-1}\int_{\tilde{\bar{\rho}}_{k,n}}^{\tilde{\rho}_{k,n}}\la_n f_n(v_n)rdr\to 4
\]
as $n\to \infty$.  Furthermore,  we obtain
\[
\lim_{n\to \infty} p_n\int_0^1\la_n v_n^{p_n-1}f_n(v_n)rdr=\infty
\]
and
\[
\lim_{n\to \infty}\frac{\log{\frac1{\la_n}}}{v_n(0)^{p_n-1}}=0.
\]
Finally, we get that for all $k\in \mathbb{N}\cup\{0\}$, 
\[
v_n(\tilde{r}_{k,n})=\left\{(2+o(1))\log{\frac1{\tilde{r}_{k,n}}}\right\}^{\frac1{p_n}}
\]
and there exists a sequence $(\tilde{r}_{k,n}^*)\subset (\tilde{r}_{k,n},\tilde{r}_{k+1,n})$ such that 
\[
v_n(\tilde{r}_{k,n}^*)=\left(o\left(1\right)\log{\frac1{\tilde{r}_{k,n}^*}}\right)^{\frac1{p_n}}
\]
as $n\to \infty$.
\end{corollary}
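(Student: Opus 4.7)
The plan is a diagonal extraction. For each $n$ large enough that the existence assumption applies to $p_n$, I apply Theorems \ref{thm1}, \ref{thm30}, and \ref{thm3} (together with the oscillation estimates of Theorem \ref{thm:osc}) to the $p_n$-equation. These theorems, applied to any sequence of solutions with $\mu\to\infty$ at fixed $p=p_n$, deliver---along subsequences---convergence of all relevant quantities (bubble centers $r_{k,n}^{(p_n)}$, scaling parameters $\gamma_{k,n}^{(p_n)}$, rescaled profiles $z_{k,n}^{(p_n)}\to z_k^{(p_n)}$, local energies, and the asymptotics for $\log(1/\la_n)/\mu_n^{p_n}$). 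For each $n$ I pick $\mu_n$ large enough and choose an appropriate term in the corresponding subsequence so that all those conclusions hold simultaneously for bubble indices $k=0,1,\dots,n$ up to an error of $1/n$, and so that $\mu_n\to\infty$. This yields the required single sequence $\{(\la_n,\mu_n,v_n)\}$ with the centers $\tilde r_{k,n}=r_{k,n}^{(p_n)}$, $\tilde\gamma_{k,n}=\gamma_{k,n}^{(p_n)}$, etc.

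The second step is to translate the $p_n$-dependent data into the limits of the corollary. The key input is Proposition \ref{prop:p2}: as $p_n\to 2^+$, $\delta_k(p_n)/\delta_{k-1}(p_n)\to 0$, $a_k(p_n)\to 2$, and $z_k^{(p_n)}\to \tilde z_0$ pointwise on $(0,\infty)$. For $k\geq 1$, chaining this pointwise convergence with the $C^2_{\mathrm{loc}}$-convergence $\tilde z_{k,n}-z_k^{(p_n)}\to 0$ (provided by Theorem \ref{thm30} along the chosen diagonal) and standard elliptic compactness for the rescaled equation with the uniformly convergent right-hand side $e^{z_k^{(p_n)}}\to e^{\tilde z_0}$ upgrades this to $\tilde z_{k,n}\to \tilde z_0$ in $C^2_{\mathrm{loc}}((0,\infty))$. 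For $k=0$ the first-bubble profile $z_0$ does not depend on $p$, so the statement is simply Theorem \ref{thm1}.

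The energy limits reflect the renormalization used in the corollary: we measure energy around the $k$-th bubble with weight $v_n(\tilde r_{k,n})^{p_n-1}\sim (\delta_k(p_n)\mu_n)^{p_n-1}$ rather than $\mu_n^{p_n-1}$. This cancels the factor $\delta_k(p_n)^{p_n-1}$ in Theorem \ref{thm30}'s formula $2a_k/\delta_k^{p-1}$, leaving the clean limit $2a_k(p_n)\to 4=\int_0^\infty e^{\tilde z_0}r\,dr$. The vanishing of the intermediate-region energies follows from \eqref{eq:en1}; the divergence of the total energy is Theorem \ref{thm3} propagated through the diagonal, and $\log(1/\la_n)/v_n(0)^{p_n-1}\to 0$ is a direct restatement of \eqref{sup3} together with $\mu_n^{p_n-1}\sim v_n(0)^{p_n-1}$.

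Finally, the curves on which $v_n$ oscillates come from Theorem \ref{thm:osc}(iii): at bubble centers, \eqref{de11} gives $v_n(\tilde r_{k,n})^{p_n}=(2+o(1))\log(1/\tilde r_{k,n})$; at the valley points, pick $\tilde r_{k,n}^*\in(\tilde r_{k,n},\tilde r_{k+1,n})$ with $v_n(\tilde r_{k,n}^*)/(\delta_k^*(p_n)\mu_n)\to 1$ (where $\delta_k^*(p_n)$ is the minimizer of $\beta_k^{(p_n)}$), so that \eqref{bo2} yields $v_n(\tilde r_{k,n}^*)^{p_n}=(\beta_k^*(p_n)+o(1))\log(1/\tilde r_{k,n}^*)$. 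Since $\beta_k^*(p)=(2+a_k(p))\bigl(1-a_k(p)/(2(p-1))\bigr)^{p-1}$ and $a_k(p_n)\to 2$, $p_n-1\to 1$, the second factor vanishes in the limit, so $\beta_k^*(p_n)\to 0$, which is exactly the $o(1)$ coefficient claimed. The main obstacle is the bookkeeping in the diagonal extraction: the number of steps along the $\mu$-subsequence needed to realize the asymptotic regime for a given number of bubbles depends on $p$, and one must quantify this carefully as $p_n\to 2^+$ to guarantee a single sequence achieves the stated limits for every $k$ simultaneously.
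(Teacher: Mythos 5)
Your proposal is correct and matches the paper's proof essentially step by step: a diagonal extraction (choosing, for each $p_m$, a large $n_m$ so that the first $m+1$ bubbles of $u_{m,n_m}$ are captured to accuracy $1/m$), followed by Proposition \ref{prop:p2} to pass the $p$-dependent limit profiles $z_k^{(p_m)}$, amplitudes $a_k(p_m)$, and valley heights $\beta_k^*(p_m)$ to their $p\to 2^+$ limits. One small remark: to upgrade the pointwise convergence $z_k^{(p_m)}\to\tilde z_0$ to $C^2_{\mathrm{loc}}$ you invoke "standard elliptic compactness," but this detour is unnecessary here—since $z_k^{(p)}(r)=\log\bigl(2a_k^2 b_k/(r^{2-a_k}(1+b_k r^{a_k})^2)\bigr)$ is an explicit smooth function of $(a_k,r)$ and $a_k(p)\to 2$, the convergence to $\tilde z_0$ in $C^\ell_{\mathrm{loc}}$ for any $\ell$ follows directly, and the paper then simply applies the triangle inequality $\|\tilde z_{k,m}-\tilde z_0\|_{C^2(K)}\le\|\tilde z_{k,m}-z_k^{(p_m)}\|_{C^2(K)}+\|z_k^{(p_m)}-\tilde z_0\|_{C^2(K)}$.
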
 
We remark that, as noted in the first part of Section \ref{subsec:ce}, the conditions on $h_p$ in the present corollary admit the typical examples. Moreover, we notice some different behaviors from those  in Theorem \ref{thm30}. In fact, recalling $\delta_k(p)/\delta_{k-1}(p)\to 0$ as $p\to 2^+$, we get $v_n(\tilde{r}_{k,n})/v_n(\tilde{r}_{k-1,n})\to0$ as $n\to \infty$ for all $k\in \mathbb{N}$  which implies that each concentrating bubble is located relatively farther away from the previous one. Furthermore, the limit profile and energy of each bubble is given by the regular function $\tilde{z}_0$ and $\int_0^\infty e^{\tilde{z}_0}rdr$ respectively as expected from Proposition \ref{prop:p2}. Hence such behaviors are  closer to those observed in the critical case $p=2$ by \cite{D} while the number and total energy of bubbles diverge to infinity in our case which is the crucial difference.   In addition, the final asymptotic formula in the previous corollary comes from \eqref{bo2} and the fact $\beta_k^*(p)\to 0$ as $p\to 2^+$. As a result, the last two formulas  suggest that the amplitude of each oscillation of $v_n$ gets larger and larger as $p_n\to2^+$. 

Lastly, we give the result for the case $p\to \infty$.
\begin{corollary}\label{cor5} 
We suppose that for all $p>2$ which is sufficiently large, $h_p$ verifies (H1) and there exists a value $\mu_{p}>0$ such that \eqref{p} permits  a solution $(\la,u)$ with $u(0)=\mu$ for all $\mu\ge\mu_{p}$. Take any sequence  $(p_n)\subset (2,\infty)$ of values  such that $p_n\to\infty$ as $n\to \infty$. Then, for all large $n\in \mathbb{N}$, there exists a sequence $\{(\la_n,\mu_n,v_n)\}$ of solutions of  \eqref{eq:q2} such that $\mu_n\to \infty$ as $n\to \infty$ and for all $k\in \mathbb{N}\cup\{0\}$, there exists a sequence $(\hat{r}_{k,n})\subset (0,1)$ such that $\hat{r}_{k,n}\to0$, $v_n(\hat{r}_{k,n})\to \infty$, $v_n(\hat{r}_{k,n})/\mu_n\to 1$, 
and further, setting sequences $(\hat{\ga}_{k,n})$  of positive numbers  and $(\hat{z}_{k,n})$ of functions   so that if $k=0$,
\[
p_n\la_nv_n(0)^{p-1}f_n(v_n(0))\hat{\ga}_{0,n}^2=1
\]
and
\[
\hat{z}_{0,n}(r)=p_n v_n(0)^{p-1}(v_n(\hat{\ga}_{0,n}r)-v_n(0))
\]
for all $r\in [0,1/\hat{\ga}_{0,n}]$ and $n\in\mathbb{N}$ and if $k\ge1$,
\[
p_n\la_nv_n(\hat{r}_{k,n} r)^{p-1}f_n(v_n(\hat{r}_{k,n}))\hat{\ga}_{k,n}^2=1
\]
and
\[
\hat{z}_{k,n}(r)=p_n v_n(\hat{r}_{k,n})^{p-1}(v_n(\hat{\ga}_{k,n} r)-v_n(\hat{r}_{k,n})) 
\]
for all $r\in [0,1/\hat{\ga}_{k,n}]$ and $n\in\mathbb{N}$, then we obtain for all $k\in \mathbb{N}\cup\{0\}$ that  $\hat{\ga}_{k,n}\to0$, $\hat{z}_{0,n}\to z_0$ in $C^1_{\text{loc}}([0,\infty))$, and $\hat{z}_{k,n}\to \hat{z}_k$ in $C^2_{\text{loc}}((0,\infty))$ if $k\ge1$ where $\hat{z}_k$ is  defined by \eqref{hatz}. Furthermore, for all $k\in \mathbb{N}\cup\{0\}$, there exist sequences $(\hat{\bar{\rho}}_{k,n}),(\hat{\rho}_{k,n})\subset (0,1)$  of values such that $\hat{\bar{\rho}}_{0,n}=0$, $v_n(\hat{\rho}_{0,n})/\mu_n\to1$,  
\[ 
p_nv_n(0)^{p_n-1}\int_0^{\hat{\rho}_{0,n}}\la_n f_n(v_n)rdr \to 
4,
\]
and if $k\ge1$, $\hat{\rho}_{k-1,n}/\hat{\bar{\rho}}_{k,n}\to 0$, $\hat{\bar{\rho}}_{k,n}/\hat{r}_{k,n}\to0$,  $\hat{r}_{k,n}/\hat{\rho}_{k,n}\to 0$, $v_n(\hat{\rho}_{k,n})/\mu_n\to1$, $v_n(\hat{\bar{\rho}}_{k,n})/\mu_n\to1$, 
\[
p_nv_n(0)^{p_n-1}\int_{\hat{\rho}_{k-1,n}}^{\hat{\bar{\rho}}_{k,n}}\la_n f_n(v_n)rdr\to 0,
\]
and
\[
p_nv_n(\hat{r}_{k,n})^{p_n-1}\int_{\hat{\bar{\rho}}_{k,n}}^{\hat{\rho}_{k,n}}\la_n f_n(v_n)rdr\to 2\hat{a}_k
\]
as $n\to \infty$. In addition, we get
\[
\lim_{n\to \infty} p_n\int_0^1\la_n v_n^{p_n-1}f_n(v_n)rdr=\infty
\]
and
\[
\lim_{n\to \infty}\frac{\log{\frac1{\la_n}}}{v_n(0)^{p_n}}=0.
\]
Lastly, we obtain that  for all $k\in \mathbb{N}\cup\{0\}$, 
\[
v_n(\hat{r}_{k,n})=\left\{(2+o(1))\log{\frac1{\hat{r}_{k,n}}}\right\}^{\frac1{p_n}}
\]
and there exists a sequence $(\hat{r}_{k,n}^*)\subset (\hat{r}_{k,n},\hat{r}_{k+1,n})$ such that 
\[
v_n(\hat{r}_{k,n}^*)= \left\{\left(\hat{\beta}_k^*+o(1)\right)\log{\frac1{\hat{r}_{k,n}^*}}\right\}^{\frac1{p_n}}
\]
as $n\to \infty$ where $\hat{\beta}_k^*=(2+\hat{a}_k)e^{-\hat{a}_k/2}<2$.
\end{corollary}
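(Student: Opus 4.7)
The plan is a diagonal extraction combining the fixed-$p$ concentration theorems of this paper with Proposition \ref{prop:p*}. Fix $n\in \mathbb{N}$. Since $p_n$ is held constant and $h_{p_n}$ satisfies (H1), the existence hypothesis yields a sequence $\{(\la_{n,m}, \mu_{n,m}, u_{n,m})\}_{m\in \mathbb{N}}$ of solutions of \eqref{eq:q2} with $\mu_{n,m} \to \infty$ as $m\to \infty$. Applying Theorems \ref{thm1}, \ref{thm30}, and \ref{thm3} to this sequence (all with $p=p_n$), I obtain for every $k\in \mathbb{N}\cup\{0\}$ radii $(r_{k,n,m})$, $(\rho_{k,n,m})$, $(\bar\rho_{k,n,m})$ and scaled functions $(z_{k,n,m})$ converging, as $m\to \infty$, to the singular profile $z_k^{(p_n)}$ with energy $2a_k(p_n)/\delta_k(p_n)^{p_n-1}$, together with the asymptotic relations for $\log(1/\la_{n,m})$ and the energy divergence \eqref{sup2}. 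All of these statements are internal to a fixed exponent $p_n$.

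The second step invokes Proposition \ref{prop:p*}: as $n\to \infty$, $a_k(p_n)\to \hat a_k$, $\delta_k(p_n)^{p_n-1}\to \hat c_k$, $b_k(p_n)=(\sqrt{2}/a_k(p_n))^{a_k(p_n)}\to \hat b_k$, and hence the explicit expression for $z_k^{(p)}$ forces $z_k^{(p_n)}(r)\to \hat z_k(r)$ uniformly on compact subsets of $(0,\infty)$. I then perform a diagonal selection $m=m(n)$, taking $m(n)$ large enough that at stage $n$ the first $n$ bubble estimates for $\{u_{n,m}\}$ hold with error at most $1/n$ and the parameters $a_k(p_n),\delta_k(p_n)^{p_n-1}$ lie within $1/n$ of their limits for $0\le k\le n$. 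Setting $(\la_n,\mu_n,v_n):=(\la_{n,m(n)},\mu_{n,m(n)},u_{n,m(n)})$ and $\hat r_{k,n}:=r_{k,n,m(n)}$ (and analogously for $\hat\rho_{k,n}, \hat{\bar\rho}_{k,n}, \hat\ga_{k,n}, \hat z_{k,n}$), the relation $\delta_k(p_n)\to 1$ gives $v_n(\hat r_{k,n})/\mu_n\to 1$, $\hat z_{k,n}\to \hat z_k$ in $C^2_{\mathrm{loc}}((0,\infty))$, and the bubble energies $p_nv_n(\hat r_{k,n})^{p_n-1}\int_{\hat{\bar\rho}_{k,n}}^{\hat\rho_{k,n}}\la_n f_n(v_n) r\,dr\to 2\hat a_k$, while \eqref{sup2} and \eqref{sup3} survive the diagonal passage.

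The oscillation formulas come from Theorem \ref{thm:osc}(iii) applied with $p=p_n$: identity \eqref{de02} at $r=\hat r_{k,n}$ gives $v_n(\hat r_{k,n})=\{(2+o(1))\log(1/\hat r_{k,n})\}^{1/p_n}$, while choosing $\hat r_{k,n}^*\in(\hat r_{k,n},\hat r_{k+1,n})$ with $v_n(\hat r_{k,n}^*)/\mu_n\to \delta_k^*(p_n)$ and applying \eqref{bo2} yields $v_n(\hat r_{k,n}^*)^{p_n}=(\beta_k^*(p_n)+o(1))\log(1/\hat r_{k,n}^*)$. A direct computation together with Proposition \ref{prop:p*} gives
\[
\beta_k^*(p_n)=(2+a_k(p_n))\Bigl(1-\frac{a_k(p_n)}{2(p_n-1)}\Bigr)^{p_n-1}\longrightarrow(2+\hat a_k)e^{-\hat a_k/2}=\hat\beta_k^*,
\]
and $\hat\beta_k^*<2$ because $(2+x)e^{-x/2}<2$ for every $x>0$. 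The main difficulty is that the implicit $p$-dependence in the $o(1)$ terms of Theorems \ref{thm1}, \ref{thm30}, \ref{thm3} is not a priori uniform as $p_n\to \infty$; however, each assertion of the corollary involves only finitely many bubbles and finitely many bookkeeping quantities, and the relevant constants are continuous in $p$ by Proposition \ref{prop:p*} and Lemma \ref{lem:p1}, so the $p_n$-dependence of the errors is absorbed into the choice of $m(n)$ at each stage of the diagonalization.
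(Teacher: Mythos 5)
Your proposal is correct and matches the paper's approach: the paper's proof of Corollary~\ref{cor5} simply points to Proposition~\ref{prop:p*}, the limit $\beta_k^*(p)\to\hat\beta_k^*$, and the diagonal-extraction argument already carried out in detail for Corollary~\ref{thm4}, which is exactly the diagonalization over $m(n)$ you perform. Your explicit computation $\beta_k^*(p_n)=(2+a_k(p_n))\bigl(1-a_k(p_n)/(2(p_n-1))\bigr)^{p_n-1}\to(2+\hat a_k)e^{-\hat a_k/2}$ and the observation that $(2+x)e^{-x/2}<2$ for $x>0$ correctly supply the detail the paper leaves implicit.
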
 
A different point from the previous case is that $v_n(\hat{r}_{k,n})/\mu_n\to1$ as $n\to \infty$ for all $k\in \mathbb{N}\cup\{0\}$ by the fact $\delta_k(p)\to1$ as $p\to \infty$ in Proposition \ref{prop:p*}. Hence each bubble appears relatively closer to the top of the graph. Moreover, the last formula is a consequence of the fact that for each $k\in \mathbb{N}$, $\beta_k^*(p)\to \hat{\beta}_k^*$ as $p\to\infty$. Notice also that  $\hat{\beta}_k^*\to 2$ as $k\to \infty$. Hence in the present limit case, the oscillation behavior is similar to that in the case of fixed $p>2$ while the fact $\lim_{p\to \infty}\delta_k(p)= 1$ yields that each oscillation occurs much closer to the origin.    

In the next sections,  we prove the propositions and corollaries above. 
\subsection{Proofs}
Let us first show Proposition \ref{prop:p2}. We have the following. 
\begin{lemma}\label{lem:ad} Assume $p>2$.  Then we have that for all $k\in \mathbb{N}$, $\delta_k(p)/\delta_{k-1}(p)\to 0$ and $a_k(p)\to2$ as $p\to2$.
\end{lemma}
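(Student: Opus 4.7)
The plan is to argue by induction on $k\in\mathbb{N}$, using the defining recurrence formulas \eqref{eq:del1} and \eqref{eq:del2} directly, together with an argument that isolates the correct branch of \eqref{eq:del1} as the two parameters $p$ and $a_{k-1}$ approach the degenerate values $2$ and $2$. Throughout I set $d_k(p):=\delta_k(p)/\delta_{k-1}(p)$.

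For the base case $k=1$, since $a_0=2$, formula \eqref{eq:del1} reduces to
\[
G_1(d,p):=(p/2)(1-d)-1+d^p=0.
\]
One observes $G_1(1,p)=0$, $G_1(0,p)=(p-2)/2>0$ for $p>2$, while $\partial_d G_1(d,p)=-p/2+pd^{p-1}$ is negative near $d=0$ and positive near $d=1$. Hence $G_1(\cdot,p)$ has a unique root $d_1(p)\in(0,1)$. Fix any $\epsilon\in(0,1)$; then
\[
G_1(\epsilon,p)=\frac{p-2}{2}+\epsilon\bigl(\epsilon^{p-1}-\tfrac{p}{2}\bigr)\longrightarrow \epsilon(\epsilon-1)<0\qquad (p\to 2^+),
\]
so $d_1(p)\in(0,\epsilon)$ for $p$ near $2$. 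Thus $d_1(p)\to 0$, and \eqref{eq:del2} yields $a_1(p)=2-4d_1(p)^{p-1}\to 2$.

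For the inductive step, suppose $a_{k-1}(p)\to 2$. Set $A(p):=2p/(2+a_{k-1}(p))\to 1$ and
\[
G_k(d,p):=A(p)(1-d)-1+d^p.
\]
Exactly the same shape analysis as above (using $G_k(1,p)=0$, $G_k(0,p)=A(p)-1>0$ since $a_{k-1}<2(p-1)$, and the unimodal behavior of $\partial_d G_k$) shows that the unique root $d_k(p)\in(0,1)$ of $G_k(\cdot,p)$ exists and is isolated. Evaluating at a fixed $\epsilon\in(0,1)$,
\[
G_k(\epsilon,p)=(A(p)-1)(1-\epsilon)+\epsilon(\epsilon^{p-1}-1)\longrightarrow \epsilon(\epsilon-1)<0\qquad(p\to 2^+),
\]
which forces $d_k(p)\in(0,\epsilon)$ for $p$ near $2$, hence $d_k(p)\to 0$. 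Then \eqref{eq:del2} gives $a_k(p)=2-d_k(p)^{p-1}(2+a_{k-1}(p))\to 2$ since $d_k(p)^{p-1}\to 0$, closing the induction.

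The main obstacle is precisely the degeneration at $(p,a_{k-1})=(2,2)$: there the equation \eqref{eq:del1} collapses to $d(d-1)=0$, so both $d=0$ and $d=1$ are a priori candidate limits for $d_k(p)$. The heart of the proof is to rule out the branch $d=1$ by producing a definite test value (any fixed $\epsilon\in(0,1)$ works) at which $G_k(\epsilon,p)$ is strictly negative in the limit; this traps the root in $(0,\epsilon)$. After that, the induction hypothesis on $a_{k-1}$ propagates the conclusion to all $k\in\mathbb{N}$ routinely.
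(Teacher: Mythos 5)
Your proposal is correct and follows essentially the same inductive scheme as the paper, but uses a slightly more direct mechanism to rule out the spurious branch $d=1$ of the degenerate limit equation $d(d-1)=0$. The paper isolates the relevant root with the explicit bound $d_k(p)<(2/(2+a_{k-1}(p)))^{1/(p-1)}$ (coming from Lemma \ref{lem:bf}), whose limit as $p\to 2$ is $1/2$; it then passes to a convergent subsequence $d_k(p_n)\to d_{k,*}\in(0,1/2]$ and derives a contradiction from the limit equation. You instead test the function $G_k(\cdot,p)$ at an arbitrary fixed $\epsilon\in(0,1)$ and show $G_k(\epsilon,p)<0$ once $p$ is close enough to $2$; since $G_k(0,p)>0$ and the root in $(0,1)$ is unique, this traps $d_k(p)\in(0,\epsilon)$. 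Your trapping step avoids the subsequence-and-contradiction packaging and exposes that the conclusion holds uniformly on $(0,\epsilon)$ rather than only along subsequences, but the mathematical content --- uniqueness of the root on $(0,1)$, monotonicity structure of $G_k$, degeneration of the limit equation at $(p,a_{k-1})=(2,2)$ --- is identical. The remaining steps ($d_k(p)^{p-1}\to 0$ since $d_k<\epsilon$ and $p-1>1$, hence $a_k(p)\to 2$ by \eqref{eq:del2}) match the paper.
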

\begin{proof}  For each $k\in \mathbb{N}$, put $d_k(p)=\delta_k(p)/\delta_{k-1}(p)$. We argue by induction. First, we consider the case $k=1$. From \eqref{eq:del1}, we see
\begin{equation}\label{eq:i1}
\frac p2(1-d_1(p))-1+d_1^p(p)=0.
\end{equation}
It follows that $d_1(p)<(1/2)^{1/(p-1)}$. We claim $d_1(p)\to 0$ as $p\to2$. If not, there exist a sequence $(p_n)\subset (2,3)$ and a constant $d_{1,*}\in(0,1/2]$ such that  $p_n\to2$ and $d_1(p_n)\to d_{1,*}$ as $n\to \infty$. Then \eqref{eq:i1} yields hat
\[
-d_{1,*}+d_{1,*}^2=0.
\]
This is impossible. As a consequence, we get by \eqref{eq:del2} that
\[
a_1(p)=2-4d_1(p)^{p-1} \to2
\]
as $p\to2$. This is the proof for the case $k=1$. We assume that the assertions are true for a number $k\ge 1$. From \eqref{eq:del1}, we see
\begin{equation}\label{eq:i2}
\frac {2p}{2+a_k(p)}(1-d_{k+1}(p))-1+d_{k+1}(p)^p=0.
\end{equation}
It follows that
\begin{equation}\label{eq:i3}
d_{k+1}(p)<\left(\frac{2}{2+a_k(p)}\right)^{\frac1{p-1}}.
\end{equation}
We claim $d_{k+1}(p)\to0$ as $p\to2$. If not, noting the assumption that $a_k(p)\to 2$ as $p\to2$, we find a sequence $(p_n)\subset (2,3)$ and a value $d_{k+1,*}\in (0,1/2]$ such that $p_n\to2$ and $d_{k+1}(p_n)\to d_{k+1,*}$ as $n\to \infty$.  Moreover, \eqref{eq:i2} yields that
\[
-d_{k+1,*}+d_{k+1,*}^2=0
\]
which  is again impossible. Then \eqref{eq:del2} shows $a_{k+1}(p)\to2$ as $p\to 2$. This proves the desired assertion. We finish the proof. 
\end{proof}
Then we prove Proposition \ref{prop:p2}.
\begin{proof}[Proof of Proposition \ref{prop:p2}] Using Lemma \ref{lem:ad} and recalling the definition of $z_k$ in Theorem \ref{thm30}, we readily get the proof. 
\end{proof}
Next we shall show Proposition \ref{prop:p*}. We begin with the following. 
\begin{lemma}\label{lem:p1} Suppose $p>2$, $\hat{a}_0=2$, and $\hat{c}_0=1$. Then for all $k\in \mathbb{N}$, $\hat{a}_k\in(0,2)$ and $\hat{c}_k\in(0,\hat{c}_{k-1})$ are well-defined by the relations \eqref{eq:*1} and \eqref{eq:*2}. 
\end{lemma}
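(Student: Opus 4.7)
I will argue by induction on $k$, in direct analogy with the proof of Lemma \ref{lem:bf}. For the base case $k=1$, with $\hat{a}_0=2$ and $\hat{c}_0=1$, relation \eqref{eq:*1} becomes
\[
\tfrac{1}{2}\log(1/\hat{c}_1)-1+\hat{c}_1=0,
\]
and I will show this has a unique solution $\hat{c}_1\in(0,1/2)$, which then makes $\hat{a}_1=2-4\hat{c}_1$ lie in $(0,2)$ by \eqref{eq:*2}.

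For the inductive step, assume $\hat{a}_{k-1}\in(0,2)$ and $\hat{c}_{k-1}\in(0,\hat{c}_{k-2})$ are given, and set $x=\hat{c}_k/\hat{c}_{k-1}$. Then \eqref{eq:*1} reduces to finding a root in $(0,1)$ of
\[
g(x):=\tfrac{2}{2+\hat{a}_{k-1}}\log(1/x)-1+x.
\]
The plan is to study $g$ on $(0,1]$. Note that $g(1)=0$, $g(x)\to+\infty$ as $x\to0^+$, $g$ is strictly convex (since $g''(x)=2/((2+\hat{a}_{k-1})x^2)>0$), and $g$ has its unique critical point at $x^*=2/(2+\hat{a}_{k-1})\in(0,1)$. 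To produce the root I need in $(0,x^*)$, the key sub-step is to verify $g(x^*)<0$. Writing $t=x^*\in(0,1)$, a short calculation gives $g(x^*)=t-1-t\log t$, and one checks that $\phi(t):=t-1-t\log t$ satisfies $\phi(1)=0$ and $\phi'(t)=-\log t>0$ on $(0,1)$, so $\phi(t)<0$ there. Consequently, by convexity and the intermediate value theorem, $g$ has exactly one zero $x_k\in(0,x^*)$ and exactly one (namely $x=1$) on $[x^*,1]$. I therefore set $\hat{c}_k=x_k\hat{c}_{k-1}$, which gives $\hat{c}_k\in(0,\hat{c}_{k-1})$ as desired.

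It remains to conclude via \eqref{eq:*2} that $\hat{a}_k\in(0,2)$. Because $x_k<x^*=2/(2+\hat{a}_{k-1})$, I immediately obtain
\[
\hat{a}_k=2-x_k(2+\hat{a}_{k-1})>2-x^*(2+\hat{a}_{k-1})=0,
\]
while $x_k>0$ yields $\hat{a}_k<2$. This completes the inductive step and hence the lemma.

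The argument is not really obstructed anywhere; the one place that requires a genuine (though elementary) computation is the strict inequality $g(x^*)<0$, which is what ensures that the relation \eqref{eq:*1} actually admits a root strictly less than $x^*$ rather than only the trivial root $x=1$. Without this, \eqref{eq:*2} could fail to produce $\hat{a}_k\in(0,2)$, so this is the step I would single out as the crux of the proof.
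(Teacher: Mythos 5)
Your proposal is correct and follows essentially the same inductive scheme as the paper's proof: reduce \eqref{eq:*1} to a one-variable equation for $x=\hat{c}_k/\hat{c}_{k-1}$, show it has a unique root in $(0,\,2/(2+\hat{a}_{k-1}))$, and then deduce $\hat{a}_k\in(0,2)$ from \eqref{eq:*2}. The only difference is that you spell out the convexity argument (and the computation $g(x^*)=t-1-t\log t<0$ for $t\in(0,1)$) behind the steps the paper dispatches with ``we easily see'' and ``it obviously follows.''
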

\begin{proof} We argue by induction. For $k=1$, we clearly see that there exists a unique positive value $x_1$ such that $x_1<1$ and  
\[
0=\frac{1}{2}\log{\frac{1}{x_1}}-1+x_1.
\]
Moreover, we easily see that $x_1<1/2$. Hence, the constants $\hat{c}_1\in(0,1)$ and $\hat{a}_1\in(0,2)$ are well-defined by \eqref{eq:*1} and \eqref{eq:*2} with $k=1$.  Next, we suppose $\hat{a}_k\in(0,2)$ and $\hat{c}_k\in(0,\hat{c}_{k-1})$ are well-defined by \eqref{eq:*1} and \eqref{eq:*2} for some $k\ge1$. Then similarly we easily check that there exists a unique positive constant $x_{k+1}$ such that $x_{k+1}<1$ and 
\[
0=\frac{2}{2+\hat{a}_k}\log{\frac 1{x_{k+1}}}-1+x_{k+1}. 
\]
It obviously follows that  $x_{k+1}<2/(2+\hat{a}_k)$. Putting $\hat{c}_{k+1}=\hat{c}_kx_{k+1}$, the constant $\hat{c}_{k+1}\in(0,\hat{c}_k)$ satisfying \eqref{eq:*1} is uniquely determined. Then   $\hat{a}_{k+1}\in (0,2)$ is uniquely defined by \eqref{eq:*2}. 
 This finishes the proof. 
\end{proof}
We use the next lemma to check the convergence of $a_k(p)$, $\delta_k(p)$, and $\delta_k^{p-1}(p)$.
\begin{lemma}\label{lem:pre} Assume $p>2$. Let $A_p,A_*\in(2,4]$ be constants such that $A_p\to A_*\in(2,4]$ as $p\to \infty$ and put a function
\[
g_p(x)=\frac{2p}{A_p}(1-x)-1+x^p
\]
for all $x\in[0,1]$. Then there exists a unique value  $x_p$  such that $0<x_p<1$ and $g_p(x_p)=0$. Moreover, there exists a number  $x_*\in(0,2/A_*)$ such that $x_p\to 1$, $x_p^{p-1}\to x_*$ as $p\to \infty$, and 
\[
\frac2A_*\log{\frac1{x_*}}-1+x_*=0.
\] 
 \end{lemma}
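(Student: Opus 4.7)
\textbf{Proof proposal for Lemma \ref{lem:pre}.}

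The plan is to proceed in three steps: (1) establish existence and uniqueness of $x_p\in(0,1)$ via a direct monotonicity analysis of $g_p$; (2) recast the equation $g_p(x_p)=0$ in a geometric-series form from which it is immediate that $x_p\to 1$; and (3) analyze the quantity $y_p:=x_p^{p-1}$ by passing to subsequential limits and identifying the limit via the reduced equation, using the uniqueness of the latter's solution in $(0,2/A_*)$.

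First I would compute $g_p'(x)=p(x^{p-1}-2/A_p)$. Since $A_p\in(2,4]$, for large $p$ the critical point $x_p^*:=(2/A_p)^{1/(p-1)}$ lies in $(0,1)$, $g_p$ is strictly decreasing on $[0,x_p^*]$ and strictly increasing on $[x_p^*,1]$, $g_p(0)=2p/A_p-1>0$, $g_p(1)=0$, and $g_p'(1)=p(1-2/A_p)>0$. Hence $g_p(x_p^*)<0$, and $g_p$ has exactly one zero in $(0,1)$, namely $x_p\in(0,x_p^*)$, proving the first assertion. Next, factoring $1-x_p^p=(1-x_p)(1+x_p+\cdots+x_p^{p-1})$ turns $g_p(x_p)=0$ into
\begin{equation*}
\frac{1-x_p^p}{1-x_p}=1+x_p+\cdots+x_p^{p-1}=\frac{2p}{A_p}.
\end{equation*}
If $x_p$ did not converge to $1$, then along a subsequence $x_p\to c\in[0,1)$, forcing $x_p^p\to 0$ and the left-hand side to stay bounded by $1/(1-c)$, contradicting $2p/A_p\to\infty$. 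Thus $x_p\to 1$.

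For step (3), write $x_p=e^{-\delta_p}$ with $\delta_p>0$, $\delta_p\to 0$, so that $y_p=x_p^{p-1}=e^{-(p-1)\delta_p}$ and $x_p^p=x_py_p$. Substituting into the displayed identity above gives
\begin{equation*}
\frac{1-x_p y_p}{1-e^{-\delta_p}}=\frac{2p}{A_p},
\qquad\text{equivalently}\qquad
1-x_py_p=\frac{2p}{A_p}(1-e^{-\delta_p}).
\end{equation*}
Since the sequence $(y_p)\subset(0,1)$ is bounded, extract any subsequential limit $y_p\to x_*\in[0,1]$. Combining $1-x_py_p\to 1-x_*$ with the identity above and $p(1-e^{-\delta_p})\to$ (something) forces $p\delta_p$ to have a finite positive limit $\alpha$: indeed $2p(1-e^{-\delta_p})/A_p\to 1-x_*\in[0,1)$ (note $x_p y_p\to x_*$ and $x_*=1$ is excluded by $x_p<x_p^*<(2/A_p)^{1/(p-1)}$, which implies $y_p<2/A_p$ and thus $x_*\le 2/A_*<1$). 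Taking logarithms in $y_p=e^{-(p-1)\delta_p}$ then yields $\log(1/x_*)=\alpha$. Passing to the limit in $1-x_*=(2/A_*)\alpha=(2/A_*)\log(1/x_*)$ gives exactly the reduced equation
\begin{equation*}
\frac{2}{A_*}\log\frac{1}{x_*}-1+x_*=0.
\end{equation*}
As noted in the proof of Lemma \ref{lem:bf}-style analysis, a direct inspection of $h(x):=(2/A_*)\log(1/x)-1+x$ shows that $h$ is strictly convex on $(0,1]$, vanishes at $x=1$, and has a unique additional zero $x_*\in(0,2/A_*)$ (its minimum $2/A_*<1$ being strictly negative because $A_*>2$). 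Since every subsequential limit of $y_p$ must solve this equation and lie in $[0,2/A_*]$, and neither $0$ nor $2/A_*$ is a solution, the full limit exists and equals $x_*$.

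The main obstacle will be step (3), specifically ensuring that the scaling $p\delta_p$ stays bounded away from both $0$ and $\infty$ so that one actually reaches the limit equation rather than a degenerate one. The upper bound $y_p<2/A_p$ coming from $x_p<x_p^*$ (which prevents $x_*=1$, i.e.\ $p\delta_p\to 0$) and the identity $1-x_py_p=(2p/A_p)(1-e^{-\delta_p})$ (which, since the left side is bounded and the prefactor $2p/A_p\to\infty$, forces $\delta_p=O(1/p)$, preventing $p\delta_p\to\infty$) together pin down this scaling and are the technical heart of the argument.
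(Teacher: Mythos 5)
Your proof is correct. It uses the same key change of variables ($y_p=x_p^{p-1}$, which is the paper's $t_p$) and arrives at the same reduced equation, but it packages the limit passage differently. The paper's route is: after obtaining the two-sided bound $1-A_p/(2p)<x_p<(2/A_p)^{1/(p-1)}$ (which gives $x_p\to 1$ and $e^{-A_*/2}\le\liminf x_p^{p-1}\le\limsup x_p^{p-1}\le 2/A_*<1$), it introduces $\tilde g_p(t):=g_p(t^{1/(p-1)})$, proves $\tilde g_p\to\tilde g_*(t):=\frac{2}{A_*}\log\frac{1}{t}-1+t$ locally uniformly on $(0,1]$, and concludes $x_p^{p-1}\to t_*$ directly from the uniqueness of the zero of $\tilde g_*$. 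You instead perform the equivalent computation at the level of the root: writing $x_p=e^{-\delta_p}$, feeding the identity $1-x_py_p=\frac{2p}{A_p}(1-e^{-\delta_p})$ into a subsequence argument, and recovering the limit equation by identifying $p\delta_p\to\alpha=\log(1/x_*)$. The elementary expansions driving both proofs ($1-e^{-\delta}\sim\delta$, equivalently $1-t^{1/(p-1)}\sim\frac{\log(1/t)}{p-1}$, and $t^{p/(p-1)}\to t$) are the same, so the computational content coincides; the paper abstracts it into a uniform-convergence statement, while you keep it as a by-hand asymptotic analysis of the zero. One small plus of your version: deriving $x_p\to 1$ from the geometric-series identity $(1-x_p^p)/(1-x_p)=2p/A_p\to\infty$ is a bit slicker than the paper's tangent-line bound $g_p(x)>-\frac{2p}{A_p}x+\frac{2p}{A_p}-1$. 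One small plus of the paper's version: the explicit bound $\liminf x_p^{p-1}\ge e^{-A_*/2}$ pins $t_p$ into a compact subinterval of $(0,1)$ before the limit is taken, whereas you exclude $x_*=0$ only implicitly through the finiteness of $\alpha$ — both work, but the paper's is more transparent on that point.
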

\begin{proof}
First, obviously  there exists the unique value $x_p$ such that $x_p\in(0,1)$ and $g_p(x_p)=0$. 
Moreover, noting that $g_p(x)>-(2p/A_p)x+2p/A_p-1$ for all $x\in(0,1)$, we get
\[
1-\frac {A_p}{2p}<x_p<\left(\frac2A_p\right)^{\frac1{p-1}}. 
\]
It follows that 
\begin{equation}\label{eq:pre1}
\lim_{p\to \infty}x_p= 1\text{ and }\  e^{-A_*/2}\le \liminf_{p\to \infty}x_p^{p-1}\le\limsup_{p\to \infty}x_p^{p-1}\le 2/A_*<1.
\end{equation}
 To finish the proof, we set $\tilde{g}_p(t)=g_p(t^{1/(p-1)})$, that is,
\[
\tilde{g}_p(t)=\frac{2p}{A_p}\left(1-t^{\frac1{p-1}}\right)-1+t^{\frac{p}{p-1}}
\]
for all $t\in [0,1]$. Setting $t_p=x_p^{p-1}$, we have that $t_p$ is the unique constant  such that   $t_p\in(0,1)$ and $\tilde{g}_p(t_p)=0$. Moreover, we obtain that $\tilde{g}_p(t)\to \tilde{g}_*(t)$ as $p\to \infty$ locally uniformly in $(0,1]$ where 
\[
\tilde{g}_*(t)=\frac2{A_*}\log{\frac1t}-1+t.
\]
Hence noting \eqref{eq:pre1}, we confirm that $t_p\to t_*$ as $p\to \infty$ where $t_*$ is the unique value such that $t_*\in(0,1)$ and $\tilde{g}_*(t_*)=0$. We readily check that  $t_*\in(0,2/A_*)$. This completes the proof. 
 \end{proof}
Then we get the desired convergence results. 
\begin{lemma}\label{lem:ad2} Assume $p>2$.  We have that for each $k\in \mathbb{N}$, $\delta_{k}(p)^{p-1}\to \hat{c}_k$ and $a_k(p)\to \hat{a}_k$ as $p\to \infty$.  
\end{lemma}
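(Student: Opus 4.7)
The plan is to proceed by induction on $k$, with Lemma \ref{lem:pre} serving as the engine at each step. The key is to identify $x_k(p) := \delta_k(p)/\delta_{k-1}(p)$ with the quantity $x_p$ of Lemma \ref{lem:pre}, and the factor $2 + a_{k-1}(p)$ appearing in \eqref{eq:del1} with the parameter $A_p$. Under this identification, Lemma \ref{lem:pre} simultaneously delivers that $x_k(p)\to 1$ (so $\delta_k(p)$ has the same leading order as $\delta_{k-1}(p)$) and that $x_k(p)^{p-1}\to x_*$ for a specific value $x_*\in(0,2/(2+\hat{a}_{k-1}))$ determined by a logarithmic equation obtained as the $p\to\infty$ degeneration of \eqref{eq:del1}. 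Crucially, that limit equation is exactly the defining relation \eqref{eq:*1} for $\hat{c}_k/\hat{c}_{k-1}$, so uniqueness forces $x_* = \hat{c}_k/\hat{c}_{k-1}$.

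For the base case $k=1$, since $a_0(p)=2$ and $\delta_0(p)=1$, the equation \eqref{eq:del1} reads $(p/2)(1-\delta_1(p)) - 1 + \delta_1(p)^p = 0$, which is exactly Lemma \ref{lem:pre} with $A_p\equiv A_*=4$; reading off the limit and comparing with \eqref{eq:*1} gives $\delta_1(p)^{p-1}\to \hat{c}_1$. The convergence $a_1(p)\to \hat{a}_1$ then follows by plugging into \eqref{eq:del2} and comparing with \eqref{eq:*2}. For the inductive step, assume $\delta_{k-1}(p)^{p-1}\to \hat{c}_{k-1}$ and $a_{k-1}(p)\to \hat{a}_{k-1}$. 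Apply Lemma \ref{lem:pre} with $A_p = 2 + a_{k-1}(p)$ and $A_* = 2 + \hat{a}_{k-1}$; the hypothesis $A_p, A_*\in (2,4]$ is satisfied since $a_{k-1}(p)\in(0,2]$ by Lemma \ref{lem:bf} and $\hat{a}_{k-1}\in(0,2]$ by Lemma \ref{lem:p1}. The conclusion gives $x_k(p)^{p-1}\to \hat{c}_k/\hat{c}_{k-1}$, and multiplying by $\delta_{k-1}(p)^{p-1}\to \hat{c}_{k-1}$ yields $\delta_k(p)^{p-1}\to \hat{c}_k$. Feeding into \eqref{eq:del2} and comparing with \eqref{eq:*2} closes the induction for $a_k(p)\to \hat{a}_k$.

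No real obstacle is anticipated once Lemma \ref{lem:pre} is in place; the argument is essentially mechanical. The only bookkeeping is to verify at each level that the parameter $A_p = 2 + a_{k-1}(p)$ stays in the range $(2,4]$ required by Lemma \ref{lem:pre}, which is automatic from the inductive hypothesis together with the basic monotonicity and range properties of $a_k(p)$ and $\hat{a}_k$ recorded in Lemmas \ref{lem:bf} and \ref{lem:p1}. The slight conceptual point worth isolating is the uniqueness used when identifying $x_* = \hat{c}_k/\hat{c}_{k-1}$: the function $\xi \mapsto \frac{2}{2+\hat{a}_{k-1}}\log(1/\xi) - 1 + \xi$ is strictly decreasing on $(0, 2/(2+\hat{a}_{k-1}))$ and strictly increasing beyond, so its unique root in $(0,1)$ lies in the interval $(0, 2/(2+\hat{a}_{k-1}))$ where Lemma \ref{lem:pre} places $x_*$.
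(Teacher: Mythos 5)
Your proposal is correct and follows essentially the same route as the paper: induction on $k$, with Lemma \ref{lem:pre} applied at each step by identifying $d_k(p)=\delta_k(p)/\delta_{k-1}(p)$ with $x_p$ and $A_p=2+a_{k-1}(p)$, then passing to \eqref{eq:del2} and \eqref{eq:*2} to close the step for $a_k(p)$. The only cosmetic difference is that the paper first names the limit $\hat{c}^*_{k+1}$ and then matches it against \eqref{eq:*1}, while you identify it directly with $\hat{c}_k/\hat{c}_{k-1}$ via uniqueness of the root; that uniqueness, which you correctly isolate as the one point needing care, is the same fact the paper relies on implicitly.
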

\begin{proof}
We again put 
 $d_k=\delta_k/\delta_{k-1}$ for all $k\in \mathbb{N}$. Let us give the proof  by induction. If $k=1$,  \eqref{eq:del1} and  \eqref{eq:del2} become 
\begin{equation}\label{sss1}
\frac{2p}{4}\left(1-d_1(p)\right)-1+d_1(p)^p=0
\end{equation}
and
\begin{equation}\label{sss2}
a_1(p)=2-4d_1(p)^{p-1}.
\end{equation} 
Then using Lemma \ref{lem:pre}, we get that  $d_1(p)\to 1$ and  $d_1(p)^{p-1}\to \hat{c}_{1}$ as $p\to \infty$. 
 Moreover, it follows from \eqref{sss2} and \eqref{eq:*2} that $a_{1}(p)\to \hat{a}_1$ as $p\to \infty$.
This proves the case $k=1$. We assume the assertions are true for some $k\ge1$. From \eqref{eq:del1} and \eqref{eq:del2}, we see 
\begin{equation}\label{eq:sts1}
\frac{2p}{2+a_{k}(p)}\left(1-d_{k+1}(p)\right)-1+d_{k+1}(p)^p=0
\end{equation}
and
\begin{equation}\label{eq:sts2}
a_{k+1}(p)=2-d_{k+1}(p)^{p-1}(2+a_{k}(p)).
\end{equation}
Then again using Lemma \ref{lem:pre}, we obtain from \eqref{eq:sts1} that there exists a value $\hat{c}_{k+1}^*\in(0,2/(2+\hat{a}_k))$ such that $d_{k+1}(p)\to 1$, $d_{k+1}^{p-1}\to \hat{c}_{k+1}^*$ as $p\to \infty$, and 
\[
\frac2{2+\hat{a}_k}\log{\frac1{\hat{c}_{k+1}^*}}-1+\hat{c}_{k+1}^*=0.
\]
It follows that  $\delta_{k+1}(p)^{p-1}=d_{k+1}^{p-1}\delta_k(p)^{p-1}\to \hat{c}_{k+1}^*\hat{c}_k<\hat{c}_k$ as $p\to \infty$. Using the previous formula and \eqref{eq:*1}, we get $\hat{c}_{k+1}^*\hat{c}_k= \hat{c}_{k+1}$. Then \eqref{eq:sts2} completes the proof.   We finish the proof. 
\end{proof}
The next lemma shows the behavior of $\hat{a}_k$ and $\hat{c}_k$ as $k\to \infty$.
\begin{lemma}\label{lem:p2} We have $\hat{a}_{k-1}>\hat{a}_k$ and $\hat{c}_k/\hat{c}_{k-1}<\hat{c}_{k+1}/\hat{c}_k$ for all $k\in \mathbb{N}$ and $\hat{a}_k\to 0$ and $\hat{c}_k/\hat{c}_{k-1}\to1$ as $k\to \infty$. 
\end{lemma}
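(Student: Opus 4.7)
The plan is to mirror the argument of Lemma~\ref{lem:bf2}, but with the logarithmic profile that replaces the power profile in the limit $p\to\infty$. Writing $\hat{d}_k := \hat{c}_k/\hat{c}_{k-1}$, equations \eqref{eq:*1} and \eqref{eq:*2} become
\begin{equation*}
\frac{2}{2+\hat{a}_{k-1}}\log\frac{1}{\hat{d}_k}-1+\hat{d}_k=0,
\qquad
\hat{d}_k=\frac{2-\hat{a}_k}{2+\hat{a}_{k-1}}.
\end{equation*}
Eliminating $\hat{d}_k$ yields a scalar recurrence $g(\hat{a}_k)=0$ with
\begin{equation*}
g(x):=\frac{2}{2+\hat{a}_{k-1}}\log\frac{2+\hat{a}_{k-1}}{2-x}-1+\frac{2-x}{2+\hat{a}_{k-1}},
\qquad x\in[0,2).
\end{equation*}

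The first step I would take is to verify that $g'(x)=x/[(2+\hat{a}_{k-1})(2-x)]>0$, so $g$ is strictly increasing on $(0,2)$ and the zero $\hat{a}_k$ is unique. Then, arguing by induction on $k$, I would reduce the comparison $\hat{a}_k<\hat{a}_{k-1}$ to the positivity of
\begin{equation*}
\tilde g(x):=\frac{2}{2+x}\log\frac{2+x}{2-x}-1+\frac{2-x}{2+x}
\end{equation*}
on $(0,2)$, since $g(\hat{a}_{k-1})=\tilde g(\hat{a}_{k-1})$. The key computation here is $\tilde g(0)=0$ together with
\begin{equation*}
\tilde g'(x)=\frac{2}{(2+x)^2}\Bigl(-\log\tfrac{2+x}{2-x}+\tfrac{2x}{2-x}\Bigr);
\end{equation*}
the inner bracket vanishes at $x=0$ and has derivative $8x/[(2-x)^2(2+x)]>0$, so it is positive on $(0,2)$, hence so is $\tilde g$. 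The base case $k=1$ uses $\hat{a}_0=2$ directly from Lemma~\ref{lem:p1}.

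Once $\hat{a}_{k-1}>\hat{a}_k$ has been established for every $k$, the monotonicity of $(\hat{d}_k)$ follows by inspection from $\hat{d}_k=(2-\hat{a}_k)/(2+\hat{a}_{k-1})$: replacing $k$ by $k+1$ both enlarges the numerator and shrinks the denominator, so $\hat{d}_{k+1}>\hat{d}_k$. For the convergence, monotonicity and boundedness yield limits $\hat{a}_*\in[0,2)$ and $\hat{d}_*\in(0,1]$; passing to the limit in the two relations above gives $\hat{d}_*=(2-\hat{a}_*)/(2+\hat{a}_*)$ and $\tilde g(\hat{a}_*)=0$. Since $\tilde g>0$ on $(0,2)$ and $\tilde g(0)=0$, this forces $\hat{a}_*=0$ and therefore $\hat{d}_*=1$.

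The only nontrivial step is the positivity of $\tilde g$ on $(0,2)$, but as indicated this reduces to the elementary inequality $\tfrac{2x}{2-x}>\log\tfrac{2+x}{2-x}$ for $x\in(0,2)$, which is handled by differentiating once. Beyond this short computation I anticipate no substantive obstacle; the rest is bookkeeping of the induction and a routine limit passage in the recurrence.
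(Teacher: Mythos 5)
Your proof is correct and follows the same route as the paper: reduce to the two scalar functions $g$ and $\tilde g$, establish monotonicity of $g$, show $\tilde g>0$ on $(0,2)$, deduce $\hat a_k<\hat a_{k-1}$ and $\hat d_k<\hat d_{k+1}$ from $\hat d_k=(2-\hat a_k)/(2+\hat a_{k-1})$, then pass to the limit in the recurrence. The only difference is that you spell out the elementary derivative computation showing $\tilde g>0$, which the paper leaves to the reader.
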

\begin{proof} Fix any $k\in \mathbb{N}$. From \eqref{eq:*2}, we get
\begin{equation}\label{uu2}
\frac{\hat{c}_k}{\hat{c}_{k-1}}=\frac{2-\hat{a}_k}{2+\hat{a}_{k-1}}.
\end{equation}
Substituting this into \eqref{eq:*1}, we have  
\begin{equation}\label{uu1}
\frac{2}{2+\hat{a}_{k-1}}\log{\frac{2+\hat{a}_{k-1}}{2-\hat{a}_k}}-1+\frac{2-\hat{a}_k}{2+\hat{a}_{k-1}}=0.
\end{equation}
Then the proof is done similarly to Lemma \ref{lem:bf2}. For readers' convenience, we show the proof. This time, we put
\[
g(x)=\frac{2}{2+\hat{a}_{k-1}}\log{\frac{2+\hat{a}_{k-1}}{2-x}}-1+\frac{2-x}{2+\hat{a}_{k-1}}
\]  
for all $x\in [0,2)$. Then we easily get $g(0)<0$, $g(a_k)=0$, and $g'(x)>0$ for all $x\in (0,2]$. Moreover, if we put
\[
\tilde{g}(x)=\frac{2}{2+x}\log{\frac{2+x}{2-x}}-1+\frac{2-x}{2+x}
\]
for all $x\in [0,2)$, an elementary argument shows that  $\tilde{g}(x)>0$ for all $x\in(0,2)$. In particular, $g(\hat{a}_{k-1})=\tilde{g}(\hat{a}_{k-1})>0$. Hence the monotonicity of $g$ implies  $\hat{a}_k<\hat{a}_{k-1}$. Then \eqref{uu2} ensures that $\hat{c}_k/\hat{c}_{k-1}<\hat{c}_{k+1}/\hat{c}_{k}$. This proves the former assertions.  It follows that there exist  constants $\hat{a}_*\in[0,2)$ and $\hat{c}_{*}\in(0,1]$ such that $\hat{a}_k\to \hat{a}_*$ and $\hat{c}_k/\hat{c}_{k-1}\to \hat{c}_*$ as $k\to \infty$. Then \eqref{uu2} and \eqref{uu1} imply
\[
\hat{c}_*=\frac{2-\hat{a}_*}{2+\hat{a}_*}
\] 
and 
\[
\frac{2}{2+\hat{a}_*}\log{\frac{2+\hat{a}_*}{2-\hat{a}_*}}-1+\frac{2-\hat{a}_*}{2+\hat{a}_*}=0.
\]
Recalling that $\tilde{g}(x)>0$ for all $x\in(0,2)$, we get  $\hat{a}_*=0$ and thus, $\hat{c}_{*}=1$. We finish the proof. 
\end{proof}
We also prove that the infinite series of $(\hat{a}_k)$ diverges. 
\begin{lemma}\label{lem:p3}
We get $\sum_{k=0}^\infty \hat{a}_k=\infty$.
\end{lemma}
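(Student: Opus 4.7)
The plan is to mimic the proof of Lemma \ref{lem:div} closely, with $\hat{e}_k := 1-\hat{c}_k/\hat{c}_{k-1}$ playing the role that $\e_k = 1-d_k$ played there. By Lemma \ref{lem:p2}, $\hat{e}_k>0$, $\hat{e}_k$ is decreasing in $k$, and $\hat{e}_k\to 0$ as $k\to\infty$. So the target is to prove a recurrence of the form $\hat{e}_{k+1}\ge \hat{e}_k-\alpha \hat{e}_k^2$ for some $\alpha>0$ and all large $k$; Lemma \ref{pre0} then yields $\hat{e}_k\ge \beta/k$ for some $\beta>0$ and all large $k$, and the divergence of $\sum \hat{a}_k$ will follow from $\hat{a}_k\sim \hat{e}_k$.

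First, I would recast the two defining relations \eqref{eq:*1} and \eqref{eq:*2} in terms of $\hat{e}_k$. From \eqref{eq:*1},
\[
\log\frac{1}{1-\hat{e}_k}=\frac{2+\hat{a}_{k-1}}{2}\,\hat{e}_k,
\]
while \eqref{eq:*2} gives $\hat{a}_k=-\hat{a}_{k-1}+(2+\hat{a}_{k-1})\hat{e}_k$. Expanding the logarithm as $\log(1/(1-\hat{e}_k))=\hat{e}_k+\hat{e}_k^2/2+\hat{e}_k^3/3+O(\hat{e}_k^4)$ and solving for $\hat{a}_{k-1}$ yields
\[
\hat{a}_{k-1}=\hat{e}_k+\frac{2}{3}\hat{e}_k^2+O(\hat{e}_k^3).
\]
Substituting back into the expression for $\hat{a}_k$ gives
\[
\hat{a}_k=\hat{e}_k+\frac{1}{3}\hat{e}_k^2+O(\hat{e}_k^3).
\]
Shifting the index $k\mapsto k+1$ in the first expansion gives $\hat{a}_k=\hat{e}_{k+1}+\tfrac{2}{3}\hat{e}_{k+1}^2+O(\hat{e}_{k+1}^3)$, and equating the two expressions for $\hat{a}_k$ yields
\[
\hat{e}_{k+1}+\frac{2}{3}\hat{e}_{k+1}^2=\hat{e}_k+\frac{1}{3}\hat{e}_k^2+O(\hat{e}_k^3).
\]
Since $\hat{e}_{k+1}\le \hat{e}_k$ by Lemma \ref{lem:p2}, the right-hand side is of order $\hat{e}_k$ and one solves for $\hat{e}_{k+1}$ to order $\hat{e}_k^2$, obtaining
\[
\hat{e}_{k+1}=\hat{e}_k-\frac{1}{3}\hat{e}_k^2+O(\hat{e}_k^3).
\]
In particular there exists $\alpha>0$ and $k_0\in\mathbb{N}$ such that $\hat{e}_{k+1}\ge \hat{e}_k-\alpha \hat{e}_k^2$ for all $k\ge k_0$.

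Applying Lemma \ref{pre0} to the sequence $(\hat{e}_k)_{k\ge k_0}$, I obtain a constant $\beta>0$ such that $\hat{e}_k\ge \beta/k$ for all sufficiently large $k$. Combining this with the expansion $\hat{a}_k=\hat{e}_k+O(\hat{e}_k^2)$, there exists $k_1\ge k_0$ such that $\hat{a}_k\ge \beta/(2k)$ for all $k\ge k_1$, and hence
\[
\sum_{k=0}^\infty \hat{a}_k\ \ge\ \frac{\beta}{2}\sum_{k=k_1}^\infty \frac{1}{k}=\infty,
\]
which concludes the proof.

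The main obstacle is purely bookkeeping: one must carry the Taylor expansions to enough orders to detect the sign of the quadratic correction in the recurrence for $\hat{e}_k$ (a sign error would make the sequence decrease too fast and the sum could converge), but the structural analogy with Lemma \ref{lem:div} leaves little room for surprise. Everything else is a direct application of Lemmas \ref{lem:p2} and \ref{pre0}, which have already been established.
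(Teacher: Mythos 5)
Your proof is correct and follows the paper's argument essentially verbatim: same substitution $\hat{e}_k=1-\hat{c}_k/\hat{c}_{k-1}$, same Taylor expansions of \eqref{eq:*1} and \eqref{eq:*2} to the quadratic correction, the same recurrence $\hat{e}_{k+1}=\hat{e}_k-\tfrac13\hat{e}_k^2+O(\hat{e}_k^3)$, and the same appeal to Lemma \ref{pre0} followed by comparison with the harmonic series. There is nothing to add.
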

\begin{proof} From the previous lemma, we have a decreasing sequence $(\e_k)\subset (0,1)$ such that $\hat{c}_k/\hat{c}_{k-1}=1-\e_k$ for all $k\in \mathbb{N}$ and $\e_k\to0$ as $k\to \infty$. Then from \eqref{eq:*1}, we get
\begin{equation}\label{v1}
\hat{a}_{k-1}= -2+\frac{2}{\e_k}\log{\frac1{1-\e_k}}=\e_{k}+\frac23 \e_{k}^2+O(\e_{k}^3).
\end{equation}
Substituting this into \eqref{eq:*2}, we see
\begin{equation}\label{v2}
\hat{a}_k=\e_k+\frac13 \e_k^2+O(\e_k^3).
\end{equation}
Replacing $k$ with $k+1$ in \eqref{v1}, we get
\[
\hat{a}_k=
\e_{k+1}+\frac23 \e_{k+1}^2+O(\e_{k+1}^3).
\]
From these two formulas, we obtain
\[
\e_{k+1}=\e_k+\frac13 \e_k^2-\frac23 \e_{k+1}^2+O(\e_k^3)
\]
since $\e_{k+1}<\e_k$. Then we get $\e_{k+1}=\e_k+O(\e_k^2)$ and thus,  
\[
\begin{split}
\e_{k+1}&=\e_k+\frac13 \e_k^2-\frac23 (\e_k+O(\e_k^2))^2+O(\e_k^3)
=\e_k-\frac13 \e_k^2+O(\e_k^3).
\end{split}
\]
Then from Lemma \ref{pre0}, there exist numbers $k_0\in \mathbb{N}$ and $\beta>0$ such that $\e_k\ge \beta/k$ for all $k\ge k_0$. Hence, it follows from  \eqref{v2} that $\hat{a}_k\ge \beta/k$ for all $k\ge k_0$ by taking $k_0$ larger if necessary. Consequently, we get for all $k\ge k_0$ that
\[
\sum_{i=0}^{k} \hat{a}_i\ge \sum_{i=k_0}^{k} \hat{a}_i\ge \beta\sum_{i=k_0}^{k} \frac1i\to \infty
\]
as $k\to \infty$. This completes the proof. 
\end{proof}
Now we show Proposition \ref{prop:p*}.
\begin{proof}[Proof of Proposition \ref{prop:p*}]
The proof readily follows from  Lemmas \ref{lem:ad2}, \ref{lem:p2}, \ref{lem:p3}, and the definitions of $z_k^{(p)}$ and $\hat{z}_k$. We complete the proof.  
\end{proof}

Let us prove Corollary \ref{thm4}.
\begin{proof}[Proof of Corollary \ref{thm4}] Let $(p_m)_{m\in \mathbb{N}}\subset (2,3)$ be any sequence of values such that $p_m\to2$ as $m\to \infty$. From the assumption,  for each large $m\in \mathbb{N}$, there exists a sequence $\{(\la_{m,n},\mu_{m,n}u_{m,n})\}_{n\in \mathbb{N}}$ such that $\{(\la_n,\mu_n,u_n)\}=\{(\la_{m,n},\mu_{m,n},u_{m,n})\}$ is a sequence of solutions of  \eqref{q} with $p=p_m$ and $\mu_{m,n}\to \infty$ as $n\to \infty$. Fix such a large $m\in \mathbb{N}$. The idea of the rest of the proof is simple. By Theorems \ref{thm1} and \ref{thm30}, we choose a sufficiently large number $n_m\in \mathbb{N}$ so that the first and the next $m$ bubbles on $u_{n_m}$ are well-approximated by $z_0$ and $\tilde{z}_0$ respectively. Then $\{(\la_m,\mu_m,v_m)\}:=\{(\la_{n_m},\mu_{n_m},u_{n_m})\}$ is the desired sequence of solutions of \eqref{eq:q2} which finishes the proof. For the sake of the completeness, we show a more detailed  proof. To this end, for all $k\in \mathbb{N}\cup\{0\}$, we write $a_{m,k}=a_k(p_m)$, $b_{m,k}=b_k(p_m)$, $\delta_{m,k}=\delta_k(p_m)$, and $\beta_{m,k}^*=\beta_k^*(p_m)$. Then from Theorems \ref{thm1} and \ref{thm30}, up to a subsequence,  for each $k\in \mathbb{N}\cup\{0\}$, there exists a sequence  $(r_{m,k,n})\subset (0,1)$ of values  such that $r_{m,k,n}\to0$ and $u_{m,n}(r_{m,k,n})/\mu_{m,n}\to \delta_{m,k}$ as $n\to \infty$ and if we define sequences $(\ga_{k,n})$ of values  and  $(z_{k,n})$ of functions   so that
\[
p_m\mu_{m,n}^{p-1}\la_{m,n}f_n(\mu_{m,n})\ga_{m,0,n}^2=1
\]
and
\[
z_{m,0,n}(r)=p_m\mu_{m,n}^{p-1}(u_{m,n}(\ga_{m,0,n}r)-\mu_{m,n})
\]
for all $r\in [0,1/\ga_{m,0,n}],$ and if $k\ge1$,
\[
p_mu_{m,n}(r_{m,k,n})^{p-1}\la_{m,n}f_n(u_{m,n}(r_{m,k,n}))\ga_{m,k,n}^2=1
\]
and
\[
z_{m,k,n}(r)=p_mu_{m,n}(r_{m,k,n})^{p-1}(u_{m,n}(\ga_{m,k,n}r)-u_{m,n}(r_{m,k,n}))
\]
for all $r\in [0,1/\ga_{m,k,n}]$, then there exist sequences $(\bar{\rho}_{m,k,n}),(\rho_{m,k,n})\subset (0,1)$ such that $\bar{\rho}_{m,0,n}=0$ for all $n\in \mathbb{N}$, $u_{m,n}(\rho_{m,0,n})/\mu_{m,n}\to 1$, $\rho_{m,0,n}/\ga_{m,0,n}\to \infty$, 
\[
\|z_{m,0,n}- z_0\|_{C^1([0,\rho_{m,0,n}/\ga_{m,0,n}])}\to0,
\]
\[
p_m\mu_{m,n}^{p_m-1}\int_{0}^{\rho_{m,0,n}}\la_{m,n}f_m(u_{m,n})rdr\to 4,
\]
and for all $k\in \mathbb{N}$, $\rho_{m,k-1,n}/\bar{\rho}_{m,k,n}\to0$, $\bar{\rho}_{m,k,n}/r_{m,k,n}\to0$, $r_{m,k,n}/\rho_{m,k,n}\to 0$, $u_{m,n}(\bar{\rho}_{m,k,n})/\mu_{m,n}\to \delta_{m,k}$, $u_{m,n}(\rho_{m,k,n})/\mu_{m,n}\to \delta_{m,k}$, 
 $\bar{\rho}_{m,k,n}/\ga_{m,k,n}\to0$, $\rho_{m,k,n}/\ga_{m,k,n}\to \infty$, 
\[
\|z_{m,k,n}-z_{m,k}\|_{C^2([\bar{\rho}_{m,k,n}/\ga_{m,k,n},\rho_{m,k,n}/\ga_{m,k,n}])}\to0
\]
where $z_{m,k}(r)=z_k^{(p_m)}(r)$,
\[
p_m\mu_{m,n}^{p_m-1}\int_{\rho_{m,k-1,n}}^{\bar{\rho}_{m,k,n}}\la_{m,n}f_m(u_{m,n})rdr\to 0,
\]
and
\[
p_mu_{m,n}(r_{m,k,n})^{p_m-1}\int_{\bar{\rho}_{m,k,n}}^{\rho_{m,k,n}}\la_{m,n}f_m(u_{m,n})rdr\to 2a_{m,k}
\]
as $n\to \infty$. Moreover, by Theorems \ref{thm3} and \ref{thm:osc}, we have 
\[
p_m\int_0^1\la_{m,n}u_{m,n}^{p_m-1} f_m(u_{m,n})rdr\to \infty,
\]
\[
\frac{\log{\frac1{\la_{m,n}}}}{\mu_{m,n}^{p_m}}\to0,
\]
and  for all $k\in \mathbb{N}\cup\{0\}$, there exists a sequence $(r_{m,k,n}^*)\subset (r_{m,k,n},r_{m,k+1,n})$ such that 
\[\frac{u_{m,n}(r_{m,k,n})^{p_m}}{\log{\frac1{r_{m,k,n}}}}\to 2\ \text{ and }\ \frac{u_{m,n}(r_{m,k,n}^*)^{p_m}}{\log{\frac1{r_{m,k,n}^*}}}\to \beta_{m,k}^*
\]
as $n\to \infty$. Then we find a number $n_m\in \mathbb{N}$ such that for all $k=0,1,\cdots,m$, 
  $r_{m,k,n_m}<1/m$, $u_{m,k,n_m}(r_{m,k,n_m})>m$, $|u_{m,n_m}(r_{m,k,n_m})/(\delta_{m,k}\mu_{m,n_m})- 1|<1/m$, $|u_{m,n}(\rho_{m,0,n_m})/\mu_{m,n_m}-1|<1/m$, $\rho_{m,0,n_m}/\ga_{m,0,n_m}>m$, 
\[
\|z_{m,0,n_m}- z_0\|_{C^1([0,\rho_{m,0,n_m}/\ga_{m,0,n_m}])}<1/m,
\]
\[
\left|p_m\mu_{m,n_m}^{p_m-1}\int_{0}^{\rho_{m,0,n_m}}\la_{m,n_m}f_m(u_{m,n_m})rdr- 4\right|<1/m,
\]
and for all $k=1,\cdots,m$, $\rho_{m,k-1,n_m}/\bar{\rho}_{m,k,n_m}<1/m$, $\bar{\rho}_{m,k,n_m}/r_{m,k,n_m}<1/m$, $r_{m,k,n_m}/\rho_{m,k,n_m}<1/m$,  $|u_{m,n}(\rho_{m,k,n_m})/(\delta_{m,k}\mu_{m,n_m})-1|<1/m$, $|u_{m,n}(\bar{\rho}_{m,k,n_m})/(\delta_{m,k}\mu_{m,n_m})-1|<1/m$,  $\bar{\rho}_{m,k,n_m}/\ga_{m,k,n_m}<1/m$, $\rho_{m,k,n_m}/\ga_{m,k,n_m}>m$, 
\[
\|z_{m,k,n_m}-z_{m,k}\|_{C^2([\bar{\rho}_{m,k,n_m}/\ga_{m,k,n_m},\rho_{m,k,n_m}/\ga_{m,k,n_m}])}<1/m, 
\]
\[
p_m\mu_{m,n_m}^{p_m-1}\int_{\rho_{m,k-1,n_m}}^{\bar{\rho}_{m,k,n_m}}\la_{m,n_m}f_m(u_{m,n_m})rdr<1/m,
\]
\[
\left|p_mu_{m,n_m}(r_{m,k,n_m})^{p_m-1}\int_{\bar{\rho}_{m,k,n_m}}^{\rho_{m,k,n_m}}\la_{m,n_m}f_m(u_{m,n_m})rdr- 2a_{m,k}\right|<1/m,
\]
\[
p_m\int_0^1\la_{m,n_m}u_{m,n_m}^{p_m-1} f_m(u_{m,n_m})rdr>m,\ 
\frac{\log{\frac1{\la_{m,n_m}}}}{\mu_{m,n_m}^{p_m}}<\frac1m,
\]
and  for all $k=0,1,\cdots,m$, 
\[
\left|\frac{u_{m,n}(r_{m,k,n})^{p_m}}{\log{\frac1{r_{m,k,n_m}}}}- 2\right|<\frac1m\ \text{ and }\ \left|\frac{u_{m,n_m}(r_{m,k,n_m}^*)^{p_m}}{\log{\frac1{r_{m,k,n_m}^*}}}- \beta_{m,k}^*\right|<\frac1m.
\]
Then put $v_m=u_{m,n_m}$ and $\la_m=\la_{m,n_m}$, and for all $k\in \mathbb{N}\cup\{0\}$, $\tilde{r}_{k,m}=r_{m,k,n_m}$, $\tilde{\bar{\rho}}_{k,m}=\bar{\rho}_{m,k,n_m}$, $\tilde{\rho}_{k,m}=\rho_{m,k,n_m}$, $\tilde{r}_{k,m}^*=r_{m,k,n_m}^*$ if $k=0,\cdots,m$ and $\tilde{r}_{k,m}=\tilde{\bar{\rho}}_{k,m}=\tilde{\rho}_{k,m}=\tilde{r}_{k,m}^*=1/2$ if $k\ge m+1$ and then, define $\tilde{\ga}_{k,n}$ and $\tilde{z}_{k,m}$ as in the corollary. 
 These are the desired sequence. To see this, it suffices to note Proposition \ref{prop:p2} and find that for each $k\in \mathbb{N}$, $a_{m,k}\to 2$, $\beta_{m,k}^*\to 0$, and for every compact set  $K\subset (0,\infty)$, 
\[
\begin{split}
&\|z_{m,k,n_m}-\tilde{z}_0\|_{C^2(K)}\le \|z_{m,k,n_m}-z_{m,k}\|_{C^2(K)}+ \|z_{m,k}-\tilde{z}_0\|_{C^2(K)}\to0
\end{split}
\]
as $m\to \infty$.  We finish the proof. 
\end{proof}
We finally show  Corollary \ref{cor5}.
\begin{proof}[Proof of Corollary \ref{cor5}] Noting the facts in Proposition \ref{prop:p*} and $\beta_k^*(p)\to \hat{\beta}_k^*$ as $p\to \infty$, the proof is done with the  same argument to that  in the previous corollary. We finish the proof. 
\end{proof}
\subsubsection*{Acknowledgement} The author is grateful to Norisuke Ioku for favorable discussions and a useful comment on Lemma \ref{lem:sg}.  He thanks the authors of \cite{FIRT}  for providing him with detailed information referred in Remarks \ref{rmk:B} and \ref{rmk:D} when he was preparing this paper.  He also thanks  Kenta Kumagai for information about his recent work and  stimulating discussions. This work is supported by JSPS KAKENHI Grant Numbers 17K14214 and 21K13813.  
\addcontentsline{toc}{section}{References}

\end{document}